\DeclareSymbolFontAlphabet{\amsmathbb}{AMSb}
\newtheorem{thm}{Theorem}[section]
\newtheorem {lem} [thm]{Lemma}
\newtheorem{cor}[thm]{Corollary}
\newtheorem{defn}[thm]{Definition}
\newtheorem{prop}[thm]{Proposition}
\newtheorem{lemma}[thm]{Lemma}
\newtheorem{remark}[thm]{Remark}
\newtheorem{quest}[thm]{Question}
\newcommand{\act}{\curvearrowright}
\newcommand{\ts}{\textstyle}
\newcommand{\br}{\Bbb R}
\newcommand{\Linf}{L^{\infty}}
\newcommand{\Lone}{L^{1}}
\newcommand {\Tr} {{\textrm{Tr}}}
\newcommand {\Ind} {\mathcal{I}}
\newcommand {\id} {{\textup{id}}}
\newcommand{\Hil}{\mathsf{H}}
\newcommand{\Kil}{\mathsf{K}}
\newcommand{\sK}{\mathsf{K}}
\newcommand{\wot}{\overline{\ot}}
\newcommand{\Com}{\Delta}
\newcommand{\QG}{\mathbb{G}}
\newcommand{\QH}{\mathbb{H}}
\newcommand{\tens}{\otimes}
\newcommand{\comp}{\!\circ\!}
\newcommand{\I}{\mathds{1}}
\newcommand{\hQG}{\widehat{\mathbb{G}}}
\newcommand{\hQH}{\widehat{\mathbb{H}}}
\numberwithin{equation}{section}
\newcommand{\noop}[1]{}
\newcommand{\field}[1]{\mathbb{#1}}
\newcommand{\CC}{{\field{C}}}
\newcommand{\im}{{\operatorname{Im}}}
\renewcommand{\im}{\operatorname{im}}
\newcommand{\ot}{{\otimes}}
\newcommand{\om}{{\omega}}
\newcommand{\HH}{{\mathbb H}}
\newcommand{\ad}{{\mathrm{ad}}}
\newcommand{\ep}{{\varepsilon}}
\newcommand{\G}{{\mathbb{\Gamma}}}
\newcommand{\p}{{\mathcal{P}}}
\newcommand{\cP}{{\mathcal{P}}}
\newcommand{\fb}{{\partial_F(\G)}}
\newcommand{\cF}{\mathcal {F}}
\newcommand{\HS}{{\mathrm{HS}}}
\newcommand{\cst}{\ifmmode\mathrm{C}^*\else{$\mathrm{C}^*$}\fi}
\newcommand{\hG}{\hat{\G}}
\newcommand{\RR}{\mathbb{R}}
\newcommand{\KK}{\mathbb{K}}
\newcommand{\GG}{\mathbb{G}}
\newcommand{\vtens}{\,\bar{\otimes}\,}
\newcommand{\sM}{\mathsf{M}}
\newcommand{\sN}{\mathsf{N}}
\newcommand{\sB}{\mathsf{B}}
\newcommand{\sC}{\mathsf{C}}
\newcommand{\sV}{\mathsf{V}}
\newcommand{\sW}{\mathsf{W}}
\newcommand{\sA}{\mathsf{A}}
\newcommand{\sX}{\mathsf{X}}
\newcommand{\sY}{\mathsf{Y}}
\newcommand{\sI}{\mathsf{I}}
\newcommand{\sJ}{\mathsf{J}}
\newcommand{\hh}[1]{\widehat{#1}}
\newcommand{\ww}{\mathrm{W}}
\newcommand{\WW}{{\mathds{V}\!\!\text{\reflectbox{$\mathds{V}$}}}}
\newcommand{\Ww}{\mathds{W}}
\newcommand{\wW}{\text{\reflectbox{$\Ww$}}\:\!} 
\newcommand{\GGamma}{\mathbb{\Gamma}}
\newcommand{\LLambda}{\mathbb{\Lambda}}
\DeclareMathOperator{\Mor}{Mor}
\DeclareMathOperator{\M}{M}
\DeclareMathOperator{\Aut}{Aut}
\DeclareMathOperator{\Ltwo}{\mathnormal{L}^2\;\!\!}
\DeclareMathOperator{\c0}{c_0}
\DeclareMathOperator{\cu}{c}
\DeclareMathOperator{\linf}{\ell^\infty\;\!\!}
\DeclareMathOperator{\Irr}{Irr}
\DeclareMathOperator{\C}{C}
\numberwithin{equation}{section}
\newcommand{\NN}{\mathbb{N}} 
\newcommand{\FO}{\mathbb{F}O} 
\newcommand{\Cst}{$C^*$-\relax}
\newcommand{\qdim}{\operatorname{dim_{\text{$q$}}}}
\newcommand{\qTr}{\operatorname{qTr}}
\newcommand{\qtr}{\operatorname{qtr}}
\DeclareMathOperator{\Pol}{Pol}
\title{Noncommutative Furstenberg boundary}
\author[M. Kalantar]{Mehrdad Kalantar}
\address{Mehrdad Kalantar, Department of Mathematics, University of Houston, USA}
\email{kalantar@math.uh.edu}
\author[P. Kasprzak]{Pawe{\l} Kasprzak}
\address{Pawe{\l} Kasprzak, Department of Mathematical Methods in Physics, Faculty of Physics, University of Warsaw, Poland} 
\address{ Laboratory of NMR Spectroscopy, Centre of New Technologies, University of Warsaw, Poland}
\email{pawel.kasprzak@fuw.edu.pl}
\author[A. Skalski]{Adam Skalski}
\address{Adam Skalski, Institute of Mathematics of the Polish Academy of Sciences, ul.\ \'Sniadeckich 8, 00-656 Warszawa, Poland}
\email{a.skalski@impan.pl}
\author[R. Vergnioux]{Roland Vergnioux}
\address{Roland Vergnioux, Normandie Univ, UNICAEN, CNRS, LMNO, 14000 Caen, France} 
\email{roland.vergnioux@unicaen.fr}
\begin{document}

\begin{abstract}
	We introduce and study the notions of boundary actions and of the Furstenberg boundary of a discrete quantum group. As for classical groups, properties of boundary actions turn out to encode significant properties of the operator algebras associated with the discrete quantum group in question; for example we prove that if the action on the Furstenberg boundary is faithful, the quantum group $\C^*$-algebra admits at most one KMS-state for the scaling automorphism group. To obtain these results we develop a version of Hamana's theory of injective envelopes for quantum group actions, and  establish several facts on relative amenability for quantum subgroups. We then show that the Gromov boundary actions of free orthogonal quantum groups, as studied by Vaes and Vergnioux, are also boundary actions in our sense; we obtain this by proving that these actions admit unique stationary states. Moreover, we prove these actions are faithful, hence conclude a new unique KMS-state property in the general case, and a new proof of unique trace property when restricted to the unimodular case. We prove equivalence of simplicity of the crossed products of all boundary actions of a given discrete quantum group, and use it to obtain a new simplicity result for the crossed product of the Gromov boundary actions of free orthogonal quantum groups.
\end{abstract}

\subjclass[2010]{Primary:46L55; Secondary  20G42, 46L05, 46L65}

\keywords{discrete quantum group; quantum group action; noncommutative boundary; free orthogonal quantum group}

\maketitle


\section{Introduction}

The concept of boundary actions in topological dynamics was introduced by Furstenberg in 1950s. It was also Furstenberg who noted the existence of a universal boundary action for any locally compact group $G$; nowadays this action, as well as the relevant space, is called the Furstenberg boundary of $G$. In the last few years, beginning from the article \cite{KalKen} this notion found  unexpected and groundbreaking  applications in the study of operator algebras associated with discrete groups, in particular leading to a resolution of an old open problem regarding the relationship between the simplicity of the group $\C^*$-algebra and the uniqueness of its trace (see \cite{KalKen}, \cite{BKKO}, \cite{LB} and also the survey \cite{Raum}). In particular it was shown in these works that for a discrete group $\Gamma$ its reduced group $\C^*$-algebra $\C^*_r(\Gamma)$ is simple if and only if the action of $\Gamma$ on its Furstenberg boundary is free, and that $\C^*_r(\Gamma)$ admits unique trace if and only if this action is faithful, if and only if the amenable radical of $\Gamma$ is trivial.

As has been known for over twenty years now, another source of operator algebras sharing many properties with these related to discrete groups is provided by the theory of compact (equivalently, discrete) quantum groups, as initiated by Woronowicz (\cite{Wor}), with the quantum theory encompassing its classical counterpart. Of particular interest is the class of universal quantum groups of Van Daele and Wang  (\cite{VDW}), which leads to operator algebras behaving in many ways as these associated with the classical free groups (\cite{VaesVergnioux}), but recently shown by Brannan and the last named author of this paper to be non-isomorphic to these  at the von Neumann algebra level (\cite{BrannanVergnioux}).

In view of the statements in the beginning of this introduction it is natural to investigate a theory of boundary actions of discrete quantum groups and aim to apply it to operator algebraic questions. This is the subject of this paper, in which we introduce the notion of quantum boundary actions, establish the existence of the Furstenberg boundary for each discrete quantum group, record several consequences of properties of the action on the Furstenberg boundary for the features of the related operator algebras, and finally show that a geometric boundary action of the free orthogonal quantum group, as constructed in \cite{VaesVergnioux}, is a  faithful boundary action in our sense, which has several strong consequences.

\bigskip

Development of the theory in the quantum context offers several technical and conceptual difficulties. The notion of the Furstenberg boundary fits naturally with Hamana's work on injective envelopes -- which was a very important observation made in \cite{KalKen} -- so that it is suitable for noncommutative generalizations. This allows us  to establish the existence and the uniqueness of the noncommutative Furstenberg boundary, and use it for example to characterise the amenability of the quantum group in question. One should note that the relationship between actions of a discrete quantum group $\G$ on a $\C^*$-algebra $\sA$ and the $\ell^1(\G)$-structures on $\sA$ is subtler in comparison to the classical case. 

More importantly, quantum group actions in general do not admit `kernels' viewed as quantum subgroups, but rather `cokernels', identified with quantum subgroups of the dual quantum group. This is a key conceptual difference, and as its consequence in the main theorems regarding $\C^*$-simplicity  or the uniqueness of the trace (in the unimodular case) we were only able to obtain one-sided implications; however these are the ones that can be used to deduce something about the operator algebras from the knowledge of boundary actions, which is our main aim. 

The difference mentioned above forces us also to study various notions of relative amenability/coamenability in the quantum world, which itself is of independent interest. In particular we show 
that certain relations between amenability of a subgroup $\LLambda$ of a discrete quantum group $\G$ and existence of ucp liftings with particular properties, classically rather straightforward, persist also in the quantum world, but require far more advanced proofs, for example exploiting a natural equivalence relation on irreducible representations, introduced in \cite{Vergnioux_Amalgamated} and later studied in \cite{Kenny}.  

Another phenomenon, this time visible only in the quantum world, is that the faithfulness of the action on the Furstenberg boundary turns out to be related to the uniqueness of the KMS-state for the scaling automorphism group -- which in the unimodular case is the same as the uniqueness of the trace, and hence can be viewed as a natural generalization of the unique trace property. 

Finally, in comparison with the classical case, it is much more difficult to produce non-trivial examples of boundary actions, or rather to show that certain geometric boundaries, as constructed in several examples, starting from \cite{VaesVergnioux} (see also \cite{VaesVDW}), are boundary actions in our sense. To address this question, we produce a criterion based on unique stationarity (the concept which was recently very successfully applied in the classical framework by Hartman and the first-named author in \cite{YairMehrdad}), and then prove that the unique stationarity indeed holds for the `Gromov boundary' action of the free orthogonal quantum group, studied in \cite{VaesVergnioux}. With this result in hand we can exploit several general theorems obtained earlier for example to provide the `Ozawa-type' embeddings of the exact group $\C^*$-algebra $\C(O_N^+)$ inside a nuclear $\C^*$-algebra contained in the injective envelope of $\C(O_N^+)$.

\bigskip

The main results of the paper are Theorem \ref{thm:rel_amen}, concerning relative amenability for discrete quantum subgroups; Theorem \ref{thm:universalF}, establishing the existence of the noncommutative Furstenberg boundary for arbitrary discrete quantum group; Theorem \ref{thm:faith-Fur-bnd-->unq-trc} on faithfulness of the boundary action implying the existence of at most one KMS-state for the scaling automorphism group; Theorems \ref{embed:crossed} and \ref{thm:c-star-simplicity} on the Ozawa type embedding of the crossed product by boundary actions and connections between simplicity of the group $\C^*$-algebra of a discrete quantum group and boundary crossed products; and Theorems \ref{thm:unique stn} and \ref{thm_faithful} on the unique stationarity and faithfulness of the Gromov boundary actions of free orthogonal quantum groups, together with their corollaries.

A detailed  plan of the paper is as follows: In Section 2 we recall preliminary facts on locally compact quantum groups, establishing notation and basic conventions concerning quantum group actions, associated crossed products and quantum subgroups. We introduce there also the notions of Poisson transforms and the co-kernel for a given discrete quantum group action. Section 3 is devoted to the study of relative amenability and coamenability for the pairs quantum group-subgroup; among other things we characterise amenability of a discrete quantum subgroup via the existence of certain ucp lifts and establish the existence of the amenable radical in the quantum context. In Section 4 we define boundary actions of discrete quantum groups, prove the existence of the unique universal boundary, called the non-commutative Furstenberg boundary, and deduce its key properties. Then in Section 5 we characterise the co-kernel of the Furstenberg boundary action and prove that faithfulness of any boundary action leads to the unique trace property for a unimodular discrete quantum group, and for a non-unimodular one implies that there is no KMS-invariant state for the scaling automorphism group.
In Section 6 we discuss the connections between simplicity of the reduced $\C^*$-algebra of a discrete quantum group and that of the associated boundary crossed product.
In Section 7, the most concrete and technical part of the paper,  we study in detail the action of a free orthogonal quantum group on its Gromov boundary, prove that it is uniquely stationary and faithful, and deduce several consequences for the relevant operator algebras. Finally in Section 8 we list several open questions naturally arising from our work.

\section{Preliminaries}
In this section we recall some definitions and basic results from the theory of locally compact quantum groups, and gather some preliminary results that we will use in the rest of the paper.

All scalar products  will be linear on the right. The symbol $\ot$ will denote the tensor product of maps and minimal spatial tensor product of $\cst$-algebras, $\wot$ will denote the ultraweak tensor product of von Neumann algebras. Given two \cst-algebras $\sA$ and $\sB$, a \emph{morphism} from $\sA$ to $\sB$ is a $*$-homomorphism $\Phi$ from $\sA$ into the \emph{multiplier algebra} $\M(\sB)$ of $\sB$, which is \emph{non-degenerate}, i.e.~the set $\Phi(\sA)\sB$ of linear combinations of products of the form $\Phi(a)b$ ($a\in\sA$, $b\in\sB$) is dense in $\sB$. The set of all morphisms from $\sA$ to $\sB$ will be denoted by $\Mor(\sA,\sB)$. The non-degeneracy of morphisms ensures that each $\Phi\in\Mor(\sA,\sB)$ extends uniquely to a unital $*$-homomorphism $\M(\sA)\to\M(\sB)$ which we will usually denote by the same symbol and use  implicitly when composing the morphisms. On the multiplier \cst-algebras we will occasionally use apart from the norm topology also the \emph{strict topology}.
Unital completely positive will be often abbreviated to \emph{ucp}. 

For operators acting on tensor products of spaces we will use the familiar leg notation: so for example if $V$ is a vector space and $T\in L( V^{\ot 2})$ then, depending on which legs of the triple tensor product we want to act with $T$, we have the natural operators $T_{12}, T_{13}, T_{23} \in L(V^{\ot 3})$ (the notation will be also applied in a formally more complicated case of completed tensor products). Tensor flip between spaces and algebras will be denoted by $\sigma$.

\subsection{Locally compact quantum groups -- basic facts}
Throughout the paper symbols $\GG$, $\HH$ will denote \emph{locally compact quantum groups} in the sense of Kustermans and Vaes (\cite{KV}) -- we refer the reader to the latter paper, as well as \cite{KusNotes} and \cite{DKSS} for detailed definitions of the objects to be introduced below. 
A locally compact quantum group (often simply called a quantum group in what follows) $\GG$ is defined in terms of a von Neumann algebra $\Linf(\GG)$ equipped with a unital normal, coassociative *-homomorphism $\Delta:\Linf(\GG)\to \Linf(\GG) \wot \Linf(\GG)$, called the \emph{coproduct} or \emph{comultiplication}. The symbols $\varphi$ and $\psi$ will denote respectively \emph{left} and \emph{right invariant Haar weights} of $\GG$, which are unique up to a positive scalar multiple, and $\Ltwo(\GG)$ will denote the GNS Hilbert space of the left Haar weight $\varphi$ (on which $\Linf(\GG)$ acts). The Tomita-Takesaki anti-unitary conjugation associated with $\varphi$ will be denoted by $J:\Ltwo(\GG)\to\Ltwo(\GG)$.  We will also frequently use
the corresponding \cst-algebra of ``continuous functions on $\GG$ vanishing at infinity'', $\C_0(\GG)\subset \Linf(\GG)$. The comultiplication $\Delta$ restricts to a (still coassociative) morphism $\Delta\in\Mor\bigl(\C_0(\GG),\C_0(\GG)\tens\C_0(\GG)\bigr)$. Finally we have the universal object related to $\GG$, i.e.\ a \cst-algebra which we will denote by $\C_0^u(\GG)$, endowed with a comultiplication $\Delta^u\in\Mor\bigl(\C_0^u(\GG),\C_0^u(\GG)\tens\C_0^u(\GG)\bigr)$,  introduced and studied in \cite{Johanuniv}. There is  a canonical surjective \emph{reducing morphism} $\Lambda \in \Mor (\C_0^u(\GG), \C_0(\GG))$, intertwining the respective coproducts. If $\Lambda$ is injective, we say that $\GG$ is \emph{coamenable}; further we say that a locally compact quantum group $\QG$ is \emph{amenable}, if there exists a state $m \in \Linf(\QG)^*$ such that for every $\omega \in \Lone(\QG) := \Linf(\QG)_*$ we have
\[ m \circ ((\omega \ot \id)\circ \Com) =  m \circ ((\id \ot \omega)\circ \Com) = \omega(\cdot)\I. \]

A fundamental object in the study of $\GG$ turns out to be the \emph{left multiplicative unitary} $\ww\in B(\Ltwo(\GG) \ot \Ltwo(\GG))$, which satisfies the pentagonal equation
$\ww_{12} \ww_{13} \ww_{23} = \ww_{23} \ww_{12}$ (see \cite[Proposition 3.18]{KV}). In fact $\ww$ determines $\GG$ completely, as we have on one hand the equality: $\Linf(\GG)=\{(\id \ot \omega)(\ww): \omega \in B(\Ltwo(\GG))_*\}''$, and on the other $\ww$ implements the coproduct:
\[\Delta(x) =\ww^*(\I \ot x) \ww, \;\;\; x \in \Linf(\GG).\]
We also have the equality $\C_0(\GG)=\overline{\{(\id \ot \omega)(\ww): \omega \in B(\Ltwo(\GG))_*\}}^{\|\cdot\|}$.

As already mentioned above, the predual of $\Linf(\GG)$ is denoted, by the analogy with the classical case, $\Lone(\GG)$. It is a Banach algebra with respect to the convolution product given by the pre-adjoint of the comultiplication.

The multiplicative unitary $\ww$ allows for a straightforward description of the \emph{dual locally compact quantum group} of $\GG$, which we will denote by $\hh{\GG}$. We have $\Linf(\hh\GG)\subset B(\Ltwo(\GG))$   and  $\Linf(\hh\GG)=\{(\omega \ot \id )(\ww): \omega \in B(\Ltwo(\GG))_*\}''$. It turns out that the representation of $\Linf(\hh\GG)$   on $\Ltwo(\GG)$ can be promoted to the standard form of the von Neumann algebra $\Linf(\hh\GG)$, so that we have a canonical identification $\Ltwo(\GG) = \Ltwo(\hh\GG)$. In what follows, when we consider more than one quantum group, we will adorn the respective symbols with the upper index describing which group we refer to: so for example another (equivalent) way of defining $\hh\GG$ would be via the equality $\ww^{\hh\GG}= \sigma \left( (\ww^{\GG})^* \right)$. Since  $\Linf(\hh\GG)$ is naturally represented on $\Ltwo (\GG)$, the same holds for $\C_0(\hh\GG)$, and it can be proved that $\ww^{\GG} \in \M(\C_0(\GG) \ot \C_0(\hh\GG)) \subset \Linf(\GG) \wot \Linf(\hh\GG)$. In fact $\ww$ admits a universal version, $\WW \in \M(\C_0^u(\GG) \ot \C_0^u(\hh\GG))$, such that $\ww= (\Lambda_{\GG} \ot \Lambda_{\hh\GG})(\WW)$.

Occasionally we will also need the \emph{right multiplicative unitary}
$V \in \Linf (\hh\GG)' \wot \Linf(\GG)$, given by the formula
\[V = (\hh{J}\ot \hh{J})\ww^{\hh\GG}(\hh{J}\ot \hh{J})\] where $\hh{J}:\Ltwo(\GG)\to\Ltwo(\GG))$ is the antiunitary conjugation associated with the Haar weight $\hh\varphi$ of the dual quantum group $\hh\GG$. 
The right multiplicative unitary  implements the coproduct of $\GG$ via the formula
\[ \Delta(x) = V (x \ot \I)V^*, \;\;\; x \in \Linf(\GG).\] 
We will also sometimes denote the objects related to $\hh\GG$ simply by using hats, so for example $\hh{\varphi}$ and $\hh{\psi}$ denote the left and right Haar weights of $\hh\GG$, respectively. Occasionally we will use the notation $\C^*_r(\GG)$ for $\C_0(\hh\GG)$.

Recall that a locally compact quantum group is said to be \emph{compact} if the $\C^*$-algebra $\C_0(\GG)$ or, equivalently, $\C_0^u(\GG)$, is unital, and then these $\C^*$-algebras are denoted simply by $\C(\GG)$ and $\C^u(\GG)$. It is said to be  \emph{discrete} if $\hh\GG$ is compact. A compact quantum group $\QG$ can be fully described by its associated Hopf$^*$-algebra $\Pol(\QG)$, which is densely contained in both  $\C(\GG)$ and $\C^u(\GG)$. A discrete quantum group $\GG$ is said to be unimodular if its left and right Haar weights coincide, which is not automatic in the quantum case. This is equivalent to the fact that the dual compact quantum group $\hh\GG$ is \emph{of Kac type} -- the Haar state of $\hh\GG$ is tracial.

A \emph{(unitary) representation of $\GG$} on a Hilbert space $\Hil$ is a unitary $\mathcal{U} \in \M (\C_0(\GG) \ot \mathcal{K}(\Hil))$ such that $(\Delta \ot \id)(\mathcal{U})= \mathcal{U}_{13} \mathcal{U}_{23}$. It turns out that $\ww\in\M(\C_0(\GG)\ot\mathcal{K}(\Ltwo(\GG))$  is a unitary representation of $\GG$ on $\Ltwo(\GG)$ (the quantum analog of the left regular representation). 
For convenience we shall also call  a unitary representation its `right version', that is a unitary operator    $\mathcal{V} \in \M ( \mathcal{K}(\Hil)\ot \C_0(\GG))$  satisfying $(\id\otimes \Delta)(\mathcal{V}) = \mathcal{V}_{12}\mathcal{V}_{13}$. It turns out that the right regular representation $V\in\M(\mathcal{K}(\Ltwo(\GG))\ot\C_0(\GG))$ is an example of a unitary representation in the latter sense. 

\subsection{Actions of quantum groups on von Neumann algebras and the crossed product construction}\label{subsect.-crossedprod}
We will now recall the rudiments of the theory of actions of locally compact quantum groups on von Neumann algebras.
\begin{defn}
Let $\GG$ be a locally compact quantum group and $\sN$ be a von Neumann algebra. We say that an injective normal $*$-homomorphism $\alpha:\sN\to \Linf(\GG)\wot\sN$ is a left  action of $\GG$ on $\sN$ (or $\sN$ is a left $\GG$-von Neumann algebra) if the action equation holds: $(\Delta\ot\id)\circ\alpha = (\id\ot\alpha)\circ\alpha$. 
\end{defn}
Similarly we define right $\GG$-von Neumann algebras. In what follows we shall sometimes  write simply a $\GG$-von Neumann algebra having either a right or left case in mind, depending on the context. 

Given a $\GG$-von Neumann algebra $\sN$ one defines the corresponding fixed point subalgebra $\sN^\alpha = \{x\in\sN \mid \alpha(x) = 1\otimes x\}$ and the crossed product von Neumann algebra 
\begin{displaymath}
\GG\ltimes\sN = ((\Linf(\hh\GG)\otimes\I)\alpha(\sN))'' \subset B(\Ltwo(\GG))\wot\sN.
\end{displaymath}
The crossed product von Neumann algebra admits the right (dual) action of $\hh\GG$ denoted by $\hh\alpha:\GG\ltimes\sN\to \GG\ltimes\sN\wot\Linf(\hh\GG)$ and defined as follows:
\[\hh\alpha(y) = \hh{V}_{13}y_{12} \hh{V}^*_{13}, \;\;  y\in\GG\ltimes\sN\subset B(\Ltwo(\GG))\wot\sN, \] 
where $\hh{V}\in\Linf(\GG)'\wot\Linf(\hh\GG)$ is the right multiplicative unitary for $\hh\GG$: $\hh{V} = (J\otimes J)\ww^{\GG}(J\otimes J)$. It is clear that
\begin{itemize}
    \item if $y = x\otimes\I$ where  $x\in\Linf(\hh\GG)$, then 
$\hh\alpha(y) = \hh\Delta(x)_{13}$;
\item if $y = \alpha(z)$ where $z\in \sN$ then $\hh\alpha(y) = y_{12}$.
\end{itemize}
Given $\hh\alpha$ one defines a normal faithful operator valued weight $T:\GG\ltimes\sN\to \alpha(\sN)$, which is roughly speaking given by the formula
\[T = (\id\otimes\id\otimes\hh\varphi)\circ \hh\alpha.\]
The crossed product von Neumann algebra admits also a left $\GG$-von Neumann structure $\beta:\GG\ltimes\sN\to\Linf(\GG)\wot\GG\ltimes\sN$ given by 
\begin{equation}\label{eq1}\beta(y) = \ww^*_{12}y_{23}\ww_{12}, \;\;  y\in\GG\ltimes\sN\subset B(\Ltwo(\GG))\wot\sN .\end{equation}
It is clear that
\begin{itemize}
    \item if $y = x\otimes\I$ where  $x\in\Linf(\hh\GG)$, then 
$\beta(y) = \left(\ww^*(\I\otimes x)\ww\right)_{12}$; the left action \[\Linf(\hh\GG)\ni x\mapsto \ww^*(\I\otimes x)\ww\in\Linf(\GG)\wot\Linf(\hh\GG)\] will be referred to as the adjoint action of $\GG$ on $\Linf(\hh\GG)$ and occasionally will also be denoted by $\beta$.
\item if $y = \alpha(z)$ where $z\in \sN$ then $\beta(y) =(\Delta\otimes\id)(\alpha(z)) = (\id\otimes\alpha)(\alpha(z))$; identifying $\sN$ with the  image $\alpha(\sN)\subset\GG\ltimes\sN$ we see that $\beta|_{\sN} = \alpha$. 
\end{itemize}

\subsection{Quantum subgroups}

Given two locally compact quantum groups $\GG$ and $\HH$, a morphism from $\HH$ to $\GG$ is represented via a \cst-morphism $\pi\in \Mor(\C_0^u(\GG), \C_0^u(\HH))$ intertwining the respective coproducts:
\[ (\pi \ot \pi) \circ \Delta_{\GG} = \Delta_{\HH} \circ \pi.\]
It can be equivalently described via a \emph{bicharacter} from $\HH$ to $\GG$, i.e.\ a unitary $U \in \M (\C_0(\HH)\tens\C_0(\hh{\GG}))$ such that
\[ (\Delta_{\HH} \ot \id_{\C_0(\hh{\GG})}) (U) = U_{13} U_{23},\]
\[ (\id_{\C_0(\HH)} \ot \Delta_{\hh\GG}) (U) = U_{13} U_{12}.\]
In fact
$U= ((\Lambda^{\HH} \circ \pi) \ot\Lambda^{\hh\GG} )(\WW^{\GG})$. Each morphism $\pi$ from $\HH$ to $\GG$,
determines uniquely a \emph{dual morphism} $\hh{\pi}$ from $\hh\GG$ to $\hh\HH$
such that $(\pi \ot \id)(\WW^{\GG})= (\id \ot  \hh\pi)(\WW^{\HH})$. Finally note that although $\pi \in \Mor(\C_0^u(\GG), \C_0^u(\HH))$ describing a morphism from $\HH$ to $\GG$ need not have a reduced version $\pi_r \in \Mor(\C_0(\GG), \C_0(\HH))$ (such that $\pi_r \circ \Lambda^{\GG} = \Lambda^{\HH} \circ \pi$), if we are given  $\pi_r \in \Mor(\C_0(\GG), \C_0(\HH))$ intertwining the coproducts, then it always admits the universal version $\pi \in \Mor(\C_0^u(\GG), \C_0^u(\HH))$.
   For more information on this equivalence and other pictures of morphisms we refer to \cite{SLW12}, \cite{DKSS} (the cautious reader will note that our choice of the  "left" convention yields a corresponding difference in the definition of bicharacter). 
\begin{defn}
We say that a morphism $\pi\in \Mor(\C_0^u(\GG), \C_0^u(\HH))$ from $\HH$ to $\GG$ identifies $\HH$ with a closed quantum subgroup of $\GG$ (in the sense of Vaes) if there exists an injective normal unital $^*$-homomorphism $\gamma:\Linf(\hh\HH) \to \Linf(\hh\GG)$ such that
\[\bigl.\gamma\bigr|_{\C_0(\hh{\HH})}\comp\Lambda_{\hh{\HH}}=\Lambda_{\hh{\GG}}\comp\hh{\pi}.\]
Often in this case we simply say that $\HH$ is a closed quantum subgroup of $\GG$.
\end{defn}

The above definition is equivalent to the existence of an injective normal unital $^*$-homo--morphism $\gamma:\Linf(\hh\HH) \to \Linf(\hh\GG)$ intertwining the respective coproducts.
It then follows $\pi(\C_0^u(\GG))= \C_0^u(\HH)$ -- if the latter condition holds, we say that the underlying quantum group morphism identifies $\HH$ with a closed quantum subgroup of $\GG$ \emph{in the sense of Woronowicz}. These two notions are studied in detail in \cite{DKSS}, where in particular one can find the proofs of the facts stated above.

Note that it follows from \cite[Proposition 10.5]{BV}
that there is a bijective correspondence between closed quantum subgroups of $\GG$ and the so-called \emph{Baaj-Vaes} subalgebras of $\Linf(\hh{\GG})$ (i.e.\ those von Neumann subalgebras $\sN \subset \Linf(\hh\GG)$ for which $\Delta_{\hh\GG}(\sN)\subset\sN\wot\sN$, $\hh{R}(\sN)=\sN$ and $\hh{\tau}_t(\sN)=\sN$ for all $t\in\RR$).
More precisely, if $\sN$ is a Baaj-Vaes subalgebra of $\Linf(\hh{\GG})$ then there exists a locally compact quantum group $\HH$ such that $\sN = \Linf(\HH)$ (and the coproducts match), and more or less by definition $\hh\HH$ is a closed quantum subgroup of $\GG$.

Assume that $\HH$ is a closed quantum subgroup of $\GG$, determined by a morphism $\pi \in \Mor (\C_0^u(\GG), \C_0^u(\HH))$. Then $\HH$ acts on $\Linf(\GG)$ on the left (we will modify the language slightly and say simply that $\HH$ acts on $\GG$) by the following formula
\begin{equation} \alpha_{\HH}(x) =  U^*(\I \ot x) U,\;\;\; x \in \Linf(\GG), \label{subgroupaction}\end{equation}
where $U\in \M(\C_0(\HH) \ot \C_0(\hh\GG))$  denotes the bicharacter associated to the morphism $\pi$.
We then call the fixed point space of $\alpha_{\HH}$ the \emph{algebra of bounded functions on the quantum homogeneous space $\HH\backslash\GG $} and denote it by $\Linf (\HH\backslash\GG)$. One can similarly define the right quotient $\GG/\HH$ and actually we have $R(\Linf (\HH\backslash\GG)) = \Linf (\GG/\HH)$

Finally, we say that a von Neumann subalgebra $\sM\subset \Linf(\GG)$ is $\GG$-invariant if $\Delta(\sM)\subset \Linf(\GG)\overline{\otimes}\sM$. If $\HH$ is a closed quantum subgroup of $\GG$, then $\Linf(\GG/\HH)$ is a $\GG$-invariant von Neumann subalgebra of $\Linf(\GG)$. On the other hand if $\HH$ a closed quantum subgroup of $\widehat\GG$ then $\Linf(\widehat\HH)$ is a Baaj-Vaes subalgebra, hence a $\GG$-invariant von Neumann subalgebra of $\Linf(\widehat\GG)$.

\subsection{Normal quantum subgroups and quotient quantum groups}

The definition of a closed normal quantum subgroup was introduced in \cite{VainVaes}.
\begin{defn} \label{def:normal}
Let $\GG$ be a locally compact quantum group and $\KK$ its closed quantum subgroup identified by an injective morphism $\gamma:\Linf(\hh\KK) \to \Linf(\hh\GG)$. We say that $\KK$ is a normal quantum subgroup of $\GG$ if $\gamma(\Linf(\hh\KK))$ is a normal coideal in $\Linf(\hh\GG)$, that is \[\ww^* ( \I\ot \gamma(\Linf(\hh\KK)) ) \ww \subset \Linf(\GG) \overline{\ot} \gamma(\Linf(\hh\KK)).\] 
\end{defn}
Using terminology introduced in Subsection \ref{subsect.-crossedprod}, $\KK\subset \GG$ is a normal quantum subgroup of $\GG$ if $\gamma(\Linf(\hh\KK))$ is preserved by the adjoint action $\beta$ of $\GG$ on $\Linf(\hh\GG)$. 
The key consequence (and actually a characterization) of the fact that $\KK$ is a normal quantum subgroup of $\GG$  is that  $\Linf(\GG/\KK)$ is a Baaj-Vaes subalgebra of $\Linf(\GG)$, so that $  \GG/\KK$ becomes a locally compact quantum group, naturally called a \emph{quotient quantum group of $\GG$}. As $\Linf(  \GG/\KK)$ inherits the full quantum group structure from $\Linf(\GG)$, quotient quantum groups of quantum groups of Kac type are again of Kac type (we defined the Kac property only for compact quantum groups, the general definition can be found for example in \cite{EnockSchwartz}).

 The above facts lead naturally to the concept of short exact sequences, studied in detail in \cite{VainVaes} (see also \cite{KSprojections}). Denoting $\HH = \GG/\KK$ we have a short exact sequence
 \begin{equation}\label{ex1}\{e\}\rightarrow\KK\rightarrow\GG\rightarrow \HH\rightarrow\{e\}\end{equation}
together with the dual exact sequence
\begin{equation}\label{ex2}\{e\}\rightarrow\widehat\HH\rightarrow\widehat\GG\rightarrow\widehat\KK\rightarrow\{e\}\end{equation}
where the embedding $\Linf(\HH)\subset \Linf(\GG)$,  yields the identification of   $\widehat{\HH}$ with a (Vaes) closed normal subgroup of $\hat\GG$.  Thus we have a bijective correspondence between normal subgroups of $\GG$ and $\hat\GG$ given by \eqref{ex1} and \eqref{ex2}.

\subsection{Discrete  quantum groups and their actions}
Through most of the paper we will be primarily interested in discrete quantum groups. If $\G$ is a discrete quantum group, we usually just write $\ell^\infty(\G)$ for $L^\infty(\G)$, $\c0(\G)$ for $\C_0(\G)$ and $\ell^1(\G)$ for the predual of $L^\infty(\G)$. The set of equivalence classes of unitary representations of $\hh\G$ will be denoted by $\Irr(\hh\G)$;  we will often use the fact that $\ell^\infty(\G)\cong \prod_{\gamma \in \Irr(\G)} M_{n_\gamma}$, $\c0(\G) \cong \bigoplus_{\gamma \in \Irr(\G)} M_{n_\gamma}$ where we use the symbols $\prod$, $\bigoplus$ to denote $\ell^\infty$-, resp. $c_0$-direct sums of normed spaces. Discrete quantum groups are always coamenable:  $\c0(\G)\cong \cu^u_0(\G)$.  Moreover $\linf(\G) \cong \M(\c0(\G))$ and the counit character extends continuously to $\ell^\infty(\G)$.

Recall that a discrete quantum group $\G$ is amenable if and only if $\hh\G$ is co-amenable, i.e.\ the reduced and universal $\cst$-norms on the canonical Hopf$^*$-algebra $\Pol(\hh\G)$ coincide.

We will now discuss the notion of $\C^*$-algebraic actions of discrete quantum groups.

\begin{defn} 
Let $\G$ be a discrete quantum group, let $\sA$ be a $\cst$-algebra, and let $\alpha\in\Mor(\sA,\c0(\G)\tens\sA)$ be an injective morphism. We say that $\alpha$ is a (left) action of $\G$ on $\sA$ if
\begin{itemize} 
\item $(\id\otimes\alpha)\circ\alpha = (\Delta\otimes\id)\circ\alpha$;
 \item $(\c0(\G)\otimes\mathds{1})\alpha(\sA)$ spans a dense subspace of $\c0(\G)\otimes\sA$ (the Podle\'s condition holds).
\end{itemize}
In this case we say $\sA$ is a $\G$-$\cst$-algebra. 
\end{defn}
Occasionally  we will need the corresponding notion of a right action.

Given an action $\alpha$ of $\G$ on a $\C^*$-algebra $\sA$ and $a\in\sA$, $\mu\in\c0(\G)^*$ and $\nu\in\sA^*$  we define
\begin{itemize}
\item $\mu*\nu =(\mu\otimes\nu)\circ\alpha \in\sA^*$;
\item $a*\mu=(\mu\otimes\id)(\alpha(a))\in \sA $. 
\end{itemize}
Note that for $\tilde\mu\in\c0(\G)^*$ we have $(a*\mu)*\tilde\mu = a*(\mu*\tilde\mu)$.

A linear map $\Phi:\sA \to \sB$ between two $\G$-$\cst$-algebras $\sA$ and $\sB$ is said to be \emph{$\G$-equivariant} if for all $a \in \sA$ and $\mu \in \sA^*$ we have 
\[ \Phi (a* \mu) = \Phi(a) * \mu;\]
if the map $\Phi$ is itself a morphism, the above condition is equivalent to the equality $( \id \ot\Phi ) \circ \alpha_\sA = \alpha_\sB \circ \Phi$. 

\begin{remark}\label{multiplieralgebra}
	Note that for any $\C^*$-algebra $\sA$ we have $\c0(\G)\ot \sA\cong \bigoplus_{\gamma \in \Irr(\hh{\G})} M_{n_\gamma}(\sA)$. In particular if $\sM$ is a von Neumann algebra then  \[\M(\c0(\G)\ot \sM)\cong \prod_{\gamma \in \Irr(\G)} M_{n_\gamma}(\sM) \cong \ell^\infty(\G)\wot \sM.\]
This  implies that if $\sM$ is a von Neumann algebra equipped with an action  $\beta:\sM\to\ell^\infty(\G)\wot\sM$ then identifying  $\ell^\infty(\G)\wot\sM\cong \M(\c0(\G)\otimes \sM)$ we can view $\beta$ as an action of $\G$ on the $\cst$-algebra $\sM$. Indeed $\beta$ is unital; for every $\omega\in\ell^1(\G)$ and $x\in\sM$ we have $(\omega\otimes\id)\beta(x)\in\sM$ and $(\varepsilon\otimes\id)\beta(x) = x$. Thus we can use \cite[Proposition 5.8]{BSV}, and the fact that any discrete quantum group is regular, to see that the action  $\beta:\sM\to \M(\c0(\G)\otimes\sM)$ satisfies the Podle\'s condition. 
\end{remark}

Let $\G$ be a discrete quantum group and let $\alpha$ be an action of $\G$ on $\sA$. The \emph{Poisson transform} associated to $\nu\in\sA^*$ is the map $\cP_\nu: \sA\to \linf(\G)$ defined by 
\[ \cP_\nu(a) = (\id\otimes\nu)\alpha(a), \;\;a \in \sA. \]
The Poisson transforms $\cP_\nu$ are completely bounded and strict, that is continuous (on bounded subsets) with respect to the strict topology of $\sA$ and the strict topology of $\linf(\G)\cong \M(\c0(\G))$. 

Observe that for $\mu\in\ell^1(\G)$,  $\nu\in\sA^*$ and $a\in \sA$ we have
\begin{equation}\label{eq:Poi-map}
\cP_{\mu*\nu}(a) = (\id\otimes\mu)(\Delta(\cP_\nu(a))).
\end{equation}

\begin{prop}\label{Pois-map-props}
The mapping $(\nu, a) \mapsto \cP_\nu(a)$ is norm continuous (from $\sA^* \times \sA$ to $\linf(\G)$). For any $\nu\in\sA^*$ the Poisson transform $\cP_\nu$ is $\G$-equivariant. Moreover, if $\nu\in\sA^*$ is a state and $\sA$ is unital, the map $\cP_\nu$ is unital and completely positive.

Any $\G$-equivariant map from a unital $\G$-$\C^*$-algebra $\sA$ to $\linf(\G)$ is a Poisson transform, and any ucp $\G$-equivariant map from $\sA$ to $\linf(\G)$ is a Poisson transform associated to a state on $\sA$.
\end{prop}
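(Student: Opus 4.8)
The first three assertions are essentially formal, so I would handle them first. Since $\alpha$ is an isometric $*$-homomorphism and the slice map $\id\otimes\nu$ on $\M(\c0(\G)\ot\sA)$ has norm $\|\nu\|$, one has $\|\cP_\nu(a)\|\le\|\nu\|\,\|a\|$; joint norm continuity of $(\nu,a)\mapsto\cP_\nu(a)$ then follows by writing $\cP_{\nu_1}(a_1)-\cP_{\nu_2}(a_2)=\cP_{\nu_1}(a_1-a_2)+\cP_{\nu_1-\nu_2}(a_2)$ and estimating the two summands. For $\G$-equivariance I would apply the slice $\mu\otimes\id\otimes\nu$ (with $\mu\in\c0(\G)^*$) to both sides of the action identity $(\Delta\otimes\id)\alpha(a)=(\id\otimes\alpha)\alpha(a)$: by linearity and strictness of $\alpha$ and of the slice maps the left-hand side collapses to $(\mu\otimes\id)\Delta(\cP_\nu(a))=\cP_\nu(a)*\mu$ and the right-hand side to $\cP_\nu(a*\mu)$, which is exactly $\G$-equivariance of $\cP_\nu$ for the canonical $\G$-$\cst$-structure on $\linf(\G)$ coming from $\Delta$ (cf.\ Remark~\ref{multiplieralgebra}); this is a companion of \eqref{eq:Poi-map}. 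When in addition $\sA$ is unital and $\nu$ is a state, $\alpha(1_{\sA})=1$ forces $\cP_\nu(1_{\sA})=(\id\otimes\nu)(1)=1$, and $\cP_\nu=(\id\otimes\nu)\circ\alpha$ is the composition of the $*$-homomorphism $\alpha$ with the ucp slice map $\id\otimes\nu$, hence ucp.

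For the converse, let $\Phi\colon\sA\to\linf(\G)$ be a bounded linear $\G$-equivariant map with $\sA$ unital. Recall that the counit of $\G$ extends to a normal character $\varepsilon$ of $\linf(\G)$ satisfying $(\id\otimes\varepsilon)\circ\Delta=\id$. I would take $\nu:=\varepsilon\circ\Phi\in\sA^*$ as the candidate and show $\Phi=\cP_\nu$ as follows. Fix $a\in\sA$; for each $\mu\in\ell^1(\G)$, equivariance gives $\Phi(a*\mu)=\Phi(a)*\mu=(\mu\otimes\id)\Delta(\Phi(a))$, so applying $\varepsilon$ and using $\varepsilon\circ(\mu\otimes\id)=(\mu\otimes\varepsilon)$ together with $(\id\otimes\varepsilon)\circ\Delta=\id$ turns the right-hand side into $\mu(\Phi(a))$, whereas $\varepsilon(\Phi(a*\mu))=\nu(a*\mu)=(\mu\otimes\nu)\alpha(a)=\mu(\cP_\nu(a))$. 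Hence $\mu(\Phi(a))=\mu(\cP_\nu(a))$ for all $\mu\in\ell^1(\G)=\linf(\G)_*$, and since the predual separates points we conclude $\Phi(a)=\cP_\nu(a)$. If moreover $\Phi$ is ucp, then $\nu=\varepsilon\circ\Phi$ is positive and $\nu(1_{\sA})=\varepsilon(\Phi(1_{\sA}))=\varepsilon(1)=1$, so $\nu$ is a state, which settles the last assertion.

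The only step that is not routine bookkeeping with slice maps on multiplier algebras (all of it legitimate thanks to the strictness of $\alpha$ and of the Poisson transforms) is the converse: one must guess the reconstructing functional $\nu=\varepsilon\circ\Phi$ and, more importantly, justify using the counit at the level of $\linf(\G)$ --- namely, that $\varepsilon$ is normal there (so that $\varepsilon\circ(\mu\otimes\id)=(\mu\otimes\varepsilon)$) and still satisfies $(\id\otimes\varepsilon)\circ\Delta=\id$. This is exactly where discreteness of $\G$ enters.
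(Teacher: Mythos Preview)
Your proof is correct and follows essentially the same approach as the paper: the paper dismisses the first paragraph as ``very easy to check'' and for the converse uses exactly your candidate $\nu=\varepsilon\circ\Phi$, verifying $\mu(\cP_\nu(a))=\mu(\Phi(a))$ for all $\mu$ in the predual. Your write-up simply fills in the routine details (and correctly flags that the key non-formal input is the normality of $\varepsilon$ on $\linf(\G)$, which is where discreteness is used).
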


\begin{proof}
The statements in the first paragraph are very easy to check. For the second part it suffices to note that if $\Phi:\sA\to \linf(\G)$ is $\G$-equivariant, then putting $\nu: = \epsilon \circ \Phi$ we get $\mu (\cP_\nu(a)) = \mu (\Phi(a))$ for all $\mu \in \sA^*$ and $a \in \sA$, so that $\Phi= \cP_\nu$. 
\end{proof}

The following definition will play a role in the later parts of the paper.

\begin{defn}\label{def:stationary}
Let $\G$ be a discrete quantum group acting on a $\cst$-algebra $\sA$, and let $\mu \in \ell^1(\G)$ be a state. A state $\omega \in S(\sA)$ is called $\mu$-stationary if $\mu * \omega = \omega$; in other words
\[\cP_{\omega} = (\id \ot \mu)\circ \Com \circ \cP_\omega.\]
\end{defn}

Finally we will need the notion of the co-kernel of a given action. 

\begin{defn}\label{BVact}
Let $\alpha\in\Mor(\sA,\c0(\G)\tens\sA)$ be an action of $\G$ on $\sA$.  We define the \emph{co-kernel of $\alpha$} to be the von Neumann algebra $$\sN_\alpha =\{\cP_\nu(a):\nu\in\sA^*, a\in\sA\}''\subset\linf(\G).$$  
We say that the action $\alpha$ is faithful if $\sN_\alpha = \linf(\G)$. 
\end{defn}

If $\alpha\in\Mor(\sA,\c0(\Gamma)\tens\sA)$ is an action of a classical group $\Gamma$ on a $\cst$-algebra $\sA$, then the cokernel of this action is  canonically isomorphic to the algebra of bounded functions on the quotient group $\Gamma/\ker\alpha$, where we identify $\alpha$ with a homomorphism from $\Gamma$ to $\Aut(\sA)$.

\begin{prop}\label{prop:coker->Baaj-Vaes}
Let $\alpha$ be an action of $\G$ on $\sA$. Then the co-kernel of $\alpha$ is a Baaj-Vaes subalgebra of $\linf(\G)$.
\end{prop}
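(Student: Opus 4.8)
The plan is to verify the three defining conditions of a Baaj--Vaes subalgebra for $\sN_\alpha$: that it is a coideal ($\Delta(\sN_\alpha) \subset \linf(\G) \vtens \sN_\alpha$), that it is preserved by the unitary antipode $R$, and that it is preserved by the scaling group $\tau_t$ for all $t \in \RR$. The starting point for all three is the fundamental identity \eqref{eq:Poi-map}, which reads $\cP_{\mu * \nu}(a) = (\id \ot \mu)(\Delta(\cP_\nu(a)))$, together with the fact that the Poisson transforms $\cP_\nu$ are $\G$-equivariant and that $\sN_\alpha$ is by definition the von Neumann algebra generated by all $\cP_\nu(a)$.

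For the coideal property, the key observation is that equivariance of $\cP_\nu$ means precisely $(\id \ot \cP_\nu)\circ \alpha_\sA = \Delta \circ \cP_\nu$ (thinking of $\cP_\nu$ as a map into $\linf(\G) \cong \M(\c0(\G))$ with its canonical $\G$-action given by $\Delta$). Hence for $a \in \sA$,
\[ \Delta(\cP_\nu(a)) = (\id \ot \cP_\nu)(\alpha(a)), \]
and since $\alpha(a) \in \M(\c0(\G) \ot \sA)$ can be approximated (strictly, then one passes to weak closures) by elements of the form $\sum_i x_i \ot a_i$ with $x_i \in \c0(\G) \subset \linf(\G)$ and $a_i \in \sA$, one gets $\Delta(\cP_\nu(a)) \in \linf(\G)\,\vtens\, \sN_\alpha$; taking weak closures and using normality of $\Delta$ gives $\Delta(\sN_\alpha) \subset \linf(\G)\vtens\sN_\alpha$. (One should be a little careful here: the approximation of $\alpha(a)$ is strict and $\cP_\nu$ is only strictly continuous on bounded sets, so I would phrase this via slicing --- apply $(\om \ot \id)$ for $\om \in \ell^1(\G)$ to both sides and use \eqref{eq:Poi-map} to recognise $(\om \ot \id)\Delta(\cP_\nu(a)) = \cP_{\om * \nu}(a) \in \sN_\alpha$, hence $\Delta(\cP_\nu(a)) \in (\ell^1(\G) \ot \id)'$-predual-closure, i.e. in $\linf(\G) \vtens \sN_\alpha$ since the latter is exactly the set of elements of $\linf(\G)\vtens\linf(\G)$ all of whose right slices lie in $\sN_\alpha$.)

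For invariance under $R$ and $\tau_t$: here I would use the known behaviour of the comultiplication and Haar weights under these maps. On $\linf(\hh\G)$ one has the adjoint action $\beta(x) = \ww^*(\I \ot x)\ww$ discussed in Subsection~\ref{subsect.-crossedprod}, but more relevantly, the dual picture --- since $\G$ is discrete, $\hh\G$ is compact, and Baaj--Vaes subalgebras of $\linf(\G)$ correspond to closed quantum subgroups of $\hh\G$ by \cite[Proposition 10.5]{BV}. The scaling group $\hh\tau_t$ and unitary antipode $\hh R$ of $\hh\G$ act on $\linf(\G) = \linf(\hh{\hh\G})$; I expect the cleanest route is: the counit $\epsilon$ of $\G$ is $\tau$- and $R$-invariant (it is the co-unit, preserved by the quantum group structure maps), and for a state $\nu$ one has $\cP_\nu(a) = (\id \ot \nu)\alpha(a)$; combining this with the commutation of $R_{\G}$ (resp. $\tau_t^{\G}$) past $\alpha$ --- which follows because $\alpha$ is a morphism of $\G$-$\cst$-algebras and these maps are built canonically from the multiplicative unitary --- one deduces $R(\cP_\nu(a)) = \cP_{\nu}(a')$ and $\tau_t(\cP_\nu(a)) = \cP_{\nu}(a'')$ for suitable $a', a'' \in \sA$, or at worst that $R$, $\tau_t$ map the generating set of $\sN_\alpha$ back into $\sN_\alpha$. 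Since $R$ is a normal $*$-antiautomorphism and $\tau_t$ a normal $*$-automorphism of $\linf(\G)$, invariance of the generating set gives invariance of the generated von Neumann algebra.

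The main obstacle I anticipate is the $R$- and $\tau_t$-invariance, specifically making precise how these canonical maps of $\G$ interact with an \emph{abstract} action $\alpha$ on $\sA$ --- there is no a priori reason for $R$ or $\tau_t$ to be implemented compatibly on $\sA$, so one cannot simply say "$R \circ \cP_\nu = \cP_\nu \circ (\text{something on } \sA)$." The resolution should come from working entirely inside $\linf(\G)$: both $R$ and $\tau_t$ can be expressed through the multiplicative unitary $\ww$ (e.g. $\tau_t = \hh\sigma_t^{\hh\varphi}$-conjugation type formulas, and $R$ via $J, \hh J$), and the defining relation of the representation $\ww$ plus the coideal property already established let one check that the closed span of $\{\cP_\nu(a)\}$ weak-$*$, being a coideal, is automatically globally invariant under $R$ and $\tau$ --- indeed a standard fact (used in \cite{BV}, \cite{VainVaes}) is that a coideal von Neumann subalgebra of $\linf(\G)$ which additionally is the range of a $\G$-equivariant conditional-expectation-like map inherits these symmetries. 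I would isolate this as the technical heart of the argument and, if a direct verification is cumbersome, invoke the correspondence with coideals together with the regularity of $\G$ to conclude.
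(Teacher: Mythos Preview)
Your coideal verification only establishes $\Delta(\sN_\alpha)\subset\linf(\G)\vtens\sN_\alpha$: you slice $\Delta(\cP_\nu(a))$ on the left by $\omega\in\ell^1(\G)$ and obtain $\cP_{\omega*\nu}(a)\in\sN_\alpha$. What you do not use explicitly is the symmetric observation that slicing on the \emph{right} also stays in $\sN_\alpha$: by $\G$-equivariance of $\cP_\nu$ (Proposition~\ref{Pois-map-props}), $(\id\ot\omega)\Delta(\cP_\nu(a)) = (\cP_\nu(a))*\omega = \cP_\nu(a*\omega)\in\sN_\alpha$. Together these give the two-sided inclusion $\Delta(\sN_\alpha)\subset\sN_\alpha\vtens\sN_\alpha$, which is strictly stronger than the one-sided coideal property you write down.

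This stronger inclusion is exactly what allows the paper to bypass the $R$- and $\tau_t$-invariance altogether. You correctly identify that there is no reason for $R$ or $\tau_t$ to be compatible with an abstract action on $\sA$, and your attempted workaround via ``coideal plus regularity'' is not a standard fact --- a one-sided coideal need not be $R$-invariant. The paper instead invokes a result of Neshveyev--Yamashita \cite{NeshYam}: for a \emph{discrete} quantum group $\G$, any von Neumann subalgebra $\sN\subset\linf(\G)$ satisfying $\Delta(\sN)\subset\sN\vtens\sN$ is automatically preserved by $R$ and by $\tau_t$, hence is Baaj--Vaes. So the entire second half of your argument is replaced by a single citation, once the two-sided coproduct inclusion is in hand.
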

\begin{proof}
Indeed, since for every $\mu \in \ell^1(\G)$ and $\nu \in \sA^*$ we have $P_\mu(P_\nu (a)) = P_{\mu*\nu}(a)\in\sN_\alpha$ and $(P_\nu(a))*\mu = P_\nu(a*\mu)\in\sN_\alpha$ we conclude that $\Delta(\sN_\alpha)\subset\sN_\alpha\vtens\sN_\alpha$.  By the results of \cite{NeshYam} we see that $\sN_\alpha$ is a Baaj-Vaes subalgebra. 
\end{proof}

Verifying that a given action is faithful may be based on a simple observation, which we formulate as a proposition for the ease of reference.

\begin{prop}\label{prop_p0}
Denote by $p_0$ the support projection of the counit in $\ell^\infty (\G)$. If $\sN \subset \linf(\G)$ is a Baaj-Vaes subalgebra and $p_0 \in \sN$, then $\sN = \linf (\G)$.	
\end{prop}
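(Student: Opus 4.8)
The plan is to exploit the general principle that a Baaj--Vaes subalgebra $\sN \subset \linf(\G)$ is itself of the form $\linf(\HH)$ for a quotient $\HH$ of $\G$, more precisely that $\hh\HH$ is a closed normal quantum subgroup of $\hh\G$ with $\Linf(\hh\HH) = \sN$ sitting inside $\Linf(\hh\G) = \linf(\G)$ (via the correspondence recalled after Definition~\ref{def:normal} and the short exact sequences \eqref{ex1}--\eqref{ex2}). Under this identification $\G$ itself is a discrete quantum group, so the family of minimal central projections of $\linf(\G) \cong \prod_{\gamma \in \Irr(\G)} M_{n_\gamma}$ is indexed by $\Irr(\G)$, and the support projection $p_0$ of the counit is precisely the central projection corresponding to the trivial representation, a minimal projection in this product. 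The key point is that $p_0 \in \sN$, combined with $\sN$ being a $\Delta$-coideal (indeed a full quantum subgroup algebra), forces $\sN$ to be all of $\linf(\G)$.

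First I would set up the structure: write $\linf(\G) = \prod_{\gamma} M_{n_\gamma}$ and note that $p_0$ is the unit of the block $M_{n_e} = \CC$ corresponding to the trivial corepresentation $e$ of $\hh\G$. Since $\sN$ is a Baaj--Vaes subalgebra it is in particular globally invariant under the comultiplication $\Delta = \Delta_{\hh\G}^{\text{(dual sense)}}$ on $\linf(\G) = \Linf(\hh\G)$, i.e. $\Delta(\sN) \subset \sN \vtens \sN$. Now apply $\Delta$ to $p_0$: using that $p_0$ is the support of the counit, one has the standard identity expressing that $(\mathrm{id} \ot \varepsilon)\Delta = \mathrm{id} = (\varepsilon \ot \mathrm{id})\Delta$, which translates into the fact that for every minimal central projection $p_\gamma \in \linf(\G)$ the element $p_\gamma$ appears in the "expansion" of $\Delta(p_0)$ — concretely, $\Delta(p_\gamma)(p_\delta \ot p_0) \neq 0$ exactly when $\gamma = \delta$, and more to the point $\Delta(p_0) = \sum_{\gamma} (\text{something supported on } p_\gamma \ot p_{\bar\gamma})$ is a projection whose "first leg support" is all of $\linf(\G)$. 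The cleanest route: since $\hh\HH$ is a closed quantum subgroup of $\hh\G$ with $\Linf(\hh\HH) = \sN$, the trivial corepresentation of $\hh\HH$ corresponds to the trivial corepresentation of $\hh\G$, and $p_0 \in \sN$ says $\Irr(\hh\HH)$ contains the class of the trivial representation; but then applying fusion/tensoring — every $\gamma \in \Irr(\hh\G)$ that lies in $\Irr(\hh\HH)$ has its conjugate there too, and $e = \gamma \otimes \bar\gamma$ decomposes over $\Irr(\hh\HH)$ — one sees that restriction of corepresentations of $\hh\G$ to $\hh\HH$ must be surjective on objects, i.e. $\Irr(\hh\HH) = \Irr(\hh\G)$, whence $\sN = \linf(\G)$.

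An alternative and perhaps more self-contained argument I would actually write: use the $\G$-invariance directly. The subalgebra $\sN$, being Baaj--Vaes in $\linf(\G) = \Linf(\hh\G)$, is in particular invariant under the adjoint action $\beta$ of $\G$ on $\Linf(\hh\G)$, $\beta(x) = \ww^*(\I \ot x)\ww$. Evaluating slices $(\omega \ot \mathrm{id})\beta(p_0)$ for $\omega \in \ell^1(\G)$, and using that $\ww \in \M(\c0(\G) \ot \c0(\hh\G))$ together with the explicit form of the counit support projection, one computes that the weak-$*$ closed span of $\{(\omega \ot \mathrm{id})\beta(p_0) : \omega \in \ell^1(\G)\}$ already equals $\linf(\G)$ — essentially because $\ww$ is a multiplicative unitary and $p_0$ "generates" under the adjoint action. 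Hence $\linf(\G) \subset \sN$.

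The main obstacle, and the step requiring genuine care, is making precise the claim that $p_0$ together with the coideal/normality structure generates everything — i.e. identifying exactly which fact about multiplicative unitaries (or about the fusion rules of $\Irr(\hh\G)$, namely that $e$ lies in $\gamma \otimes \bar\gamma$ for every $\gamma$) does the work, and phrasing it cleanly rather than through a long computation in leg notation. I expect the cleanest presentation invokes the quantum-subgroup dictionary: $p_0 \in \sN = \Linf(\hh\HH)$ means the quotient $\hh\G \to \hh\KK$ (with $\hh\HH \hookrightarrow \hh\G$ normal, $\KK = \G/\HH$ in the notation of \eqref{ex1}) has the property that the trivial representation of $\hh\G$ restricts to contain the trivial of $\hh\KK$ with multiplicity forcing $\hh\KK$ trivial, equivalently $\hh\HH = \hh\G$. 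I would present this as the core lemma and relegate the multiplicative-unitary computation to a remark.
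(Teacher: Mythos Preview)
Your proposal contains the correct idea but buries it under confusion and detours. The identification ``$\linf(\G) = \Linf(\hh\G)$'' in your second approach is wrong: these are distinct von Neumann algebras on $\ell^2(\G)$ (one is a product of matrix algebras, the other the weak closure of $\C(\hh\G)$). What you call the adjoint action $\beta$ applied to $p_0 \in \linf(\G)$ is simply the coproduct $\Delta(p_0)$ --- both are implemented by $\ww^*(\I\ot \,\cdot\,)\ww$, but $p_0$ lives in $\linf(\G)$, not $\Linf(\hh\G)$. Once this is untangled, your second approach \emph{is} the paper's approach: the paper makes it explicit via the formula
\[
\Delta(p_0) = \sum_{\gamma \in \Irr(\hh\G)} t_\gamma t_\gamma^*,
\]
with $t_\gamma \in H_\gamma \ot H_{\gamma^c}$ the (suitably normalised) invariant vector. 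Since $\sN$ is Baaj--Vaes, $\Delta(p_0) \in \sN \vtens \sN$; slicing on the right by normal functionals on $\linf(\G)$ lands in $\sN$, and the non-degeneracy of $t_\gamma$ means such slices of $t_\gamma t_\gamma^*$ already span $B(H_\gamma)$. Hence $B(H_\gamma) \subset \sN$ for every $\gamma$, and $\sN = \linf(\G)$. You had exactly this in your parenthetical remark about ``$\Delta(p_0) = \sum_\gamma(\text{something supported on } p_\gamma\ot p_{\bar\gamma})$'' but then abandoned it for longer routes.

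Your preferred subgroup-dictionary route has further errors. A general Baaj--Vaes subalgebra of $\linf(\G)$ corresponds to a closed quantum subgroup $\HH$ of $\hh\G$ with $\sN = \Linf(\hh\HH)$, with no normality assumed, so your invocation of the short exact sequences \eqref{ex1}--\eqref{ex2} is unjustified; and a Baaj--Vaes subalgebra is not automatically invariant under the adjoint action of $\G$ (that invariance is precisely the normality condition in Definition~\ref{def:normal}). The route can be repaired --- $p_0 \in \sN$ forces the image of the counit support of $\linf(\hh\HH)$ to equal $p_0$, i.e.\ $H_\gamma^\HH = 0$ for all $\gamma\neq e$, whence $\Linf(\HH\backslash\hh\G)=\CC$ and $\HH = \hh\G$ --- but this is considerably more work than the two-line argument above, and your version as written does not contain it.
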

\begin{proof}
The desired fact follows from the formula 
\[ \Delta(p_0) = \sum_{\gamma \in \Irr \hh\G}  t_\gamma  t_\gamma^*,\]
where $t_\gamma \in H_\gamma \ot H_\gamma^{c}$ is a (suitably normalised) invariant vector for the tensor product of the representation $\gamma$ with its contragredient  $\gamma^c$, and the fact that this vector is necessarily non-degenerate, in the sense that slicing $t_\gamma t_\gamma^*$ on the right by functionals on $B(H_{\gamma^c})$ we can obtain the whole $B(H_\gamma)$.
\end{proof}	

The (reduced) crossed product for an action $\alpha$ of a discrete quantum group $\G$ on a $\C^*$-algebra $\sA$ is defined similarly to the von Neumann construction, see Subsection \ref{subsect.-crossedprod}: $\G\ltimes_r\sA$  is the $\C^*$-algebra that is obtained as the closed linear span of  $(\C(\hh\GGamma)\otimes\I)\alpha(\sA)$ inside $\M(\mathcal{K}(\ell^2(\G))\otimes \sA)$. Moreover $\G\ltimes_r\sA$ is equipped with the right dual action $\hh\alpha$ of $\hh\G$ and the left action $\beta$ of $\G$ (c.f. Subsection \ref{subsect.-crossedprod}). The action $\beta$ restricts to the (adjoint) action on $\C(\hh\G)$ (which we also denote by $\beta$). Let us summarize these observations   in the form of the following lemma.  
\begin{lem}\label{lemcrossed}
If  $\alpha\in\Mor(\sA,\c0(\G)\tens\sA)$ is an action of a discrete quantum group $\G$ on a unital $\cst$-algebra $A$, then the crossed product $\G \ltimes_r \sA$ is a $\G$-space (when equipped with the canonical adjoint action), and both standard embeddings of $(\C(\hh\G),\beta)$ and of $(\sA,\alpha)$ into $\G \ltimes_r \sA$ are $\G$-equivariant ucp maps.
\end{lem}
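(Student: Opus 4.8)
The plan is to unwind the definitions and verify each assertion in turn, relying on the identifications recalled in Subsection \ref{subsect.-crossedprod} (applied in the $\C^*$-algebraic rather than von Neumann setting) together with the formulas collected just before the lemma. First I would recall that $\G \ltimes_r \sA$ sits inside $\M(\mathcal{K}(\ell^2(\G)) \ot \sA)$ and carries the left action $\beta$ of $\G$ given by $\beta(y) = \ww_{12}^* y_{23} \ww_{12}$, exactly as in \eqref{eq1}; since $\G$ is discrete it is regular, so by \cite[Proposition 5.8]{BSV} (invoked just as in Remark \ref{multiplieralgebra}) the Podle\'s condition is automatic once one checks that $\beta$ is a non-degenerate morphism satisfying the action equation and $(\eps \ot \id)\beta = \id$ — the latter following from $(\eps \ot \id)(\ww) = \I$. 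So the first paragraph of the proof establishes that $(\G \ltimes_r \sA, \beta)$ is a $\G$-$\C^*$-algebra, i.e.\ a $\G$-space.

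Next I would treat the two embeddings. For $\sA \hookrightarrow \G \ltimes_r \sA$ via $\alpha$: the computation recalled in Subsection \ref{subsect.-crossedprod} shows $\beta(\alpha(z)) = (\Delta \ot \id)(\alpha(z)) = (\id \ot \alpha)(\alpha(z))$ for $z \in \sA$, which is precisely the equivariance identity $(\id \ot \alpha)\circ \alpha = \beta \circ \alpha$ on the image; since $\alpha$ is an injective morphism, it is in particular a ucp $\G$-equivariant map onto its image inside $\G \ltimes_r \sA$. For $\C(\hh\G) \hookrightarrow \G \ltimes_r \sA$ via $x \mapsto x \ot \I$: the same bulleted computation gives $\beta(x \ot \I) = (\ww^*(\I \ot x)\ww)_{12}$, i.e.\ the image of $x$ under the adjoint action $\beta\colon \C(\hh\G) \to \c0(\G) \ot \C(\hh\G)$ (this adjoint action on $\C(\hh\G)$ is the $\C^*$-level restriction of the one named $\beta$ in Subsection \ref{subsect.-crossedprod}, and it is an action of $\G$ — this is where one uses that $\ww \in \M(\c0(\G)\ot \C(\hh\G))$ is a representation of $\G$, so that $(\Delta \ot \id)(\ww) = \ww_{13}\ww_{23}$ yields the action equation). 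Hence $x \mapsto x \ot \I$ intertwines $\beta_{\C(\hh\G)}$ with $\beta_{\G\ltimes_r\sA}$, so it too is a ucp $\G$-equivariant map. Unitality of both maps is immediate since $\sA$ is unital and $\alpha(\ida) = \I = \I \ot \ida$.

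I do not expect a serious obstacle here; this lemma is a bookkeeping statement assembling facts that are either recalled verbatim earlier in the paper or are standard. The one point requiring a little care — and the only plausible ``hard part'' — is justifying the Podle\'s condition for $\beta$ at the $\C^*$-level, since a priori \eqref{eq1} defines $\beta$ on a von Neumann algebra; here one must check that $\beta$ restricts to a morphism $\G \ltimes_r \sA \to \M(\c0(\G)\ot (\G \ltimes_r \sA))$ and then invoke regularity of $\G$ via \cite[Proposition 5.8]{BSV} exactly as in Remark \ref{multiplieralgebra}. Everything else — the action equations, equivariance of the two embeddings, and unitality — reduces to the identities already displayed in Subsection \ref{subsect.-crossedprod}.
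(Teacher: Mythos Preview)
Your proposal is correct and matches the paper's approach exactly: the paper does not give a separate proof of this lemma at all, introducing it with ``Let us summarize these observations in the form of the following lemma'' after the bulleted computations of Subsection~\ref{subsect.-crossedprod} that you are unwinding. Your only addition is the explicit justification of the Podle\'s condition via regularity and \cite[Proposition~5.8]{BSV}, which the paper leaves implicit.
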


\section{Amenability and coamenability}
All notions of boundary actions are intimately connected to the notion of amenability. Non-triviality of boundary actions is in a sense a measure of non-amenability of a group. In fact, the main applications of theories of boundary actions of locally compact groups, as developed by Furstenberg, have been in the problems that are related to rigidity properties that are extreme opposites of amenability.

One of the main motivations behind this work is to develop new tools to tackle similar problems in the context of discrete quantum groups. 
For this, we first need to generalize some of the main properties of amenable and co-amenable subgroups to the quantum setting. 
As it turns out, some of the basic well-known facts in the group setting become rather non-trivial in the quantum world.

\subsection{Relative amenability}

\begin{defn}
Let $\G$ be a discrete quantum group, and let $\LLambda$ be a quantum subgroup of $\GGamma$. We say $\LLambda$ is \emph{relatively amenable} in $\G$ if there is a $\LLambda$-invariant mean on $\linf(\G)$.
\end{defn}

In the classical setting, a subgroup $\Lambda$ of a discrete group $\Gamma$ is relatively amenable iff it is amenable. This follows from the fact that there is a unital positive $\Lambda$-equivariant map from $\ell^\infty(\Lambda)$ into $\ell^\infty(\Gamma)$, which is easily constructed using a set of representatives for the  space of cosets, $\Gamma/\Lambda$. This reasoning is not immediately available in the non-commutative case and providing the answer requires significantly more work.

Another easily seen characterization of amenability of a subgroup in the classical setting, is the existence of an equivariant map with values in the algebra of functions on the quotient space. Motivated by this, we introduce the following definition.
\begin{defn} \label{defn:rel-amen-subalg}
Let $\G$ be a discrete quantum group. A $\G$-invariant von Neumann subalgebra $\sM$ of $\linf(\G)$ is said to be \emph{relatively amenable} in $\linf(\G)$ if there is a ucp $\G$-equivariant map $\Psi : \linf(\G)\to\sM$.
\end{defn}

Recall that if $\LLambda$ is a quantum subgroup of $\GGamma$, then $\linf(\GGamma/\LLambda)$ is an invariant von Neumann subalgebra of $\linf(\G)$. The main result of this section is the equivalence of the notions introduced above for $\linf(\GGamma/\LLambda)$ to amenability of the quantum subgroup $\LLambda$, which will be shown to hold in Theorem \ref{thm:rel_amen}.

We need some preparation before proving the above theorem. 
Let $\GGamma$ be a discrete quantum group. Given  $\gamma\in\textrm{Irr}(\hh\GGamma)$ we denote by $\Hil_\gamma$ the corresponding Hilbert space, by $U_\gamma\in B(\Hil_\gamma)\otimes \C(\hh\GGamma)$ the unitary representation and by $\pi_\gamma:\linf(\GGamma)\to B(\Hil_\gamma)$ the representation of $\linf(\GGamma)$ on $\Hil_\gamma$, where 
\[U_\gamma = (\pi_\gamma\otimes\id)(\ww^*).\] There is a unique state $\qTr_\gamma:\linf(\GGamma)\to\CC$ satisfying \[\qTr_\gamma(x)\I_{\Hil_\gamma} = (\id\otimes h^{\hh\GGamma})(U^*_\gamma(\pi_\gamma(x)\otimes\I) U_{\gamma}) \] for all $x\in\linf(\G)$, where $h^{\hh\GGamma}$ denotes the Haar state of $\hh\GGamma$. We will need the following result, which is essentially contained in \cite[Lemma 2.2]{Izumi} (see also  \cite[Subsection 4.2]{Tomatsu}), but for the reader's convenience, as we are using somewhat different conventions, the proof will be presented below. For any von Neumann algebra $\sM$ we denote its centre by $Z(\sM)$.
\begin{thm}[Izumi]\label{thm_center}
If $y\in Z(\linf(\GGamma))$  then $(\id\otimes \qTr_\gamma)\Delta(y)\in Z(\linf(\GGamma))$.
\end{thm}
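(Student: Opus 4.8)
The plan is to show that $(\id \otimes \qTr_\gamma) \circ \Delta$ maps $\linf(\GGamma)$ into the commutant of $\linf(\GGamma)$ inside $\linf(\GGamma)$, from which the claim about the centre follows by restricting to central elements (although actually it is cleaner to argue directly that the image of a central element commutes with everything). First I would unpack the definition: for $x \in \linf(\GGamma)$ the element $\qTr_\gamma(\cdot)$ is a state, and the map $E_\gamma := (\id \otimes \qTr_\gamma) \circ \Delta : \linf(\GGamma) \to \linf(\GGamma)$ is a normal ucp map. The key structural fact is that $\qTr_\gamma$ is, up to normalisation, the composition of the representation $\pi_\gamma$ with a conditional-expectation-type slice using the Haar state $h^{\hh\GGamma}$ and the representation $U_\gamma = (\pi_\gamma \otimes \id)(\ww^*)$; so $E_\gamma$ can be rewritten as a slice of $\Delta$ against a functional built from $\pi_\gamma$ and $h^{\hh\GGamma}$.

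The main step is to identify $E_\gamma$ with a convolution-type operator that manifestly lands in the commutant of $\linf(\GGamma)$. Concretely, I would use the fact that $\Delta(x) = \ww^*(\I \otimes x)\ww$ and push the formula for $\qTr_\gamma$ through: writing out $(\id \otimes \qTr_\gamma)\Delta(y)$ and using $U_\gamma = (\pi_\gamma \otimes \id)(\ww^*)$, one expresses $E_\gamma(y)$ in terms of $(\id \otimes \id \otimes h^{\hh\GGamma})$ applied to a product of legs of $\ww$ and $\pi_\gamma$-images. The representation-theoretic content is that slicing the regular representation $\ww$ against a matrix coefficient of the irreducible $\gamma$ (coming from $\pi_\gamma$ and $h^{\hh\GGamma}$) produces an element that intertwines the left action of $\linf(\GGamma)$ on $\Ltwo(\GGamma)$ with itself — this is where the $\hh\GGamma$-invariance of the Haar state $h^{\hh\GGamma}$ and the pentagon equation $\ww_{12}\ww_{13}\ww_{23} = \ww_{23}\ww_{12}$ enter. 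For $y$ central, the resulting operator additionally commutes with $\linf(\hh\GGamma)$, hence (using that $\linf(\GGamma)$ and $\linf(\hh\GGamma)$ together generate $B(\Ltwo(\GGamma))$, or more precisely that the commutant of $\linf(\GGamma) \cup \linf(\hh\GGamma)$ is trivial) lies in $Z(\linf(\GGamma))$. I would also invoke that the $\qTr_\gamma$ are precisely the normalised quantum traces, so that $y \mapsto E_\gamma(y)$ respects the block structure $\linf(\GGamma) \cong \prod_{\beta} M_{n_\beta}$ in a way compatible with fusion; an equivalent route is to test against $z \in \linf(\GGamma)$ and verify $E_\gamma(y) z = z E_\gamma(y)$ directly using $\Delta(yz) = \Delta(y)\Delta(z)$, centrality of $y$, and the fact that $\qTr_\gamma$ is a trace on each block $\pi_\gamma(\linf(\GGamma))$.

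I expect the main obstacle to be bookkeeping with the conventions: the paper uses the ``left'' convention, scalar products linear on the right, $\ww$ rather than $V$, and $\qTr_\gamma$ defined via $U_\gamma = (\pi_\gamma \otimes \id)(\ww^*)$ and the Haar state of $\hh\GGamma$ — so the translation from \cite[Lemma 2.2]{Izumi} requires care to get every adjoint and leg in the right place. The conceptual heart, that a quantum-trace slice of the comultiplication preserves the centre, is short once the formula $E_\gamma = (\text{matrix coefficient of }\gamma) * (\cdot)$ is in place; the risk is purely in verifying that the ``matrix coefficient'' functional one extracts is genuinely central-preserving, which I would pin down by checking it is a finite linear combination of $\qTr$-type slices and using that these commute with $\Delta$ appropriately (equation \eqref{eq:Poi-map}-style manipulations, with $\mu$ now a matrix coefficient rather than a state in $\ell^1$). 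If the direct computation gets unwieldy, the fallback is to cite \cite[Lemma 2.2]{Izumi} and \cite[Subsection 4.2]{Tomatsu} and only spell out the convention dictionary.
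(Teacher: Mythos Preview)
Your proposal gestures at the right ingredients (the pentagon equation for $\ww$ and the invariance of the Haar state $h^{\hh\GGamma}$), but it is not a proof, and it contains a genuine error that would block the ``alternative route'' you suggest.

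The error: you propose to ``test against $z \in \linf(\GGamma)$ and verify $E_\gamma(y) z = z E_\gamma(y)$ directly using $\Delta(yz) = \Delta(y)\Delta(z)$, centrality of $y$, and the fact that $\qTr_\gamma$ is a trace on each block $\pi_\gamma(\linf(\GGamma))$''. But $\qTr_\gamma$ is \emph{not} a trace outside the unimodular (Kac) case; it is the quantum trace $\Tr(F_\gamma\,\cdot\,)$, and the paper explicitly works in the general, possibly non-Kac, setting (this is crucial later in Section~\ref{orthog} for $\FO_Q$ with non-unitary $Q$). Even in the Kac case it is not clear how traciality of $\qTr_\gamma$ alone would let you compare $E_\gamma(y)z$ with $zE_\gamma(y)$, since these are products in the first leg and $\qTr_\gamma$ only sees the second. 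Similarly, your ``commutant'' route via $\linf(\GGamma)\cup\linf(\hh\GGamma)$ is never made precise enough to be checkable, and the final reduction (``check it is a finite linear combination of $\qTr$-type slices'') is circular, since that is exactly the statement to be proved.

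What the paper actually does is the direct computation you allude to but never perform: one observes that $A := (\id\otimes\qTr_\gamma)\Delta(y)$ lies in $Z(\linf(\GGamma))$ iff $A\otimes 1$ commutes with $\ww$ (since the left slices of $\ww$ generate $\linf(\GGamma)$). One then writes $A\otimes\I_{H_\gamma}$ explicitly as $(\id\otimes\pi_\gamma\otimes h^{\hh\GGamma})(\ww_{23}\ww^*_{12}(\I\otimes y\otimes\I)\ww_{12}\ww^*_{23})$ and checks that conjugating by $\ww_{14}$ leaves this fixed, via a chain of equalities using (in order) centrality of $y$, the pentagon equation, invariance of $h^{\hh\GGamma}$, and pentagon plus centrality again. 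This is precisely the ``bookkeeping with conventions'' you flag as the obstacle, but it is also the entire content of the proof; there is no softer conceptual shortcut in the paper.
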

\begin{proof}
Let us recall that $\ww\in\ell^\infty(\GGamma)\wot L^\infty(\hh\GGamma)$ and $\Delta(x)=\ww^*(\I\otimes x)\ww$ for all $x\in\ell^\infty(\GGamma)$. 
 Clearly $(\id\otimes \qTr_\gamma)\Delta(y)\in Z(\linf(\GGamma))$ if and only if 
 \[\ww^*((\id\otimes \I\cdot\qTr_\gamma)\Delta(y))\ww  =(\id\otimes \I\cdot\qTr_\gamma)\Delta(y). \]
 Since \begin{align*}(\id\otimes \I\cdot\qTr_\gamma)\Delta(y)& =(\id\otimes \id\otimes h^{\hh\GGamma})(U_{23}^*(\id\otimes\pi_{\gamma}\otimes\id)(\Delta(y)_{12})U_{23})\\& =(\id\otimes \pi_\gamma\otimes h^{\hh\GGamma})(\ww_{23}\ww^*_{12}(\I\otimes y\otimes\I)\ww_{12}\ww^*_{23}).
   \end{align*}
our theorem will follow when we establish the following equality 
  \begin{align*}(\id\otimes\pi_\gamma\otimes h^{\hh\GGamma}\otimes\id)&(\ww^*_{14}\ww_{23}\ww^*_{12}(\I\otimes y\otimes\I\otimes\I)\ww_{12}\ww^*_{23}\ww_{14})\\&=(\id\otimes\pi_\gamma\otimes h^{\hh\GGamma}\otimes\id)(\ww_{23}\ww^*_{12}(\I\otimes y\otimes\I\otimes\I)\ww_{12}\ww^*_{23}).\end{align*}
Indeed, we compute:
 \begin{align*}(\id\otimes\pi_\gamma\otimes h^{\hh\GGamma}\otimes\id)&(\ww^*_{14}\ww_{23}\ww^*_{12}(\I\otimes y\otimes\I\otimes\I)\ww_{12}\ww^*_{23}\ww_{14})\\&=
 (\id\otimes\pi_\gamma\otimes h^{\hh\GGamma}\otimes\id)(\ww^*_{14}\ww_{23}\ww^*_{12}\ww_{23}^*(\I\otimes y\otimes\I\otimes\I)\ww_{23}\ww_{12}\ww^*_{23}\ww_{14})\\&=
 (\id\otimes\pi_\gamma\otimes h^{\hh\GGamma}\otimes\id)(\ww^*_{14}\ww_{13}^*\ww_{12}^*(\I\otimes y\otimes\I\otimes\I) \ww_{12}\ww_{13}\ww_{14})\\&=
 (\id\otimes\pi_\gamma\otimes h^{\hh\GGamma}\otimes\id)((\id\otimes\id\otimes\Delta^{\textrm{op}}_{\hh\GGamma})(\ww_{13}^*\ww_{12}^*(\I\otimes y\otimes\I) \ww_{12}\ww_{13}))
 \\&=
 (\id\otimes\pi_\gamma\otimes h^{\hh\GGamma}\otimes\id)(\ww_{13}^*\ww_{12}^*(\I\otimes y\otimes\I\otimes\I) \ww_{12}\ww_{13})\\&=
 (\id\otimes\pi_\gamma\otimes h^{\hh\GGamma}\otimes\id)(\ww_{23}\ww^*_{12}\ww_{23}^*(\I\otimes y\otimes\I\otimes\I) \ww_{23}\ww_{12}\ww_{23}^*)
 \\&=
 (\id\otimes\pi_\gamma\otimes h^{\hh\GGamma}\otimes\id)(\ww_{23}\ww^*_{12}(\I\otimes y\otimes\I\otimes\I) \ww_{12}\ww_{23}^*)
 \end{align*}
 where the first and the last equality hold since $y$ is central, the second and the fifth use the pentagonal equation for $\ww$, and the fourth equality follows from the invariance of the Haar measure of $\hh\GGamma$.  
\end{proof}
Suppose that $\LLambda$ is a quantum subgroup of $\GGamma$ and $\pi:\linf(\GGamma)\to \linf(\LLambda)$ is the corresponding surjective $*$-homomorphism satisfying $(\pi\otimes\pi)\circ\Delta^\GGamma = \Delta^\LLambda\circ \pi$. Let $\I_\LLambda\in Z(\linf(\GGamma))$ be the central carrier of $\pi$. 
By the theorem above  we have $(\id\otimes \qTr_{\gamma^c})\Delta^\GGamma(\I_\LLambda)\in Z(\linf(\GGamma))$ for all $\gamma\in\textrm{Irr}(\hh\GGamma)$. Moreover since $\Delta^\GGamma(\I_\Lambda)\in\linf(\LLambda\backslash\GGamma)\vtens\linf(\GGamma/\LLambda)$ we conclude that \begin{equation}\label{qtr}(\id\otimes \qTr_{\gamma^c})\Delta^\GGamma(\I_\LLambda)\in Z(\linf(\LLambda\backslash\GGamma))\end{equation}

In what follows we shall denote by $\sim_{\LLambda}$  the equivalence relation  on $\Irr(\hh{\GGamma})$ induced by $\LLambda$.  Recalling that $\Irr(\hh\LLambda)\subset  \Irr(\hh\GGamma)$,  we write $\tau\sim_\LLambda\sigma$ if there exists $\rho\in\Irr(\hh\LLambda)$ such that $\tau\subset \sigma\otimes\rho$. Equivalently (cf \cite{Vergnioux_Amalgamated} and \cite[Theorem 5.6]{Kenny}) \begin{equation}\label{vvrel}\tau\sim_\LLambda\sigma\textrm{ if } (\pi_\sigma\otimes\pi_{\tau^c})\Delta^\GGamma(\I_\LLambda)\neq  0,\end{equation} 
where $\tau^c$ denotes the contragredient representation of $\tau$.

Let $(q_i)_{i\in\mathcal{I}}$ be the maximal family of non-zero central minimal projections in $\linf(\LLambda\backslash\GGamma)$.  The equivalence relation in $\mathcal{I}$ as defined in \cite{Kenny} will be denoted by $\sim$. We shall use  following property of $\sim$  (c.f. \cite[Theorem 5.6]{Kenny} and  \cite[Theorem 5.2]{Kenny}): given $i\in\mathcal{I}$ there exists $\gamma\in\Irr(\hh\GGamma)$ such that
\begin{equation}\label{igam}\sum_{\sigma\sim_\LLambda\gamma}p_\sigma=\sum_{j\sim i}q_j.\end{equation}
Moreover   $\sigma\in\textrm{supp}(q_j)$, i.e.\ $\pi_\sigma(q_j)\neq 0$. 
\begin{lem}\label{eqrel}
Let $i\in\mathcal{I}$ and $\gamma\in\Irr(\hh\GGamma)$ be related as in \eqref{igam}. Suppose that \begin{equation}\label{igam1}\sum_{\sigma\sim_\LLambda\gamma}t_\sigma p_\sigma=\sum_{j\sim i}s_j q_j\end{equation} for some $t_\sigma,s_j\in\CC$. Then $t_\sigma = s_j$ for all $\sigma$ and $j$.  
\end{lem}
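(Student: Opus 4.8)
The plan is to exploit the interplay between the two equivalence relations $\sim_\LLambda$ on $\Irr(\hh\GGamma)$ and $\sim$ on $\mathcal{I}$, together with the nondegeneracy statement in \eqref{vvrel} and the centrality result Theorem \ref{thm_center}. Recall that \eqref{igam} tells us that the projection $Q_i := \sum_{j\sim i} q_j$ coincides with $P_\gamma := \sum_{\sigma\sim_\LLambda\gamma} p_\sigma$ inside $\linf(\LLambda\backslash\GGamma)$, so both sides of \eqref{igam1} are elements of the finite-dimensional abelian algebra $Q_i\linf(\LLambda\backslash\GGamma)Q_i \cong \bigoplus_{j\sim i}\CC q_j$. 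The content of the lemma is that the specific two spanning families $(p_\sigma)_{\sigma\sim_\LLambda\gamma}$ and $(q_j)_{j\sim i}$ of this algebra are ``linked'' in the sense that no nontrivial linear relation can distinguish them: a linear combination of the $p_\sigma$'s that happens to be a linear combination of the $q_j$'s must in fact be a \emph{scalar multiple of $Q_i$}, forcing all coefficients equal.

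First I would set up the combinatorial skeleton. Each $p_\sigma$ (for $\sigma\sim_\LLambda\gamma$) is a central projection of $\linf(\GGamma)$ lying \emph{above} $\I_\LLambda$-slices; more precisely, the image of $p_\sigma$ under the natural ``averaging'' into $\linf(\LLambda\backslash\GGamma)$ is a sum of some of the minimal central projections $q_j$, and by \eqref{vvrel} the index $j$ appears with nonzero coefficient iff $\sigma\in\mathrm{supp}(q_j)$. So there is a bipartite ``incidence'' structure between $\{\sigma : \sigma\sim_\LLambda\gamma\}$ and $\{j : j\sim i\}$; write $\sigma\frown j$ when $\pi_\sigma(q_j)\neq 0$. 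The last sentence preceding the lemma guarantees that for every $j\sim i$ there is at least one $\sigma$ with $\sigma\frown j$ (namely $\sigma\in\mathrm{supp}(q_j)$), and conversely every $\sigma\sim_\LLambda\gamma$ satisfies $\sigma\frown j$ for some $j\sim i$ since $p_\sigma\le Q_i=\sum q_j$. The crucial extra input I would extract from Theorem \ref{thm_center} / \eqref{qtr} is that for each fixed $\sigma$, the scalars obtained by applying $\qTr_{\gamma^c}$-type slices are \emph{constant} across the $j$'s with $\sigma\frown j$ — this is exactly what makes the incidence graph behave rigidly rather than arbitrarily. Equivalently, I aim to show the incidence graph $\frown$ is \emph{connected}; once connectivity is established, \eqref{igam1} read coordinate-by-coordinate at each $q_j$ yields, for each pair $\sigma\frown j$, the equation (nonzero coefficient)$\cdot t_\sigma = s_j$ up to a common positive factor depending only on $j$, and walking along edges of a connected graph propagates a single common value, giving $t_\sigma = s_j$ for all $\sigma,j$ (after checking the proportionality constants all agree, which they do because applying the counit/evaluation at the trivial representation normalizes everything).

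Concretely the argument would run: project \eqref{igam1} via $\pi_\rho$ for $\rho$ ranging over $\mathrm{supp}(q_j)$; since $\pi_\rho(q_{j'})=0$ for $j'\neq j$ and $\pi_\rho(q_j)=\I$, the right side collapses to $s_j\I$; on the left side only those $p_\sigma$ with $\pi_\rho(p_\sigma)\neq 0$ survive, and by minimality/centrality of $p_\sigma$ in $\linf(\GGamma)$ exactly one such $\sigma$ survives, namely $\sigma = \rho$ itself (as $\rho\sim_\LLambda\gamma$ because $\rho\in\mathrm{supp}(q_j)$ and $q_j\le Q_i$ corresponds to $\sim_\LLambda\gamma$). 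Hence $t_\rho\I = s_j\I$, i.e. $t_\rho = s_j$ for every $\rho\in\mathrm{supp}(q_j)$. This already gives $t_\rho = s_j$ whenever $\rho$ supports $q_j$; it remains to see that this pins down \emph{all} the $t_\sigma$ and forces the $s_j$ to coincide. For the latter: every $\sigma\sim_\LLambda\gamma$ lies in $\mathrm{supp}(q_j)$ for \emph{some} $j$ (because $p_\sigma\neq 0$ and $p_\sigma\le\sum_{j\sim i}q_j$), so the $t_\sigma$'s are determined; for the former, fix two indices $j,j'\sim i$ and use transitivity of $\sim$ on $\mathcal{I}$ as described in \cite{Kenny}: $j\sim j'$ is witnessed by a chain in which consecutive members share a common supporting representation $\sigma$, and along each link $s_j = t_\sigma = s_{j'}$, so all $s_j$ are equal, and then all $t_\sigma$ equal that common value.

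The main obstacle I anticipate is the claim in the previous paragraph that only \emph{one} $\sigma$ survives the slice $\pi_\rho$ on the left-hand side of \eqref{igam1} — i.e. that the projections $(p_\sigma)_\sigma$ are not merely central but have \emph{disjoint} supports as seen through the irreducible representations of $\hh\GGamma$. This is really the statement that the $p_\sigma$ are the minimal central projections of $\linf(\GGamma)$ (one per irreducible), which is standard, but one must be careful that the $\sigma$'s indexing \eqref{igam1} are genuinely distinct elements of $\Irr(\hh\GGamma)$ and not merely distinct $\sim_\LLambda$-classes; the formulation $\sum_{\sigma\sim_\LLambda\gamma}$ should be read as a sum over $\sigma\in\Irr(\hh\GGamma)$ in the $\sim_\LLambda$-class of $\gamma$, in which case the disjointness is immediate. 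A secondary point requiring care is the connectivity of the incidence graph $\frown$: this is not automatic from \eqref{igam} alone and is precisely where one invokes the characterization of $\sim$ in \cite[Theorem 5.6]{Kenny} and \cite[Theorem 5.2]{Kenny}, which says the blocks $\sum_{j\sim i}q_j$ are the \emph{smallest} $\GGamma$-invariant-compatible projections, so they cannot be further decomposed along the incidence structure — hence $\frown$ restricted to a single $\sim$-block is connected. I would isolate this as the one nontrivial lemma-within-the-proof and attribute the needed input to \cite{Kenny}.
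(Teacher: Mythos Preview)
Your central computation contains a genuine error. You claim that for $\rho\in\mathrm{supp}(q_j)$ one has $\pi_\rho(q_{j'})=0$ for $j'\neq j$ and $\pi_\rho(q_j)=\I$. This is false: the $q_j$ are minimal central projections of $\linf(\LLambda\backslash\GGamma)$, \emph{not} of $\linf(\GGamma)$, so $\pi_\rho(q_j)\in B(\Hil_\rho)$ is in general a nontrivial projection, and several $q_j$'s can have $\rho$ in their support simultaneously. Thus the right-hand side of \eqref{igam1} under $\pi_\rho$ does not collapse to $s_j\I$ but to $\sum_{j'\sim i} s_{j'}\,\pi_\rho(q_{j'})$, a linear combination of mutually orthogonal projections summing to $\I_{\Hil_\rho}$.

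Once you correct this, the situation is actually \emph{better} than you anticipated, and the connectivity detour becomes unnecessary. Equating $t_\rho\I_{\Hil_\rho}$ with $\sum_{j'\sim i} s_{j'}\,\pi_\rho(q_{j'})$ forces $s_{j'}=t_\rho$ for \emph{every} $j'$ with $\pi_\rho(q_{j'})\neq 0$. The input cited from \cite{Kenny} just before the lemma is precisely that the incidence graph is \emph{complete}: for every $\sigma\sim_\LLambda\gamma$ and every $j\sim i$ one has $\pi_\sigma(q_j)\neq 0$. So applying $\pi_\gamma$ once already yields $s_j=t_\gamma$ for \emph{all} $j\sim i$, and then applying $\pi_\sigma$ for each $\sigma\sim_\LLambda\gamma$ gives $t_\sigma=s_j$ for all such $\sigma$ and $j$. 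This is exactly the paper's proof: two lines, no connectivity argument needed. Your elaborate bipartite-graph strategy was built to work around a difficulty (possible disconnectedness) that the cited result from \cite{Kenny} rules out, while simultaneously resting on a claim (disjointness of supports) that the same result contradicts.
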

\begin{proof}
Applying $\pi_\gamma$ to both sides of \eqref{igam1} we conclude that \[t_\gamma\I_{\Hil_\gamma} = \sum_{j\sim i}s_j \pi_\gamma(q_j).\] Since $ (\pi_\gamma(q_j))_{j\sim i}$ is a family of non-zero mutually orthogonal projections (c.f. the line following \eqref{igam}) we conclude that $s_j = t_\gamma$ for all $j\sim i$. Replacing $\gamma$ by $\sigma\sim_\LLambda\gamma$ in the above argument we can see that $t_\sigma = s_j$ for all $\sigma$ and $j$ as above. 
\end{proof}
\begin{cor}
For every $\gamma\in\Irr(\hh\GGamma)$ there exists $t_\gamma>0$ such that   \[(\id\otimes \qTr_{\gamma^c})\Delta^\GGamma(\I_\LLambda) = t_\gamma\sum_{\sigma\sim_{\LLambda}\gamma} p_\sigma \] 
\end{cor}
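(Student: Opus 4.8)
The plan is to deduce the corollary directly from Lemma \ref{eqrel} together with the properties \eqref{qtr}, \eqref{igam} and the structure of $\Delta^\GGamma(\I_\LLambda)$. First I would fix $i\in\mathcal{I}$ and $\gamma\in\Irr(\hh\GGamma)$ related as in \eqref{igam}. From \eqref{qtr} we know that $z_\gamma:=(\id\otimes\qTr_{\gamma^c})\Delta^\GGamma(\I_\LLambda)$ lies in $Z(\linf(\LLambda\backslash\GGamma))$, hence is a (bounded) linear combination $\sum_{k}c_k q_k$ of the central minimal projections $(q_k)_{k\in\mathcal{I}}$ with $c_k\in\CC$. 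The key point is to show that the support of $z_\gamma$ is exactly the set of $\sigma$ with $\sigma\sim_\LLambda\gamma$, i.e.\ that $\pi_\sigma(z_\gamma)\neq 0$ precisely when $\sigma\sim_\LLambda\gamma$, and moreover that $z_\gamma$ is supported on $\sum_{j\sim i}q_j$ by \eqref{igam}.

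Next I would identify the support of $z_\gamma$ more precisely using \eqref{vvrel}. Computing $\pi_\sigma(z_\gamma) = (\pi_\sigma\otimes\qTr_{\gamma^c})\Delta^\GGamma(\I_\LLambda)$, one sees (unravelling the definition of $\qTr_{\gamma^c}$ as a partial Haar-state slice of $U_{\gamma^c}^*(\pi_{\gamma^c}(\cdot)\otimes\I)U_{\gamma^c}$) that this is a non-zero positive operator if and only if $(\pi_\sigma\otimes\pi_{\gamma^c})\Delta^\GGamma(\I_\LLambda)\neq 0$, which by \eqref{vvrel} happens exactly when $\sigma\sim_\LLambda\gamma$. (Positivity of each $\pi_\sigma(z_\gamma)$ holds because $\I_\LLambda$ is a projection, $\Delta^\GGamma$ and $\qTr_{\gamma^c}$ are positive, and the central projections $p_\sigma$ commute with everything; this also shows $c_k\geq 0$ on the relevant support.) Thus $z_\gamma = \sum_{\sigma\sim_\LLambda\gamma} c_\sigma p_\sigma$ for some $c_\sigma\geq 0$, and by \eqref{igam} the indices $\sigma$ with $\sigma\sim_\LLambda\gamma$ are grouped into the classes $\{j: j\sim i\}$ at the level of the $q_j$'s.

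Having put $z_\gamma$ in the form $\sum_{\sigma\sim_\LLambda\gamma}c_\sigma p_\sigma = \sum_{j\sim i}d_j q_j$ (rewriting the same central element in the two bases, using that each $p_\sigma$ below a given $q_j$ gets the same coefficient — which is a priori not yet clear, but follows a posteriori), I would apply Lemma \ref{eqrel} with $t_\sigma=c_\sigma$ and $s_j=d_j$ to conclude that all the coefficients coincide: there is a single scalar $t_\gamma$ with $c_\sigma = t_\gamma$ for every $\sigma\sim_\LLambda\gamma$, giving $z_\gamma = t_\gamma\sum_{\sigma\sim_\LLambda\gamma}p_\sigma$. Finally $t_\gamma>0$ because $z_\gamma\neq 0$: indeed $\pi_\gamma(z_\gamma)=\qTr_{\gamma^c}$-slice of a conjugate of the non-zero projection $\pi_\gamma\otimes\pi_{\gamma^c}$ applied to $\Delta^\GGamma(\I_\LLambda)$, which is non-zero since $\gamma\sim_\LLambda\gamma$ always (take $\rho$ trivial in the definition of $\sim_\LLambda$), so its coefficient $t_\gamma$ cannot vanish.

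The main obstacle I expect is the bookkeeping in the second step: carefully justifying that $\pi_\sigma\bigl((\id\otimes\qTr_{\gamma^c})\Delta^\GGamma(\I_\LLambda)\bigr)\neq 0$ if and only if $(\pi_\sigma\otimes\pi_{\gamma^c})\Delta^\GGamma(\I_\LLambda)\neq 0$. This requires knowing that slicing the positive operator $U_{\gamma^c}^*\bigl((\pi_\sigma\otimes\pi_{\gamma^c})\Delta^\GGamma(\I_\LLambda)\bigr)U_{\gamma^c}$ by the faithful Haar state $h^{\hh\GGamma}$ cannot kill a non-zero positive operator — which is fine by faithfulness of $h^{\hh\GGamma}$ on $\C(\hh\GGamma)$ together with the fact that $U_{\gamma^c}$ is unitary — but one must also check that the resulting scalar operator really is non-zero, i.e.\ that the partial trace is non-degenerate, and this is exactly the content of the reference to \cite{Vergnioux_Amalgamated} and \cite[Theorem 5.6]{Kenny} encoded in \eqref{vvrel}. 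Once \eqref{vvrel} is invoked as stated, the rest is a clean application of Lemma \ref{eqrel}.
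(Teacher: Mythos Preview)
Your overall route is the same as the paper's: write $z_\gamma$ simultaneously as $\sum_{\sigma\sim_\LLambda\gamma} t_\sigma p_\sigma$ and as $\sum_{j\sim i} s_j q_j$, then invoke Lemma~\ref{eqrel}. However, there is a genuine gap in how you obtain the first of these two expressions.

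You only use \eqref{qtr}, which gives $z_\gamma\in Z(\linf(\LLambda\backslash\GGamma))$, hence $z_\gamma=\sum_k c_k q_k$. Your support analysis via \eqref{vvrel} correctly tells you which blocks $\pi_\sigma(z_\gamma)\in B(\Hil_\sigma)$ are nonzero, but it does \emph{not} tell you that each such block is a scalar multiple of $\I_{\Hil_\sigma}$. Without that, the step ``Thus $z_\gamma=\sum_{\sigma\sim_\LLambda\gamma} c_\sigma p_\sigma$'' is unjustified: an element of $Z(\linf(\LLambda\backslash\GGamma))$ need not lie in $Z(\linf(\GGamma))$, since the individual $q_j$ are generally not sums of $p_\sigma$'s. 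You even flag this yourself (``which is a priori not yet clear, but follows a posteriori''), but it does not follow a posteriori --- Lemma~\ref{eqrel} takes as \emph{hypothesis} an equality of the form $\sum t_\sigma p_\sigma=\sum s_j q_j$, so you cannot use its conclusion to supply that hypothesis.

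The missing ingredient is precisely Theorem~\ref{thm_center} (Izumi's result): since $\I_\LLambda\in Z(\linf(\GGamma))$, one has $z_\gamma=(\id\otimes\qTr_{\gamma^c})\Delta^\GGamma(\I_\LLambda)\in Z(\linf(\GGamma))$, so $z_\gamma=\sum_\sigma t_\sigma p_\sigma$ with scalars $t_\sigma$. This, combined with your support computation via \eqref{vvrel}, gives $z_\gamma=\sum_{\sigma\sim_\LLambda\gamma} t_\sigma p_\sigma$; then \eqref{qtr} and \eqref{igam} give the $q_j$-expansion, and Lemma~\ref{eqrel} finishes the argument exactly as you intended. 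With Theorem~\ref{thm_center} inserted at the right place, your proof becomes essentially identical to the paper's.
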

\begin{proof}
Using \eqref{vvrel} and Theorem \ref{thm_center} we conclude that there exist $t_\sigma\in\RR$ such that $(\id\otimes \qTr_{\gamma^c})\Delta^\GGamma(\I_\LLambda) = \sum_{\sigma\sim_{\LLambda}\gamma} t_\sigma p_\sigma$. On the other hand using \eqref{qtr} and \eqref{igam} we see that there exists $i\in\mathcal{I}$ and $s_j\in\RR$ such that $\sum_{\sigma\sim_{\LLambda}\gamma} t_\sigma p_\sigma = \sum_{j\sim i}s_j q_j$. We conclude using Lemma \ref{eqrel}. 
\end{proof}

This leads us to the following result.
\begin{prop} \label{Theorem:ucp lifts}
Let $\GGamma$ be a discrete quantum group and $\LLambda$ a quantum subgroup of $\GGamma$. Then there exists a normal ucp $\LLambda$-equivariant map $\Phi:\linf(\LLambda)\to\linf(\GGamma)$. 
\end{prop}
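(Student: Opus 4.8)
The plan is to construct $\Phi$ by averaging the canonical conditional-expectation-type maps against the quasi-trace functionals $\qTr_{\gamma^c}$, using the Corollary above to control the resulting scalars. Concretely, recall that $\LLambda\backslash\GGamma$ carries a central projection decomposition $\I_\LLambda = \sum_{i\in\mathcal I} q_i$, and that $\linf(\LLambda)$ can be identified, via the surjection $\pi$, with a corner of $\linf(\GGamma)$; more precisely $\pi$ restricts to an isomorphism between $\I_\LLambda\linf(\GGamma)$-type data and $\linf(\LLambda)$ on the level of the relevant blocks $M_{n_\rho}$, $\rho\in\Irr(\hh\LLambda)\subset\Irr(\hh\GGamma)$. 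The map I want to build should send the block of $\linf(\LLambda)$ indexed by $\rho$ into the sum of blocks of $\linf(\GGamma)$ indexed by $\{\sigma : \sigma\sim_\LLambda \rho\}$, spreading the value out in a $\LLambda$-equivariant way; the weights needed to make this unital and completely positive are exactly the $t_\gamma$ furnished by the Corollary.

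First I would make precise the formula. Consider the map
\[
\linf(\GGamma)\ni x \longmapsto (\id\otimes\qTr_{(\cdot)^c})\bigl(\Delta^\GGamma(\I_\LLambda)\,(\I\otimes x)\bigr)
\]
interpreted blockwise: on the block of $\linf(\GGamma)$ indexed by $\gamma$ this produces, by the Corollary, the operator $t_\gamma\sum_{\sigma\sim_\LLambda\gamma}p_\sigma$ when $x=\I$, so after normalising by $t_\gamma^{-1}$ we obtain a genuinely unital map. Composing with the inclusion $\linf(\LLambda)\hookrightarrow\linf(\GGamma)$ dual to $\pi$ (i.e. identifying $\linf(\LLambda)$ with $\I_\LLambda$-supported data and then using the Poisson-type transform $\cP$ attached to the $\LLambda$-action on $\GGamma$) should give the desired $\Phi:\linf(\LLambda)\to\linf(\GGamma)$. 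The key structural point, which is why Theorem \ref{thm_center} and the Corollary are invoked, is that $(\id\otimes\qTr_{\gamma^c})\Delta^\GGamma(\I_\LLambda)$ is central in $\linf(\LLambda\backslash\GGamma)$ and is a positive scalar multiple of a sum of minimal central projections; centrality is what guarantees that the averaged map lands in the right subalgebra and, together with the invariance properties of $\qTr_\gamma$ under $\Delta$, that it is $\LLambda$-equivariant.

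For $\LLambda$-equivariance I would argue as follows: the functionals $\qTr_\gamma$ satisfy a covariance identity with respect to the coproduct (they arise from slicing the regular corepresentation against the Haar state, which is $\hh\GGamma$-invariant), so slicing $\Delta^\GGamma$ commutes appropriately with the $\LLambda$-action $\alpha_\LLambda(x)=U^*(\I\otimes x)U$ on $\linf(\GGamma)$; this is the same mechanism that makes Poisson transforms equivariant in Proposition \ref{Pois-map-props}. Complete positivity is immediate because each $\qTr_\gamma$ is a state, hence $(\id\otimes\qTr_{\gamma^c})$ applied to $\Delta^\GGamma(\I_\LLambda)^{1/2}(\I\otimes x)\Delta^\GGamma(\I_\LLambda)^{1/2}$ (which agrees with $(\id\otimes\qTr_{\gamma^c})(\Delta^\GGamma(\I_\LLambda)(\I\otimes x))$ on positive $x$ since $\Delta^\GGamma(\I_\LLambda)$ is a projection commuting with the range) is completely positive, and normality follows since everything is a normal slice. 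Normality and boundedness of the assembled map over all blocks $\gamma$ are uniform because the $t_\gamma$ enter only through $t_\gamma^{-1}t_\gamma=1$ after normalisation.

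\textbf{Main obstacle.} The delicate point is not the formal construction but verifying that the blockwise-normalised map is globally well defined, unital, and completely positive \emph{simultaneously} across all the equivalence classes of $\sim_\LLambda$ — i.e. that the different scalars $t_\gamma$ patch up into a single bona fide ucp map rather than a formal block-diagonal object with unbounded behaviour. This is precisely where Lemma \ref{eqrel} and the Corollary do the real work: they pin down that $t_\gamma$ depends only on the $\sim_\LLambda$-class of $\gamma$ (equivalently, on $i\in\mathcal I$), so that the normalisation is consistent with the central projection structure of $\linf(\LLambda\backslash\GGamma)$ and the map is the identity on the relevant scalars. I expect the bulk of the write-up to consist of carefully identifying $\linf(\LLambda)$ with the appropriate compression of $\linf(\GGamma)$ (keeping track of the $\Irr(\hh\LLambda)\subset\Irr(\hh\GGamma)$ inclusion and the contragredient that appears in \eqref{vvrel}), and then checking the intertwining relation $(\id\otimes\Phi)\circ\alpha_\LLambda^{\linf(\LLambda)} = \alpha_\LLambda^{\linf(\GGamma)}\circ\Phi$ using the covariance of $\qTr_\gamma$; the positivity and normality are comparatively routine once the scalars are under control.
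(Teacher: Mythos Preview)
Your overall strategy is the same as the paper's: form the maps $x\mapsto t_\gamma^{-1}(\id\otimes\qTr_{\gamma^c})\Delta^\GGamma(x)$ on $\linf(\LLambda)\cong\I_\LLambda\linf(\GGamma)$ and assemble them over the $\sim_\LLambda$-classes. However two points in your write-up are off.

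First, the formula you display, $(\id\otimes\qTr_{\gamma^c})\bigl(\Delta^\GGamma(\I_\LLambda)(\I\otimes x)\bigr)$, is not the right object; the paper applies $(\id\otimes\qTr_{\gamma^c})$ directly to $\Delta^\GGamma(x)$ for $x\in\I_\LLambda\linf(\GGamma)$, and this is what the Corollary controls when $x=\I_\LLambda$. Your expression places $x$ on the leg that is being traced out, which does not give a map $\linf(\LLambda)\to\linf(\GGamma)$ of the desired kind.

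Second, and more importantly, you misread Lemma~\ref{eqrel} and the Corollary. They do \emph{not} say that $t_\gamma$ depends only on the $\sim_\LLambda$-class of $\gamma$; they say that for a \emph{fixed} $\gamma$ the element $(\id\otimes\qTr_{\gamma^c})\Delta^\GGamma(\I_\LLambda)$ is a single scalar $t_\gamma$ times the full projection $\sum_{\sigma\sim_\LLambda\gamma}p_\sigma$ (a priori $t_\gamma\neq t_{\gamma'}$ for $\gamma\sim_\LLambda\gamma'$). So the ``patching'' problem you flag as the main obstacle does not exist, and is not solved the way you suggest. The paper simply chooses a section $X:\Irr(\hh\GGamma)/{\sim_\LLambda}\to\Irr(\hh\GGamma)$ and sets $\Phi=\bigoplus_{x}\Phi_{X(x)}$ with $\Phi_\gamma(y)=t_\gamma^{-1}(\id\otimes\qTr_{\gamma^c})\Delta^\GGamma(y)$; since each $\Phi_\gamma$ already lands in the whole block $\bigoplus_{\sigma\sim_\LLambda\gamma}p_\sigma\linf(\GGamma)$ and is ucp there, the direct sum over classes (not over all $\gamma$) is automatically ucp and normal. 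The $\LLambda$-equivariance is then a one-line coassociativity computation, $(\pi\otimes\id)\Delta^\GGamma\circ\Phi_\gamma = (\id\otimes\Phi_\gamma)\circ\Delta^\LLambda$, rather than anything requiring a special covariance property of $\qTr_\gamma$ or an appeal to Proposition~\ref{Pois-map-props}.
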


\begin{proof}
Identify  $\linf(\LLambda)$ with $\linf(\GGamma)\I_\LLambda$. 
Given $\gamma\in\textrm{Irr}(\hh{\GGamma})$ we define a ucp map  $\Phi_\gamma:\linf(\LLambda)\to\bigoplus_{\sigma\sim_\LLambda\gamma}p_\sigma\linf(\GGamma)$ by the formula \[\Phi_\gamma(x) = t_\gamma^{-1}(\id\otimes \qTr_{\gamma^c})\Delta^\GGamma(x) .\] 
Let  $X:\textrm{Irr}(\hh\GGamma)/\sim_\LLambda\to \textrm{Irr}(\hh\GGamma)$ be a section of the canonical surjection $ \textrm{Irr}(\hh\GGamma)\to \textrm{Irr}(\hh\GGamma)/\sim_\LLambda$. Now it suffices to note that  $\Phi:=\bigoplus_{x\in\textrm{Irr}(\hh\GGamma)/\sim_\LLambda}\Phi_{X(x)}:\linf(\LLambda)\to\linf(\GGamma)$ is a ucp $\LLambda$-equivariant map, that is
\[\alpha\circ\Phi = (\id\otimes\Phi)\circ \Delta^\LLambda,\]
where $\alpha:\linf(\GGamma)\to\linf(\LLambda)\vtens\linf(\GGamma)$ is the left action of $\LLambda$ on $\linf(\GGamma)$.	
\end{proof}

We are now ready to formulate and prove the main result of this section.

\begin{thm}\label{thm:rel_amen}
	Let $\GGamma$ be a discrete quantum group and $\LLambda$ a quantum subgroup of $\GGamma$. The following are equivalent:
	\begin{enumerate}
		\item
		$\LLambda$ is relatively amenable in $\G$;
		\item
		$\linf(\G/\LLambda)$ is relatively amenable in $\linf(\G)$;
		\item
		$\LLambda$ is amenable.
	\end{enumerate}
\end{thm}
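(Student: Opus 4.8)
The plan is to prove the cycle of implications $(2)\Rightarrow(1)\Rightarrow(3)\Rightarrow(2)$. The easiest step is $(2)\Rightarrow(1)$: if $\Psi:\linf(\G)\to\linf(\G/\LLambda)$ is a ucp $\G$-equivariant map, then in particular it is $\LLambda$-equivariant, and composing it with the counit character $\epsilon$ of $\G$ restricted to $\linf(\G/\LLambda)$ (or rather evaluating at the support projection $p_0$ and using that $\linf(\G/\LLambda)$ contains $p_0$, whence $\epsilon$ restricts to a state there) yields a state $m=\epsilon\circ\Psi$ on $\linf(\G)$; one checks using $\G$-equivariance of $\Psi$ and the invariance of $\epsilon$ that $m$ is a $\LLambda$-invariant mean, so $\LLambda$ is relatively amenable. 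The implication $(3)\Rightarrow(2)$ is the one where Proposition \ref{Theorem:ucp lifts} does the work: if $\LLambda$ is amenable, there is a $\LLambda$-invariant mean $m_\LLambda$ on $\linf(\LLambda)$, and composing the normal ucp $\LLambda$-equivariant lift $\Phi:\linf(\LLambda)\to\linf(\G)$ from that proposition with a conditional-expectation-type averaging built from $m_\LLambda$ against the $\LLambda$-action on $\linf(\G)$ — concretely $\Psi=(m_\LLambda\ot\id)\circ\alpha$, where $\alpha$ is the $\LLambda$-action on $\linf(\G)$ — produces a ucp map $\linf(\G)\to\linf(\G)^{\alpha}=\linf(\G/\LLambda)$; one then has to verify this map is $\G$-equivariant, which should follow from the fact that the $\G$-action and the $\LLambda$-action on $\linf(\G)$ (both restrictions of $\Delta$, read on different legs) commute in the appropriate sense.

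The remaining implication $(1)\Rightarrow(3)$ is where I expect the main obstacle to lie, and where the machinery developed in the preceding pages — Izumi's central-multiplier theorem (Theorem \ref{thm_center}), the equivalence relation $\sim_\LLambda$ on $\Irr(\hh\GGamma)$ and Kenny's relation $\sim$ on the minimal central projections of $\linf(\LLambda\backslash\GGamma)$, and the Corollary computing $(\id\ot\qTr_{\gamma^c})\Delta^\GGamma(\I_\LLambda)=t_\gamma\sum_{\sigma\sim_\LLambda\gamma}p_\sigma$ — is presumably deployed. The idea: starting from a $\LLambda$-invariant mean on $\linf(\G)$, I want to manufacture an invariant mean on $\linf(\LLambda)$. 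Identifying $\linf(\LLambda)\cong\linf(\G)\I_\LLambda$, the natural candidate is to compose the given mean with a conditional-expectation-type or slice map $\linf(\LLambda)\to\linf(\G)$; but the honest direction is the reverse — one needs a ucp $\LLambda$-equivariant map $\linf(\G)\to\linf(\LLambda)$, and the ``restriction'' $x\mapsto x\I_\LLambda$ is generally not equivariant because $\I_\LLambda$ is central in $\linf(\G)$ but the coproduct does not respect it naively; this is exactly the subtlety the quantum case introduces over the classical one. The resolution should use the maps $\Phi_\gamma(x)=t_\gamma^{-1}(\id\ot\qTr_{\gamma^c})\Delta^\GGamma(x)$ from the proof of Proposition \ref{Theorem:ucp lifts} together with their adjoints/partial inverses: the $\LLambda$-invariant mean pulls back along $\Phi$ to a $\LLambda$-invariant mean on $\linf(\LLambda)$ provided $\Phi$ admits a ucp $\LLambda$-equivariant left inverse, or more robustly, one averages the mean directly over the section $X$ used to assemble $\Phi$. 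Since $\Phi$ is injective (it is a normal ucp lift of the identity in a suitable sense) and equivariant, the mean $m$ on $\linf(\G)$ gives a $\LLambda$-invariant functional $m\circ\Phi$ on $\linf(\LLambda)$ which is a state, hence the desired invariant mean; equivariance of $\Phi$ translates $\LLambda$-invariance of $m$ into $\LLambda$-invariance of $m\circ\Phi$.

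Thus the skeleton is: $(2)\Rightarrow(1)$ by evaluating an equivariant ucp map at the counit; $(3)\Rightarrow(2)$ by averaging the lift $\Phi$ of Proposition \ref{Theorem:ucp lifts} against an invariant mean on $\linf(\LLambda)$ and checking the average lands in $\linf(\G/\LLambda)$ and is $\G$-equivariant; $(1)\Rightarrow(3)$ by pulling back a relatively invariant mean along the same $\LLambda$-equivariant lift $\Phi:\linf(\LLambda)\to\linf(\G)$. The hard part will be $(1)\Rightarrow(3)$, specifically ensuring that the composition $m\circ\Phi$ genuinely detects $\LLambda$-invariance — i.e.\ that the $\LLambda$-equivariance in Proposition \ref{Theorem:ucp lifts} (which reads $\alpha\circ\Phi=(\id\ot\Phi)\circ\Delta^\LLambda$) combined with $\LLambda$-invariance of $m$ forces $\LLambda$-invariance of $m\circ\Phi$ on the nose, using that the $\LLambda$-action $\alpha$ on $\linf(\G)$ restricted along $\Phi$ is the co-multiplication of $\LLambda$, so that $m$ being $\alpha$-invariant on $\linf(\G)$ pushes down correctly; tracking the precise form of the co-kernel and the fact (Proposition \ref{prop:coker->Baaj-Vaes}) that these averaging constructions stay inside Baaj–Vaes subalgebras is what makes the argument work.
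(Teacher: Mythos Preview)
Your proposal is essentially correct and follows the same cycle $(2)\Rightarrow(1)\Rightarrow(3)\Rightarrow(2)$ as the paper (the paper writes it as $(1)\Rightarrow(3)$, $(3)\Rightarrow(2)$, $(2)\Rightarrow(1)$, but the content is identical). A few clarifications are in order, however.

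For $(1)\Rightarrow(3)$ you badly overestimate the difficulty. Once Proposition~\ref{Theorem:ucp lifts} is established (and that is where the real work lies --- Izumi's theorem, the equivalence relation $\sim_\LLambda$, and the computation of $(\id\otimes\qTr_{\gamma^c})\Delta^\GGamma(\I_\LLambda)$ were all spent proving \emph{that} proposition, not this implication), the argument is one line: if $m$ is a $\LLambda$-invariant mean on $\linf(\G)$ and $\Phi:\linf(\LLambda)\to\linf(\G)$ is the $\LLambda$-equivariant ucp lift, then $m\circ\Phi$ is a $\LLambda$-invariant mean on $\linf(\LLambda)$. Your final paragraph actually carries out this verification correctly; the preceding speculation about adjoints, partial inverses, and averaging over the section $X$ is unnecessary.

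For $(3)\Rightarrow(2)$ you mention composing with $\Phi$, but your concrete formula $\Psi=(m_\LLambda\otimes\id)\circ\alpha$ does not use $\Phi$ at all --- nor should it. The lift $\Phi$ plays no role in this implication; only the invariant mean $m_\LLambda$ and the action of $\LLambda$ on $\linf(\G)$ are needed. Also, watch the side of the action: the paper uses the \emph{right} action $\rho^r$ of $\LLambda$ on $\linf(\G)$ so that the fixed-point algebra is $\linf(\G/\LLambda)$; with a left action you would land in $\linf(\LLambda\backslash\G)$.

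For $(2)\Rightarrow(1)$, the remark about $p_0\in\linf(\G/\LLambda)$ is beside the point. The counit $\varepsilon$ is a normal state on all of $\linf(\G)$ and restricts to $\linf(\G/\LLambda)$ automatically; what matters is that this restriction is $\LLambda$-invariant, which follows because $\pi(y)=\varepsilon(y)\I$ for $y\in\linf(\G/\LLambda)$ (applying $\pi\otimes\pi$ to $\Delta(y)$ and using $(\id\otimes\pi)\Delta(y)=y\otimes\I$).
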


\begin{proof}
(1)$\implies$(3): suppose $\LLambda$ is relatively amenable in $\G$, i.e. there is a $\LLambda$-invariant mean on $\linf(\G)$. Composing this mean with the $\LLambda$-equivariant map $\Phi:\linf(\LLambda)\to\linf(\GGamma)$ from Theorem \ref{Theorem:ucp lifts}, we obtain a $\LLambda$-invariant mean on $\linf(\LLambda)$. \\
(3)$\implies$(2): suppose $\LLambda$ is amenable, and let $m$ be an invariant state on $\linf(\LLambda)$. Denote by $\rho^r:\linf(\G)\to\linf(\G)\overline{\otimes}\linf(\LLambda)$ the right action of $\LLambda$ on $\linf(\G)$. Consider the ucp map $\Psi:=(\id\otimes m)\circ\rho^r: \linf(\G)\to\linf(\G)$. For any $a\in\linf(\G)$ we have
\[\begin{split}
\rho^r(\Psi(a)) &= \rho^r \big((\id\otimes m)\rho^r(a)\big) = (\id\otimes \id\otimes m)\big( (\rho^r\otimes \id)\rho^r(a)\big) 
\\&= 
(\id\otimes \id\otimes m)\big( (\id\otimes\Delta_\LLambda)\rho^r(a) \big)= \I\otimes [(\id\otimes m)\rho^r(a)]
\\&=
\I\otimes\Psi(a),
\end{split}\]
which shows $\Psi(a)\in \linf(\G/\LLambda)$. Moreover, for any $a\in\linf(\G)$ and $\om\in \ell^1(\G)$ we have
\[\begin{split}
(\om\otimes \id)\big(\Delta_\G(\Psi(a))\big) &= (\om\otimes \id)\big(\Delta_\G((\id\otimes m)\rho^r(a))\big) 
= 
(\om\otimes\id\otimes m)\big((\Delta_\G\otimes \id)\rho^r(a)\big) 
\\&= 
(\om\otimes\id\otimes m)\big((\id\otimes\rho^r)\Delta_\G(a)\big) = 
(\om\otimes[(\id\otimes m)\circ\rho^r])\Delta_\G(a)
\\&=
\Psi\big((\om\otimes \id)\Delta_\G(a)\big),
\end{split}\]
which shows that $\Psi$ is $\G$-equivariant. Hence $\linf(\G/\LLambda)$ is relatively amenable in $\linf(\G)$.\\
(2)$\implies$(1): let $\Psi : \linf(\G)\to\linf(\G/\LLambda)$ be a ucp $\G$-equivariant map. 
The restriction of the co-unit $\varepsilon$ of $\linf(\G)$ to $\linf(\G/\LLambda)$ is $\LLambda$-invariant. Hence $\tilde m= \varepsilon\circ \Psi$ is a $\LLambda$-invariant state on $\linf(\G)$. \end{proof}

\bigskip

Next we investigate the connection of the above notion of relative amenability for invariant subalgebras with respect to the structure of the dual compact quantum group $\GG = \widehat\G$. Recall indeed that invariant subalgebras of $\ell^\infty(\G)$ can also arise as subalgebras $\ell^\infty(\widehat\HH)$ associated with closed quantum subgroups $\HH$ of $\GG$.

\begin{defn}\label{rel_coam}
Let $\QG$ be a compact quantum group and let $\QH$ be a closed quantum subgroup of $\QG$ realized by $\pi\in\Mor(\C^u(\QG), \C^u(\QH))$. Denote 
\begin{itemize}
    \item $\C^u(\QH\backslash\QG) =\{x\in \C^u(\QG):(\pi\otimes\id)\Delta^u(x) = \I\otimes x\}$;
    \item $\C(\QH\backslash\QG) = \Lambda_\QG(\C^u(\QH\backslash\QG))$.
\end{itemize}
We say that $\QH\backslash\QG$ (which is in general just a symbol, not a compact quantum group) is a  co-amenable quantum quotient  of  $\QG$ if the restriction $\varepsilon|_{\C^u(\QH\backslash\QG)}:\C^u(\QH\backslash\QG)\to\CC$ admits a reduced version, i.e. there exists  $\varepsilon^r:\C(\QH\backslash\QG)\to\CC$ satisfying \[\varepsilon^r\circ{\Lambda_\QG}|_{\C^u(\QH\backslash\QG)} = \varepsilon|_{\C^u(\QH\backslash\QG)}\] where $\Lambda_{\QG}:\C^u(\QG)\to \C(\QG)$ is the reducing surjection.
\end{defn}

\begin{remark}\label{rem:coam-quo1}
Let $\QG$ be a compact quantum group and let $\QH\subset \QG$ be a normal quantum subgroup. The quotient $\QH\backslash\QG$ viewed as a compact quantum group will be denoted by $\mathbb{L}$. Let us note that the quotient $\QH\backslash\QG$ is co-amenable if and only if $\mathbb{L}$ is a co-amenable quantum group. Indeed the  exact sequence of quantum groups
\[\{e\}\to\QH\to\QG\to\mathbb{L}\to\{e\}\]
has the dual version 
\[\{e\}\to\hh{\mathbb{L}}\to\hh\QG\to\hh\QH\to\{e\}.\]
Using \cite[Theorem 3.2]{Induction} we see that $\C^u(\QH\backslash\QG)$ can be identified with $\C^u(\mathbb{L})$.
\end{remark}

\begin{thm}\label{thm:rel_amen_coamen}
Let $\QG$ be a compact quantum group and let $\QH\subset \QG$ be a normal quantum subgroup. Then the normal Baaj-Vaes subalgebra $\linf(\hh\QH)\subset\linf(\hh\QG)$ is relatively amenable iff $\QH\backslash \QG$ is a co-amenable quotient.
\end{thm}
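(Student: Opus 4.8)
The statement asserts that $\linf(\hh\QH) \subset \linf(\hh\QG)$ being relatively amenable (in the sense of Definition \ref{defn:rel-amen-subalg}, i.e.\ the existence of a ucp $\hh\QG$-equivariant map $\Psi : \linf(\hh\QG) \to \linf(\hh\QH)$, noting that $\hh\QH$ corresponds to a quantum subgroup of the discrete quantum group $\hh\QG$) is equivalent to $\QH\backslash\QG$ being a co-amenable quotient. Since $\QH$ is normal, by Remark \ref{rem:coam-quo1} the quotient $\mathbb{L} := \QH\backslash\QG$ is itself a compact quantum group, $\C^u(\QH\backslash\QG) \cong \C^u(\mathbb{L})$, and co-amenability of the quotient is the same as co-amenability of $\mathbb{L}$. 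On the dual side we have the exact sequence $\{e\}\to\hh{\mathbb{L}}\to\hh\QG\to\hh\QH\to\{e\}$, so $\hh{\mathbb{L}}$ is a closed normal quantum subgroup of the discrete quantum group $\hh\QG$, and $\linf(\hh\QH) = \linf(\hh\QG / \hh{\mathbb{L}})$ as an invariant subalgebra. Thus, by Theorem \ref{thm:rel_amen} applied to $\G = \hh\QG$ and $\LLambda = \hh{\mathbb{L}}$, relative amenability of $\linf(\hh\QH) = \linf(\hh\QG/\hh{\mathbb{L}})$ in $\linf(\hh\QG)$ is equivalent to amenability of the discrete quantum group $\hh{\mathbb{L}}$.

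The plan is therefore to reduce everything to the following identity of properties: \emph{$\hh{\mathbb{L}}$ is amenable $\iff$ $\mathbb{L}$ is co-amenable}. This is precisely the standard duality between amenability of a discrete quantum group and co-amenability of its (compact) dual, which is recorded in the Preliminaries: ``a discrete quantum group $\G$ is amenable if and only if $\hh\G$ is co-amenable.'' Applying this with $\G = \hh{\mathbb{L}}$ (so that $\hh\G = \mathbb{L}$, using that $\mathbb{L}$ is compact hence $\hh{\mathbb{L}}$ is discrete and $\hh{\hh{\mathbb{L}}} = \mathbb{L}$) gives exactly the equivalence needed. Chaining the three equivalences — co-amenability of the quotient $\QH\backslash\QG$ $\iff$ co-amenability of $\mathbb{L}$ (Remark \ref{rem:coam-quo1}) $\iff$ amenability of $\hh{\mathbb{L}}$ (Kac--Woronowicz duality of amenability/co-amenability) $\iff$ relative amenability of $\linf(\hh\QG/\hh{\mathbb{L}}) = \linf(\hh\QH)$ in $\linf(\hh\QG)$ (Theorem \ref{thm:rel_amen}) — completes the proof.

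The one genuine verification, and the step I expect to require the most care, is the bookkeeping of which duality picture applies: one must make sure that the ``relative amenability of an invariant subalgebra'' notion in Definition \ref{defn:rel-amen-subalg} (phrased for a discrete quantum group $\G$ and its invariant subalgebra) is being applied to $\G = \hh\QG$, which is indeed discrete since $\QG$ is compact, and that the invariant subalgebra $\linf(\hh\QH)$ does arise as $\linf(\G/\LLambda)$ for the \emph{quantum subgroup $\LLambda = \hh{\mathbb{L}}$ of $\hh\QG$} — this is where normality of $\QH$ is essential, since only then is $\QH\backslash\QG$ a quotient quantum group and $\hh{\mathbb{L}} = \widehat{\QH\backslash\QG}$ a (normal) quantum subgroup of $\hh\QG$ with $\linf(\hh\QH) = \linf(\hh\QG/\hh{\mathbb{L}})$, as explained around \eqref{ex1}--\eqref{ex2}. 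Once this identification is in place, Theorem \ref{thm:rel_amen} and the amenability/co-amenability duality do all the work, and no further computation is needed.
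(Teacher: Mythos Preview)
Your proposal is correct and follows essentially the same argument as the paper: identify $\linf(\hh\QH) = \linf(\hh\QG/\hh{\mathbb{L}})$ via the dual exact sequence (using normality of $\QH$), apply Theorem \ref{thm:rel_amen} to reduce to amenability of $\hh{\mathbb{L}}$, and then invoke the amenability/co-amenability duality together with Remark \ref{rem:coam-quo1}. The paper's proof is a compressed version of exactly this chain of equivalences.
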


\begin{proof}
By the above remark we have $\linf(\hh\QH) = \linf(\hh\QG/\hh{\mathbb{L}})$. Thus, by Theorem \ref{thm:rel_amen}, the relative amenability of $\linf(\hh\QH)$ is equivalent to amenability of $\hh{\mathbb{L}}$, hence equivalent to co-amenability of the compact quantum group $\mathbb{L}$ (\cite{Tomatsu06}). The latter is equivalent to co-amenability of the quotient $\QH\backslash \QG$ as mentioned in Remark \ref{rem:coam-quo1}.
\end{proof}

We do not know the relation between relative amenability of $\linf(\hh\QH)$ in $\linf(\hh\QG)$ and the co-amenability of the quotient $\QH\backslash \QG$ in general for non-normal quantum subgroups $\QH\subset \QG$.

\begin{thm}\label{thm_fun}
Let $\QG$ be a compact quantum group and $\QH\subset \QG$ be a compact quantum subgroup  given by $\pi\in\Mor(\C^u(\QG),\C^u(\QH))$. Then $\QH\backslash \QG $ is co-amenable if and only if  $\pi\in\Mor(\C^u(\QG),\C^u(\QH))$ admits a reduced version, that is  there exists $\tilde\pi\in\Mor(\C(\QG),\C(\QH))$ such that $\tilde\pi\circ\Lambda_{\QG} = \Lambda_{\QH}\circ\pi$. 
\end{thm}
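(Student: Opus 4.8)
The plan is to connect the two one-sided conditions via the universal/reduced picture of the quotient algebra $\C^u(\QH\backslash\QG)$ and the counit. First I would prove the easy implication: if $\pi$ admits a reduced version $\tilde\pi\in\Mor(\C(\QG),\C(\QH))$, then $\QH\backslash\QG$ is co-amenable. Here one uses that $\C(\QH)$ always admits its own counit in reduced form only when $\QH$ is coamenable, so instead I would argue directly: the fixed-point subalgebra $\C(\QH\backslash\QG)=\Lambda_\QG(\C^u(\QH\backslash\QG))$ is, by equivariance of $\Lambda_\QG$ and the intertwining property $\tilde\pi\circ\Lambda_\QG=\Lambda_\QH\circ\pi$, exactly $\{x\in\C(\QG):(\tilde\pi\otimes\id)\Delta_\QG(x)=\I\otimes x\}$. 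Since the Haar state $h_\QG$ of $\QG$ is faithful on $\C(\QG)$ and restricts to a state on this subalgebra, and since the reduced counit $\varepsilon^r$ we seek must agree with $\varepsilon|_{\C^u(\QH\backslash\QG)}$, the point is that on the fixed-point algebra the Haar state is itself multiplicative — indeed for $x$ fixed, $h_\QG(x)\I=(\,h_\QH\otimes\id)(\tilde\pi\otimes\id)\Delta_\QG(x)=x$, hence $\C(\QH\backslash\QG)\cong\CC$ is trivial, wait — that is too strong in general, so more carefully one restricts to the dense Hopf-algebra level $\Pol(\QH\backslash\QG)$ where $\varepsilon$ is always defined, and shows $\ker(\Lambda_\QG|_{\C^u(\QH\backslash\QG)})\subseteq\ker\varepsilon$ because any element of that kernel, being mapped to $0$ by the faithful reduced state composed with the conditional-expectation onto the quotient, is killed by $\varepsilon$ as well. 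This gives the factorisation $\varepsilon^r$.

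For the converse — $\QH\backslash\QG$ co-amenable implies $\pi$ has a reduced version — I would proceed as follows. Co-amenability of the quotient gives a character $\varepsilon^r$ on $\C(\QH\backslash\QG)$ with $\varepsilon^r\circ\Lambda_\QG|_{\C^u(\QH\backslash\QG)}=\varepsilon|_{\C^u(\QH\backslash\QG)}$. The strategy is to build a Poisson-type transform: define $E:\C^u(\QG)\to\C^u(\QG)$ by $E=(\varepsilon^r\comp\Lambda_\QG|_{\C^u(\QH\backslash\QG)})$-averaging over the $\QH$-action, concretely $E=(\pi\otimes\id)$ followed by slicing with a suitable state — but the cleanest route is to use the GNS representation $\lambda_\QG$ of $h_\QG$ on $\Ltwo(\QG)$ and show that $\ker\Lambda_\QG\subseteq\ker(\Lambda_\QH\circ\pi)$. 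An element $a\in\ker\Lambda_\QG$ satisfies $h_\QG(a^*a)=0$; I want $h_\QH(\pi(a)^*\pi(a))=0$, i.e.\ $(h_\QH\circ\pi)(a^*a)=0$. Now $h_\QH\circ\pi$ is a $\QH$-invariant (from the right, via $(\pi\otimes\id)\Delta_\QG$) state on $\C^u(\QG)$; one shows using the co-amenability hypothesis that $h_\QH\circ\pi$ factors through the Haar state of $\QG$ — precisely, $(h_\QH\circ\pi)=\varepsilon^r\circ\Lambda_\QG\circ(E_{\QH\backslash\QG})$ where $E_{\QH\backslash\QG}=(h_\QH\otimes\id)\circ(\pi\otimes\id)\circ\Delta_\QG$ is the conditional expectation onto $\C^u(\QH\backslash\QG)$, and then faithfulness issues reduce to the inequality $h_\QH\circ\pi\le C\, h_\QG$ on positive elements, which follows because $E_{\QH\backslash\QG}$ is $h_\QG$-preserving and $\varepsilon^r$ is bounded on the reduced quotient. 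Hence $\pi$ descends to $\tilde\pi:\C(\QG)\to\C(\QH)$.

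The main obstacle I anticipate is the converse direction, specifically justifying that $h_\QH\circ\pi$ is dominated by (a multiple of) $h_\QG$ on $\C^u(\QG)$ purely from co-amenability of the quotient $\QH\backslash\QG$ — the subtlety being that $\QH$ itself need not be co-amenable, so one genuinely must exploit that the \emph{relative} object $\QH\backslash\QG$ behaves well, routing through the conditional expectation $E_{\QH\backslash\QG}$ onto the fixed-point algebra and the reduced counit $\varepsilon^r$ living on $\C(\QH\backslash\QG)$ rather than on $\C(\QH)$. Once that domination is in place, the inclusion of kernels $\ker\Lambda_\QG\subseteq\ker(\Lambda_\QH\comp\pi)$ is immediate and the reduced morphism $\tilde\pi$ exists by the universal property of the quotient $\C^*$-algebra; checking that $\tilde\pi$ is a morphism (non-degenerate) and intertwines the coproducts is then routine from the corresponding properties of $\pi$ and $\Lambda_\QG$, $\Lambda_\QH$.
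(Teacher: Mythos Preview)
Your plan for the converse (co-amenable $\Rightarrow$ $\tilde\pi$ exists) is correct and is in fact the heart of the paper's argument, though the paper packages it differently. The paper uses Rieffel induction from $\varepsilon^r:\C(\QH\backslash\QG)\to\CC$ to build a representation $\pi_R:\C(\QG)\to B(\Hil_R)$, and then computes that the associated vector state is $h_\QH\circ\pi$ lifted through $\Lambda_\QG$. Your route is more direct: the identity $h_\QH\circ\pi=\varepsilon\circ E^u=\varepsilon^r\circ\Lambda_\QG\circ E^u=\varepsilon^r\circ E\circ\Lambda_\QG$ (using $E\circ\Lambda_\QG=\Lambda_\QG\circ E^u$) immediately gives $\ker\Lambda_\QG\subseteq\ker(\Lambda_\QH\circ\pi)$, since $\Lambda_\QG(a)=0$ implies $h_\QH(\pi(a^*a))=\varepsilon^r(E(0))=0$. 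You do not actually need the domination $h_\QH\circ\pi\leq C\,h_\QG$; the identity already suffices, so what you flag as the ``main obstacle'' is not an obstacle at all.

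For the forward direction ($\tilde\pi$ exists $\Rightarrow$ co-amenable), however, your argument has a gap. Your first attempt was on the right track but you miscalculated and abandoned it: from $(\tilde\pi\otimes\id)\Delta(x)=\I\otimes x$ for $x\in\C(\QH\backslash\QG)$ you should \emph{not} slice with $h_\QH$ on the left (which just returns $x$, as you noticed), but rather apply $\tilde\pi$ on the right leg to obtain $\Delta_\QH(\tilde\pi(x))=\I\otimes\tilde\pi(x)$. This forces $\tilde\pi(x)\in\CC\I$, and one checks the scalar is $\varepsilon(x^u)$; thus $\varepsilon^r:=\tilde\pi|_{\C(\QH\backslash\QG)}$ is the reduced counit. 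This is exactly what the paper does. Your fallback kernel argument (``mapped to $0$ by the faithful reduced state composed with the conditional expectation \ldots is killed by $\varepsilon$'') is too vague to constitute a proof: you have not explained why vanishing under $h_\QG$ (which is what $\ker\Lambda_\QG$ detects) should force vanishing under $\varepsilon$ on the quotient subalgebra, and indeed there is no such implication without using $\tilde\pi$ in the way just described.
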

\begin{proof}
Suppose that $\QH\backslash\QG$ is a co-amenable quantum quotient. In order to prove that $\pi$ admits the reduced version it is enough to show that $\Lambda_{\QH}\circ\pi$ viewed as a map from $\C^u(\QG)$ to  $B(\Ltwo(\QH))$ admits a reduced version  $\C(\QG)\to B(\Ltwo(\QH))$.  Note that the map  $\C^u(\QG)\to B(\Ltwo(\QH))$ is obtained by the GNS construction applied to the state $h_{\QH}\circ\pi\in \C^u(\QG)^*$ where $h_{\QH}$ is the Haar state on $\C^u(\QH)$.  

Let  $E:  \C(\QG)\to \C(\QH\backslash\QG)$ be the conditional expectation and $E^u: \C^u(\QG)\to \C^u(\QH\backslash\QG)$  its universal version:\[E^u(c) = (h_{\QH}\circ\pi\otimes\id)\Delta^u(c)\] for all $c\in \C^u(\QG)$ and note that $E(\Lambda_{\QG}(c)) = \Lambda_{\QG}(E^u(c))$. 
Let us apply the Rieffel induction (\cite{Rieffel}) to  $\varepsilon_r:\C(\QH\backslash\QG)\to \CC$ and denote the induced representation  by 
  $\pi_R:\C(\QG)\to B(\Hil_R)$, where   $\Hil_R =  \C(\QG)\otimes_{\C(\QH\backslash\QG)}\CC$. Clearly $\Omega = \I\otimes 1$ is the cyclic vector for $\pi_R$. Let us fix $a,b\in \C(\QG)$ and $a^u,b^u\in \C^u(\QG)$ such that $\Lambda_\QG(a^u) = a$ and $\Lambda_\QG(b^u) = b$.   The scalar product in $\Hil_R$  is given by 
\begin{align*}
    \langle a\otimes 1,b\otimes 1\rangle &=\varepsilon^r(E(a^*b))=\varepsilon^r(\Lambda_{\QG}(h_{\QH}\circ\pi\otimes\id)(\Delta^u(a^*_ub_u)))=
    h_{\QH}(\pi(a_u^*b_u))
\end{align*}
In particular the state $\omega:\C(\GG)\ni a\to \langle \Omega|\pi_R(a)\Omega\rangle\in\CC$ satisfies $h_\QH(\pi(a_u)) = \omega(a)$ and $\pi_R$ is the GNS representation assigned to $\omega$. This shows that   $\pi_R$ is  the reduced version of $\pi$. 

Conversely, if a given morphism $\pi\in\Mor(\C^u(\QG),\C^u(\QH))$ identifying $\HH$ with a closed quantum subgroup of $\QG$ admits a reduced version $\tilde\pi\in\Mor(\C(\QG),\C(\QH))$ then $\QH\backslash\QG$ is co-amenable. Indeed,  for every $x\in \C(\QH\backslash\QG)$ we have  $(\tilde\pi\otimes\id)(\Delta(x)) = \I\otimes x$. In particular  $\Delta_\QH(\tilde\pi(x)) = \I\otimes \tilde\pi(x)$ and $\tilde\pi(x)\in\CC\I$. It is easy to check that $\tilde\pi(x) = \varepsilon(x^u)\I$ where $x^u\in \C^u(\QH\backslash\QG)$ is such that $\Lambda_{\QG}(x^u) = x$. 
\end{proof}

\begin{thm}
Let $\QG$ be a compact quantum group and $\QH$ a closed quantum subgroup of $\QG$. Then the following are equivalent:
\begin{itemize}
    \item $\QG$ is co-amenable;
    \item $\QH$ is co-amenable quantum group and $\QH\backslash \QG$ is co-amenable quantum quotient of $\QG$. 
\end{itemize}
Moreover if $\QH_1\subset\QH_2\subset \QG $ and $\pi_{\QH_1}\in\Mor(\C^u(\QG),\C^u(\QH_1))$ admits a reduced version, 
then $\pi_{\QH_2}\in\Mor(\C^u(\QG),\C^u(\QH_2))$ also admits a reduced version.
\end{thm}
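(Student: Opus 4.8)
The plan is to derive everything from Theorem \ref{thm_fun}, which reduces co-amenability of a quotient $\QH\backslash\QG$ to the existence of a reduced version of the defining morphism $\pi\in\Mor(\C^u(\QG),\C^u(\QH))$, together with the relative-amenability machinery of Theorem \ref{thm:rel_amen} transported through the dual picture. For the first equivalence: if $\QG$ is co-amenable, then $\Lambda_\QG$ is an isomorphism, so trivially $\pi$ has the reduced version $\Lambda_\QH\circ\pi\circ\Lambda_\QG^{-1}$, giving co-amenability of $\QH\backslash\QG$ by Theorem \ref{thm_fun}; moreover co-amenability passes to closed quantum subgroups (this is known: a closed quantum subgroup of a co-amenable compact quantum group is co-amenable, which can be seen since amenability of $\hh\QG$ plus the existence of the Haar-state-preserving conditional expectation $\Linf(\hh\QH)\to$ forces amenability of $\hh\QH$, or directly via the dual Baaj--Vaes inclusion and a standard averaging argument). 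Conversely, assume $\QH$ is co-amenable and $\QH\backslash\QG$ is a co-amenable quotient. First I would use Theorem \ref{thm_fun} to get $\tilde\pi\in\Mor(\C(\QG),\C(\QH))$ with $\tilde\pi\circ\Lambda_\QG=\Lambda_\QH\circ\pi$. One then wants the counit $\varepsilon_\QG\colon\C^u(\QG)\to\CC$ to factor through $\C(\QG)$. The natural route: $\varepsilon_\QG = \varepsilon^r_{\QH\backslash\QG}\circ E^u$ is false in general, so instead write $\varepsilon_\QG$ as a composition involving $\tilde\pi$ and the (reduced) counit of $\QH$ on a suitable corner. More precisely, since $\QH\backslash\QG$ is co-amenable we have a reduced counit $\varepsilon^r$ on $\C(\QH\backslash\QG)$, and since $\QH$ is co-amenable we have a reduced counit $\varepsilon^r_\QH$ on $\C(\QH)$; the composition $\varepsilon^r_\QH\circ\tilde\pi$ on $\C(\QG)$ agrees with $\varepsilon_\QH\circ\pi$ on $\C^u(\QG)$, which in general is \emph{not} $\varepsilon_\QG$.

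\textbf{The genuine argument.} The correct way, which I would actually carry out, is to work with means rather than counits. Co-amenability of $\QH$ is equivalent (by \cite{Tomatsu06}) to amenability of the discrete dual $\hh\QH$; co-amenability of the quotient $\QH\backslash\QG$ says precisely that $\varepsilon|_{\C^u(\QH\backslash\QG)}$ has a reduced version, which by the Rieffel-induction computation in the proof of Theorem \ref{thm_fun} means the GNS representation of $h_\QH\circ\pi$ on $\C^u(\QG)$ descends to $\C(\QG)$. Combining these: amenability of $\hh\QH$ gives an invariant mean on $\linf(\hh\QH)$, which via the $\LLambda$-equivariant ucp lift $\Phi$ of Proposition \ref{Theorem:ucp lifts} (applied with $\G=\hh\QG$, $\LLambda=\hh\QH$ — note $\hh\QH$ is a quantum subgroup of the discrete quantum group $\hh\QG$) produces a $\hh\QH$-invariant mean on $\linf(\hh\QG)$, i.e. $\hh\QH$ is relatively amenable in $\hh\QG$. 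By Theorem \ref{thm:rel_amen_coamen} applied in the dual direction — here I use that $\QH$ normal in $\QG$ is \emph{not} assumed, so I should instead argue directly — relative amenability of $\linf(\hh\QH)$ together with co-amenability of $\QH\backslash\QG$ should force an invariant mean on all of $\linf(\hh\QG)$, i.e. amenability of $\hh\QG$, i.e. co-amenability of $\QG$. The mechanism: the co-amenable quotient gives a $\hh\QG$-equivariant conditional-expectation-type ucp map $\linf(\hh\QG)\to\linf(\hh\QH)$ (dual to the reduced inclusion $\C(\QH\backslash\QG)\subset\C(\QG)$), compose it with the invariant mean on $\linf(\hh\QH)$ coming from relative amenability, and check the result is a genuine invariant mean on $\linf(\hh\QG)$ using equivariance.

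\textbf{The last assertion} (transitivity of admitting a reduced version along $\QH_1\subset\QH_2\subset\QG$) I would handle as follows. Since $\pi_{\QH_1}$ factors as $\pi_{\QH_1}=\pi_{12}\circ\pi_{\QH_2}$ where $\pi_{12}\in\Mor(\C^u(\QH_2),\C^u(\QH_1))$ realizes $\QH_1$ as a closed quantum subgroup of $\QH_2$, existence of a reduced version $\tilde\pi_{\QH_1}\in\Mor(\C(\QG),\C(\QH_1))$ means $\C(\QH_1)$ is a quotient of $\C(\QG)$ compatible with $\Lambda$'s. By Theorem \ref{thm_fun}, $\QH_1\backslash\QG$ is co-amenable. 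Now I would observe that the conditional expectation factors: $E^u_{\QH_1\backslash\QG}$ on $\C^u(\QG)$ is the composition of $E^u_{\QH_2\backslash\QG}$ followed by $E^u_{\QH_1\backslash\QH_2}$ applied inside $\C^u(\QH_2\backslash\QG)$, so a reduced version of $\varepsilon$ on $\C(\QH_1\backslash\QG)$ restricts to a candidate reduced version of $\varepsilon$ on $\C(\QH_2\backslash\QG)\subseteq\C(\QH_1\backslash\QG)$ — one must check this inclusion holds at the reduced level, which follows because $h_{\QH_1}\circ\pi_{12}$ is a state majorizing (in GNS terms) $h_{\QH_2}$ appropriately, so the GNS space for $h_{\QH_2}\circ\pi_{\QH_2}$ is a subspace of that for $h_{\QH_1}\circ\pi_{\QH_1}$ and descends to $\C(\QG)$ whenever the larger one does. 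Hence $\QH_2\backslash\QG$ is co-amenable, and Theorem \ref{thm_fun} gives the reduced version of $\pi_{\QH_2}$.

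\textbf{Main obstacle.} The delicate point is the converse of the first equivalence: showing that co-amenability of $\QH$ plus co-amenability of the quotient $\QH\backslash\QG$ implies co-amenability of $\QG$ \emph{without} a normality hypothesis. The proof of Theorem \ref{thm:rel_amen_coamen} used normality to identify $\linf(\hh\QH)$ with $\linf(\hh\QG/\hh{\mathbb{L}})$ and invoke Theorem \ref{thm:rel_amen}; here one cannot do that. Instead one must directly produce a two-sided invariant mean on $\linf(\hh\QG)$ by pasting the invariant mean coming from amenability of $\hh\QH$ (lifted via $\Phi$ of Proposition \ref{Theorem:ucp lifts}) with the averaging over the co-amenable quotient. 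Getting the invariance on \emph{both} sides — and verifying that the Poisson-transform / equivariance identities survive the composition — is where the real work lies; this is presumably why the authors placed this theorem after the full relative-amenability toolkit.
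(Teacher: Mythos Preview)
Your proposal contains a genuine error that causes you to reject the correct, simple argument and embark on an unnecessary detour.

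You write that ``the composition $\varepsilon^r_\QH\circ\tilde\pi$ on $\C(\QG)$ agrees with $\varepsilon_\QH\circ\pi$ on $\C^u(\QG)$, which in general is \emph{not} $\varepsilon_\QG$.'' This is false: for any morphism of compact quantum groups $\pi\in\Mor(\C^u(\QG),\C^u(\QH))$ intertwining the coproducts, one always has $\varepsilon_\QH\circ\pi=\varepsilon_\QG$. Indeed, if $u$ is any finite-dimensional unitary representation of $\QG$ then $(\pi\otimes\id)(u)$ is a representation of $\QH$, so $(\varepsilon_\QH\circ\pi\otimes\id)(u)=\I$, i.e.\ $\varepsilon_\QH\circ\pi$ and $\varepsilon_\QG$ agree on matrix coefficients and hence on all of $\Pol(\QG)$ and $\C^u(\QG)$. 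Once you accept this, the converse direction is a one-liner, exactly as in the paper: co-amenability of $\QH\backslash\QG$ gives (via Theorem \ref{thm_fun}) the reduced $\tilde\pi\in\Mor(\C(\QG),\C(\QH))$, co-amenability of $\QH$ gives the reduced counit $\varepsilon^r_\QH$ on $\C(\QH)$, and then $\varepsilon^r_\QH\circ\tilde\pi$ is a bounded character on $\C(\QG)$ satisfying $(\varepsilon^r_\QH\circ\tilde\pi)\circ\Lambda_\QG = \varepsilon_\QH\circ\pi = \varepsilon_\QG$, i.e.\ it is the reduced counit of $\QG$.

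All of your ``genuine argument'' via invariant means, Proposition~\ref{Theorem:ucp lifts}, and the attempt to bypass the normality hypothesis of Theorem~\ref{thm:rel_amen_coamen} is therefore unnecessary; the ``main obstacle'' you identify does not exist. Your treatment of the forward implication and of the final transitivity statement is essentially correct and matches the paper's (the paper simply notes $\C^u(\QH_2\backslash\QG)\subset\C^u(\QH_1\backslash\QG)$, so the reduced counit on the larger algebra restricts, and one applies Theorem~\ref{thm_fun} twice).
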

\begin{proof}
If $\QG$ is co-amenable then using \cite{Tomquot} we see that $\HH$ is co-amenable. Clearly the quotient is $\QH\backslash\QG$ is co-amenable in this case. Conversely suppose that $\QH$ and $\QH\backslash\QG$ are 
co-amenable. Then $\pi\in\Mor(\C^u(\QG),\C^u(\QH))$ admits the reduced version $\tilde\pi\in\Mor(\C(\QG),\C(\QH))$ and thus $\varepsilon_\QH\circ\tilde\pi$ defines the  reduced counit on $\C(\QG)$.

The last statement of the theorem follows from Theorem \ref{thm_fun} and the fact that $\C^u(\QG/\QH_2)\subset \C^u(\QG/\QH_1)$. 
\end{proof}
\subsection{The amenable radical}\label{sec_amrad}
We finish the section by proving  a quantum version of Day's result from \cite{Day}, showing that  that every discrete quantum group $\G$ has a unique maximal normal amenable closed quantum subgroup, which contains all normal amenable closed quantum subgroups of $\G$.

We will need the following special case of a recent result of Crann.

\begin{thm}[\cite{Jason}, Theorem 3.2]\label{Theorem:Jason}
	Let $\QG$ be a locally compact quantum group with a closed normal quantum subgroup $\QH$. Then $\QG$ is amenable if and only if both $\QH$ and $\QG/\QH$ are amenable.
\end{thm}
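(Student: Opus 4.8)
The plan is to prove the two implications separately, the substance lying in the ``if'' direction. The ``only if'' implication asserts that amenability is inherited by closed quantum subgroups and by quotients. For the quotient $\QG/\QH$ this is routine: since $\QH$ is normal, $\Linf(\QG/\QH)$ is a Baaj--Vaes subalgebra of $\Linf(\QG)$ on which $\Com_\QG$ restricts to $\Com_{\QG/\QH}$, so the restriction to $\Linf(\QG/\QH)$ of a (two-sided) invariant mean on $\Linf(\QG)$ is again an invariant mean, whence $\QG/\QH$ is amenable. That closed quantum subgroups of amenable locally compact quantum groups are amenable is the easier half and is known.

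For the ``if'' direction I would mirror the classical argument: average first over $\QH$, then over $\QG/\QH$. Fix invariant means $m_\QH$ on $\Linf(\QH)$ and $m_{\QG/\QH}$ on $\Linf(\QG/\QH)$, and let $\rho\colon\Linf(\QG)\to\Linf(\QG)\vtens\Linf(\QH)$ be the right translation action of the closed subgroup $\QH$ on $\QG$, so that its fixed-point algebra is $\Linf(\QG)^{\rho}=\Linf(\QG/\QH)$. Set $E:=(\id\vtens m_\QH)\circ\rho$, a unital completely positive map. Using the action identity $(\rho\vtens\id)\circ\rho=(\id\vtens\Com_\QH)\circ\rho$ together with left-invariance of $m_\QH$, which gives $(\id\vtens m_\QH)\circ\Com_\QH=m_\QH(\cdot)\I$, one checks that $\rho(E(x))=E(x)\vtens\I$ for all $x$; hence $E$ takes values in $\Linf(\QG/\QH)$, and as it restricts to the identity there, $E$ is a conditional expectation onto $\Linf(\QG/\QH)$. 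Because left translation $\Com_\QG$ commutes with the right $\QH$-action $\rho$, i.e.\ $(\Com_\QG\vtens\id)\circ\rho=(\id\vtens\rho)\circ\Com_\QG$, the map $E$ is $\QG$-equivariant for left translation: $\Com_\QG\circ E=(\id\vtens E)\circ\Com_\QG$. Finally I would set $m:=m_{\QG/\QH}\circ E$, a state on $\Linf(\QG)$, and check, using $\QG$-equivariance of $E$ and then left-invariance of $m_{\QG/\QH}$ --- here one needs $\Com_\QG$ to map $\Linf(\QG/\QH)$ into $\Linf(\QG/\QH)\vtens\Linf(\QG/\QH)$, once more a consequence of normality of $\QH$ --- that $m\bigl((\om\vtens\id)\Com_\QG(x)\bigr)=\om(\I)\,m(x)$ for all $\om\in\Lone(\QG)$. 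Thus $m$ is a left-invariant mean on $\Linf(\QG)$, and since for locally compact quantum groups the existence of a one-sided invariant mean already entails amenability, $\QG$ is amenable.

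The step I expect to be the main obstacle is the careful treatment of the right translation action $\rho$ of $\QH$ on $\QG$: one must establish that its fixed-point algebra is precisely $\Linf(\QG/\QH)$ and that it commutes with $\Com_\QG$. Both express the classical fact that left and right translations commute, but in the operator-algebraic setting they have to be extracted from the bicharacter (multiplicative-unitary) descriptions of $\rho$ and $\Com_\QG$; and normality of $\QH$ is used crucially, being exactly what makes $\Linf(\QG/\QH)$ a Baaj--Vaes subalgebra and hence $\QG/\QH$ a quantum group for which amenability is defined and $m_{\QG/\QH}$ available. A more conceptual alternative, which is the approach taken in \cite{Jason}, avoids invariant means altogether: amenability of $\QG$ is characterised homologically, through relative injectivity of $\Linf(\QG)$ in a suitable category of operator modules, and the extension statement then follows from a long exact sequence argument for the pair $(\QH,\QG/\QH)$.
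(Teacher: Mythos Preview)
The paper does not give a proof of this theorem at all: it simply cites Crann's Theorem~3.2, adding only the remark that in the present situation the ``amenability of the $\QG$-action on $\QG/\QH$'' appearing in Crann's statement is nothing but amenability of the quantum group $\QG/\QH$. Your proposal is therefore genuinely different: you sketch a direct invariant-mean argument rather than invoking the module-theoretic machinery of \cite{Jason} (which, as you correctly note in your final paragraph, proceeds via relative injectivity of $\Linf(\QG)$).

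Your argument is essentially correct and is the natural quantum translation of the classical proof. One point deserves care: the expression $E=(\id\vtens m_\QH)\circ\rho$ is not a priori meaningful, since $m_\QH$ is not normal and there is no slice map $\id\vtens m_\QH$ on the von Neumann tensor product $\Linf(\QG)\vtens\Linf(\QH)$. The standard fix is to \emph{define} $E(x)\in\Linf(\QG)$ by duality, via $\omega(E(x))=m_\QH\bigl((\omega\otimes\id)\rho(x)\bigr)$ for $\omega\in\Lone(\QG)$; all the verifications you list (range in the fixed-point algebra, $\QG$-equivariance, left-invariance of $m=m_{\QG/\QH}\circ E$) then go through exactly as you describe, using normality of $\QH$ at the point you identify. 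A minor quibble: calling the inheritance of amenability to closed quantum subgroups ``the easier half'' is generous in the general locally compact setting --- it is known, but not entirely trivial; for the purposes of this paper it is indeed fine to treat it as a black box.
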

\begin{proof}
	It suffices to observe that the fact that in the context above the amenability of the $\QG$ action on $\QG/\QH$ as defined in \cite{Jason} is the same as the amenability of the locally compact quantum group $\QG/\QH$ and apply Theorem 3.2 of that paper.
\end{proof}

Suppose then that $\QG$ is a locally compact quantum group and we have a collection $(\QH_i)_{i \in \Ind}$ of closed quantum subgroups of $\QG$, so that we have the respective normal embeddings $\gamma_i: \Linf(\hQH_i) \to \Linf(\hQG)$. We say that $\QG$ is \emph{generated} by the family $(\QH_i)_{i \in \Ind}$ if $\Linf(\hQG)$ is the smallest von Neumann algebra containing all $\gamma_i( \Linf(\hQH_i))$. Further if $\Ind$ is an ordered set, then we say that the net $(\QH_i)_{i \in \Ind}$ is \emph{increasing} if for all $i,j \in \Ind$ such that $i \leq j$ we have $\gamma_i(\Linf(\hQH_i))\subset \gamma_j(\Linf(\hQH_j))$.

\begin{lem}\label{Lemma:maximal}
	Let $\G$ be a discrete quantum group and let $(\QH_i)_{i \in \Ind}$ be an increasing net of amenable quantum subgroups of $\G$. Then the quantum subgroup generated by $\{\QH_i: i \in \Ind\}$ is amenable.
\end{lem}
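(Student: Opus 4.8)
Let $\QG$ denote the quantum subgroup of $\G$ generated by the increasing net $\{\QH_i : i \in \Ind\}$; concretely, $\Linf(\hQG)$ is the smallest von Neumann subalgebra of $\linf(\hh\G)$ containing $\bigcup_{i \in \Ind} \gamma_i(\Linf(\hQH_i))$, and since the net is increasing this union is already a $*$-subalgebra, so $\Linf(\hQG) = \overline{\bigcup_i \gamma_i(\Linf(\hQH_i))}^{\,\text{w}^*}$. The plan is to produce an invariant mean on $\linf(\QG)$ by a weak-$*$ limit argument along the net, using the fact that each $\QH_i$ is amenable and sits (relatively amenably, hence with a ucp equivariant lift into $\QG$) inside $\QG$.

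First I would recast amenability in a form suited to taking limits. Since each $\QH_i$ is a quantum subgroup of $\QG$, by Theorem \ref{thm:rel_amen} the amenability of $\QH_i$ is equivalent to the relative amenability of $\QH_i$ in $\QG$, i.e.\ to the existence of a $\QH_i$-invariant mean $m_i$ on $\linf(\QG)$. (Here I use that the $\QH_i$ are quantum subgroups not just of $\G$ but of $\QG$: the embedding $\gamma_i(\Linf(\hQH_i)) \subset \Linf(\hQG)$ is a normal embedding of Baaj--Vaes subalgebras, which is exactly the data identifying $\hQH_i$ — equivalently $\QH_i$ after dualizing appropriately — as a closed quantum subgroup; one should check this identification at the start.) Next I would pass to a weak-$*$ cluster point: the set of means on $\linf(\QG)$ is weak-$*$ compact, so the net $(m_i)_{i \in \Ind}$ has a subnet converging weak-$*$ to some mean $m$ on $\linf(\QG)$.

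The main step is to verify that $m$ is $\QG$-invariant, i.e.\ that $m$ is a left-invariant mean on $\linf(\QG)$ in the sense of the amenability definition in the preliminaries: for every $\omega \in \ell^1(\QG)$ and $x \in \linf(\QG)$ one needs $m\big((\omega \ot \id)\Delta_{\QG}(x)\big) = \omega(1)\, m(x)$, and symmetrically. The point is that $\Linf(\QG) = \overline{\bigcup_i \Linf(\QH_i)}^{\,\text{w}^*}$ (where I write $\Linf(\QH_i)$ for the image of the corresponding conditional-expectation range inside $\linf(\QG)$, via the coset picture $\Linf(\QG/\QH_i) \subset \Linf(\QG)$), so it suffices to check the invariance identity on elements that are, up to small weak-$*$ perturbation, controlled at the level of a single $\QH_j$; since the net is increasing, for $i \geq j$ the mean $m_i$ is $\QH_j$-invariant, and hence the cluster point $m$ is $\QH_j$-invariant for every $j$. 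From $\QH_j$-invariance for all $j$ together with the density of $\bigcup_j$ (and the fact that $\Delta_{\QG}$ restricted to each piece lands in the appropriate tensor product), one upgrades to full $\QG$-invariance. A clean way to organize this last upgrade: the set $\{x \in \linf(\QG) : m((\omega \ot \id)\Delta_{\QG}(x)) = \omega(1)m(x) \text{ for all } \omega\}$ is a weak-$*$ closed subspace containing $\Linf(\QH_j)$ for every $j$ — this uses that $\Delta_{\QG}$ is compatible with the subgroup structure — hence it is all of $\linf(\QG)$.

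I expect the main obstacle to be the bookkeeping in that last upgrade: making precise the sense in which $\QH_j$-invariance of $m$ (a statement about the action $\alpha_{\QH_j}$ of $\QH_j$ on $\linf(\QG)$ of \eqref{subgroupaction}) combines across all $j$ to give genuine left-invariance for $\QG$ itself, given that the $\QH_j$ only generate $\QG$ rather than exhausting it on the nose. The honest resolution is that $\bigcup_j \gamma_j(\Linf(\hQH_j))$ is weak-$*$ dense in $\Linf(\hQG)$, and one transports this density through the duality and the coset constructions; alternatively, one may sidestep the issue entirely by invoking Theorem \ref{Theorem:Jason} (Crann) in an inductive/limit form — but since $\Ind$ is merely a directed net, the direct weak-$*$-compactness argument above is the cleaner route, and the only real work is confirming that the generated-subgroup hypothesis gives the required density and that $m$'s cluster-point invariance really is invariance for the ambient $\QG$-action.
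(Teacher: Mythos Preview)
Your strategy is genuinely different from the paper's. The paper never constructs an invariant mean: it works on the dual side and shows coamenability of $\hh\QG$ directly. Assuming without loss of generality that $\G$ itself is generated by the $\QH_i$, one has Hopf embeddings $\gamma_i:\Pol(\hQH_i)\hookrightarrow\Pol(\hh\G)$ that are isometric for both the reduced and the universal norm; since each $\QH_i$ is amenable these two norms coincide on each $\Pol(\hQH_i)$; and by orthogonality relations $\Pol(\hh\G)=\bigcup_i\gamma_i(\Pol(\hQH_i))$, so the two norms coincide on all of $\Pol(\hh\G)$ and $\hh\G$ is coamenable. This is short and entirely algebraic.

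Your invariant-mean approach can be made to work, but the ``upgrade'' step is mis-formulated and, as written, fails. You vary $x$ and claim the set of good $x$ is weak-$*$ closed; but $m$ is not normal, so that closedness is unavailable, and more fundamentally $\QH_j$-invariance of $m$ is not a statement about special $x$---it says the invariance identity holds for \emph{every} $x\in\linf(\QG)$, but only for those $\omega\in\ell^1(\QG)$ that factor as $\omega'\circ\pi_j$ with $\omega'\in\ell^1(\QH_j)$. (Your parenthetical identifying ``$\Linf(\QH_j)$'' with $\linf(\QG/\QH_j)$ makes matters worse: those algebras form a \emph{decreasing} net with intersection $\CC$.) The correct upgrade reverses the roles: the set $S=\{\omega\in\ell^1(\QG):m((\omega\ot\id)\Delta_{\QG}(x))=\omega(1)\,m(x)\text{ for all }x\}$ is a norm-closed subspace of $\ell^1(\QG)$, and $\QH_j$-invariance of $m$ says exactly $\pi_j^*(\ell^1(\QH_j))\subset S$. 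You then need $\bigcup_j\pi_j^*(\ell^1(\QH_j))$ norm-dense in $\ell^1(\QG)$, which is equivalent to $\bigcup_j\Irr(\hQH_j)=\Irr(\hh\QG)$---precisely the orthogonality-relations fact the paper invokes. So the same core ingredient is needed either way; the paper's route just packages it more directly.
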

\begin{proof}
Without loss of generality we shall assume that $\GGamma$ is generated by  by $\{\QH_i: i \in \Ind\}$. 	By Theorem 6.2 of \cite{DKSS} each $\QH_i$ is also discrete. 
	Thus we have the respective Hopf embeddings $\gamma_i: \Pol(\hQH_i)\to \Pol(\hh\G)$, which are isometries with respect to the respective reduced $\cst$-norms. Using \cite[Theorem 3.2]{Induction} in the case of $\HH_i\subset \GGamma$ these embeddings are isometric for the universal norms too. It is easy to see that  $\Pol(\hh\G)=\bigcup_{i \in \Ind} \gamma_i (\Pol(\hQH_i))$ (this follows from the orthogonality relations). In particular, since each $\HH_i$ is amenable, the universal and the reduced norm  coincide on $\Pol(\hh\G)$. Since $\Pol(\hh\G)$ is dense in $\C^u(\hh\GGamma)$ these norms must be equal and
we see that  $\hh\G$ is co-amenable, so that $\G$ is amenable.
\end{proof}

\begin{prop}
	Every discrete quantum group $\G$ admits the largest normal amenable quantum subgroup, that we call the amenable radical of $\QG$.
\end{prop}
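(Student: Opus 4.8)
The plan is to prove the statement by a Zorn's lemma argument, leveraging the two lemmas already in hand (Lemma \ref{Lemma:maximal} on increasing nets of amenable quantum subgroups, and Theorem \ref{Theorem:Jason} on amenability in short exact sequences). First I would consider the collection $\mathcal{A}$ of all normal amenable closed quantum subgroups of $\G$, ordered by inclusion (in the dual picture, via inclusion of the corresponding Baaj-Vaes subalgebras $\gamma_i(\Linf(\hh\QH_i)) \subset \Linf(\hh\G)$). This collection is nonempty since the trivial subgroup belongs to it. To apply Zorn's lemma I need every chain in $\mathcal{A}$ to have an upper bound in $\mathcal{A}$: given an increasing net $(\QH_i)_{i\in\Ind}$ of normal amenable quantum subgroups, Lemma \ref{Lemma:maximal} tells me that the quantum subgroup $\QH$ generated by $\{\QH_i : i\in\Ind\}$ is amenable; I then need to check that $\QH$ is still \emph{normal} in $\G$, which follows because the defining condition of normality (the adjoint action $\beta$ of $\G$ preserving $\gamma(\Linf(\hh\QH))$) passes to the von Neumann algebra generated by an increasing union, as $\beta$ is normal and each $\gamma_i(\Linf(\hh\QH_i))$ is $\beta$-invariant. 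Hence Zorn's lemma produces a maximal element $\sN$ of $\mathcal{A}$, i.e.\ a maximal normal amenable closed quantum subgroup.

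The remaining point is to show that this maximal $\sN$ is in fact the \emph{largest}, i.e.\ it contains every normal amenable closed quantum subgroup $\QK$ of $\G$. The natural strategy: given such a $\QK$, form the quantum subgroup $\QL$ generated by $\sN$ and $\QK$ together; I want to argue $\QL$ is normal and amenable, so that by maximality $\QL = \sN$, whence $\QK \subset \sN$. Normality of $\QL$ is again the stability-of-$\beta$-invariance argument. For amenability, the idea is to view $\sN$ as a normal quantum subgroup of $\QL$ (this requires knowing that a normal subgroup of $\G$ contained in another subgroup $\QL$ of $\G$ is normal in $\QL$ — a standard but slightly delicate point in the Baaj-Vaes picture), and to identify the quotient $\QL/\sN$ with (a quantum subgroup of) $\G/\sN$; since $\QK$ maps onto a dense piece and $\QK$ is amenable, the image of $\QK$ in $\G/\sN$ — which generates $\QL/\sN$ — is amenable, hence $\QL/\sN$ is amenable (using again Lemma \ref{Lemma:maximal} for the generated subgroup, or rather its normal analogue). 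Then Theorem \ref{Theorem:Jason} applied to $\sN \lhd \QL$ with $\sN$ and $\QL/\sN$ both amenable gives that $\QL$ is amenable. By maximality $\QL = \sN$, so $\QK \subset \sN$; and uniqueness of the largest element is automatic.

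The step I expect to be the main obstacle is the bookkeeping around normality of the \emph{generated} quantum subgroup and the interplay of normality with the intermediate subgroup $\QL$: one must carefully justify, in the Baaj-Vaes/coideal language, that (i) the von Neumann algebra generated by an increasing family of normal coideals that are each invariant under the adjoint action $\beta$ is again a normal coideal, and (ii) that $\sN$ normal in $\G$ forces $\sN$ normal in any intermediate $\QL$, together with the correct identification of $\QL/\sN$ inside $\G/\sN$ via the dual short exact sequences \eqref{ex1}--\eqref{ex2}. Each of these is conceptually routine but requires invoking the machinery of \cite{VainVaes} and \cite{DKSS} with care; by contrast the amenability transfers themselves are immediate from Theorem \ref{Theorem:Jason} and Lemma \ref{Lemma:maximal}. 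An alternative that sidesteps some of this, and which I would fall back on if the intermediate-subgroup argument gets unwieldy, is to work entirely with the family $\mathcal{A}$ directly closed under taking generated subgroups: show first (via Lemma \ref{Lemma:maximal} and the normality-stability observation) that $\mathcal{A}$ is closed under arbitrary suprema of subfamilies whose generated subgroup is amenable, then observe that the supremum of \emph{all} members of $\mathcal{A}$ pairwise — handled two at a time using Theorem \ref{Theorem:Jason} — stays in $\mathcal{A}$, directly yielding a largest element without a separate maximality-versus-largest argument.
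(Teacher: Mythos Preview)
Your proposal is correct and follows essentially the same architecture as the paper: Zorn's lemma produces a maximal normal amenable subgroup, and then one shows that the join of two normal amenable subgroups is again normal amenable via an extension argument using Theorem~\ref{Theorem:Jason}. The only difference is that where you sketch the identification of $\QL/\sN$ with the image of $\QK$ in $\G/\sN$ (and flag the attendant bookkeeping as the main obstacle), the paper instead invokes the second isomorphism theorem for discrete quantum groups from \cite{IsomorphismTheorems} directly, obtaining $(\QH\vee\QH')/\QH' \cong \QH/(\QH\wedge\QH')$; this reference also supplies the normality of $\QH\vee\QH'$, so the technical points you correctly anticipated are all absorbed into that citation.
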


\begin{proof}
	The proof follows the original argument of  Section 4 of \cite{Day}. 
	
	Consider $\mathcal{H}$, the collection of all normal amenable quantum subgroups of $\G$. Kuratowski-Zorn's lemma together with Lemma \ref{Lemma:maximal} show that $\mathcal{H}$ contains a maximal element, say $\QH$. Suppose then that there is $\QH'\in \mathcal{H}$ which is not contained in $\QH$. Consider, as in \cite{IsomorphismTheorems} (Definition 2.1 and Definition 1.9), the subgroup generated by $\QH$ and $\QH'$, denoted $\QH\vee \QH'$. Again, by Definitions 2.1 and 1.9 in \cite{IsomorphismTheorems} and the discussion before the latter it follows that $\QH\vee \QH'$ is a normal quantum subgroup of $\G$ (strictly containing $\QH$). Then Corollary 4.2 of \cite{IsomorphismTheorems}, applicable to discrete quantum groups, says that $(\QH \vee \QH')/\QH' = \QH/(\QH \wedge \QH')$. By Theorem \ref{Theorem:Jason} first the right hand side is amenable, and then so is $\QH \vee \QH'$. This leads to a contradiction and ends the proof.
\end{proof}

\section{Noncommutative topological boundaries and injective envelopes}

In this section we define topological $\G$-boundaries, and prove the existence of a (unique) universal $\G$-boundary, by identifying that object with the $\G$-injective envelope of the trivial action, in the sense of Hamana. 

All the $\G$-$\C^*$-algebras in this section are assumed to be unital.

\subsection{Topological $\G$-boundaries}
We begin by defining boundary actions.

\begin{defn}\label{def:boundary}
Let $\G$ be a discrete quantum group. A $\G$-$\cst$-algebra $\sA$ is a $\G$-boundary if for every state $\nu$ on $\sA$ the Poisson transform $\p_\nu$ is completely isometric.
\end{defn}

By ${\rm P}(\G)$ we shall denote  the set of all states $\mu\in\ell^1(\G)$. The next lemma connects the above notion with the classical notion of boundary actions as developed by Furstenberg. Note indeed that when $\sA$ and $\ell^\infty(\G)$ are commutative, $P_\nu$ is isometric on the self-adjoint part of $\sA$ {\bf iff} it is completely isometric, and on the other hand the set $\{\mu*\nu : \mu\in \rm{P}(\G)\}$ is weak* dense in the state space of $\sA$ for every state $\nu$ on $\sA$ {\bf iff} the action of $\G = \Gamma$ on the spectrum of $\sA$ is minimal and strongly proximal.

\begin{lem}\label{prox-act_c*}
Let $\G$ be a discrete quantum group, let $\sA$ be a $\G$-$\cst$-algebra, and let $\nu\in \sA^*$ be a state.
The Poisson transform $\cP_\nu$ is isometric on the self-adjoint part of $\sA$ if and only if the set $\{\mu*\nu : \mu\in \rm{P}(\G)\}$ is weak* dense in the state space of $\sA$.
\end{lem}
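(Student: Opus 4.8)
The statement is an ``if and only if'' between an analytic condition on $\cP_\nu$ and a weak* density condition, so the plan is to prove each implication by a Hahn--Banach / duality argument, exploiting the formula $\cP_{\mu*\nu} = (\id\otimes\mu)\circ\Delta\circ\cP_\nu$ from \eqref{eq:Poi-map} together with $\G$-equivariance of $\cP_\nu$. The key underlying observation is that, since $\sA$ is unital and $\cP_\nu$ is ucp, $\cP_\nu$ restricted to the self-adjoint part $\sA_{sa}$ is isometric if and only if $\cP_\nu$ is faithful as an \emph{order} map, i.e.\ it ``sees'' the whole order-unit structure of $\sA$; dually this should be phrased in terms of which states on $\sA$ arise by pulling back states (of the form $\mu$, $\mu\in \rm P(\G)$, or more precisely the counit and its translates) along $\cP_\nu$.

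\textbf{($\Leftarrow$) Density implies isometry.} Assume $\{\mu * \nu : \mu \in \rm P(\G)\}$ is weak* dense in $S(\sA)$. Fix $a = a^* \in \sA$; I want $\|\cP_\nu(a)\| = \|a\|$, and since $\cP_\nu$ is ucp (hence contractive) only ``$\geq$'' needs proof. Pick a state $\omega_0 \in S(\sA)$ with $|\omega_0(a)| = \|a\|$ (possible as $\sA$ is unital and $a$ self-adjoint). By hypothesis there is a net $\mu_i \in \rm P(\G)$ with $\mu_i * \nu \to \omega_0$ weak*, so $(\mu_i*\nu)(a) \to \|a\|$ (up to sign). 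Now observe that $(\mu_i * \nu)(a) = (\mu_i \otimes \nu)(\alpha(a)) = \mu_i\big((\id\otimes\nu)\alpha(a)\big) = \mu_i(\cP_\nu(a))$, so $|\mu_i(\cP_\nu(a))| \to \|a\|$; since each $\mu_i$ is a state on $\ell^\infty(\G)$ (extending by normality/multiplier to $\M(\c0(\G))$) this forces $\|\cP_\nu(a)\| \geq \|a\|$, giving equality.

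\textbf{($\Rightarrow$) Isometry implies density.} Assume $\cP_\nu$ is isometric on $\sA_{sa}$ but, for contradiction, that the weak* closure $K$ of $\{\mu * \nu : \mu\in \rm P(\G)\}$ is a proper (weak* closed, convex) subset of $S(\sA)$. Pick $\omega_1 \in S(\sA) \setminus K$; by Hahn--Banach separation in the self-adjoint dual pairing there is $a = a^* \in \sA$ and $t\in\RR$ with $\omega_1(a) > t \geq \sup_{\omega \in K}\omega(a)$, in particular $\sup_{\mu\in \rm P(\G)} \mu(\cP_\nu(a)) = \sup_{\mu}(\mu*\nu)(a) \leq t$. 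Taking $\mu$ to range over \emph{all} states of $\ell^\infty(\G)$ — here one should check that states of the form $\mu\in\rm P(\G) = \ell^1(\G)\cap S(\ell^\infty(\G))$ are weak* dense in $S(\ell^\infty(\G))$, which holds because $\ell^1(\G)$ is weak* dense in $\ell^\infty(\G)^*$ and states approximate states — one gets $\sup_{m \in S(\ell^\infty(\G))} m(\cP_\nu(a)) \leq t$, i.e.\ the largest element of the spectrum of the self-adjoint $\cP_\nu(a)$ is $\leq t$. But then $\|\cP_\nu(a - \|a\|_\infty\text{-type shift})\|$ is controlled: more cleanly, replacing $a$ by $a - s\cdot 1$ for suitable $s$ one arranges $\|a\| = \sup\mathrm{spec}(a) > t \geq \sup\mathrm{spec}(\cP_\nu(a)) \geq -\|\cP_\nu(a)\|\cdot(-1)\ldots$ — the point being that $\omega_1(a) > t$ forces $\sup\mathrm{spec}(a) > t$, while $\sup\mathrm{spec}(\cP_\nu(a))\le t$, and combined with unitality this makes $\|\cP_\nu(a)\| < \|a\|$ after a harmless affine normalisation of $a$ (shift so that $\mathrm{spec}(\cP_\nu(a)) \subset [-\|a\|+\epsilon, t]$ is strictly shorter than $[-\|a\|,\|a\|]\ni \mathrm{spec}(a)$), contradicting isometry.

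\textbf{Main obstacle.} The routine analytic parts (Hahn--Banach, ucp contractivity, the identity $\mu(\cP_\nu(a)) = (\mu*\nu)(a)$) are straightforward; the one point deserving genuine care is the passage between states $\mu \in \rm P(\G) \subset \ell^1(\G)$ and arbitrary states on $\ell^\infty(\G) = \M(\c0(\G))$ — i.e.\ the weak* density of $\rm P(\G)$ in $S(\ell^\infty(\G))$ — and, correlatively, making the affine normalisation of $a$ in the ($\Rightarrow$) direction fully rigorous so that the strict inequality $\|\cP_\nu(a)\| < \|a\|$ really drops out. I expect this bookkeeping about where the supremum over $\mu$ is attained (or approximated) to be the crux; everything else is formal.
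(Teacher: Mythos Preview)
Your approach is essentially the same as the paper's: both directions use exactly the duality ideas you describe (approximation by $\mu*\nu$ for $(\Leftarrow)$, Hahn--Banach separation for $(\Rightarrow)$). The paper's proof is slightly more compressed but follows the same line.

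Your self-identified ``main obstacle'' is not a genuine obstacle and can be dispatched in one line. For the normalisation in $(\Rightarrow)$: since $\rho$ and every $\mu*\nu$ are states, the separation inequality $\mu*\nu(a) \leq \rho(a) - \ep$ is preserved when $a$ is replaced by $a + c\cdot 1$ for any scalar $c$. Taking $c = \|a\|$ makes $a \geq 0$, so $\cP_\nu(a) \geq 0$ and hence $\|\cP_\nu(a)\| = \sup_{\mu\in\mathrm P(\G)} \mu(\cP_\nu(a)) \leq \rho(a) - \ep \leq \|a\| - \ep$, giving the contradiction directly without any further spectral bookkeeping. (The paper's proof is in fact equally terse at this point and implicitly relies on the same shift.)

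Your second worry --- whether $\mathrm P(\G)$ is weak* dense in $S(\ell^\infty(\G))$ --- is a non-issue: you do not need density of normal states among all states, only the standard fact that for self-adjoint $y$ in a von Neumann algebra the normal states compute $\sup\mathrm{spec}(y)$ (and hence $\|y\|$ for $y\geq 0$). For $\ell^\infty(\G) = \prod_\gamma M_{n_\gamma}$ this is immediate.
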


\begin{proof}
Suppose there is a state $\rho\in\sA^*$ that is not contained in the weak* closure of $\{\mu*\nu : \mu\in {\rm P}(\G)\}$. Then by the Hahn-Banach separation theorem, there is a self-adjoint element $x \in \sA$ and $\ep > 0$ such that
\[
\mu(\p_\nu (x)) = \mu*\nu (x) \leq \rho(x) - \ep \leq \|x\| - \ep, \quad \forall \mu \in {\rm P}(\G). 
\]
Since the Poisson transform $\cP_\nu$ is completely positive, $\p_\nu (x)$ is also self-adjoint, and therefore it follows $\|\p_\nu (x)\| = \sup\{|\mu(\p_\nu (x))| : \mu\in {\rm P}(\G)\} \leq \|x\| - \ep$. 

Conversely, suppose the weak* closure of the set $\{\mu*\nu : \mu\in {\rm P}(\G)\}$ contains every state on $\sA$. Let $x\in \sA$ be self-adjoint. Let $\ep>0$ be given. There is a state $\rho$ on $\sA$ such that $|\rho(x)|>\|x\|-\ep$. By weak* density of the set above, there is $\mu\in {\rm P}(\G)$ such that $|\mu(\p_\nu (x))| = |\mu*\nu(x)|>\|x\|-2\ep$. This implies $\|\cP_\nu(x)\| \geq\|x\|-2\ep$. Since $\ep$ was arbitrary, and $\cP_\nu$ is a contraction, it follows $\|\cP_\nu(x)\| = \|x\|$.
\end{proof}

We record the following simple fact as we use it in several several places later.

\begin{prop}\label{prop:bnd-act-eqvs}
Let $\sA$ be a $\G$-$\cst$-algebra. The following are equivalent:
\begin{enumerate}
\item 
$\sA$ is a $\G$-boundary;
\item
every ucp $\G$-equivariant from $\sA$ into $\linf(\G)$ is completely isometric;
\item
every ucp $\G$-equivariant map from $\sA$ into any $\G$-$\cst$-algebra $\sB$ is completely isometric.
\end{enumerate}
\end{prop}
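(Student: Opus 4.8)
The plan is to prove the cyclic chain $(1)\Rightarrow(3)\Rightarrow(2)\Rightarrow(1)$, with Proposition~\ref{Pois-map-props} carrying the main weight: it identifies the ucp $\G$-equivariant maps from $\sA$ into $\linf(\G)$ with exactly the Poisson transforms $\cP_\nu$ attached to states $\nu\in\sA^*$, and it is this dictionary that makes everything go through.

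For $(1)\Rightarrow(3)$, I would take an arbitrary ucp $\G$-equivariant map $\Phi\colon\sA\to\sB$ into a (unital) $\G$-$\cst$-algebra $\sB$, fix any state $\omega$ on $\sB$, and consider the composition $\cP^{\sB}_\omega\circ\Phi\colon\sA\to\linf(\G)$. It is ucp, and it is $\G$-equivariant because equivariance in the form $\Phi(a*\mu)=\Phi(a)*\mu$ is visibly preserved under composition while $\cP^{\sB}_\omega$ is equivariant by Proposition~\ref{Pois-map-props}. Hence, by the last part of Proposition~\ref{Pois-map-props}, $\cP^{\sB}_\omega\circ\Phi=\cP^{\sA}_\nu$ for the state $\nu=\omega\circ\Phi$ on $\sA$ (using $\varepsilon\circ\cP^{\sB}_\omega=\omega$, which is the counit property of the action on $\sB$). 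Since $\sA$ is a $\G$-boundary, $\cP^{\sA}_\nu$ is completely isometric; and since $\cP^{\sB}_\omega$ and $\Phi$ are ucp, hence completely contractive, the factorization $\cP^{\sA}_\nu=\cP^{\sB}_\omega\circ\Phi$, amplified to matrices over the respective $\cst$-algebras, forces $\Phi$ to be isometric on $M_n(\sA)$ for every $n$, i.e.\ completely isometric.

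The implication $(3)\Rightarrow(2)$ is immediate once one recalls, via Remark~\ref{multiplieralgebra}, that $\linf(\G)$ equipped with the coproduct $\Delta$ is itself a unital $\G$-$\cst$-algebra, so that (2) is the instance $\sB=\linf(\G)$ of (3). Finally, for $(2)\Rightarrow(1)$, given any state $\nu$ on $\sA$ the map $\cP_\nu$ is a ucp $\G$-equivariant map from $\sA$ to $\linf(\G)$ by Proposition~\ref{Pois-map-props}, hence completely isometric by (2); and this is precisely the defining condition for $\sA$ to be a $\G$-boundary.

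I do not expect a genuine obstacle here: the proof is essentially a diagram chase. The only points requiring attention are that all $\G$-$\cst$-algebras in this section are assumed unital (so the Poisson transforms $\cP^{\sB}_\omega$ are genuinely ucp and the compositions above are well defined), and the elementary fact that a completely contractive map which is bounded below on every matrix amplification is automatically completely isometric.
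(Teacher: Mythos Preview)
Your proof is correct and follows essentially the same approach as the paper's own argument: both rely on Proposition~\ref{Pois-map-props} to identify ucp $\G$-equivariant maps into $\linf(\G)$ with Poisson transforms, and on the factorization through $\linf(\G)$ to deduce complete isometry from complete contractivity. The only cosmetic difference is that the paper proves the chain $(1)\Rightarrow(2)\Rightarrow(3)\Rightarrow(1)$ while you prove $(1)\Rightarrow(3)\Rightarrow(2)\Rightarrow(1)$; your $(1)\Rightarrow(3)$ simply merges the paper's $(1)\Rightarrow(2)$ and $(2)\Rightarrow(3)$ into one step.
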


\begin{proof}
By Proposition \ref{Pois-map-props} every ucp $\G$-equivariant map from $\sA$ into $\linf(\G)$ is the Poisson transform associated to some state on $\sA$, hence the implication (1)$\implies$(2). To show
(2)$\implies$(3), let $\psi$ be a ucp $\G$-equivariant map from $\sA$ into a $\G$-$\cst$-algebra $\sB$. Let $\nu$ be a state on $\sB$, then $\p_\nu\circ \psi$ is a ucp $\G$-equivariant map from $\sA$ into $\linf(\G)$, hence completely isometric assuming (2). Since $\p_\nu$ is a complete contraction, this implies $\psi$ is  completely isometric. The implication (3)$\implies$(1) is immediate from the definition.
\end{proof}

\begin{prop}\label{prop:subsp-bnd-->bnd}
Let $\G$ be a discrete quantum group and let $\sB$ be a $\G$-boundary. Suppose $\sA$ is a $\G$-$\cst$-algebra such that there is a completely isometric ucp $\G$-equivariant map $\iota: \sA\to \sB$. Then $\sA$ is a $\G$-boundary.
\end{prop}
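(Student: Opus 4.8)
The plan is to verify the defining property of a $\G$-boundary directly from Proposition \ref{prop:bnd-act-eqvs}, using the criterion (1)$\iff$(2): it suffices to show that every ucp $\G$-equivariant map $\Phi\colon\sA\to\linf(\G)$ is completely isometric. So let $\Phi$ be such a map. The key idea is to extend $\Phi$ along the completely isometric embedding $\iota$ to a map defined on all of $\sB$, and then use that $\sB$ is a $\G$-boundary.

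First I would recall that by Proposition \ref{Pois-map-props} we may write $\Phi=\cP_\nu$ for a state $\nu$ on $\sA$; since $\iota$ is completely isometric and ucp, Arveson's extension theorem provides a ucp map $\tilde\nu$-extension at the level of states — more precisely, the state $\nu$ on $\sA\cong\iota(\sA)\subseteq\sB$ extends (Hahn–Banach, positivity preserved since $\|\nu\|=1=\nu(1)$) to a state $\rho$ on $\sB$ with $\rho\circ\iota=\nu$. Then $\cP_\rho\colon\sB\to\linf(\G)$ is a ucp $\G$-equivariant map, and for $a\in\sA$, $\mu\in\sA^*$ one checks $\iota(a)*\mu=\iota(a*\mu)$ by $\G$-equivariance of $\iota$, whence $\cP_\rho(\iota(a))=(\id\ot\rho)\alpha_\sB(\iota(a))=(\id\ot\rho)(\id\ot\iota)\alpha_\sA(a)=(\id\ot\nu)\alpha_\sA(a)=\cP_\nu(a)=\Phi(a)$. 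Thus $\cP_\rho\circ\iota=\Phi$.

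Now since $\sB$ is a $\G$-boundary, Proposition \ref{prop:bnd-act-eqvs} gives that the ucp $\G$-equivariant map $\cP_\rho\colon\sB\to\linf(\G)$ is completely isometric. As $\iota\colon\sA\to\sB$ is also completely isometric, the composition $\cP_\rho\circ\iota=\Phi$ is completely isometric. Applying Proposition \ref{prop:bnd-act-eqvs} in the reverse direction (implication (2)$\implies$(1)) to $\sA$, we conclude that $\sA$ is a $\G$-boundary.

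**Main obstacle.** The only slightly delicate point is the identity $\cP_\rho\circ\iota=\Phi$: one must make sure that the Poisson transform is computed against the $\G$-action of the ambient algebra $\sB$ and that equivariance of $\iota$ (in the morphism form $(\id\ot\iota)\circ\alpha_\sA=\alpha_\sB\circ\iota$, valid here because $\iota$ is a ucp map satisfying $\Phi(a*\mu)=\Phi(a)*\mu$, hence the slice identity) really does pull the $\rho$-slice back to the $\nu$-slice; but this is a routine diagram chase once the equivariance of $\iota$ is written out. Everything else — Hahn–Banach extension of the state, Proposition \ref{Pois-map-props}, and the two implications of Proposition \ref{prop:bnd-act-eqvs} — is immediate from the material already established.
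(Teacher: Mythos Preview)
Your proof is correct and follows essentially the same approach as the paper's: extend the state $\nu$ on $\sA$ (via $\iota^{-1}$) to a state on $\sB$, observe that the Poisson transforms satisfy $\cP_\rho\circ\iota=\cP_\nu$, and use that $\cP_\rho$ is completely isometric because $\sB$ is a boundary. The paper's version is slightly more direct in that it works straight from Definition~\ref{def:boundary} (take an arbitrary state $\nu$) rather than routing through the equivalence (1)$\Leftrightarrow$(2) of Proposition~\ref{prop:bnd-act-eqvs}, which is an unnecessary detour since by Proposition~\ref{Pois-map-props} the two are the same data; also, in your ``main obstacle'' paragraph the displayed equivariance should read $\iota(a*\mu)=\iota(a)*\mu$ rather than $\Phi(a*\mu)=\Phi(a)*\mu$.
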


\begin{proof}
Denote by $\alpha_\sA$ and $\alpha_\sB$ the actions of $\G$ on $\sA$ and $\sB$. Let $\nu$ be a state on $\sA$. Extend the state $\nu\circ\iota^{-1}$ on $\iota(\sA)$ to a state $\tilde\nu$ on $\sB$. Then 
we have $\p_\nu = \p_{\tilde\nu}\circ\iota$. Since $\sB$ is a $\G$-boundary, $\p_{\tilde\nu}$ is completely isometric, hence $\p_\nu$ is completely isometric and the proof is complete.
\end{proof}

\begin{prop}\label{prop:subgrp-bnd-->bnd}
Let $\G$ be a discrete quantum group and let $\sA$ be a $\G$-$\cst$-algebra. If $\sA$ is a $\LLambda$-boundary for some quantum subgroup $\LLambda\subset \G$ with the canonical restriction action, then $\sA$ is a $\G$-boundary.
\end{prop}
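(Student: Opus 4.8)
The plan is to use the characterization of $\G$-boundaries given in Proposition~\ref{prop:bnd-act-eqvs}, namely that $\sA$ is a $\G$-boundary if and only if every ucp $\G$-equivariant map from $\sA$ into $\linf(\G)$ is completely isometric. So let $\Phi : \sA \to \linf(\G)$ be a ucp $\G$-equivariant map, where the equivariance is with respect to the full action $\alpha$ of $\G$ on $\sA$ and the translation action of $\G$ on $\linf(\G) = \M(\c0(\G))$ (which is just $\Delta$, viewed appropriately). We must show $\Phi$ is completely isometric. The idea is to restrict everything to $\LLambda$: since $\sA$ is a $\LLambda$-boundary, it suffices to exhibit a ucp $\LLambda$-equivariant map from $\linf(\G)$ into some $\LLambda$-$\cst$-algebra (or directly into $\linf(\LLambda)$) and compose, so that the composite becomes a ucp $\LLambda$-equivariant map out of $\sA$, which is then automatically completely isometric by the $\LLambda$-boundary property; since the second map is a complete contraction, $\Phi$ itself must be completely isometric.

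The key point is therefore to produce a ucp $\LLambda$-equivariant map $\Psi : \linf(\G) \to \linf(\LLambda)$. This is exactly the surjective $*$-homomorphism $\pi : \linf(\G) \to \linf(\LLambda)$ coming from the quantum subgroup structure, which satisfies $(\pi \otimes \pi)\circ \Delta^\G = \Delta^\LLambda \circ \pi$; restricting the first leg to the $\LLambda$-action one obtains precisely $\LLambda$-equivariance of $\pi$ from $\linf(\G)$ with its $\LLambda$-action (the restriction of the $\G$-translation action) to $\linf(\LLambda)$ with its translation action. Being a unital $*$-homomorphism, $\pi$ is in particular ucp. Thus $\pi \circ \Phi : \sA \to \linf(\LLambda)$ is a ucp $\LLambda$-equivariant map; since $\sA$ is a $\LLambda$-boundary, Proposition~\ref{prop:bnd-act-eqvs} (applied to $\LLambda$) forces $\pi \circ \Phi$ to be completely isometric. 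As $\pi$ is a complete contraction, $\|\Phi(x)\| \geq \|\pi(\Phi(x))\| = \|x\|$ for $x$ in any matrix amplification of $\sA$, hence $\Phi$ is completely isometric, and by Proposition~\ref{prop:bnd-act-eqvs} again $\sA$ is a $\G$-boundary.

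The one point requiring care — and the main (minor) obstacle — is to verify cleanly that the restriction of the $\G$-action on $\linf(\G)$ to $\LLambda$ is intertwined by $\pi$ with the $\LLambda$-translation action, i.e.\ that $\pi$ is genuinely $\LLambda$-equivariant in the sense of the equivariance definition in Section~2. This is where one uses the compatibility $(\pi \otimes \id)\circ \Delta^\G|_{\text{restricted to }\LLambda} = $ the $\LLambda$-action on $\linf(\G)$ on the one hand, and $(\id \otimes \pi)$ applied to $\Delta^\LLambda \circ \pi = (\pi\otimes\pi)\circ\Delta^\G$ on the other; chasing the definition of the $\LLambda$-action on $\linf(\G)$ (via the bicharacter, or equivalently via $(\pi\otimes\id)\Delta^\G$) through these identities gives $(\id\otimes\pi)\circ\alpha_\LLambda = \Delta^\LLambda\circ\pi$, which is exactly $\LLambda$-equivariance of $\pi$. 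Everything else is formal.
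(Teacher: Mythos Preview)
Your proof is correct and follows essentially the same approach as the paper: both use the surjection $\pi:\linf(\G)\to\linf(\LLambda)$ to pass from $\G$-equivariant maps into $\linf(\G)$ to $\LLambda$-equivariant maps into $\linf(\LLambda)$, then invoke the $\LLambda$-boundary property. The paper's version is slightly more streamlined: rather than verifying $\LLambda$-equivariance of $\pi$ and invoking Proposition~\ref{prop:bnd-act-eqvs}, it works directly with Poisson transforms and observes the identity $\cP^\LLambda_\nu = \pi\circ\cP^\G_\nu$ for any state $\nu$ on $\sA$, which follows immediately from the definition $\alpha^\LLambda = (\pi\otimes\id)\circ\alpha^\G$ of the restricted action.
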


\begin{proof}Let $\pi:\linf(\G)\to\linf(\LLambda)$ be the surjective normal $*$-homomorphism which identifies $\LLambda$ as a quantum subgroup of $\G$. Let $\nu$ be a state on $\sA$. Denoting $\p^\G_\nu:\sA\to\linf(\G)$ and $\p^\LLambda_\nu:\sA\to\linf(\LLambda)$ the corresponding Poisson maps we have $\p^\LLambda_\nu = \pi\circ \p^\G_\nu$. Since $\p^\LLambda_\nu$ is completely isometric the same must hold for $\p^\G_\nu$. 
\end{proof}

\begin{prop}\label{prop:quo-bnd-->bnd}
Let $\G$ be a discrete quantum group and let $\LLambda\subset \G$ be a normal quantum subgroup. Then every $\G/\LLambda$-boundary endowed with the canonical action of $\G$ is a $\G$-boundary.
\end{prop}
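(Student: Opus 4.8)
The statement asserts that if $\sA$ is a $\G/\LLambda$-boundary and we pull the action back along the quotient map to make $\sA$ a $\G$-$\cst$-algebra, then $\sA$ is a $\G$-boundary. The key structural point is that the action of $\G$ on $\sA$ is defined by first applying the $\G/\LLambda$-action $\alpha$ and then composing with the inclusion $\linf(\G/\LLambda)\hookrightarrow\linf(\G)$ at the level of the first leg; equivalently, since $\LLambda\subset\G$ is normal, $\G/\LLambda$ is a genuine quantum group and $\Linf(\G/\LLambda)$ sits inside $\Linf(\G)$ as a $\G$-invariant (in fact Baaj--Vaes) subalgebra. The plan is to relate the Poisson transform $\p^\G_\nu:\sA\to\linf(\G)$ of the pulled-back action to the Poisson transform $\p^{\G/\LLambda}_\nu:\sA\to\linf(\G/\LLambda)$ of the original action, show that the former factors through the latter via the inclusion, and then invoke the $\G/\LLambda$-boundary hypothesis together with Proposition~\ref{prop:bnd-act-eqvs}.

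\textbf{Key steps.} First I would fix notation: let $q:\G\to\G/\LLambda$ denote the quotient morphism, inducing an inclusion $\Linf(\G/\LLambda)\subset\Linf(\G)$ (dually, this is the embedding of the Baaj--Vaes subalgebra), and let $\alpha_{\G/\LLambda}:\sA\to\c0(\G/\LLambda)\tens\sA$ be the given action, with $\alpha_\G:\sA\to\c0(\G)\tens\sA$ the pulled-back action obtained by composing with the inclusion $\c0(\G/\LLambda)\hookrightarrow\M(\c0(\G))$ in the first leg. Then for any state $\nu$ on $\sA$ one has $\p^\G_\nu(a)=(\id\ot\nu)\alpha_\G(a)=(\id\ot\nu)\alpha_{\G/\LLambda}(a)=\p^{\G/\LLambda}_\nu(a)$, the point being that the inclusion map acts as the identity on elements. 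Thus $\p^\G_\nu$ is literally the same map as $\p^{\G/\LLambda}_\nu$, only with codomain enlarged from $\Linf(\G/\LLambda)$ to $\Linf(\G)$; since $\sA$ is a $\G/\LLambda$-boundary, $\p^{\G/\LLambda}_\nu$ is completely isometric, hence so is $\p^\G_\nu$. This shows every state's Poisson transform is completely isometric, which is exactly Definition~\ref{def:boundary}.

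\textbf{Main obstacle.} The only genuinely non-trivial point is to make precise the claim that the pulled-back action is well-defined as a $\G$-$\cst$-algebra and that its Poisson transform coincides with the original one; this rests on the fact that $\Linf(\G/\LLambda)\subset\Linf(\G)$ is a $\G$-invariant von Neumann subalgebra (so $\Delta_\G$ restricts compatibly, making the pulled-back $\alpha_\G$ satisfy the action equation), together with the compatibility of the coproducts under $q$. One should verify the Podle\'s density condition for $\alpha_\G$ — but this follows from Remark~\ref{multiplieralgebra} applied to the von Neumann algebraic action, exactly as in the cited argument, since any discrete quantum group is regular. Once these bookkeeping points are in place the argument is essentially a one-line identification of Poisson transforms, and the conclusion follows from Proposition~\ref{prop:bnd-act-eqvs}. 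I expect the write-up to be short, the subtlety being conceptual (identifying the two actions correctly) rather than computational.
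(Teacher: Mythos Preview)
Your proposal is correct and follows essentially the same route as the paper: define the pulled-back $\G$-action via the inclusion $\iota:\linf(\G/\LLambda)\hookrightarrow\linf(\G)$, observe that $\p^\G_\nu=\iota\circ\p^{\G/\LLambda}_\nu$, and conclude that $\p^\G_\nu$ is completely isometric because $\p^{\G/\LLambda}_\nu$ is and $\iota$ is an isometric embedding. The paper's proof is even terser and does not dwell on the Podle\'s condition or invoke Proposition~\ref{prop:bnd-act-eqvs}, but the core argument is identical to yours.
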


\begin{proof} 
Suppose that $\sA$ is a $\G/\LLambda$ boundary via $\alpha\in\Mor(\sA,\c0(\G/\LLambda)\otimes\sA)$. Recall that $\linf(\G/\LLambda)$ is a von Neumann subalgebra of $\linf(\G)$ and that we have the corresponding injective morphism $\iota\in\Mor(\c0(\G/\LLambda) ,\c0(\G))$. Then the canonical action of $\beta\in\Mor(\sA,\c0(\G)\otimes\sA)$ of $\G$ on $\sA$ is $\beta = (\iota\otimes\id)\circ\alpha$, and for a state $\nu$ on $\sA$ the corresponding Poisson maps $\p_\nu^\G:\sA\to\linf(\G)$ and $\p_\nu^{\G/\LLambda}:\sA\to\linf(\G/\LLambda)$ satisfy $\p_\nu^\G = \iota\circ \p_\nu^{\G/\LLambda}$.
In particular $\p_\nu^{\G/\LLambda}$ is completely isometric if and only if $\p_\nu^\G$ is completely isometric. 
\end{proof}

\subsection{$\G$-Injective envelopes}
In this section we show existence of a (unique) minimal injective object in the category of $\G$-$\C^*$-algebras with ucp $\G$-equivariant maps as morphisms and completely isometric ucp $\G$-equivariant maps as injections. The construction and most arguments below follow the work of Hamana \cite{Ham85} and some simplifications made in \cite{Paulsen}, but we include all details for the convenience of the reader.

However this general categorical construction only yields an injective $\ell^1(\G)$-module object. In the classical case, for discrete group $\Gamma$, every $\ell^1(\Gamma)$-module has an obvious natural $\Gamma$-action. But this fact is by no means obvious in the quantum setting. The main new result of this section (Theorem \ref{thm:genuine-action}) is that in the case of discrete quantum groups $\G$, certain $\ell^1(\G)$-module actions, including the one on the minimal $\G$-injective object, indeed come from a $\G$-action.

Let $\mathcal G$ be the set of all ucp $\G$-equivariant maps $\phi: \ell^\infty(\G) \to \ell^\infty(\G)$. On the set $\mathcal G$ define the partial pre-order
\[
\phi \leq \psi \, \text{ if } \, \|\phi(x)\| \leq \|\psi(x)\|\, \text{ for all }\, x\in \ell^\infty(\G).
\]

\begin{prop}
The set $\mathcal G$ contains a minimal element.
\end{prop}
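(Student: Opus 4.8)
The statement to prove is that the set $\mathcal{G}$ of ucp $\G$-equivariant maps $\phi\colon\ell^\infty(\G)\to\ell^\infty(\G)$ contains a minimal element with respect to the pre-order $\phi\le\psi\iff\|\phi(x)\|\le\|\psi(x)\|$ for all $x$. This is the standard Hamana-style argument, so the plan is to run Zorn's lemma on an appropriate sub-poset. First I would observe that $\mathcal{G}$ is nonempty (it contains the identity) and that the pre-order is only a pre-order: $\phi\le\psi\le\phi$ need not force $\phi=\psi$. To get around this I would follow Hamana (as simplified in \cite{Paulsen}) and consider the set $\mathcal{S}$ of \emph{$\G$-seminorms} on $\ell^\infty(\G)$ of the form $x\mapsto\|\phi(x)\|$ with $\phi\in\mathcal{G}$; equivalently, work directly with $\mathcal{G}$ and produce a minimal element by a careful compactness argument. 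The cleanest route: pick a maximal chain $\{\phi_i\}_{i\in I}$ in $\mathcal{G}$ (in the pre-order) and show it has a lower bound in $\mathcal{G}$.

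The key step is the construction of the lower bound of a chain. Given a decreasing chain $(\phi_i)$, for each $x\in\ell^\infty(\G)$ the net $(\|\phi_i(x)\|)$ is decreasing, hence convergent; so the pointwise limit seminorm $q(x):=\lim_i\|\phi_i(x)\|$ is well-defined. The problem is that $(\phi_i(x))$ itself need not converge in any nice topology. Here one passes to a subnet, or uses a universal net / ultralimit: since $\ell^\infty(\G)$ is a dual space, for each fixed $x$ the net $(\phi_i(x))$ lies in the weak\textsuperscript{*}-compact ball of radius $\|x\|$, so along an ultrafilter $\mathcal{U}$ refining the order on $I$ one gets $\phi_\infty(x):=\text{w*-}\lim_{\mathcal{U}}\phi_i(x)$. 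One checks $\phi_\infty$ is ucp (weak\textsuperscript{*}-limits of ucp maps into a von Neumann algebra are ucp) and $\G$-equivariant — equivariance is a weak\textsuperscript{*}-closed condition expressible via $(\id\ot\phi)\circ\Delta=\Delta\circ\phi$ tested against normal functionals and elements of $\ell^1(\G)$, and passes to the limit by separate weak\textsuperscript{*}-continuity of $\Delta$ on bounded sets. Finally $\|\phi_\infty(x)\|\le\liminf_{\mathcal{U}}\|\phi_i(x)\|=q(x)\le\|\phi_i(x)\|$ for every $i$ by weak\textsuperscript{*}-lower semicontinuity of the norm, so $\phi_\infty$ is a lower bound for the chain. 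Zorn's lemma then yields a minimal element of $\mathcal{G}$.

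\textbf{Main obstacle.} The delicate point is that passing to an ultralimit $\phi_\infty$ only gives $\|\phi_\infty(x)\|\le q(x)$, not equality, so a priori $\phi_\infty$ could be \emph{strictly} below the whole chain — which is fine for producing a lower bound, but one must make sure $\phi_\infty$ still lies in $\mathcal{G}$, i.e. is genuinely ucp and $\G$-equivariant. Unitality and positivity are automatic from pointwise weak\textsuperscript{*}-limits; complete positivity requires checking at each matrix level $M_n(\ell^\infty(\G))$ simultaneously, which works since the same ultrafilter can be used on all amplifications (the target is still a von Neumann algebra and the weak\textsuperscript{*}-topology on $M_n(\ell^\infty(\G))$ is the product one). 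Equivariance is the step I would be most careful with: one writes it as $\om*\phi(x)=\phi(\om*x)$ for all $\om\in\ell^1(\G)$, $x\in\ell^\infty(\G)$, i.e. $(\om\ot\id)\Delta(\phi(x))=\phi((\om\ot\id)\Delta(x))$; both sides are weak\textsuperscript{*}-continuous in the relevant variable on bounded sets, so testing against a normal functional $\eta\in\ell^1(\G)$ and taking $\lim_{\mathcal{U}}$ transfers the identity from each $\phi_i$ to $\phi_\infty$. Once $\phi_\infty\in\mathcal{G}$ is established, the rest is the formal Zorn argument, and the existence of a minimal element follows.
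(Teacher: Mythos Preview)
Your argument is correct and essentially identical to the paper's: both use Zorn's lemma by showing that any decreasing chain (net) in $\mathcal{G}$ has a lower bound obtained as a point-weak\textsuperscript{*} cluster point, using weak\textsuperscript{*}-lower semicontinuity of the norm. The paper phrases the key step more tersely as ``$\mathcal{G}$ is point-weak\textsuperscript{*} compact'' and extracts a convergent subnet, whereas you use an ultrafilter and spell out the verification that the limit is ucp and $\G$-equivariant, but the substance is the same.
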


\begin{proof}
We show that every decreasing net $(\phi_i)_{i \in \Ind}$ in $\mathcal G$ has a lower bound. Then the result follows from the Kuratowski-Zorn Lemma.

To show this, first note that $\mathcal G$ is point-weak* compact.
Therefore, there is a subnet $(\phi_{i_j})_{j \in \mathcal{J}}$ and $\phi_0\in \mathcal G$
such that $\phi_{i_j}(x) \to \phi_0(x)$ in the weak* topology for all $x\in \ell^\infty(\G)$.
We obviously have $\|\phi_0(x)\| \leq \limsup_{i_j} \|\phi_{i_j}(x)\| = \inf_{i\in \Ind} \|\phi_{i}(x)\|$
for all $x\in \ell^\infty(\G)$. Thus, $\phi_0\leq \phi_i$ for all $i\in \Ind$.
\end{proof}

\begin{lem}\label{lem:min-Gmap->idem}
If $\phi_0$ is a minimal element of $\mathcal G$, then $\phi_0$ is an idempotent.
\end{lem}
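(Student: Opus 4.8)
The plan is to exploit the minimality of $\phi_0$ together with the fact that $\mathcal G$ is closed under composition: if $\phi,\psi \in \mathcal G$ then $\phi\circ\psi$ is again a ucp $\G$-equivariant map $\ell^\infty(\G)\to\ell^\infty(\G)$, hence $\phi\circ\psi \in \mathcal G$. In particular $\phi_0\circ\phi_0 \in \mathcal G$. First I would observe that for every $x\in\ell^\infty(\G)$ one has $\|\phi_0(\phi_0(x))\| \le \|\phi_0(x)\|$, simply because $\phi_0$ is a (unital completely positive, hence) contractive map; this says exactly that $\phi_0\circ\phi_0 \le \phi_0$ in the pre-order on $\mathcal G$. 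Since $\phi_0$ is a \emph{minimal} element, this forces $\phi_0\circ\phi_0$ and $\phi_0$ to be equivalent, i.e.\ also $\phi_0 \le \phi_0\circ\phi_0$, so that $\|\phi_0(\phi_0(x))\| = \|\phi_0(x)\|$ for all $x$.

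The remaining point is to upgrade this equality of norms to the operator identity $\phi_0\circ\phi_0 = \phi_0$. Here I would use a Choi–Effros/Hamana-type rigidity argument for ucp maps. Consider the map $\phi_0$ restricted to its image, or rather argue as follows: the equality $\|\phi_0^2(x)\| = \|\phi_0(x)\|$ for all $x$ says that $\phi_0$, viewed as a ucp map from the operator system $\phi_0(\ell^\infty(\G))$ into $\ell^\infty(\G)$, is completely isometric on $\phi_0(\ell^\infty(\G))$ (complete isometry coming for free since all maps in sight are completely positive and the norm identity can be applied at every matrix level, as $\mathcal G$ and the pre-order have matrix analogues — or simply because a unital contraction that is isometric on a self-adjoint subspace and completely positive is automatically completely isometric there). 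A unital completely isometric completely positive map restricted to the range of an idempotent-candidate ucp map, fixing that range, is the identity on it; concretely, $\phi_0$ is a ucp extension to $\ell^\infty(\G)$ of the identity map on the operator system $\phi_0(\ell^\infty(\G))$, and by the standard multiplicative-domain / rigidity lemma (as in \cite{Ham85} or \cite{Paulsen}) any ucp map that is isometric on an operator system containing the unit and that restricts to the identity there must satisfy $\phi_0(\phi_0(x)) = \phi_0(x)$. Thus $\phi_0$ is idempotent.

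I expect the main obstacle to be the passage from the norm equality $\|\phi_0^2(x)\| = \|\phi_0(x)\|$ to the genuine operator equality $\phi_0^2 = \phi_0$; the cheap part is the pre-order manipulation forcing the norm equality out of minimality and closure under composition. To handle the obstacle cleanly I would invoke the rigidity of the injective envelope / Choi–Effros theory: a ucp self-map of an operator system (here $\ell^\infty(\G)$) that is isometric on the operator subsystem $\phi_0(\ell^\infty(\G))$ and acts as the identity on it must coincide with the corresponding conditional projection, hence is idempotent. One should double-check that this rigidity lemma is available in the form stated — it is exactly the content of the arguments reproduced in this section following Hamana — and that the $\G$-equivariance of $\phi_0$ is not needed for this step (it is not; equivariance is only used to guarantee $\phi_0^2\in\mathcal G$, which we already have).
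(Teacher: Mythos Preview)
Your first step is fine: minimality together with $\phi_0^2 \le \phi_0$ gives $\|\phi_0^2(x)\| = \|\phi_0(x)\|$ for all $x$, i.e.\ $\phi_0$ is isometric on its own range. The gap is in the second step. You assert that this forces $\phi_0$ to act as the identity on its range, but an isometric (even completely isometric) ucp self-map of an operator system need not be the identity --- think of any $*$-automorphism, or simply the coordinate flip on $\CC\oplus\CC$. The ``rigidity lemma'' you invoke needs either that the subsystem be essential in the ambient algebra (which is precisely what has not yet been established, and in fact is proved later in the paper \emph{using} this lemma), or that the map already fix the subsystem pointwise (which is what you are trying to prove). Nothing in the hypotheses pins $\phi_0|_{\phi_0(\ell^\infty(\G))}$ down beyond being an isometry, and we do not even know that $\phi_0$ maps its range into itself.

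The paper supplies the missing ingredient via Ces\`aro averages: let $\Phi$ be a point-weak$^*$ limit of $\phi^{(n)} = \frac{1}{n}\sum_{k=1}^n \phi_0^k$. Then $\Phi\in\mathcal G$, $\Phi \le \phi_0$ (hence $\|\Phi(y)\|=\|\phi_0(y)\|$ for all $y$ by minimality), and crucially $\Phi\phi_0 = \Phi$. This absorbing property is exactly what isometry alone cannot give you, and it closes the argument in one line:
\[
\|\phi_0(x)-\phi_0^2(x)\| = \|\phi_0(x-\phi_0(x))\| = \|\Phi(x-\phi_0(x))\| = \|\Phi(x)-\Phi\phi_0(x)\| = 0.
\]
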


\begin{proof}
Let $\phi^{(n)} := \frac1n \sum_{k=1}^n \phi_0^k$. 
We have $\|\phi^{(n)}(x)\| \leq \frac1n \sum_{k=1}^n \|\phi_0^k(x)\|\leq \frac1n \sum_{k=1}^n \|\phi_0(x)\| = \|\phi_0(x)\|$ for all $x\in \ell^\infty(\G)$ and $n\in\mathbb N$.
Let $\Phi: \ell^\infty(\G) \to \ell^\infty(\G)$ be a point-weak* limit point of $(\phi^{(n)})$.
Then $\phi_0 \Phi = \Phi\phi_0 = \Phi$, and moreover 
$\|\Phi(x)\| \leq \limsup_n \|\phi^{(n)}\| \leq \|\phi_0(x)\|$ for all $x\in \ell^\infty(\G)$. 
By minimality of $\phi_0$ we have $\|\Phi(x)\| = \|\phi_0(x)\|$ for all $x\in \ell^\infty(\G)$.
Therefore
\[\begin{split}
\|\phi_0(x) - \phi_0^2(x)\| &= \|\phi_0(x - \phi_0(x))\| = \|\Phi(x - \phi_0(x))\| 
\\&= \|\Phi(x) - \Phi\phi_0(x)\| = 0 
\end{split}\]
for all $x\in \ell^\infty(\G)$, which implies that $\phi_0$ is idempotent.
\end{proof}

The standard Choi-Effros product makes the image of a (minimal) ucp $\G$-equivariant idempotent as above a unital $\cst$-algebra, which naturally has an $\ell^1(\G)$-module structure. We will show in the following theorem that this $\ell^1(\G)$-module structure in fact comes from an action of $\G$. 

\begin{thm}\label{thm:genuine-action}
Let $\phi: \ell^\infty(\G) \to \ell^\infty(\G)$ be an idempotent $\G$-equivariant ucp map, let $\sX = \phi(\ell^\infty(\G))$ and let $\sA$ denote the $\cst$-algebra that is obtained from equipping $\sX$ with the Choi-Effros product $a\cdot b := \phi(ab)$, $a, b\in \sX$.
Then the restriction of the coproduct defines a $\G$-action on $\sA$ satisfying the Podle\'s condition. 
\end{thm}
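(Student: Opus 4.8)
The plan is to show that the map $\alpha := \Delta|_{\sA}$, suitably corestricted, is a genuine $\G$-action on the Choi--Effros $\cst$-algebra $\sA$. The first point to settle is where $\alpha$ takes values. Since $\phi$ is $\G$-equivariant, $(\id \ot \phi) \circ \Delta = \Delta \circ \phi$ on $\ell^\infty(\G)$, so for $a \in \sX$ we have $\Delta(a) = \Delta(\phi(a)) = (\id \ot \phi)(\Delta(a)) \in \ell^\infty(\G) \wot \sX$. Thus $\alpha(a) \in \ell^\infty(\G) \wot \sX \cong \M(\c0(\G) \ot \sX)$, using the identification from Remark \ref{multiplieralgebra} (with $\sM = \sX$ as a von Neumann algebra, or rather its weak closure; here one has to be slightly careful, since $\sX$ need not be weak${}^*$-closed, but the Choi--Effros $\cst$-algebra $\sA$ sits inside $B(\Ltwo(\G))$ and one works with $\M(\c0(\G) \ot \sA)$ directly). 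First I would record that $\alpha:\sA \to \M(\c0(\G)\ot\sA)$ is a well-defined map into the multiplier algebra of $\c0(\G)\ot\sA$, using that $\c0(\G) = \c0^u(\G)$ and the strictness of slices of $\Delta$.

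Next I would check that $\alpha$ is a $\cst$-morphism for the Choi--Effros product. It is clearly unital, completely positive and normal. The multiplicativity is the Choi--Effros multiplicative-domain-type argument: for $a, b \in \sX$ one computes, using $\phi \circ \phi = \phi$ and equivariance,
\[
\alpha(a \cdot b) = \Delta(\phi(ab)) = (\id \ot \phi)(\Delta(ab)) = (\id \ot \phi)(\Delta(a)\Delta(b)),
\]
and one wants this to equal $(\id\ot\phi)(\Delta(a)) \cdot_{\text{CE}} (\id\ot\phi)(\Delta(b))$, where the product on the right is taken in $\M(\c0(\G))\ot\sA$ with the Choi--Effros product in the second leg. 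This follows because $\id\ot\phi$ restricted to $\ell^\infty(\G)\wot\sX$ is a conditional-expectation-type idempotent onto $\ell^\infty(\G)\wot\sX$, and $\Delta(a), \Delta(b)$ already lie in that range, so the standard identity $E(xy) = E(E(x)E(y))$ when $E(x)=x$, $E(y)=y$ applies leg-wise. I would spell this out via the fact that $\id\ot\phi$ is a $(\ell^\infty(\G)\wot\I)$-bimodule ucp idempotent and invoke the multiplicative domain / Choi--Effros machinery in the ampliated setting.

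Then the action equation $(\Delta\ot\id)\circ\alpha = (\id\ot\alpha)\circ\alpha$ is inherited directly from coassociativity of $\Delta$ on $\ell^\infty(\G)$, restricted to $\sA$: both sides equal $(\Delta\ot\id)\Delta|_{\sA} = (\id\ot\Delta)\Delta|_{\sA}$, and one only needs to observe that the right-hand side indeed takes values in $\c0(\G)\ot\M(\c0(\G)\ot\sA)$ after corestriction, which again uses equivariance of $\phi$ in the last leg. Injectivity of $\alpha$ follows from injectivity of $\Delta$ on $\ell^\infty(\G)$ together with the counit property, or directly since $(\ep\ot\id)\alpha(a) = a$ for $a\in\sA$ (the counit $\ep$ extends to $\ell^\infty(\G)$ and $\ep\ot\id$ applied to $\Delta(a)$ recovers $a$); this last identity will also be the key input for the Podle\'s condition.

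The main obstacle, and the last step, is verifying the Podle\'s density condition: $(\c0(\G)\ot\I)\alpha(\sA)$ is dense in $\c0(\G)\ot\sA$. Here I would follow the route indicated in Remark \ref{multiplieralgebra}: the extension $\bar\alpha$ of $\alpha$ to $\ell^\infty(\G)\wot\sA$ is a normal unital map satisfying $(\om\ot\id)\bar\alpha(x)\in\sA$ for $\om\in\ell^1(\G)$ and $(\ep\ot\id)\bar\alpha(x) = x$, so by \cite[Proposition 5.8]{BSV} and regularity of the discrete quantum group $\G$, the Podle\'s condition holds automatically. The subtlety to address is that the cited criterion is stated for actions on von Neumann algebras, whereas $\sA$ is only a $\cst$-algebra; I would handle this by passing to the von Neumann algebra $\sA'' \subset B(\Ltwo(\G))$ generated by $\sA$ (noting $\phi$ extends normally, so $\alpha$ extends to an action of $\G$ on $\sA''$ in the von Neumann sense), applying the von Neumann version of the Podle\'s condition there, and then descending to $\sA$ using that $\c0(\G)\ot\sA$ is an ideal structure inside $\M(\c0(\G)\ot\sA'')$ and that $\alpha(\sA)\subset\M(\c0(\G)\ot\sA)$. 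I expect the bookkeeping around multiplier algebras versus von Neumann closures to be the fiddly part, but conceptually everything is forced by equivariance of $\phi$ plus the already-established von Neumann-level facts.
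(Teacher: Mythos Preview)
Your approach is essentially the paper's: define $\alpha$ as the restriction of $\Delta$, check multiplicativity for the Choi--Effros product via equivariance of $\phi$, inherit the coaction identity from coassociativity, and get the Podle\'s condition from \cite[Proposition~5.8]{BSV} and regularity. Two places need tightening.

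First, the multiplicativity step: your ``multiplicative domain / $E(xy)=E(E(x)E(y))$'' explanation is circular, since with $E(x)=x$, $E(y)=y$ it says nothing. The actual point, which the paper makes precise with the maps $j$, $J$, $\Phi=\id\ot_{\cF}\phi$, is to identify the product on $\M(\c0(\G)\ot\sA)$ under the block picture $\M(\c0(\G)\ot\sA)\cong\prod_\gamma M_{n_\gamma}(\sA)$. Viewing $M_{n_\gamma}(\sA)$ inside $M_{n_\gamma}(\ell^\infty(\G))$ via $j$, the $\sA$-product becomes $(x,y)\mapsto\phi^{(n_\gamma)}(xy)$; globally this says that the product on $Z=\ell^\infty(\G)\wot\sX$ transported from $\M(\c0(\G)\ot\sA)$ is exactly $(z,w)\mapsto(\id\ot\phi)(zw)$. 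Once that identity is stated, your computation $\alpha(a\cdot b)=(\id\ot\phi)(\Delta(a)\Delta(b))=\alpha(a)\cdot\alpha(b)$ is immediate. Say this directly rather than invoking multiplicative domains.

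Second, drop the detour through $\sA''$. Your claim that ``$\phi$ extends normally'' is unjustified: $\phi$ is not assumed normal, so there is no reason it interacts well with weak closures. Fortunately the detour is unnecessary: \cite[Proposition~5.8]{BSV} is a $\cst$-algebraic statement and applies directly to $\alpha:\sA\to\M(\c0(\G)\ot\sA)$. You already have all the hypotheses---$\alpha$ is a unital $*$-homomorphism (hence non-degenerate), $(\omega\ot\id)\alpha(a)\in\sA$ for $\omega\in\ell^1(\G)$, and $(\varepsilon\ot\id)\alpha=\id$ gives weak continuity since $\varepsilon\in\ell^1(\G)$---so regularity of $\G$ finishes the Podle\'s condition with no von Neumann closure needed. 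This is exactly how the paper proceeds.
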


\begin{proof}
In order to distinguish the abstract $\cst$-algebra $\sA$ from the image $\sX$ of $\phi:\ell^\infty(\G)\to\ell^\infty(\G)$, we shall be using the embedding  $j:\sA\to\ell^\infty(\G)$ of $\sA$ into $\ell^\infty(\G)$. The product of elements $a,b$ viewed as elements of abstract $\cst$-algebra $\sA$ will be denoted by $ab$. The definition of the Choi-Effros product can now be written as 
\[ j(ab)  =\phi(j(a)j(b)), \;\;\; a,b \in \sA.\]  Note that $j$ is a unital completely isometric map. 

Consider the Fubini product map (see e.g. \cite[Chapter 7]{EffrosRuan})
 \[\Phi:=\id\otimes_{\cF}\phi:\ell^\infty(\G)\vtens\ell^\infty(\G) \to \ell^\infty(\G)\vtens\ell^\infty(\G).\] 
Recall that $\Phi$ is a complete contraction, characterised by the formula 
 \[(\omega \ot \mu) (\Phi(x)) = \mu (\phi((\omega \ot \id)(x))), \;\;\; x \in \ell^\infty(\G)\vtens \ell^\infty(\G), \omega, \mu \in \ell^1(\G). \]
The above formula allows us to deduce that for $x$ and $\omega$ as above we have $(\omega \ot \id) \Phi(x) = \phi((\omega \ot \id)(x))$, and further that $\Phi$ is idempotent. The fact that $\phi$ is a ucp $\G$-equivariant map implies after another easy computation that
\[ \Phi \circ \Delta = \Delta \circ \phi,\]
and that the image of $\Phi$ equals 
\[Z:=\left\{z\in \ell^\infty(\G)\vtens\ell^\infty(\G) \,: \,  (\om \ot \id)(z) \in \sX \, \text{ for all } \, {\om \in  \ell^1(\G)} \right\}.\] 
We then need another identification: as $\ell^\infty(\G) \approx \prod_{\gamma \in \Irr(\hG)} M_{n_\gamma}$, we can identify canonically $\ell^\infty(\G)\vtens\ell^\infty(\G)$ with $\prod_{\gamma \in \Irr(\hG)} M_{n_\gamma}(\ell^\infty(\G))$. In this identification we have $\Phi(\prod_{\gamma \in \Irr(\hG)} m_\gamma) = 
\prod_{\gamma \in \Irr(\hG)} \phi(m_\gamma)$ and $Z \approx \prod_{\gamma \in \Irr(\hG)} M_{n_\gamma}(\sX)$. Finally use the `pointwise' application of the map $j^{-1}:\sX \to \sA$ to define the ucp isometry $J^{-1}:Z \to \prod_{\gamma \in \Irr(\hG)} M_{n_\gamma}(\sA)\approx \M(\c0(\G) \ot \sA)$.

We are finally ready to define the action $\alpha:\sA \to \M(\c0(\G) \ot \sA)$; it is informally just the suitably interpreted coproduct, and formally
\[ \alpha = J^{-1} \circ \Com \circ j.\]
It is now easy to check that for any $z, w \in Z$ we have $J^{-1}(z)J^{-1}(w) = J^{-1}(\Phi(zw))$. 
Thus finally for $a,b \in A$ we have
\[\begin{split} 
\alpha(ab) &= J^{-1} \left( \Delta (j(ab)) \right) = J^{-1} \left(\Delta (\phi(j(a)j(b)) \right) 
\\&= J^{-1} \left(\Phi(\Delta(j(a) j(b))   \right)) = (J^{-1} \circ \Phi)\left(\Delta(j(a)) \Delta(j(b)) \right)
\\&= J^{-1} \left(\Delta(j(a))\right) J^{-1} \left(\Delta(j(b)) \right) = \alpha(a) \alpha(b),
\end{split}\]
which is what we intended to prove.

To prove that $\alpha$ satisfies the Podle\'s condition $[\alpha(\sA)(\c0(\G)\otimes\I)] = \c0(\G)\otimes \sA$ we shall use \cite[Proposition 5.8]{BSV}. 
Let us first note that $\alpha$, being a unital $*$-homomorphism, is non-degenerate, i.e.\ $[\alpha(\sA)(\c0(\G)\otimes\sA)] = \c0(\G)\otimes \sA$. Moreover for all $\omega\in\ell^1(\G)$ and $a\in\sA$ we have $(\omega\otimes\id)(\alpha(a))\in\sA$. The counit $\varepsilon\in \c0(\G)^*$  satisfies $(\varepsilon\otimes\id)\alpha(a) = a$ and since $\varepsilon\in \ell^1(\G)$ we see that the action $\alpha$ is weakly continuous, that is \[\left[\left\{(\omega\otimes\id)\alpha(a):\omega\in\ell^1(\G),a\in\sA\right\}\right] = \sA.\] Since   discrete quantum  groups are regular,  \cite[Proposition 5.8]{BSV} indeed applies to  $\alpha$. 
\end{proof}

In the following, for any idempotent ucp $\G$-equivariant $\phi$ on $\ell^\infty(\G)$, we consider $\im(\phi)$ as a $\G$-$\cst$-algebra, with the $\G$-action given in Theorem \ref{thm:genuine-action}.

\begin{prop}\label{prop:rigidity&essentiality}
Suppose $\phi_0$ is a minimal element of $\mathcal G$ (which is idempotent by Lemma \ref{lem:min-Gmap->idem}). Then 
\begin{enumerate}
\item
\emph{($\G$-rigidity)} the identity map is the unique ucp $\G$-equivariant map on $\im(\phi_0)$;
\item
\emph{($\G$-essentiality)} any ucp $\G$-equivariant map $\psi : \im(\phi_0) \to \sA$ from $\im(\phi_0)$ into any $\G$-$\cst$-algebra $\sA$ is completely isometric.
\end{enumerate}
\end{prop}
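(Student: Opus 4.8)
The plan is to follow Hamana's classical strategy, adapted to the $\G$-equivariant setting, exploiting the minimality of $\phi_0$ in the pre-order on $\mathcal G$ together with the idempotency established in Lemma \ref{lem:min-Gmap->idem}. Throughout write $\sX = \im(\phi_0)$, viewed as a $\G$-$\cst$-algebra via Theorem \ref{thm:genuine-action}.

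For \textbf{($\G$-rigidity)}, suppose $\theta : \sX \to \sX$ is a ucp $\G$-equivariant map; we must show $\theta = \id_\sX$. First I would observe that $\theta$ extends canonically to a ucp $\G$-equivariant map on $\ell^\infty(\G)$: namely $\tilde\theta := \iota \circ \theta \circ \phi_0$, where $\iota : \sX \hookrightarrow \ell^\infty(\G)$ is the inclusion. Equivariance of $\tilde\theta$ follows because $\phi_0$ is $\G$-equivariant and $\theta$ is $\G$-equivariant for the action of Theorem \ref{thm:genuine-action} (which is precisely the restriction of the coproduct). Now consider the composition $\tilde\theta \circ \phi_0 \in \mathcal G$; since $\phi_0$ is idempotent with image $\sX$, and $\tilde\theta$ restricted to $\sX$ equals $\theta$ followed by the inclusion, we get $\|(\tilde\theta\circ\phi_0)(x)\| = \|\theta(\phi_0(x))\| \leq \|\phi_0(x)\|$ for all $x$, so $\tilde\theta\circ\phi_0 \leq \phi_0$ in the pre-order. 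By minimality $\tilde\theta\circ\phi_0$ and $\phi_0$ have the same norm-seminorm, i.e.\ $\|\theta(\phi_0(x))\| = \|\phi_0(x)\|$ for all $x$; hence $\theta$ is a complete isometry on $\sX$ (applying the same argument with matrix amplifications). A complete order isomorphism of a unital $\cst$-algebra onto a unital $\cst$-algebra which is moreover unital need not a priori be the identity, so the final step is to run the classical Hamana trick: consider $\frac1n\sum_{k=1}^n \theta^k$, take a point-weak* cluster point $\Theta$, note $\Theta\circ\theta = \Theta$ and that $\Theta$ extends (as above) to an element of $\mathcal G$ dominated by $\phi_0$, whence by minimality $\Theta$ is again a complete isometry on $\sX$; then $\|\theta(x) - x\| = \|\Theta(\theta(x)) - \Theta(x)\|/(\text{isometry}) $ — more precisely $\Theta(\theta(x)-x) = 0$ forces, by the complete isometry property of $\Theta$, that $\theta(x) = x$.

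For \textbf{($\G$-essentiality)}, let $\psi : \sX \to \sA$ be ucp $\G$-equivariant into an arbitrary $\G$-$\cst$-algebra $\sA$. We must show $\psi$ is completely isometric. The standard move is to use injectivity of $\ell^\infty(\G)$ as an operator system in the category with $\G$-equivariant ucp maps: since $\sA$ is a $\G$-$\cst$-algebra, by Proposition \ref{Pois-map-props} any state $\nu$ on $\sA$ yields a $\G$-equivariant ucp Poisson transform $\cP_\nu : \sA \to \ell^\infty(\G)$. It suffices to find, for each such $\nu$, a $\G$-equivariant ucp map $\rho : \ell^\infty(\G) \to \sX$ with $\rho \circ \cP_\nu \circ \psi = \id_\sX$ — then $\psi$ is completely isometric because $\id_\sX$ is. To build $\rho$: extend $\cP_\nu\circ\psi : \sX\to\ell^\infty(\G)$ arbitrarily (using $\G$-injectivity of $\ell^\infty(\G)$, which the paper's general categorical construction supplies) to a $\G$-equivariant ucp map $\sigma : \ell^\infty(\G)\to\ell^\infty(\G)$, i.e.\ $\sigma\in\mathcal G$, and then compose $\phi_0\circ\sigma \in \mathcal G$. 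Since $\sigma|_\sX = \cP_\nu\circ\psi$ is a complete contraction that we do \emph{not yet know} is isometric, a priori $\phi_0\circ\sigma$ need not equal $\phi_0$; however $\phi_0\circ\sigma\circ\phi_0 \in\mathcal G$ and $\|\phi_0\sigma\phi_0(x)\|\le\|\phi_0(x)\|$, so by minimality $\phi_0\circ\sigma\circ\phi_0$ agrees with $\phi_0$ in seminorm, and running the averaging/cluster-point argument of \ref{lem:min-Gmap->idem} once more promotes this to the equality $\phi_0\circ\sigma\circ\phi_0 = \phi_0$ (this is exactly the rigidity argument applied to the $\G$-equivariant ucp self-map $\phi_0\circ\sigma|_\sX$ of $\sX$). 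Restricting to $\sX$ gives $\phi_0\circ\sigma|_\sX = \id_\sX$, i.e.\ $\phi_0\circ\cP_\nu\circ\psi = \id_\sX$; so $\rho := \phi_0$ works and $\psi$ is a complete isometry.

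\textbf{Main obstacle.} The delicate point is that minimality in the pre-order only gives \emph{equality of seminorms}, $\|\Phi(x)\| = \|\phi_0(x)\|$, not the operator-level identity $\Phi = \phi_0$; bridging this gap requires the Cesàro-averaging together with a point-weak* cluster point argument exactly as in Lemma \ref{lem:min-Gmap->idem}, and one must be careful that the relevant self-maps are genuinely $\G$-equivariant ucp maps \emph{on all of $\ell^\infty(\G)$} (so that they lie in $\mathcal G$), which is why the extension step via $\iota\circ(-)\circ\phi_0$ and $\G$-injectivity of $\ell^\infty(\G)$ is used at each stage. The fact that the $\ell^1(\G)$-module structure is a genuine $\G$-action, supplied by Theorem \ref{thm:genuine-action}, is what makes " $\G$-equivariant" a well-defined condition on these maps in the first place and must be invoked whenever we extend or restrict.
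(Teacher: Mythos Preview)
Your proposal is correct and follows essentially the same Hamana-style route as the paper: for rigidity, Ces\`aro-average the given self-map, take a point-weak* cluster point $\Theta$ satisfying $\Theta\theta=\Theta$, use minimality of $\phi_0$ to show $\Theta$ is isometric on $\sX$, and conclude; for essentiality, compose with a Poisson transform and $\phi_0$ and invoke rigidity. Two minor simplifications are worth noting. In part (1), your preliminary step showing that $\theta$ itself is isometric is correct but unused---only the isometry of $\Theta$ is needed. In part (2), the extension of $\cP_\nu\circ\psi$ to all of $\ell^\infty(\G)$ via $\G$-injectivity is unnecessary (and is a forward reference in the paper's ordering): since $\phi_0$ already maps $\ell^\infty(\G)$ onto $\sX$, the composition $\phi_0\circ\cP_\nu\circ\psi:\sX\to\sX$ is directly a ucp $\G$-equivariant self-map of $\sX$, to which rigidity applies immediately---this is precisely the paper's one-line proof of (2).
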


\begin{proof}
(1)\, Suppose $\psi : \im(\phi_0) \to \im(\phi_0)$ is a ucp $\G$-equivariant map.
Similarly as in the proof of Lemma \ref{lem:min-Gmap->idem} above we may find a ucp $\G$-equivariant $\Psi$ acting on $\im(\phi_0)$
such that $\Psi\psi = \psi\Psi = \Psi$. 
Since $\Psi$ is a contraction we have 
$\|\Psi\phi_0(x)\| \leq \|\phi_0(x)\|$ for all $x\in \im(\phi_0)$.
Hence by minimality of $\phi_0$, it follows that 
$\|x\| = \|\phi_0(x)\| \leq \|\Psi(\phi_0(x))\|$ for all $x\in \im(\phi_0)$, which 
shows that $\Psi$ is isometric on $\im(\phi_0)$.
Thus, $\|x-\psi(x)\| = \|\Psi(x-\psi(x))\| = 0$ for all $x\in \im(\phi_0)$, which implies that $\psi=\id_{\im(\phi_0)}$.

\noindent
(2)\, Let $\sA$ be a $\G$-$\cst$-algebra and $\psi : \im(\phi_0) \to \sA$ be a ucp $\G$-equivariant map. Let $\omega\in S(\sA)$ be a state on $\sA$, and $\cP_\omega:\sA\to \ell^\infty(\G)$ the corresponding Poisson transform. Then $\phi_0\circ\cP_\omega\circ \psi$ is a ucp $\G$-equivariant map on $\im(\phi_0)$, hence the identity map by part (1). Since both $\phi_0$ and $\cP_\omega$ are completely contractive, it follows that $\psi$ is completely isometric.
\end{proof}

\begin{cor}\label{cor:min-image}
If $\phi_0$ is a minimal element of $\mathcal G$, then $\im(\phi_0)$ is minimal among subspaces of $\ell^\infty(\G)$ that are images of idempotent ucp $\G$-equivariant maps.
\end{cor}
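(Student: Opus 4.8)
The plan is to show that no image of an idempotent ucp $\G$-equivariant map on $\linf(\G)$ is properly contained in $\im(\phi_0)$; equivalently, that if $\psi\colon\linf(\G)\to\linf(\G)$ is an idempotent ucp $\G$-equivariant map with $\im(\psi)\subseteq\im(\phi_0)$, then $\im(\psi)=\im(\phi_0)$. So fix such a $\psi$. Since $\phi_0$ is an idempotent with image $\im(\phi_0)$, it restricts to the identity on $\im(\phi_0)$; and the inclusion $\im(\psi)\subseteq\im(\phi_0)$ gives $\psi(\im(\phi_0))\subseteq\im(\psi)\subseteq\im(\phi_0)$, so $\psi$ also restricts to a self-map of $\im(\phi_0)$.

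Next I would consider the composition $\psi\comp\phi_0$. It is ucp and $\G$-equivariant, being a composition of such maps, hence lies in $\mathcal G$; and as $\psi$ is contractive we have $\|\psi(\phi_0(x))\|\le\|\phi_0(x)\|$ for every $x\in\linf(\G)$, i.e.\ $\psi\comp\phi_0\le\phi_0$ in the pre-order on $\mathcal G$. By minimality of $\phi_0$ this forces $\|\psi(\phi_0(x))\|=\|\phi_0(x)\|$ for all $x\in\linf(\G)$. Specialising to $x\in\im(\phi_0)$, where $\phi_0(x)=x$, we conclude that $\psi$ is isometric on $\im(\phi_0)$.

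Finally, $e:=\psi|_{\im(\phi_0)}$ is an isometric idempotent self-map of $\im(\phi_0)$ (idempotency because $\psi^2=\psi$). An isometric idempotent on a Banach space must be the identity, since $e(x-e(x))=e(x)-e^2(x)=0$ yields $\|x-e(x)\|=\|e(x-e(x))\|=0$. Hence $\psi|_{\im(\phi_0)}=\id$, so $\im(\phi_0)=\psi(\im(\phi_0))\subseteq\im(\psi)$, and together with the reverse inclusion this gives $\im(\psi)=\im(\phi_0)$. Alternatively, once one checks that $\psi|_{\im(\phi_0)}$ is $\G$-equivariant for the $\G$-action furnished by Theorem \ref{thm:genuine-action} --- which amounts to unwinding the definition of that action, using that $\im(\phi_0)$ is invariant under the $\ell^1(\G)$-module action, and invoking $\G$-equivariance of $\psi$ on $\linf(\G)$ --- one may instead appeal directly to $\G$-rigidity, Proposition \ref{prop:rigidity&essentiality}(1), to identify this restriction with the identity.

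I do not expect any genuine difficulty here: there is no analytic content beyond the elementary fact about isometric idempotents, and the whole argument is a short combination of minimality in the pre-order with idempotency. The points requiring (routine) care are purely structural --- that $\psi\comp\phi_0$ really is ucp and $\G$-equivariant, hence a member of $\mathcal G$; that the inclusion $\im(\psi)\subseteq\im(\phi_0)$ makes all the restrictions and compositions above well defined; and, on the rigidity route, that this inclusion renders $\im(\phi_0)$ invariant under the module action, so that $\psi|_{\im(\phi_0)}$ is a bona fide $\G$-equivariant endomorphism of the $\G$-$\cst$-algebra $\im(\phi_0)$.
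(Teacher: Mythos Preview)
Your proof is correct. The paper's own proof is precisely your ``alternative'' route: it simply observes that the restriction $\psi|_{\im(\phi_0)}$ is a ucp $\G$-equivariant self-map of $\im(\phi_0)$ and invokes $\G$-rigidity (Proposition~\ref{prop:rigidity&essentiality}(1)) to conclude it is the identity, whence $\im(\phi_0)\subseteq\im(\psi)$.

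Your main route is a slight variant: rather than citing rigidity, you unpack the relevant part of its proof inline, exploiting the extra hypothesis that $\psi$ is idempotent. This lets you skip the Ces\`aro-averaging step used in the proof of Proposition~\ref{prop:rigidity&essentiality}(1) (where one passes from a general ucp $\G$-map $\psi$ to a limit $\Psi$ with $\Psi\psi=\Psi$), since here $\psi$ already satisfies $\psi\cdot\psi=\psi$. Both arguments rest on the same mechanism---minimality of $\phi_0$ in the pre-order forces the composed map to be isometric, and an isometric ``absorbing'' map kills nothing---so the difference is purely one of packaging.
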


\begin{proof}
Let $\phi:\ell^\infty(\G) \to \ell^\infty(\G)$ be a ucp $\G$-equivariant map such that $\im(\phi)$ is contained in $\im(\phi_0)$. Then the restriction of $\phi$ to $\im(\phi_0)$ is a ucp $\G$-equivariant map on $\im(\phi_0)$, hence the identity map by $\G$-rigidity (Proposition \ref{prop:rigidity&essentiality}). This implies $\im(\phi_0) \subseteq \im(\phi)$.
\end{proof}

\begin{prop}\label{prop:Hamana-bnd}
The minimal image of a ucp $\G$-equivariant idempotent on $\linf(\G)$ is unique up to isomorphism (as a $\G$-$\cst$-algebra).
\end{prop}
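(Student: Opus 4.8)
The plan is to use the two defining properties of the minimal image established in Proposition \ref{prop:rigidity&essentiality} --- $\G$-rigidity and $\G$-essentiality --- together with the universality that follows from essentiality, to run the usual Hamana-style uniqueness argument in the $\G$-equivariant category. Suppose $\phi_0, \phi_1$ are two minimal elements of $\mathcal{G}$, and write $\sX_0 = \im(\phi_0)$, $\sX_1 = \im(\phi_1)$, regarded as $\G$-$\cst$-algebras via Theorem \ref{thm:genuine-action}. I want to produce a $\G$-equivariant complete order isomorphism $\sX_0 \to \sX_1$.

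First I would observe that $\phi_1|_{\sX_0}\colon \sX_0 \to \linf(\G)$ is a ucp $\G$-equivariant map, and since it takes values in $\sX_1$ (because $\phi_1$ is idempotent with image $\sX_1$), it defines a ucp $\G$-equivariant map $\psi\colon \sX_0 \to \sX_1$. By $\G$-essentiality of $\sX_0$ (Proposition \ref{prop:rigidity&essentiality}(2)), $\psi$ is completely isometric. Symmetrically, $\chi := \phi_0|_{\sX_1}\colon \sX_1 \to \sX_0$ is a ucp $\G$-equivariant complete isometry. Now $\chi \circ \psi\colon \sX_0 \to \sX_0$ is a ucp $\G$-equivariant map, hence equals $\id_{\sX_0}$ by $\G$-rigidity (Proposition \ref{prop:rigidity&essentiality}(1)); likewise $\psi \circ \chi = \id_{\sX_1}$. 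Thus $\psi$ is a bijective ucp $\G$-equivariant map whose inverse $\chi$ is also ucp, so $\psi$ is a unital complete order isomorphism intertwining the $\G$-actions.

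It remains to upgrade this to an isomorphism of $\cst$-algebras, i.e.\ to check $\psi$ is multiplicative for the respective Choi--Effros products. This is the standard fact that a unital complete order isomorphism between $\cst$-algebras (more precisely, between the operator systems carrying the Choi--Effros structure) is automatically a $*$-isomorphism: a unital complete isometry is multiplicative on the $\cst$-algebra it generates, and here $\psi$ and $\psi^{-1}$ are both unital complete contractions, so $\psi$ is a complete isometry with completely isometric inverse, hence a $*$-isomorphism for the Choi--Effros products on $\sX_0$ and $\sX_1$. Combined with $\G$-equivariance of $\psi$, this gives the required isomorphism of $\G$-$\cst$-algebras.

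The one point that needs a little care --- and is the main (mild) obstacle --- is the compatibility between the abstract $\cst$-structure coming from Choi--Effros and the $\G$-action of Theorem \ref{thm:genuine-action}: one must make sure that ``$\G$-equivariant ucp map'' in the sense used in Proposition \ref{prop:rigidity&essentiality} (maps on $\im(\phi_0)$ commuting with the $\ell^1(\G)$-module structure, equivalently with the restricted coproduct) is exactly the notion of $\G$-equivariance for the $\cst$-algebras $\sA$ constructed in Theorem \ref{thm:genuine-action}, so that $\psi$ and $\chi$ qualify for both Proposition \ref{prop:rigidity&essentiality}(1) and (2). Granting that identification, which is immediate from the construction $\alpha = J^{-1}\circ\Delta\circ j$, the argument above is complete.
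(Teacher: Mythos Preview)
Your proof is correct and follows essentially the same route as the paper: restrict each minimal idempotent to the image of the other and use $\G$-rigidity to show the two compositions are identities, whence the restrictions are mutually inverse $\G$-equivariant ucp maps. The paper's proof is terser --- it invokes only rigidity and asserts the $\cst$-isomorphism directly --- whereas you additionally invoke $\G$-essentiality (harmless but unnecessary, since rigidity already forces bijectivity) and spell out the standard fact that a unital complete order isomorphism between $\cst$-algebras is a $*$-isomorphism, as well as the compatibility of the $\G$-action from Theorem~\ref{thm:genuine-action}; these are points the paper leaves implicit.
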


\begin{proof}
Suppose $\phi:\ell^\infty(\G) \to \sX$ and $\psi: \ell^\infty(\G) \to\sY$ are ucp $\G$-equivariant idempotents, and
$\sX$ and $\sY$ are minimal among subspaces of $\ell^\infty(\G)$ that are images of ucp $\G$-equivariant idempotents.

Then by $\G$-rigidity (Proposition \ref{prop:rigidity&essentiality}) the composition $\phi\psi$, when restricted to $\sX$, is the identity map. Similarly,
the restriction of $\psi\phi$ to $\sY$ is the identity map. Hence $\psi: \sX\to\sY$ is a $\G$-isomorphism (of $\cst$-algebras).
\end{proof}

Note that the construction above remains valid for any sub-class of $\G$-invariant ucp maps, as long as it is closed under composition and pointwise weak$^*$-limits. 

The next fact is standard, but extremely useful; in fact injectivity is what makes the $\G$-envelopes interesting.

\begin{prop}\label{prop:injectivity}
Suppose $\phi_0$ is an idempotent in $\mathcal G$. Then $\im(\phi_0)$ is injective in the category of $\G$-$\cst$-algebras, i.e.\ for any $\G$-$\cst$-algebras $\sA$ and $\sB$ equipped with ucp $\G$-equivariant maps $\psi:\sA \to \im(\phi_0)$ and $\iota: \sA \to \sB$, with $\iota$ completely isometric, there exists a ucp $\G$-equivariant map $\Psi: \sB \to \im(\phi_0)$ such that $\psi = \Psi \circ \iota$.
\end{prop}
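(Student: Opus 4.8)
The plan is to first show that $\linf(\G)$ itself is injective in the category of $\G$-$\cst$-algebras, and then to deduce injectivity of $\im(\phi_0)$ by compressing along the idempotent $\phi_0$.

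First I would verify the $\G$-injectivity of $\linf(\G)$. Suppose $\sA,\sB$ are $\G$-$\cst$-algebras, $\iota\colon\sA\to\sB$ is a completely isometric ucp $\G$-equivariant map, and $\theta\colon\sA\to\linf(\G)$ is ucp and $\G$-equivariant. By Proposition~\ref{Pois-map-props} we have $\theta=\cP_\nu$ for some state $\nu\in S(\sA)$. Since $\iota$ is unital and completely isometric, $\iota(\sA)$ is a unital operator system in $\sB$ and $\nu\circ\iota^{-1}$ is a state on it; by Hahn--Banach it extends to a state $\tilde\nu\in S(\sB)$ with $\tilde\nu\circ\iota=\nu$. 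Then $\cP_{\tilde\nu}\colon\sB\to\linf(\G)$ is again ucp and $\G$-equivariant, and using $\G$-equivariance of $\iota$ one checks $\cP_{\tilde\nu}\circ\iota=\cP_{\tilde\nu\circ\iota}=\cP_\nu=\theta$, so $\cP_{\tilde\nu}$ is the desired extension. Hence $\linf(\G)$ is $\G$-injective.

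Then I would compress to $\im(\phi_0)$. Let $j\colon\im(\phi_0)\to\linf(\G)$ be the embedding of $\im(\phi_0)$, equipped with its Choi--Effros $\cst$-structure and the $\G$-action of Theorem~\ref{thm:genuine-action}, back into $\linf(\G)$; it is ucp and $\G$-equivariant, and since $\phi_0$ is idempotent we have $\phi_0\circ j=\id_{\im(\phi_0)}$, while $\phi_0\colon\linf(\G)\to\im(\phi_0)$ is itself ucp and $\G$-equivariant. Now given $\psi\colon\sA\to\im(\phi_0)$ and $\iota\colon\sA\to\sB$ as in the statement, I would apply the previous step to $j\circ\psi\colon\sA\to\linf(\G)$ to obtain a ucp $\G$-equivariant $\widetilde\Psi\colon\sB\to\linf(\G)$ with $\widetilde\Psi\circ\iota=j\circ\psi$, and set $\Psi:=\phi_0\circ\widetilde\Psi\colon\sB\to\im(\phi_0)$. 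This $\Psi$ is ucp and $\G$-equivariant as a composition of such maps, and $\Psi\circ\iota=\phi_0\circ\widetilde\Psi\circ\iota=\phi_0\circ j\circ\psi=\psi$, which is exactly the extension required.

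The hard part will not be the diagram chase but rather the bookkeeping about which maps are morphisms in which category: the only substantive inputs are Proposition~\ref{Pois-map-props}, which reduces the abstract extension problem to the elementary problem of extending a state, and the fact, read off from the proof of Theorem~\ref{thm:genuine-action}, that both the embedding $j$ and the projection $\phi_0$ are morphisms of $\G$-$\cst$-algebras once $\im(\phi_0)$ carries its abstract $\cst$-structure. I therefore expect the only genuine obstacle to be recording explicitly this $\G$-equivariance of $j$ and $\phi_0$; everything else is formal.
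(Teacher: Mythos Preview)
Your proof is correct and follows essentially the same route as the paper. The paper's proof is a two-line reduction: since $\phi_0$ is a ucp $\G$-equivariant idempotent onto $\im(\phi_0)\subset\linf(\G)$, it suffices to show $\linf(\G)$ is $\G$-injective, and this is deferred to the next proposition (Proposition~\ref{prop:injectivity-general} with $\Hil=\CC$), whose proof---compose with the counit to obtain a state, extend it, and rebuild the equivariant map as a Poisson transform---is exactly what you wrote out.
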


\begin{proof}
As $\im(\phi_0) \subset \ell^\infty(\G)$ and $\phi_0$ is a ucp $\G$-equivariant projection, it is enough to show that  $\ell^\infty(\G)$ itself is injective as a $\G$-$\cst$-algebras. This fact is a special case of the next proposition.
\end{proof}

An injective object in the category of $\G$-$\cst$-algebras is called $\G$-injective.
Recall Remark \ref{multiplieralgebra}.

\begin{prop}\label{prop:injectivity-general}
Let $\G$ be a discrete quantum group, and let $\Hil$ be a Hilbert space. The $\C^*$-algebra $\linf(\G)\overline{\otimes} B(\Hil)$ with the $\G$-action $\Delta\otimes \id$ is $\G$-injective.
\end{prop}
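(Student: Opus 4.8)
The plan is to reduce the statement to the classical Arveson–Hakeda–Tomiyama-type injectivity of $B(\sK)$ together with the $\G$-equivariant conditional-expectation obtained from averaging against an invariant mean — but since $\G$ need not be amenable, we cannot average, so the real point is to use the standard ``$2\times 2$ matrix trick'' relative to the left regular representation to realise $\linf(\G)\vtens B(\Hil)$ as a corner of $B(\ell^2(\G)\ot\Hil)$ in a $\G$-equivariant way and then invoke injectivity of the latter. More precisely, first I would recall that by Remark \ref{multiplieralgebra} the $\ell^1(\G)$-module action coming from $\Delta\ot\id$ is a genuine $\C^*$-action satisfying the Podle\'s condition, so the statement makes sense in the category of $\G$-$\cst$-algebras. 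Next, consider the unitary representation $\ww$ of $\G$ on $\ell^2(\G)$ (the quantum left regular representation) and the $\ast$-homomorphism $\theta:\linf(\G)\vtens B(\Hil)\to B(\ell^2(\G))\vtens B(\ell^2(\G))\vtens B(\Hil)$ given (informally) by $x\mapsto \ww_{12}^*(1\ot x)\ww_{12}$ on the first two legs; the comultiplication intertwining property of $\ww$ shows that $\theta$ is a $\G$-equivariant embedding when the target carries the action $\id\ot(\text{inner by }\ww)\ot\id$, which on the image is transported from $\Delta\ot\id$.

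The key steps, in order: (i) show $B(\sK)$ is $\G$-injective for $\sK=\ell^2(\G)\ot\Hil$ with the $\G$-action $Y\mapsto \ww_{12}^*Y_{23}\ww_{12}$ (a genuine action since $\ww$ is a representation) — this follows because any ucp $\G$-equivariant problem with target $B(\sK)$ can be solved forgetting equivariance by injectivity of $B(\sK)$ in the ordinary operator-system category (Arveson's extension theorem), and then one runs the standard Hamana-type averaging argument: the set of ucp maps solving the non-equivariant problem is a point-weak$^*$ compact convex set on which $\G$ acts affinely via $\om\in\ell^1(\G)$, and one extracts a minimal element which must be equivariant — here one must be careful, as $\G$ is not amenable, so instead I would argue directly that $B(\sK)$ is $\G$-injective because $\ell^\infty(\G)\vtens B(\sK)\cong B(\sK)\oplus\cdots$ is itself of the form $B(\text{big Hilbert space})$ and the $\G$-action is spatially implemented, reducing $\G$-equivariant extension to ordinary commutant-lifting; (ii) exhibit the $\G$-equivariant ucp conditional expectation $E:B(\ell^2(\G))\vtens B(\Hil)\to \linf(\G)\vtens B(\Hil)$ obtained by slicing the middle leg of $\theta$ against a fixed vector state $\om_\xi$ on the first leg, where $\xi\in\ell^2(\G)$ is chosen so that $(\om_\xi\ot\id)\ww^*(\cdot)\ww$ fixes $\linf(\G)$ pointwise — concretely one takes the construction implementing $\linf(\G)=\{(\id\ot\om)\ww\}''$ and uses that the counit $\varepsilon\in\ell^1(\G)$ gives $(\varepsilon\ot\id)\Delta=\id$; (iii) conclude: given $\psi:\sA\to\linf(\G)\vtens B(\Hil)$ and a completely isometric $\G$-equivariant $\iota:\sA\to\sB$, compose $\psi$ with the inclusion into $B(\ell^2(\G))\vtens B(\Hil)$, extend $\G$-equivariantly to $\sB$ using step (i), then compose with $E$ from step (ii) to land back in $\linf(\G)\vtens B(\Hil)$; equivariance of $E$ and of the extension gives the required $\G$-equivariant $\Psi$, and $E$ restricted to the image is the identity since $E\circ(\text{inclusion})=\id$ by the choice of $\xi$.

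The main obstacle I expect is step (i): proving $B(\sK)$ is $\G$-injective without an invariant mean. The honest route is to observe that the $\G$-action on $B(\sK)$ arising here is implemented by a unitary representation of $\G$, hence of the form $Y\mapsto \Vv^*(1\ot Y)\Vv$ for a suitable corepresentation $\Vv\in\linf(\G)\vtens B(\sK)$; then a $\G$-equivariant ucp extension problem into $B(\sK)$ is equivalent, after ``untwisting'' by $\Vv$, to finding a ucp map into $B(\sK)$ whose range commutes with a copy of $\linf(\hat\G)$ acting on an ancillary space and which extends a given one — i.e.\ a relative commutant lifting, solvable by combining Arveson's theorem with the self-duality of $\Hom$-spaces into $B(\sK)$. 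Packaging this cleanly, so that it works uniformly in $\Hil$ and reduces Proposition \ref{prop:injectivity} to the present statement as claimed, is where the care is needed; everything else is a formal diagram chase once the conditional expectation $E$ is in place.
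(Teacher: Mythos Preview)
Your proposal has a genuine gap at step (i), which you yourself flag as the main obstacle. The arguments you sketch there --- Hamana-type averaging, which you correctly note fails without amenability, and a vague ``commutant lifting'' via untwisting --- are not complete, and it is not clear how to make them work. Proving that $B(\sK)$ with an implemented $\G$-action is $\G$-injective is essentially the same difficulty as the original statement, so you have not actually reduced the problem.

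The paper's proof is much more direct and bypasses this obstacle entirely. Given $\psi:\sA\to\linf(\G)\vtens B(\Hil)$ and a completely isometric $\G$-equivariant $\iota:\sA\to\sB$, put $\phi := (\varepsilon\otimes\id)\circ\psi : \sA\to B(\Hil)$. By ordinary Arveson injectivity of $B(\Hil)$, extend $\phi$ to a ucp map $\tilde\phi:\sB\to B(\Hil)$ (no equivariance is needed at this stage). Then define $\tilde\psi:\sB\to\linf(\G)\vtens B(\Hil)$ by the formula $(\omega\otimes\id)(\tilde\psi(b)) = \tilde\phi(b*\omega)$ for $\omega\in\ell^1(\G)$, using the identification of $\linf(\G)\vtens B(\Hil)$ with the space of completely bounded maps $\ell^1(\G)\to B(\Hil)$. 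One checks directly that $\tilde\psi$ is ucp, $\G$-equivariant (by associativity of the module action), and extends $\psi$ (since $(\omega\otimes\id)\psi(a) = (\varepsilon\otimes\id)\psi(a*\omega) = \phi(a*\omega)$ by equivariance of $\psi$). You do mention the counit, but only in step (ii) to build a conditional expectation back from $B(\ell^2(\G))\vtens B(\Hil)$; the point is that the counit already lets you collapse the problem down to $B(\Hil)$ \emph{before} extending, so the intermediate passage through $B(\ell^2(\G))\vtens B(\Hil)$ and the whole of step (i) are unnecessary.
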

\begin{proof}
Assume that we have $\G$-$\cst$-algebras $\sA$ and $\sB$ equipped with ucp $\G$-equivariant maps $\psi:\sA \to \ell^\infty(\G)\overline{\otimes} B(\Hil)$ and $\iota: \sA \to \sB$, with $\iota$ completely isometric. Put $\phi:=(\epsilon\otimes\id) \circ \psi$, obtaining thus a ucp map $\sA\to B(\Hil)$, which then extends to a ucp map $\tilde{\phi}:\sB\to B(\Hil)$, by injectivity of $B(\Hil)$. Then for any $b \in \sB$ define the map $\tilde{\psi}(b):\ell^1(\G)\to B(\Hil)$ by
\[\left(\tilde{\psi}(b)\right) (\om): = \tilde{\phi}(b*\omega) ;\quad \om\in\ell^1(\G) .\]
The map $\tilde{\psi}(b)$ is obviously completely bounded, hence identifies with an element in $\linf(\G)\overline{\otimes} B(\Hil)$ (see e.g. \cite[Corollary 7.1.5]{EffrosRuan}). Thus, we obtain the map $\tilde\psi:\sB \to \ell^\infty(\G)\overline{\otimes} B(\Hil)$; it is straightforward to see $\tilde\psi$ is a ucp $\G$-equivariant map that extends $\psi$.
\end{proof}

\begin{cor}\label{cor:inj-embd}
Every $\G$-$\cst$-algebra $\sA$ embeds equivariantly into a $\G$-injective $\cst$-algebra.
\end{cor}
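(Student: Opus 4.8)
The plan is to use the action map $\alpha$ of $\sA$ \emph{itself} as the desired equivariant embedding, with the ambient $\G$-injective algebra supplied by Proposition \ref{prop:injectivity-general}.

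Concretely, I would first represent the unital $\G$-$\cst$-algebra $\sA$ faithfully and unitally on a Hilbert space $\Hil$, so that $\sA\subset B(\Hil)$. Recalling Remark \ref{multiplieralgebra}, we then obtain the chain of unital inclusions
\[
\alpha(\sA)\subset\M(\c0(\G)\ot\sA)\cong\prod_{\gamma\in\Irr(\hG)}M_{n_\gamma}(\sA)\subset\prod_{\gamma\in\Irr(\hG)}M_{n_\gamma}(B(\Hil))\cong\linf(\G)\overline{\otimes}B(\Hil),
\]
so that $\alpha$ may be regarded as a unital, injective $*$-homomorphism from $\sA$ into $\linf(\G)\overline{\otimes}B(\Hil)$; being an injective $*$-homomorphism it is automatically completely isometric, hence in particular ucp.

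It then remains to check that $\alpha$ is $\G$-equivariant when $\linf(\G)\overline{\otimes}B(\Hil)$ carries the action $\Delta\otimes\id$. Since $\alpha$ is a morphism, equivariance amounts to the equality $(\id\otimes\alpha)\circ\alpha=(\Delta\otimes\id)\circ\alpha$, which is precisely the action equation defining $\alpha$. Thus $\alpha$ is a completely isometric ucp $\G$-equivariant map from $\sA$ into $\linf(\G)\overline{\otimes}B(\Hil)$, and the latter is $\G$-injective by Proposition \ref{prop:injectivity-general}; this proves the corollary.

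There is no genuine obstacle here: the argument is essentially a bookkeeping exercise. The only point deserving a moment's care is the identification $\M(\c0(\G)\ot\sA)\cong\prod_{\gamma}M_{n_\gamma}(\sA)$ from Remark \ref{multiplieralgebra} together with the observation that the action $\Delta\otimes\id$ on $\linf(\G)\overline{\otimes}B(\Hil)$ restricts compatibly to $\c0(\G)\ot\sA$, which is immediate from the fact that $\Delta$ restricts to a morphism of $\c0(\G)$.
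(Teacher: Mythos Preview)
Your proof is correct and follows essentially the same approach as the paper: both use the action map $\alpha$ itself as the equivariant embedding into $\linf(\G)\overline{\otimes}B(\Hil)$ (equipped with $\Delta\otimes\id$), invoking Proposition~\ref{prop:injectivity-general} for $\G$-injectivity of the target. Your write-up simply unpacks the identifications from Remark~\ref{multiplieralgebra} and the equivariance verification a bit more explicitly than the paper does.
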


\begin{proof}
Let $\alpha:\G\act \sA$ be an action of $\G$ on $\sA$, and let $\sA$ be faithfully represented on a Hilbert space $\Hil$. Then  $\alpha: \sA\to \M(\c0(\G)\otimes \sA)\subset \linf(\G)\overline{\otimes} B(\Hil)$  is a ucp $\G$-equivariant map, when we equip $\linf(\G)\overline{\otimes} B(\Hil)$ with the $\G$-action $\Delta\otimes\id$.
\end{proof}

We are ready to define the central object of this paper.

\begin{thm}\label{thm:universalF}
Every discrete quantum group $\G$ admits a unique (up to $\G$-isomorphism) universal $\G$-boundary $\C(\fb)$, in the sense that for any $\G$-boundary $\sA$ there is a completely isometric ucp $\G$-equivariant map $\sA\to \C(\fb)$.

We call $\C(\fb)$ the (algebra of continuous functions on the) \emph{Furstenberg boundary} of $\G$.
\end{thm}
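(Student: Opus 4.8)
The plan is to construct $\C(\fb)$ as the minimal image of a ucp $\G$-equivariant idempotent on $\linf(\G)$, i.e.\ as $\im(\phi_0)$ for a minimal element $\phi_0$ of $\mathcal G$ whose existence and idempotency were established above. By Theorem \ref{thm:genuine-action} this space carries a genuine $\G$-action making it a $\G$-$\cst$-algebra, and by Proposition \ref{prop:Hamana-bnd} it is unique up to $\G$-isomorphism. So the two things left to prove are: (a) $\C(\fb) := \im(\phi_0)$ really is a $\G$-boundary in the sense of Definition \ref{def:boundary}, and (b) every $\G$-boundary $\sA$ admits a completely isometric ucp $\G$-equivariant map into $\C(\fb)$.

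First I would prove (a). By Proposition \ref{prop:bnd-act-eqvs} it suffices to check that every ucp $\G$-equivariant map $\psi:\im(\phi_0)\to\linf(\G)$ is completely isometric. Given such a $\psi$, the composite $\phi_0\circ\psi:\im(\phi_0)\to\im(\phi_0)$ is a ucp $\G$-equivariant map, hence equals $\id_{\im(\phi_0)}$ by $\G$-rigidity (Proposition \ref{prop:rigidity&essentiality}(1)). Since $\phi_0$ is completely contractive and $\phi_0\circ\psi = \id$, the map $\psi$ must be completely isometric. (Equivalently, this is immediate from $\G$-essentiality, Proposition \ref{prop:rigidity&essentiality}(2), applied with target $\linf(\G)$.) Thus $\C(\fb)$ is a $\G$-boundary.

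Next I would prove the universality (b). Let $\sA$ be any $\G$-boundary. By Corollary \ref{cor:inj-embd}, $\sA$ embeds $\G$-equivariantly and completely isometrically into some $\G$-injective $\cst$-algebra; concretely, if $\sA\subset B(\Hil)$ faithfully, then $\alpha_\sA:\sA\to\linf(\G)\vtens B(\Hil)$ is a completely isometric ucp $\G$-equivariant map into a $\G$-injective algebra. Now use $\G$-injectivity of $\im(\phi_0)$ (Proposition \ref{prop:injectivity}): applied to the completely isometric inclusion $\sA\hookrightarrow\linf(\G)\vtens B(\Hil)$ and to \emph{any} fixed ucp $\G$-equivariant map $\psi_0:\sA\to\im(\phi_0)$ (for instance $\psi_0 = \phi_0\circ\cP_\nu$ for a state $\nu$ on $\sA$, which exists and is ucp $\G$-equivariant by Proposition \ref{Pois-map-props}), we obtain a ucp $\G$-equivariant extension. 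But in fact it is cleaner to argue as follows: pick any ucp $\G$-equivariant map $\Psi:\im(\phi_0)\to\sA$ — one exists because $\sA$ is a $\G$-$\cst$-algebra, e.g.\ via injectivity of $\im(\phi_0)$ one extends $\id_{\im(\phi_0)}$ along the embedding $\im(\phi_0)\hookrightarrow\linf(\G)\vtens B(\Hil_{\sA}')$ composed with a ucp $\G$-equivariant retraction onto $\sA$; since $\sA$ is a $\G$-boundary, $\Psi$ is completely isometric by Proposition \ref{prop:bnd-act-eqvs}. Conversely $\G$-injectivity of $\im(\phi_0)$ yields a ucp $\G$-equivariant $\iota:\sA\to\im(\phi_0)$ by extending $\id_{\im(\phi_0)}\circ\Psi^{-1}$ (defined on $\Psi(\im(\phi_0))\subset\sA$) along the inclusion $\sA\hookrightarrow\linf(\G)\vtens B(\Hil)$; alternatively, and most directly, apply Proposition \ref{prop:injectivity} to $\psi = \id_{\im(\phi_0)}:\im(\phi_0)\to\im(\phi_0)$ and $\iota_0 = \Psi:\im(\phi_0)\to\sA$ (which is completely isometric), obtaining $\iota:\sA\to\im(\phi_0)$ with $\iota\circ\Psi = \id$. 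Then $\iota$ is a completely isometric ucp $\G$-equivariant map $\sA\to\C(\fb)$, as required. Uniqueness of $\C(\fb)$ up to $\G$-isomorphism follows from Proposition \ref{prop:Hamana-bnd} together with part (a) applied to two candidate universal boundaries.

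The main obstacle — and the point where one must be careful rather than the point where new mathematics is needed — is marshalling the injectivity/rigidity machinery so that all maps produced are \emph{$\G$-equivariant} and not merely $\ell^1(\G)$-modular; this is exactly what Theorem \ref{thm:genuine-action} and Proposition \ref{prop:injectivity-general} are for, and the argument above is designed to invoke them at each step. A secondary subtlety is ensuring that a ucp $\G$-equivariant map $\im(\phi_0)\to\sA$ exists in the first place: this is where one genuinely uses that $\sA$ is a $\G$-$\cst$-algebra (so that a Poisson-transform-type map or an injectivity extension is available), and it is the only place the hypothesis ``$\sA$ is a $\G$-boundary'' is not yet being used — the boundary hypothesis on $\sA$ then upgrades that map to a complete isometry, after which $\G$-injectivity of $\C(\fb)$ produces the required embedding in the opposite direction.
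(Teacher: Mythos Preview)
Your part (a) is correct and is essentially the paper's argument via $\G$-essentiality.

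Part (b), however, has a genuine gap. Your ``cleaner'' argument hinges on first producing a ucp $\G$-equivariant map $\Psi:\im(\phi_0)\to\sA$, but such a map need not exist for a general $\G$-boundary $\sA$. Your attempted justification inverts the direction of $\G$-injectivity: injectivity of $\im(\phi_0)$ lets you extend maps \emph{into} $\im(\phi_0)$, not maps out of it; and the ``ucp $\G$-equivariant retraction onto $\sA$'' you invoke would amount to $\sA$ itself being $\G$-injective, which is not part of the hypothesis. (A posteriori one sees from Corollary~\ref{cor:bnd->C*-incl} that any such $\Psi$ would embed $\C(\fb)$ into $\sA$, forcing $\sA\cong\C(\fb)$; so for a proper sub-boundary no $\Psi$ exists.) You also misapply Proposition~\ref{prop:bnd-act-eqvs} when you write ``since $\sA$ is a $\G$-boundary, $\Psi$ is completely isometric'': that proposition controls maps \emph{out of} a boundary, not into one.

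The irony is that your abandoned first attempt already contains the complete proof, and it is exactly what the paper does. The map $\psi_0 = \phi_0\circ\cP_\nu:\sA\to\im(\phi_0)\subset\linf(\G)$ is ucp and $\G$-equivariant, and since $\sA$ is a $\G$-boundary it is completely isometric by Proposition~\ref{prop:bnd-act-eqvs}. That is the required embedding $\sA\to\C(\fb)$; no extension to $\linf(\G)\vtens B(\Hil)$ and no map in the reverse direction is needed.
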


\begin{proof}
Let $\C(\fb)$ be the the minimal image of a ucp $\G$-equivariant idempotent on $\linf(\G)$, which is unique up to $\G$-isomorphism by Proposition \ref{prop:Hamana-bnd}. It follows from Proposition \ref{prop:rigidity&essentiality} that $\C(\fb)$ is indeed a $\G$-boundary.

Now, to show the universality property, suppose $\sA$ is a $\G$-boundary, and let $\p_\nu: \sA\to \linf(\G)$ be the Poisson transform associated to a state $\nu$ on $\sA$. Composing $\p_\nu$ with a ucp $\G$-equivariant idempotent $\linf(\G)\to \C(\fb)$ yields a ucp $\G$-equivariant map from $\sA$ into $\C(\fb)\subset \linf(\G)$, which is completely isometric by Proposition \ref{prop:bnd-act-eqvs}.
\end{proof}

\begin{cor}\label{cor:bnd->C*-incl}
Any $\G$-equivariant ucp map between two $\G$-boundaries $\sA$ and $\sB$ is an injective *-homomorphism, and any such map is unique.
\end{cor}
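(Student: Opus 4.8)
The plan is to deduce everything from the universal boundary $\C(\fb)$ together with the $\G$-equivariant Hamana machinery of this section. First, since $\sA$ is a $\G$-boundary and $\C(\fb)\subset\linf(\G)$, Proposition~\ref{prop:bnd-act-eqvs} gives that any ucp $\G$-equivariant map out of $\sA$ -- in particular $\psi$ -- is completely isometric, hence injective. Next I would establish a uniqueness statement: for a $\G$-boundary $\sD$ there is a \emph{unique} ucp $\G$-equivariant map $\sD\to\C(\fb)$. Indeed, given two such maps $\theta_1,\theta_2$, both are completely isometric by the previous remark; using $\G$-injectivity of $\C(\fb)$ (Proposition~\ref{prop:injectivity}) one extends $\theta_2$ along the complete isometry $\theta_1$ to a ucp $\G$-equivariant endomorphism $\Theta$ of $\C(\fb)$ with $\Theta\circ\theta_1=\theta_2$, and then $\Theta=\id$ by $\G$-rigidity (Proposition~\ref{prop:rigidity&essentiality}(1)), so $\theta_1=\theta_2$. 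Write $\iota_\sA\colon\sA\to\C(\fb)$ and $\iota_\sB\colon\sB\to\C(\fb)$ for these unique maps, which exist by Theorem~\ref{thm:universalF}. Then $\iota_\sB\circ\psi$ is a ucp $\G$-equivariant map $\sA\to\C(\fb)$, whence $\iota_\sB\circ\psi=\iota_\sA$; since $\iota_\sB$ is injective this already yields uniqueness of $\psi$, and moreover $\iota_\sA(\sA)\subseteq\iota_\sB(\sB)$, so that $\psi=\iota_\sB^{-1}\circ\iota_\sA$ once we know $\iota_\sB$ is a $*$-homomorphism onto its image.

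It therefore suffices to prove that $\iota_\sD$ is a $*$-homomorphism for every $\G$-boundary $\sD$: then $\psi=\iota_\sB^{-1}\circ\iota_\sA$ is a composition of injective $*$-homomorphisms, and the corollary follows. For this I would rerun the construction of this section relative to $\sD$ rather than the trivial action. Represent $\sD$ faithfully on a Hilbert space $\Hil$ and embed $\sD$ into the $\G$-injective $\cst$-algebra $\linf(\G)\overline{\otimes} B(\Hil)$ via the action $\alpha_\sD$, which is a $\G$-equivariant, completely isometric $*$-homomorphism (Proposition~\ref{prop:injectivity-general} and Corollary~\ref{cor:inj-embd}). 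As indicated in the remark after Proposition~\ref{prop:Hamana-bnd}, the minimal-idempotent argument of this section applies verbatim to the class of ucp $\G$-equivariant maps on $\linf(\G)\overline{\otimes} B(\Hil)$ fixing $\alpha_\sD(\sD)$ pointwise (it is closed under composition and pointwise weak$^*$-limits); one obtains a minimal such idempotent $\phi$, and $\im\phi$ equipped with the Choi--Effros product is a $\G$-$\cst$-algebra that is $\G$-injective and $\G$-essential over $\alpha_\sD(\sD)$. Since $\phi$ restricts to the identity on the $\cst$-subalgebra $\alpha_\sD(\sD)$, that subalgebra lies in the multiplicative domain of $\phi$; hence the Choi--Effros product on $\im\phi$ restricts to the genuine product on $\alpha_\sD(\sD)$, i.e. $\im\phi$ contains a copy of $\sD$ as a $\cst$-subalgebra.

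Finally one identifies $\im\phi$ with $\C(\fb)$: because $\sD$ is a $\G$-boundary, Theorem~\ref{thm:universalF} together with Proposition~\ref{prop:rigidity&essentiality}(2) exhibits $\C(\fb)$ as a $\G$-injective, $\G$-essential extension of $\sD$ as well, and a rigidity argument exactly as in the proof of Proposition~\ref{prop:Hamana-bnd} yields a $\G$-isomorphism $\theta\colon\im\phi\to\C(\fb)$. Then $\theta\circ\alpha_\sD\colon\sD\to\C(\fb)$ is a completely isometric ucp $\G$-equivariant $*$-homomorphism, so by the uniqueness established in the first paragraph $\theta\circ\alpha_\sD=\iota_\sD$, and $\iota_\sD$ is a $*$-homomorphism. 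Taking $\sD=\sA$ and $\sD=\sB$ completes the argument. I expect the bulk of the work, and the only genuinely delicate point, to be this last block: setting up the $\G$-equivariant injective envelope of an arbitrary $\G$-$\cst$-algebra (the text develops it only for the trivial action) and checking that it contains the algebra as a $\cst$-subalgebra -- the multiplicative-domain observation is what makes the latter work, and the rest is a routine rerun of the present section with ``$\G$-equivariant'' replaced by ``$\G$-equivariant and fixing $\sD$''.
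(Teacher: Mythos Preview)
Your argument for uniqueness and complete isometry is essentially the paper's: extend through $\C(\fb)$ via $\G$-injectivity and invoke $\G$-rigidity. That part is fine and matches the paper closely.

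For multiplicativity, your route is correct but substantially more elaborate than the paper's. You propose to rerun the entire Hamana construction relative to $\sD$ inside $\linf(\G)\overline{\otimes} B(\Hil)$, obtain a $\G$-injective envelope $\im\phi$ of $\sD$, and then identify it with $\C(\fb)$. This works, but as you rightly flag, it requires extending Theorem~\ref{thm:genuine-action} (that the Choi--Effros image carries a genuine $\G$-action) from $\ell^\infty(\G)$ to $\linf(\G)\overline{\otimes} B(\Hil)$; without that, you cannot speak of $\G$-equivariant maps to and from $\im\phi$, and in particular cannot apply $\G$-injectivity of $\C(\fb)$ to produce $\theta$. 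The extension is not hard (the same proof goes through using $\linf(\G)\overline{\otimes} B(\Hil)\cong\prod_\gamma M_{n_\gamma}(B(\Hil))$), but it is extra work.

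The paper bypasses all of this with a one-step trick. Once uniqueness gives $\iota_\sB\circ\psi=\iota_\sA$ (so $\psi=\iota_\sB^{-1}\circ\iota_\sA$ on the image), it suffices to show $\iota_\sA(\sA)$ is a $\cst$-subalgebra of $\C(\fb)$. For this the paper simply extends $\alpha_\sA\circ\iota_\sA^{-1}:\iota_\sA(\sA)\to\alpha_\sA(\sA)\subset\linf(\G)\overline{\otimes} B(\Hil)$ to a $\G$-equivariant ucp map $\Phi:\C(\fb)\to\linf(\G)\overline{\otimes} B(\Hil)$ using Proposition~\ref{prop:injectivity-general}. By $\G$-essentiality $\Phi$ is completely isometric, hence a $\cst$-isomorphism onto $\Phi(\C(\fb))$ equipped with the Choi--Effros product. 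Since $\alpha_\sA(\sA)$ is a genuine $\cst$-subalgebra of $\linf(\G)\overline{\otimes} B(\Hil)$, it is closed under that Choi--Effros product and is therefore a $\cst$-subalgebra of $\Phi(\C(\fb))$; pulling back through $\Phi$ shows $\iota_\sA(\sA)$ is a $\cst$-subalgebra of $\C(\fb)$. No new injective envelope is built, and no analogue of Theorem~\ref{thm:genuine-action} is needed beyond what is already established. Your approach buys a more conceptual picture (the $\G$-injective envelope of any boundary \emph{is} $\C(\fb)$), at the cost of redoing the Hamana theory in a larger ambient algebra.
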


\begin{proof}
Suppose $\sA$ and $\sB$ are $\G$-boundaries and $\phi$ and $\psi$ are $\G$-equivariant ucp maps from $\sA$ to $\sB$. By Theorem \ref{thm:universalF} there are completely isometric $\G$-equivariant ucp maps $\iota_\sA:\sA\to \C(\fb)$ and $\iota_\sB:\sB\to \C(\fb)$. By $\G$-injectivity of $\C(\fb)$ we may extend $\iota_\sB\circ\phi\circ\iota_\sA^{-1}: \iota_\sA(\sA)\to\iota_\sB(\sB)$ and $\iota_\sB\circ\psi\circ\iota_\sA^{-1}: \iota_\sA(\sA)\to\iota_\sB(\sB)$ to $\G$-equivariant ucp maps on $\C(\fb)$. By $\G$-rigidity both extensions are the identity map, hence $\phi=\psi$ is completely isometric. Thus, it only remains to show $\phi$ is *-homomorphism, and for this it is enough to show that $\iota_\sA(\sA)$ is a $\C^*$-subalgebra of $\C(\fb)$. By Proposition \ref{prop:injectivity-general} we can extend $\alpha\circ\iota_\sA^{-1}: \iota_\sA(\sA)\to\alpha(\sA)$ to a $\G$-equivariant ucp map from $\Phi:\C(\fb)\to\linf(\G)\overline{\otimes} B(\Hil)$, where $\Hil$ is a Hilbert space on which $\sA$ acts faithfully. The map $\Phi$ is completely isometric by $\G$-essentiality, and hence a $\cst$-isomorphism between $\C(\fb)$ and $\Phi(\C(\fb))$, where the latter gets its $\cst$-algebra structure from $\linf(\G)\overline{\otimes} B(\Hil)$ via the Choi-Effros product. Since $\alpha(\sA)$ is a subalgebra of $\linf(\G)\overline{\otimes} B(\Hil)$, it is closed under the Choi-Effros product, and hence a subalgebra of $\Phi(\C(\fb))$. This implies $\iota_\sA(\sA)$ is a $\cst$-subalgebra of $\C(\fb)$.
\end{proof}

Note that in particular by the above any $\G$-boundary $\sA$ is identified with a $\G$-$\C^*$-subalgebra of Furstenberg boundary $\C(\fb)$. We will use this fact later.

\begin{prop}
A discrete quantum group $\G$ is amenable if and only if its Furstenberg boundary $\C(\fb)$ is trivial.
\end{prop}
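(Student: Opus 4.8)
The plan is to prove both implications using the characterisation of the Furstenberg boundary as the minimal image of a ucp $\G$-equivariant idempotent on $\linf(\G)$, together with the basic link between $\G$-amenability and the existence of an invariant mean.

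For the easy direction, suppose $\G$ is amenable. Then by definition there is a state $m\in\linf(\G)^*$ which is $\G$-invariant in the sense that $m\circ((\omega\ot\id)\circ\Delta)=\omega(\I)\,m$ for all $\omega\in\ell^1(\G)$; equivalently, the map $x\mapsto m(x)\I$ is a ucp $\G$-equivariant projection of $\linf(\G)$ onto $\CC\I$. Thus $\CC\I$ is the image of a ucp $\G$-equivariant idempotent, and by minimality (Corollary \ref{cor:min-image}) the minimal such image $\C(\fb)$ is contained in $\CC\I$, hence equals $\CC\I$, i.e.\ $\C(\fb)$ is trivial.

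For the converse, suppose $\C(\fb)=\CC\I$. Then the trivial $\G$-$\cst$-algebra $\CC\I$ is $\G$-injective (being a minimal image of a ucp $\G$-equivariant idempotent, it is injective by Proposition \ref{prop:injectivity}; alternatively it is a retract of $\linf(\G)$). Now consider the $\G$-$\cst$-algebra $\linf(\G)$ with its action $\Delta$. The counit $\varepsilon$ gives a ucp $\G$-equivariant map $\CC\I\hookrightarrow\linf(\G)$ which is completely isometric (it is just the unital inclusion). By $\G$-injectivity of $\CC\I$ applied to this inclusion, there is a ucp $\G$-equivariant map $\Psi:\linf(\G)\to\CC\I$ with $\Psi|_{\CC\I}=\id$. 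Writing $\Psi(x)=m(x)\I$, the equivariance of $\Psi$ says precisely $m((\omega\ot\id)\Delta(x))\I=\Psi((\omega\ot\id)\Delta(x))=(\omega\ot\id)(\Delta(\Psi(x)))=\omega(\I)m(x)\I$, and a symmetric argument (using the right action, or noting that $\G$-invariance on one side suffices for discrete $\G$ since the right invariance follows by applying the antipode/unitary antipode, or simply by running the same argument with the right regular action) gives that $m$ is a bi-invariant mean. Hence $\G$ is amenable.

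The main obstacle is the converse direction, specifically making sure that the one-sided $\G$-equivariance of the retraction $\Psi$ genuinely yields a \emph{two-sided} invariant mean as required by the definition of amenability in the excerpt. The cleanest fix is to observe that for a discrete quantum group, left-invariance of a mean implies right-invariance: given a left-invariant mean $m$, the state $m\circ\hat{R}$-type manipulation, or more elementarily the fact that $(\id\ot m)\circ\Delta$ and $(m\ot\id)\circ\Delta$ both land in $\CC\I$ and agree on the dense subalgebra of coefficients, forces bi-invariance; this is exactly the standard equivalence between the various forms of amenability for discrete quantum groups (equivalently, co-amenability of $\hh\G$), already recalled in the Preliminaries. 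So in the write-up I would simply invoke that equivalence rather than belabour the symmetry.
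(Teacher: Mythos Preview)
Your argument is correct and close to the paper's, with two small differences worth noting.

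For the forward direction, the paper argues via the boundary property rather than minimality: given an invariant mean $m$ and any state $\nu$ on $\C(\fb)$, the state $\eta = m\circ\p_\nu$ is $\G$-invariant on $\C(\fb)$, so $\p_\eta$ has range $\CC\I$; but $\p_\eta$ is completely isometric since $\C(\fb)$ is a boundary, forcing $\C(\fb)=\CC$. Your route via Corollary~\ref{cor:min-image} is equally valid and arguably more direct.

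For the converse, you take an unnecessary detour through $\G$-injectivity. By construction, $\C(\fb)$ is \emph{already} the image of a ucp $\G$-equivariant idempotent $\phi_0$ on $\linf(\G)$; when $\C(\fb)=\CC$, this $\phi_0$ itself is the required invariant mean. There is no need to manufacture a new retraction.

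Your worry about one-sided versus two-sided invariance is legitimate but applies equally to the paper's proof, which simply asserts that $\phi_0$ is ``obviously an invariant mean''. Both arguments implicitly use the standard fact (for discrete quantum groups) that the existence of a left-invariant mean is equivalent to amenability; invoking that equivalence, as you propose, is the right thing to do.
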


\begin{proof}
Let $\G$ be an amenable discrete quantum group. Let $m\in \linf(\G)$ be an invariant mean and $\nu$ a state on $\C(\fb)$. Then $\eta = m\circ \p_\nu$ is easily seen to be an invariant state on $\C(\fb)$. Then invariance of $\eta$ implies that the range of the Poisson transform $\p_\eta$ is the scalars. Hence $\C(\fb)=\CC$.

Conversely, if $\C(\fb)=\CC$, then the ucp $\G$-equivariant idempotent $\phi: \linf(\G) \to \C(\fb)$ is obviously an invariant mean, and hence $\G$ is amenable.
\end{proof}

We finish this section with providing a relatively explicit criterion allowing us to check that a given action of $\G$ is indeed a $\G$-boundary action. It will be used in Section \ref{orthog}. 

\begin{thm}\label{thm:unq-stn-->bnd}
	Let $\G$ be a discrete quantum group and let $\mu\in\ell^1(\G)$ be a state. Suppose $\sA$ is  a unital $\G$-$\cst$-algebra that admits a unique $\mu$-stationary state $\nu$, and that the Poisson transform $\p_\nu$ is completely isometric. 
	Then $\sA$ is a $\G$-boundary.
\end{thm}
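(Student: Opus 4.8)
The plan is to verify the defining property of a $\G$-boundary directly: one must show that for \emph{every} state $\rho$ on $\sA$ the Poisson transform $\p_\rho$ is completely isometric, knowing this only for the single distinguished state $\nu$. The bridge between an arbitrary $\rho$ and $\nu$ will be built from unique stationarity via a Ces\`aro averaging argument. Concretely, I would produce a unital completely positive map $T\colon\linf(\G)\to\linf(\G)$ with $T\circ\p_\rho=\p_\nu$; since $T$ is completely contractive and $\p_\nu$ is completely isometric, a standard matrix-amplification argument then forces $\p_\rho$ itself to be completely isometric, and as $\rho$ was arbitrary, $\sA$ is a $\G$-boundary.

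To set up the averaging, note that convolution with $\mu$ on $\linf(\G)$, i.e.\ the map $U\colon\linf(\G)\to\linf(\G)$, $U(x):=(\id\ot\mu)\Delta(x)$, is ucp (a slice of the unital $*$-homomorphism $\Delta$ by the state $\mu$), and by formula \eqref{eq:Poi-map} it satisfies $U\circ\p_\sigma=\p_{\mu*\sigma}$ for every state $\sigma\in\sA^*$; iterating and using coassociativity gives $U^k\circ\p_\sigma=\p_{\mu^{*k}*\sigma}$. Setting $T_n:=\frac1n\sum_{k=0}^{n-1}U^k$ (a ucp map, being a convex combination of ucp maps) and $\rho_n:=\frac1n\sum_{k=0}^{n-1}\mu^{*k}*\rho$ (a state on $\sA$, since each $\mu^{*k}*\rho$ is a state), one obtains $T_n\circ\p_\rho=\p_{\rho_n}$. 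The desired $T$ will be a point-weak$^*$ cluster point of the sequence $(T_n)$, which exists because the set of ucp maps $\linf(\G)\to\linf(\G)$ is point-weak$^*$ compact --- exactly the compactness already used for $\mathcal G$ earlier in the paper.

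The hypothesis enters through a Markov--Kakutani argument: since $\mu*\rho_n-\rho_n=\tfrac1n(\mu^{*n}*\rho-\rho)$ has norm at most $2/n$ and $\sigma\mapsto\mu*\sigma$ is weak$^*$-continuous on $\sA^*$ (it is the Banach-space adjoint of $a\mapsto a*\mu$ on $\sA$), every weak$^*$ cluster point $\omega$ of $(\rho_n)$ satisfies $\mu*\omega=\omega$, hence is a $\mu$-stationary state, hence equals $\nu$ by the uniqueness assumption. I would then extract $T$ along a nested subnet: first choose a subnet with $T_{n_j}\to T$ point-weak$^*$, then pass to a further subnet with $\rho_{n_j}\to\nu$ weak$^*$. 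Slicing by $\zeta\in\ell^1(\G)$ and using $\zeta(\p_{\rho_{n_j}}(a))=\rho_{n_j}(a*\zeta)\to\nu(a*\zeta)=\zeta(\p_\nu(a))$ shows $\p_{\rho_{n_j}}(a)\to\p_\nu(a)$ weak$^*$ in $\linf(\G)$ for each $a\in\sA$, while simultaneously $T_{n_j}(\p_\rho(a))\to T(\p_\rho(a))$ weak$^*$; since $T_{n_j}(\p_\rho(a))=\p_{\rho_{n_j}}(a)$, uniqueness of weak$^*$ limits yields $T\circ\p_\rho=\p_\nu$. Finally, for any $n$ and any $[a_{ij}]\in M_n(\sA)$ one gets $\|[a_{ij}]\|=\|[\p_\nu(a_{ij})]\|=\|(T\ot\id_n)([\p_\rho(a_{ij})])\|\le\|[\p_\rho(a_{ij})]\|\le\|[a_{ij}]\|$, so $\p_\rho$ is completely isometric.

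The step I expect to be the most delicate is making the two limiting procedures --- convergence of $(T_n)$ in the point-weak$^*$ topology and convergence of the states $(\rho_n)$ in the weak$^*$ topology --- compatible along one common subnet; this is exactly the reason for the nested-subnet extraction above rather than treating the two convergences independently. Everything else (that $\mu^{*k}*\rho$ is again a state, that the relevant Poisson transforms are ucp, the unwinding of the $*$-notation and of $\G$-equivariance) is routine given the preliminaries in Section~2.
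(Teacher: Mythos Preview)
Your proof is correct. It differs from the paper's argument in a genuine but natural way.

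The paper invokes the Poisson boundary $\mathcal{H}_\mu = \{x\in\linf(\G) : (\id\ot\mu)\Delta(x)=x\}$, which is the image of a known ucp $\G$-equivariant idempotent $\Phi$ on $\linf(\G)$. For an arbitrary state $\rho$ on $\sA$ the composition $\Phi\circ\p_\rho$ is a ucp $\G$-equivariant map into $\linf(\G)$, hence by Proposition~\ref{Pois-map-props} a Poisson transform $\p_{\omega}$; since its range lies in $\mathcal{H}_\mu$, the state $\omega$ is $\mu$-stationary, so $\omega=\nu$ by hypothesis. Thus $\Phi\circ\p_\rho=\p_\nu$ is completely isometric, and hence so is $\p_\rho$. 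Your Ces\`aro limit $T$ plays exactly the role of $\Phi$ here; indeed, the projection onto the Poisson boundary is itself typically constructed as a point-weak$^*$ cluster point of the averages $T_n$, so you are in effect rebuilding that part of the machinery by hand. The trade-off: the paper's route is shorter and more conceptual once one grants the existence of $\Phi$, while your route is self-contained and avoids any appeal to Poisson boundary theory.

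One small simplification of your argument: your Markov--Kakutani step actually shows that \emph{every} weak$^*$ cluster point of $(\rho_n)$ equals $\nu$, so by compactness the full sequence $\rho_n\to\nu$ weak$^*$. Hence the nested-subnet extraction you flag as delicate is unnecessary: any subnet along which $T_{n_j}\to T$ point-weak$^*$ automatically has $\rho_{n_j}\to\nu$, and the identity $T\circ\p_\rho=\p_\nu$ follows immediately.
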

\begin{proof}
	Recall that the Poisson boundary $\mathcal{H}_\mu:=\{x \in \ell^\infty(\G):(\id \ot \mu)\circ \Com(x) = x\}$ is  the image of a ucp $\G$-equivariant projection $\Phi$ acting on $\linf(\G)$. 
	The fact that $\nu$ is $\mu$-stationary implies that $\p_\nu$ takes values in $\mathcal{H}_\mu$. Suppose that we have any  ucp $\G$-equivariant map $\Psi$ from $\sA$ to $\mathcal{H}_\mu \subset \ell^\infty(\G)$. By Proposition  \ref{Pois-map-props} it must be a Poisson transform associated to a state $\omega\in S(\sA)$; and then Definition \ref{def:stationary} implies that $\omega$ is $\mu$-stationary. Hence $\Psi=\p_\nu$ is completely isometric.
	
	Consider then any state $\omega \in S(\sA)$ and the map $\p_\omega:\sA \to \ell^\infty(\G)$. Then $\Phi \circ \p_\omega$ is a ucp $\G$-equivariant map from $\sA$ to $\mathcal{H}_\mu$; hence by the above it is completely isometric. Thus $\p_\omega$ is also completely isometric, and $\sA$ is a $\G$-boundary.
\end{proof}

\section{The unique trace property for unimodular discrete quantum groups and its generalization}\label{sec:unique trace}
In this section we consider the problem of identifying discrete quantum groups $\G$ with the unique trace property, and more generally these whose reduced group $\C^*$-algebras do not admit $\ad$-invariant states. Similarly to the classical case, this problem is related to the faithfulness of the action $\G\act\partial_F\G$. 

By a result of Furman \cite{Furman}, the kernel of the action $G\act\partial_F G$ of a locally compact group $G$ on its Furstenberg boundary is the amenable radical of $G$. The following theorem is a version of this result in the quantum setting.

\begin{thm}\label{amen_cor}
The co-kernel $\sN_{\rm F}$ of the action $\G\act \C(\partial_F\G)$ is the unique minimal relatively amenable Baaj-Vaes subalgebra of $\linf(\G)$. It is contained in every other relatively amenable Baaj-Vaes subalgebra of $ \linf(\G)$.
\end{thm}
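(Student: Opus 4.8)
The plan is to show two things: first, that $\sN_{\rm F}$ is relatively amenable as a Baaj-Vaes subalgebra of $\linf(\G)$ (in the sense of Definition \ref{defn:rel-amen-subalg}), and second, that it embeds equivariantly into any other relatively amenable Baaj-Vaes subalgebra, which by Theorem \ref{thm:rel_amen} forces minimality. Throughout I will use the identification (from Corollary \ref{cor:bnd->C*-incl} and the remark following it) of $\C(\partial_F\G)$ as a $\G$-$\C^*$-subalgebra of $\ell^\infty(\G)$, realised as the image $\im(\phi_0)$ of a minimal ucp $\G$-equivariant idempotent $\phi_0$ on $\linf(\G)$.

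For relative amenability of $\sN_{\rm F}$: by Proposition \ref{prop:coker->Baaj-Vaes} it is already a Baaj-Vaes subalgebra, so it remains to produce a ucp $\G$-equivariant map $\linf(\G)\to\sN_{\rm F}$. The key observation is that the co-kernel $\sN_{\rm F}$ is generated by the ranges of all Poisson transforms $\cP_\nu$ with $\nu$ a state on $\C(\partial_F\G)$; but since $\C(\partial_F\G)$ is a $\G$-boundary, each such $\cP_\nu$ is completely isometric onto its image inside $\sN_{\rm F}$, and moreover by $\G$-rigidity any two of them differ by a $\G$-automorphism. I would fix one state $\nu_0$, note $\cP_{\nu_0}$ maps $\C(\partial_F\G)$ completely isometrically into $\linf(\G)$ with image contained in $\sN_{\rm F}$, and observe that after composing with $\phi_0$ (or rather using $\G$-rigidity directly on $\C(\partial_F\G)$) the image must in fact coincide with a fixed copy of $\C(\partial_F\G)$, so that $\sN_{\rm F}$ is precisely that image — i.e.\ $\sN_{\rm F}\cong\C(\partial_F\G)$ as a $\G$-$\C^*$-algebra sitting in $\linf(\G)$, and this copy is the image of the idempotent $\phi_0$. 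Then $\phi_0:\linf(\G)\to\sN_{\rm F}$ is the desired ucp $\G$-equivariant map, so $\sN_{\rm F}$ is relatively amenable in $\linf(\G)$.

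For minimality: let $\sM\subset\linf(\G)$ be any relatively amenable Baaj-Vaes subalgebra, so there is a ucp $\G$-equivariant map $\Psi:\linf(\G)\to\sM$. Restricting $\Psi$ to $\sN_{\rm F}=\im(\phi_0)\subset\linf(\G)$ gives a ucp $\G$-equivariant map $\sN_{\rm F}\to\sM\subset\linf(\G)$. By $\G$-essentiality of $\im(\phi_0)$ (Proposition \ref{prop:rigidity&essentiality}(2)) this restriction is completely isometric, hence a complete order isomorphism of $\sN_{\rm F}$ onto a copy of $\C(\partial_F\G)$ inside $\sM$; composing with the inclusion $\sM\subset\linf(\G)$ and then with $\phi_0$, $\G$-rigidity forces $\phi_0\circ\Psi|_{\sN_{\rm F}}=\id$, so that $\Psi$ is injective on $\sN_{\rm F}$ and $\Psi(\sN_{\rm F})\subset\sM$ is a $\G$-$\C^*$-subalgebra which is itself a $\G$-boundary. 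It remains to upgrade "$\sM$ contains a copy of $\sN_{\rm F}$" to "$\sN_{\rm F}\subset\sM$ inside $\linf(\G)$": here I use that $\sM$ is Baaj-Vaes, hence $\G$-invariant, and that the inclusion $\Psi(\sN_{\rm F})\hookrightarrow\linf(\G)$ composed with $\phi_0$ is a $\G$-equivariant ucp map $\sN_{\rm F}\to\sN_{\rm F}$ which is the identity, so $\Psi(\sN_{\rm F})$ and $\sN_{\rm F}$ are both minimal images and must coincide as subspaces of $\linf(\G)$ by Corollary \ref{cor:min-image} and Proposition \ref{prop:Hamana-bnd}. Hence $\sN_{\rm F}=\Psi(\sN_{\rm F})\subset\sM$. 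Uniqueness of the minimal relatively amenable Baaj-Vaes subalgebra is then immediate.

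The main obstacle I anticipate is the last identification step: knowing that $\sM$ \emph{contains an isomorphic copy} of $\sN_{\rm F}$ is weaker than $\sN_{\rm F}\subseteq\sM$ as subalgebras of $\linf(\G)$, and making this precise requires care with the fact that both $\sN_{\rm F}$ and $\Psi(\sN_{\rm F})$ are realised as images of ucp $\G$-equivariant idempotents on $\linf(\G)$ — one must check that $\phi_0\circ(\text{inclusion})\circ\Psi|_{\sN_{\rm F}}$ is genuinely the identity (via $\G$-rigidity) and invoke the uniqueness-up-to-equality, not merely up-to-isomorphism, that minimality of the image provides through Corollary \ref{cor:min-image}. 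A secondary point requiring attention is verifying that the Choi-Effros $\C^*$-structure transported along $\Psi$ is compatible so that $\Psi(\sN_{\rm F})$ is literally a $\C^*$-subalgebra of $\linf(\G)$ rather than just an operator subsystem; this is exactly the kind of argument already carried out in the proof of Corollary \ref{cor:bnd->C*-incl} and can be reused.
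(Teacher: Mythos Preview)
Your proposal has a fundamental error: the identification $\sN_{\rm F} = \im(\phi_0)$ is false. The co-kernel $\sN_{\rm F}$ is by definition the von Neumann subalgebra of $\linf(\G)$ generated by \emph{all} Poisson transforms $\cP_\nu(a)$, whereas $\im(\phi_0)$ is the range of a single such transform (and is not even a subalgebra of $\linf(\G)$ for the ambient product---its $\C^*$-structure is the Choi--Effros one). In the examples treated later in the paper (e.g.\ $\FO_Q$, $N\geq 3$) the action is faithful, so $\sN_{\rm F}=\linf(\G)$, which is certainly not $\C(\partial_F\G)$. So ``$\sN_{\rm F}$ is precisely that image'' cannot hold.

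This error propagates into the minimality argument. What you actually establish is that $\Psi|_{\im(\phi_0)}$ is a completely isometric ucp $\G$-map into $\sM$, hence $\Psi(\im(\phi_0))\subset\sM$. You then try to force $\Psi(\im(\phi_0))=\im(\phi_0)$ as subspaces of $\linf(\G)$, but Proposition~\ref{prop:Hamana-bnd} gives uniqueness only up to $\G$-isomorphism, not equality of subspaces; the rigidity identity $\phi_0\circ\Psi|_{\im(\phi_0)}=\id$ says $\phi_0$ inverts $\Psi$ on that image, not that $\Psi$ fixes it. This is exactly the obstacle you flag, and it is genuine.

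The paper's route sidesteps the issue entirely. Since $\Psi\circ\cP_\nu:\C(\partial_F\G)\to\sM\subset\linf(\G)$ is ucp $\G$-equivariant, it equals some $\cP_{\nu'}$ by Proposition~\ref{Pois-map-props}, so $\cP_{\nu'}(\C(\partial_F\G))\subset\sM$. Now the Baaj--Vaes condition $\Delta(\sM)\subset\sM\wot\sM$ together with~\eqref{eq:Poi-map} gives $\cP_{\mu*\nu'}(\C(\partial_F\G))\subset\sM$ for every $\mu\in\ell^1(\G)$; since $\cP_{\nu'}$ is isometric, Lemma~\ref{prox-act_c*} makes $\{\mu*\nu'\}$ weak$^*$-dense in the state space, and norm continuity of $\nu\mapsto\cP_\nu$ then puts \emph{every} $\cP_{\nu''}(\C(\partial_F\G))$ inside $\sM$. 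Hence $\sN_{\rm F}\subset\sM$. Note that your argument already produces $\cP_{\nu'}(\C(\partial_F\G))\subset\sM$; the missing idea is to exploit $\Delta(\sM)\subset\sM\wot\sM$ and the density Lemma~\ref{prox-act_c*} to propagate this to all states, rather than attempting to pin down a specific copy of $\im(\phi_0)$.
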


\begin{proof}
By Proposition \ref{prop:coker->Baaj-Vaes} the co-kernel $\sN_{\rm F}$ is a Baaj-Vaes subalgebra of $\linf(\G)$. Note that, in fact, $\sN_{\rm F}$ is the von Neumann algebra generated by all different copies of $\C(\partial_F\G)$ in $\linf(\G)$. By definition, there is an idempotent ucp $\G$-equivariant map from $\linf(\G)$ onto any copy of $\C(\partial_F\G)$. Thus, $\sN_{\rm F}$ is relatively amenable.
Now, let $\sM\subset\linf(\GG)$ be a relatively amenable Baaj-Vaes subalgebra, so that there is a $\G$-equivariant ucp map $\Psi:\linf(\G)\to\sM$. Let $\nu$ be a state on $\C(\partial_F\G)$. Then $\Psi\circ\cP_\nu : \C(\partial_F\G)\to\sM\subset \linf(\G)$ is a $\G$-equivariant ucp map. 
Thus, by Proposition \ref{Pois-map-props} there is a state $\nu'$ on $\C(\partial_F\G)$ such that $\Psi\circ\cP_\nu = \cP_{\nu'}$ and in particular  $\cP_{\nu'}(\C(\partial_F\G))\subset \sM$. Since  $\Delta(\sM)\subset\sM\wot\sM$, by Equation  \eqref{eq:Poi-map} we have
\[
\cP_{\mu*\nu'}(\C(\partial_F\G)) = (\id\otimes\mu)\Delta(\cP_{\nu'}(\C(\partial_F\G))) \subset \sM 
\]
for all $\mu\in\ell^1(\G)$. Since $\cP_{\nu'}$ is isometric, Lemma \ref{prox-act_c*} implies that the set \[\left\{\mu*\nu' :  \mu\in \rm{P}(\G)\right\}\] is weak* dense in the state space of $\C(\partial_F\G)$. Thus, it follows from Proposition \ref{Pois-map-props} that $\{P_{\nu''}(a): a\in \C(\partial_F\G)\} = \cP_{\nu''}(\C(\partial_F\G))\subset \sM$ for all $\nu''\in \C(\partial_F\G)^*$. Hence $\sN_{\rm F}\subset\sM$. 
\end{proof}

Recall that the adjoint action of $\G$ on $\C(\hh\G)$ is given by the formula
$\beta(x) = \ww^*(1 \ot x)\ww$ for all $x \in \C(\hh\G)$.  When we talk about
$\C(\hh\G)$ as a $\G$-space, this is the action we have in mind and we say for
instance that a functional $\mu\in \C(\hh\G)^*$ is $\G$-invariant if
$(\id\otimes \mu)(\beta(x)) = \mu(x)\I$ for all $x\in \C(\hh\G)$. Let us also
recall that when $\ww$ is viewed as an operator on
$\ell^2(\G)\otimes \ell^2(\G)$ then $\Delta(y) = \ww^*(\I\otimes y)\ww$ for all
$y\in\ell^\infty(\G)$.

\begin{lemma}\label{lem:ad-invariance}
  Let $\G$ be a discrete quantum group and $\mu \in \C(\hh\G)^*$ a state. Then
  $\mu$ is $\G$-invariant {\bf iff} it is a KMS-state for the scaling
  automorphism group $(\tau_t)_{t\in \br}$ (at the inverse temperature $1$). In
  particular when $\G$ is unimodular, $\mu$ is $\G$-invariant {\bf iff} it is a
  trace. On the other hand if $\G$ is not unimodular, then the Haar state of
  $\G$ is not $\G$-invariant.
\end{lemma}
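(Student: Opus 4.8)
The plan is to unpack the definition of the adjoint action in terms of the multiplicative unitary and relate it to the modular theory of the Haar state. First I would recall the basic formulas: the scaling group $(\tau_t)$ of $\hh\G$ (equivalently, the scaling group of $\G$ in the appropriate picture) acts on $\C(\hh\G)$, and the Haar state $h_{\hh\G}$ on $\C(\hh\G)$ has modular automorphism group given by the composition of the scaling group with the modular element of $\G$; in the compact case $\hh\G$ we can write $\sigma^{h_{\hh\G}}_t(x) = \tau_{-t}(\ldots)$ up to the unitary implementing the modular element. The key object is the unitary $\ww \in \ell^\infty(\G) \wot \Linf(\hh\G)$; slicing the pentagon relation, or rather using the identity $\beta(x) = \ww^*(\I\ot x)\ww$ together with the known commutation relations between $\ww$ and the modular operators of $\varphi$ and $\hh\varphi$ (see \cite{KV}), one expresses $(\id\ot\mu)(\beta(x))$ as a concrete operator on $\ell^2(\G)$.

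The core computation: a state $\mu$ on $\C(\hh\G)$ is $\G$-invariant iff $(\id\ot\mu)(\ww^*(\I\ot x)\ww) = \mu(x)\I$ for all $x$. I would test this against vector states/slices by elements of $\ell^1(\G)$. Writing $\ww$ in Sweedler-type leg notation and using that $\{(\omega\ot\id)(\ww):\omega\in\ell^1(\G)\}$ is dense in $\C(\hh\G)$, the invariance condition becomes a relation of the form $\mu(a^* x b) = \mu(x)\,\langle\text{something}\rangle$ which, after inserting the explicit action of the antipode/scaling group on the legs of $\ww$, turns into precisely the KMS condition $\mu(xy) = \mu(y\,\tau_{-\ii}(x))$ for analytic $x$. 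Concretely, the invariance of $\mu$ is equivalent to $\mu$ being invariant under the right translation part of $\ww$ twisted by the polar decomposition of the antipode $S = R\tau_{-\ii/2}$; since $R$ preserves any state that is $\G$-invariant by symmetry (left/right invariance), what survives is exactly the twist by $\tau_{-\ii}$, which is the KMS condition at inverse temperature $1$.

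Once the equivalence "$\G$-invariant $\iff$ KMS for $(\tau_t)$" is established, the two consequences are immediate. If $\G$ is unimodular then $\hh\G$ is of Kac type, so $\tau_t = \id$ for all $t$ (the scaling group is trivial in the Kac case), and the KMS condition at inverse temperature $1$ for the trivial automorphism group is precisely traciality; hence $\mu$ is $\G$-invariant iff $\mu$ is a trace. For the last assertion, suppose $\G$ is not unimodular: then the Haar state $h_{\hh\G}$ of $\hh\G$ — i.e.\ the Haar \emph{weight} datum giving the Haar state on $\C(\hh\G)$ — is KMS for its own modular group $\sigma^{h_{\hh\G}}_t$, which differs from $(\tau_t)$ precisely by the (non-trivial) modular element; so $h_{\hh\G}$ cannot simultaneously be KMS for $(\tau_t)$ unless $\tau_t$ and $\sigma^{h_{\hh\G}}_t$ coincide, which forces unimodularity. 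Hence in the non-unimodular case $h_{\hh\G}$ is not $\G$-invariant.

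I expect the main obstacle to be the bookkeeping in the core computation: tracking the polar decomposition $S = R\,\tau_{-\ii/2}$ through the leg notation for $\ww$, keeping straight which quantum group's scaling/modular group acts on which leg, and converting an operator identity on $\ell^2(\G)\ot\ell^2(\G)$ into the analytic KMS identity without circularity. It may be cleaner to quote the known commutation relations $(\tau_t^\G\ot\tau_t^{\hh\G})(\ww) = \ww$ and the relation between $\tau^{\hh\G}$ and the modular group of $\hh\varphi$ from \cite{KV} rather than re-deriving them, reducing the proof to a short manipulation of the defining formula for $\beta$ against these identities.
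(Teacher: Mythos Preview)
Your overall strategy---relate the adjoint action $\beta(x)=\ww^*(1\ot x)\ww$ to the scaling group via the antipode and the multiplicative unitary---is correct, and your treatment of the two corollaries (unimodular $\Rightarrow$ KMS${}={}$trace; non-unimodular $\Rightarrow$ Haar state not KMS for $\tau$ because its modular group differs from $\tau$) matches the paper. But the core step has a real gap. You propose to use the polar decomposition $S=R\tau_{-\ii/2}$ on the $\C(\hh\G)$ side and then assert that ``$R$ preserves any state that is $\G$-invariant by symmetry (left/right invariance), what survives is exactly the twist by $\tau_{-\ii}$''. That sentence is doing all the work and is not justified: $\G$-invariance here means invariance under the \emph{adjoint} action, not left/right translation invariance, and there is no obvious reason a priori why an ad-invariant state should be $R$-invariant. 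Even granting this, it is unclear how exactly you pass from an operator identity on $\ell^2(\G)\ot\ell^2(\G)$ to the KMS identity $\mu(xy)=\mu(\tau_i(y)x)$ for arbitrary $x,y\in\Pol(\hh\G)$; your description ``becomes a relation of the form $\mu(a^*xb)=\mu(x)\langle\text{something}\rangle$'' stops short of the actual algebra needed.

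The paper's mechanism is different and avoids $R$ entirely. It applies the antipode $\hat S$ of $\ell^\infty(\G)$---not of $\C(\hh\G)$---on the \emph{first} leg, restricted to finite-rank central projections $p$ so that $\hat S_p(\cdot)=\hat S(p\,\cdot)$ is bounded. The key identities are $(\hat S_p\ot\id)(\ww)=(\bar p\ot 1)\ww^*$ together with $\hat S^2=\hat\tau_{-i}$ and $(\hat\tau_t\ot\id)(\ww)=(\id\ot\tau_{-t})(\ww)$, which combine to give $(\hat S_p\ot\id)(\ww^*)=(\bar p\ot\tau_i)(\ww)$. Applying $\hat S_p\ot\mu$ to $\ww^*(1\ot x)\ww$ and using antimultiplicativity of $\hat S$ together with the KMS property yields directly $(\hat S_p\ot\mu)\beta(x)=\mu(x)\bar p$, hence ad-invariance. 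For the converse one runs the same identity backwards: starting from ad-invariance, one writes $y=(\varphi\circ\hat S_p\ot\id)(\ww)\in\Pol(\hh\G)$, computes $\sum y_{(1)}\ot S(y_{(2)})$ using $(\id\ot\Delta)(\ww)=\ww_{13}\ww_{12}$ and $(\id\ot S)(\ww)=\ww^*$, and deduces $\mu(wz)=\mu(\tau_i(z)w)$ for all $w,z\in\Pol(\hh\G)$ from the fact that elements of the form $\sum y'_{(2)}x\ot y_{(1)}$ span. The scaling group $\tau_i$ thus appears automatically from squaring $\hat S$, with no separate $R$-invariance argument required. If you want to repair your approach, the missing ingredient is precisely this: work with the antipode on the $\ell^\infty(\G)$ leg rather than trying to decompose $S$ on the $\C(\hh\G)$ leg.
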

\begin{proof}
  In this proof the symbols adorned with hats refer to $\G$ and these without
  hats to $\hh \G$. Thus we use the antipode $\hat S$ of $\ell^\infty(\GGamma)$,
  which is defined in particular on {\em finitely supported} elements.  More
  precisely, if $p$ is a finite-rank central projection in
  $\ell^\infty(\GGamma)$ then $\hat S_p : x \to \hat S(px)$ is a well defined,
  bounded linear map on $\ell^\infty(\GGamma)$, and $\bar p := \hat S(p)$ is
  another finite-rank central projection in $\ell^\infty(\GGamma)$. We recall
  moreover that $(\hat S_p\otimes\id)(\ww) = (\bar p\otimes 1)\ww^*$
  (\cite[Proposition~8.3]{KV}). Since $\hh S^2 = \hh \tau_{-i}$ on finitely
  supported elements (\cite[Proposition~5.22]{KV}) and
  $(\hat \tau_t\otimes\id)(\ww) = (\id\otimes\tau_{-t})(\ww)$ for all
  $t \in \br$ (see before Proposition~8.25 in \cite{KV}) we have also
  $(\hat S_p\otimes\id)(\ww^*) = (\bar p\otimes \tau_i)(\ww)$.

  Now let $\mu \in \C(\hat\GGamma)^*$ be a $(\tau_t)_{t\in \br}$-KMS state. By
  the KMS-property and antimultiplicativity of $\hat S$ we can write, for any
  $x \in \ell^\infty(\G)$ and $p$ as above,
    \begin{align}\label{eq_ad_inv}
      (\hat S_p\otimes\mu)(\ww^*(1\otimes x) \ww) &= 
      (\id\otimes\mu)((\hat S_p\otimes\tau_i)(\ww)(\hat S_p\otimes\id)(\ww^*)(1\otimes x)) \\
      \nonumber &= (\id\otimes\mu)((\bar p\otimes\tau_i)(\ww^*)
                  (\bar p\otimes\tau_i)(\ww) (1\otimes x))  = \mu(x) \bar p.
    \end{align}
    Applying $\hat S^{-1}$ yields $(p\otimes\mu)\beta(x) = \mu(x)p$, hence the
    $\mathrm{ad}$-invariance of $\mu$.

    On the other hand, assume that $\mu$ is $\mathrm{ad}$-invariant. Then the
    computation above shows that \eqref{eq_ad_inv} holds for all suitable $x$
    and $p$ (note that it holds as $\mathrm{ad}$-invariance means that the first
    expression in \eqref{eq_ad_inv} equals to the last one, and the second and
    third equalities always hold). Fix $\varphi\in\ell^1(\GGamma)$, a
    finite-rank central projection $p\in\ell^\infty(\GGamma)$ and consider
    $y = (\varphi \circ \hat S_p\otimes\id)(\ww) \in \Pol(\hh \G)$. Denote
    $(\id\otimes S)\Delta(y) = \sum y_{(1)}\otimes y'_{(2)}$ --- the sum is
    naturally finite. Since $(\id\otimes\Delta)(\ww) =\ww_{13} \ww_{12}$ and
    $(\id\otimes S)(\ww) = \ww^*$ (we are now using the antipode of
    $\C(\hat\GGamma)$) we have
\begin{align*}
  \sum y_{(1)}\otimes y'_{(2)} &= (\varphi \circ \hat S_p\otimes\id\otimes\id)(\ww_{13}^* \ww_{12}) \\
  &= (\varphi\otimes\id\otimes\id)((\hat S_p\otimes\id)(\ww)_{12} (\hat S_p\otimes\id)(\ww^*)_{13}).
\end{align*}
Hence, applying $\varphi$ to the first equality in  \eqref{eq_ad_inv} yields that 
$\sum \mu(y_{(2)}'xy_{(1)}) = \sum \mu(\tau_i(y_{(1)}) y'_{(2)} x)$. Now take
arbitrary elements $w,z \in \Pol(\hh \G)$ . We know that $w\otimes z$ can be written as a finite sum of
elements of the form $\sum y'_{(2)}x\otimes y_{(1)}$, where $y$ is associated to some $\varphi$ and $p$ as above (cf.\
e.g.\ \cite[Theorem~4.9]{Woronowicz_Matrix}), and by bilinearity the above
identity yields $\mu(wz) = \mu(\tau_i(z)w)$. Hence $\mu$ is $(\tau_t)_{t\in \br}$-KMS. In the unimodular case $(\tau_t)_{t\in \br}$ is trivial, so that KMS states correspond to traces.

For the Haar state $h$ we compute as above, using the corresponding modular group $(\sigma_t)_{t\in \br}$ (again $p$ is central finite rank projection and $x \in \ell^\infty(\G)$:
\begin{align*}
  (\hat S_p\otimes h)(\ww^*(1\otimes x)\ww) &= 
  (\id\otimes h)((\hat S_p\otimes\sigma_i)(\ww)(\hat S_p\otimes\id)
   (\ww^*)(1\otimes x)) \\
  &= (\id\otimes h)((\bar p\otimes\sigma_i)( \ww^*)
  (\bar p\otimes\tau_i)( \ww) (1\otimes x)).
\end{align*}
This is equal to $h(x)\bar p$ for any $x\in \ell^\infty(\G)$ {\bf iff}
$(\bar p\otimes\sigma_i)(\ww^*) (\bar p\otimes\tau_i)(\ww) = \bar p$ {\bf iff}
$(\bar p\otimes\tau_i)(\ww) = (\bar p\otimes\sigma_i)(\ww)$, which happens for
all central finite rank projections $p$ {\bf iff} $\tau_i = \sigma_i$, i.e.\ when we are in the Kac case.
\end{proof}

\begin{thm}\label{thm:faith-Fur-bnd-->unq-trc} 
Let $\G$ be a discrete quantum group and assume that the action $\G\act \sA$ of $\G$ on some $\G$-boundary $\sA$  is faithful. Then if $\G$ is unimodular, it has the unique trace property, i.e.\ $\C(\hh\G)$ admits a unique tracial state. If $\G$ is not unimodular, then it does not admit any $\G$-invariant functional, nor any KMS-state for the scaling automorphism group at the inverse temperature $1$.
\end{thm}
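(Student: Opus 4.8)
The plan is to reduce the whole statement, via Lemma~\ref{lem:ad-invariance}, to the single assertion that \emph{every $\G$-invariant state on $\C(\hh\G)$ (for the adjoint action $\beta$) must coincide with the Haar state $\hh h$ of $\hh\G$}. Granting this, both halves of the theorem are immediate. If $\G$ is unimodular then $\hh h$ is tracial and, by Lemma~\ref{lem:ad-invariance}, $\G$-invariant, while by the same lemma the $\G$-invariant states on $\C(\hh\G)$ are exactly the tracial ones; so there is a unique trace. If $\G$ is not unimodular then Lemma~\ref{lem:ad-invariance} says $\hh h$ is \emph{not} $\G$-invariant, so there can be no $\G$-invariant state at all, equivalently (again Lemma~\ref{lem:ad-invariance}) no KMS-state for the scaling group at inverse temperature $1$, and a fortiori no $\G$-invariant functional.

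So fix a $\G$-invariant state $\mu$ on $\C(\hh\G)$ and let $(\pi_\mu,\Hil_\mu,\xi_\mu)$ be its GNS triple, $M_\mu=\pi_\mu(\C(\hh\G))''$. Because $\mu$ is $\beta$-invariant, $\beta$ descends to $M_\mu$ and is implemented on $\Hil_\mu$ by a unitary representation of $\G$ — in complete analogy with the way the left regular unitary $\ww$ implements $\beta$ on $\C(\hh\G)$ itself — so that $B(\Hil_\mu)$ becomes a $\G$-$\cst$-algebra (Remark~\ref{multiplieralgebra}) on which the vector state $\omega_{\xi_\mu}$ is $\G$-invariant and $\pi_\mu$ is $\G$-equivariant with $\G$-invariant image $M_\mu$. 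Using $\G$-injectivity of $\C(\fb)$ (Proposition~\ref{prop:injectivity}) we extend the equivariant inclusion $\CC\I\hookrightarrow\C(\fb)$ to a $\G$-equivariant ucp map $\mathcal E\colon B(\Hil_\mu)\to\C(\fb)$. Then $\mathcal E|_{M_\mu}$, and hence $\mathcal E\circ\pi_\mu\colon\C(\hh\G)\to\C(\fb)\subset\linf(\G)$, is $\G$-equivariant ucp, so by Proposition~\ref{Pois-map-props} it is a Poisson transform $\cP_\rho$ attached to a state $\rho\in S(\C(\hh\G))$.

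The crux — and the step I expect to be the main difficulty — is to show that $\mathcal E|_{M_\mu}$, and therefore $\mathcal E\circ\pi_\mu=\cP_\rho$, has range in $\CC\I$ (which automatically makes $\rho$ a $\G$-invariant state), and then to identify $\rho$, hence $\mu$, with $\hh h$. Two ingredients should feed into this. First, that the $\G$-action on $M_\mu$ is ``inner'', i.e.\ unitarily implemented from inside the relevant algebra, which is what permits averaging the equivariant map $\mathcal E|_{M_\mu}$; and second, that $\C(\fb)$ is not merely a $\G$-boundary but a minimal, strongly proximal one, so that by Lemma~\ref{prox-act_c*} the $\G$-translates of any state of $\C(\fb)$ are weak$^*$-dense in $S(\C(\fb))$ — which is what makes the averaged map collapse onto the scalars. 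The faithfulness hypothesis then enters through the Furman-type Theorem~\ref{amen_cor}: the $\beta$-invariant ideal $\ker\pi_\mu$ exhibits a normal, relatively amenable — hence amenable, by Theorem~\ref{thm:rel_amen} — quantum subgroup of $\G$, so the associated Baaj–Vaes subalgebra of $\linf(\G)$ contains $\sN_{\rm F}=\linf(\G)$ and is therefore all of $\linf(\G)$; this forces $\ker\pi_\mu$ to be trivial and pins down $\rho=\mu=\hh h$. It is precisely at this point that the relative-amenability machinery of Section~\ref{sec_amrad} does the essential work, replacing the Dixmier-property averaging that is available only in the classical, unimodular situation.
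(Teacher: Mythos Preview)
Your overall reduction via Lemma~\ref{lem:ad-invariance} is correct, and the opening paragraph matches the paper exactly: both conclusions follow once one proves that any $\G$-invariant state on $\C(\hh\G)$ equals the Haar state. But the argument you sketch for that assertion has genuine gaps, and in fact the paper proceeds quite differently.

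\textbf{The main gap.} You set up a GNS construction and a $\G$-equivariant ucp map $\mathcal E:B(\Hil_\mu)\to\C(\fb)$, then say that the ``crux'' is to show $\mathcal E|_{M_\mu}$ lands in $\CC\I$. You never actually do this; you only gesture at ``averaging'' the inner action and at the strong proximality of $\C(\fb)$. Neither of these hints constitutes an argument in the quantum setting: there is no group over which to average, and Lemma~\ref{prox-act_c*} tells you about density of $\{\mu*\nu\}$ in the state space of $\C(\fb)$, which does not by itself force an equivariant map from $M_\mu$ to collapse to scalars. Moreover, even if you succeeded here you would only know that $\rho=\varepsilon\circ\mathcal E\circ\pi_\mu$ is $\G$-invariant --- but $\rho$ need not equal $\mu$, so you are back where you started.

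\textbf{The faithfulness step is misconceived.} You write that ``the $\beta$-invariant ideal $\ker\pi_\mu$ exhibits a normal, relatively amenable quantum subgroup of $\G$''. This is not justified: a $\beta$-invariant ideal of $\C(\hh\G)$ does not produce a Baaj--Vaes subalgebra of $\linf(\G)$ in any way that Theorem~\ref{amen_cor} or Theorem~\ref{thm:rel_amen} can act on. The relative-amenability machinery of Section~3 concerns subalgebras of $\linf(\G)$, not ideals of $\C(\hh\G)$, and there is no mechanism in the paper (or elsewhere) turning one into the other.

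\textbf{What the paper actually does.} The paper first reduces to $\sA=\C(\fb)$, then works inside the crossed product $\G\ltimes_r\C(\fb)$. It extends the invariant state $\tau$ to a $\G$-equivariant ucp map $\tilde\tau:\G\ltimes_r\C(\fb)\to\C(\fb)$ by $\G$-injectivity, observes that $\alpha(\C(\fb))$ sits in the multiplicative domain of $\tilde\tau$ by $\G$-rigidity, and then slices the commutation relation $(1\otimes\alpha(a))(\ww\otimes 1)=(\ww\otimes 1)\beta(\alpha(a))$ with $\id\otimes\tilde\tau$. This yields directly
\[
x\cdot\cP_\nu(a)=\varepsilon(\cP_\nu(a))\,x \quad\text{for all }a\in\C(\fb),\ \nu\in\C(\fb)^*,
\]
where $x=(\id\otimes\tau)(\ww)\in\linf(\G)$. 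Faithfulness now enters cleanly: it says the $\cP_\nu(a)$ generate $\linf(\G)$, so in particular $xp_0=\varepsilon(p_0)x=x$, forcing $x=p_0$ and hence $\tau=\hh h$. No GNS construction, no averaging, no amenable-subgroup step --- just a multiplicative-domain computation in the crossed product together with the definition of the cokernel.
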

\begin{proof}

Note first that if $\sA$ is a $\G$-boundary and the action of $\G$ on $\sA$ is faithful, so is the action of $\G$ on its Furstenberg boundary $\C(\fb)$, as we have $\sN_{\alpha_{\sA}} \subset \sN_{\alpha_{F}}$, where $\alpha_{\sA}$ denotes the action of $\G$ on $\sA$ and $\alpha_{F}$ the action of $\G$ on $\C(\fb)$. Indeed, it suffices to use Theorem \ref{thm:universalF} and note that if $\Psi:\sA \to \C(\fb)$ is an isometric ucp $\G$-invariant map, then for any functional $\nu \in \sA^*$ and $a\in \sA$ we have
\[ (\id \ot \nu)(\alpha_{\sA}(a)) = (\id \ot \rho)(\alpha_{F}(\psi(a))) \in \sN_{\alpha_F},\]  	
where $\rho$ is any functional on $\C(\fb)$ extending the functional $\nu \circ \Psi^{-1} \in \Psi(\sA)^*$. Thus we may and do assume that $\sA=\C(\fb)$.

We want to show that if $\tau\in \C(\hh\G)^*$ is a $\G$-invariant state on $\C(\hh\G)$, then $(\id\otimes\tau)(\ww)\in\linf(\G)$ is the support of the counit $\varepsilon:\linf(\G)\to\mathbb{C}$. 

In view of Lemma \ref{lemcrossed}, we can extend $\tau$ to a ucp $\G$-equivariant map $\tilde\tau:\G\ltimes_r \C(\fb)\to \C(\fb)$, using $\G$-injectivity of $\C(\fb)$.
By $\G$-rigidity of $\C(\fb)$ (see Proposition \ref{prop:rigidity&essentiality}) the restriction of $\tilde{\tau}$ to $\alpha(\C(\fb))$ equals $\alpha^{-1}$.  
Thus, $\alpha(\C(\fb))$ lies in the multiplicative domain of $\tilde{\tau}$, and therefore we have $$\tilde\tau\big(\alpha(a)(y\ot 1)\big) = a\tau(y)$$ for all $a\in \C(\fb)$ and $y\in \C(\hh\G)$. 

Consider the action $\beta$ from \eqref{eq1}. Applying $(\id\otimes\tilde\tau)$ to the equation $(1 \ot x)(\ww \ot 1) = (\ww \ot 1) \beta(x)$, with $x=\alpha(a),a\in  \C(\fb)$ and remembering that $\beta$ restricted to $\alpha(\C(\fb))$ coincides with $\id \ot \alpha$, we get
\[((\id\otimes\tau)(\ww) )\otimes a = (((\id\otimes\tau)(\ww) )\otimes\I)\alpha(a).\]
In particular, setting $x = (\id\otimes\tau)(\ww)$ we have $\mu(a) x = x (P_\mu(a))$
for all $\mu\in \C(\fb)^*$ and $a\in \C(\fb)$. Since $\mu(a) = \varepsilon(P_\mu(a))$ this yields $\varepsilon(y)x = xy$ for all $y\in \sN_\alpha$. By faithfulness of $\alpha$ we can take for $y$ the support $p_0$ of $\varepsilon$ and we see that $x = \varepsilon(x)p_0$. Since $(\varepsilon\otimes\id)(\ww)=\I$ we conclude that $x=p_0$; thus $\tau$ is the Haar state.

The rest of the statements follows then from Lemma \ref{lem:ad-invariance}; note that in the unimodular case the Haar state is tracial, and in the non-unimodular case the Haar state is not $\G$-invariant.
\end{proof}

Finally the next proposition gives a connection to the classical result of Furman mentioned in the beginning of the section. By combining Theorem \ref{thm:faith-Fur-bnd-->unq-trc} and the next proposition we see that faithfulness of the action on the Furstenberg boundary implies, in the unimodular case, triviality of the amenable radical. This also results from Theorem \ref{amen_cor}, but we do not know whether the converse holds since in principle there might be relatively amenable Baaj-Vaes subalgebras of $\ell^\infty(\GGamma)$ which do not arise from normal amenable subgroups.

\begin{prop}\label{thm:unq-trc-->triv-amen-rad}
Let $\G$ be a unimodular discrete quantum group with the unique trace property, and let $\QG = \hh\G$.
If $\QH\subset \QG$ is such that $\QH\backslash\QG$ is co-amenable then $\QH = \QG$; in particular the amenable radical of $\G$ is trivial. 
\end{prop}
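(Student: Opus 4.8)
The plan is to turn co-amenability of $\QH\backslash\QG$ into a tracial state on $\C(\hh\G)=\C(\QG)$ built from the Haar state of $\QH$, identify that trace with the Haar state of $\QG$ using the unique trace property, and then deduce that the defining morphism $\pi:=\pi_\QH\in\Mor(\C^u(\QG),\C^u(\QH))$ is injective on the canonical Hopf $^*$-algebra, which forces $\QH=\QG$.

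I would first record two standard observations. Since $\G$ is unimodular, $\QG=\hh\G$ is of Kac type, so its reduced Haar state $h_\QG$ is a trace on $\C(\QG)$; and since $\QH$ is a closed quantum subgroup of the Kac-type $\QG$, it is itself of Kac type (from $\pi\circ S_\QG=S_\QH\circ\pi$ on the canonical Hopf $^*$-algebras together with surjectivity of $\pi$ one gets $S_\QH^2=\id$ out of $S_\QG^2=\id$), so the Haar state $h_\QH$ of $\C^u(\QH)$ is a trace.

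Now for the construction. Co-amenability of $\QH\backslash\QG$ provides $\varepsilon^r\colon\C(\QH\backslash\QG)\to\CC$ with $\varepsilon^r\circ\Lambda_\QG=\varepsilon$ on $\C^u(\QH\backslash\QG)$. Using the averaging conditional expectation $E^u\colon\C^u(\QG)\to\C^u(\QH\backslash\QG)$, $E^u(c)=(h_\QH\circ\pi\ot\id)\Delta^u(c)$, together with the conditional expectation $E\colon\C(\QG)\to\C(\QH\backslash\QG)$ satisfying $E\circ\Lambda_\QG=\Lambda_\QG\circ E^u$ (both as in the proof of Theorem \ref{thm_fun}), set $\omega:=\varepsilon^r\circ E$, a state on $\C(\QG)$. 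Then for $c\in\C^u(\QG)$,
\[\omega(\Lambda_\QG(c))=\varepsilon^r\bigl(\Lambda_\QG(E^u(c))\bigr)=\varepsilon\bigl(E^u(c)\bigr)=(h_\QH\circ\pi\ot\varepsilon)\Delta^u(c)=h_\QH(\pi(c)),\]
i.e.\ $\omega\circ\Lambda_\QG=h_\QH\circ\pi$. As $h_\QH$ is a trace and $\pi$ a $^*$-homomorphism, $h_\QH\circ\pi$ is a trace on $\C^u(\QG)$, and since $\Lambda_\QG$ is a surjective $^*$-homomorphism, $\omega$ is a tracial state on $\C(\QG)=\C(\hh\G)$. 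By the unique trace property and traciality of $h_\QG$ we conclude $\omega=h_\QG$, hence $h_\QH\circ\pi=h_\QG\circ\Lambda_\QG=h^u_\QG$, the Haar state of $\C^u(\QG)$. Since the Haar state is faithful on $\Pol(\QG)$, any $x\in\Pol(\QG)$ with $\pi(x)=0$ satisfies $h^u_\QG(x^*x)=h_\QH(\pi(x)^*\pi(x))=0$, so $x=0$; thus $\pi$ restricts to an injective morphism $\Pol(\QG)\to\Pol(\QH)$, which is moreover surjective because $\QH$ is a closed quantum subgroup, hence an isomorphism of Hopf $^*$-algebras. As a compact quantum group is determined by its Hopf $^*$-algebra, $\QH=\QG$. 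For the final assertion, let $\LLambda$ be the amenable radical of $\G$; dualising $\{e\}\to\LLambda\to\G\to\G/\LLambda\to\{e\}$ realises $\QH:=\hh{\G/\LLambda}$ as a closed normal quantum subgroup of $\QG=\hh\G$ with $\QH\backslash\QG\cong\hh\LLambda$ as compact quantum groups, and amenability of $\LLambda$ means exactly co-amenability of $\hh\LLambda$, hence of the quotient $\QH\backslash\QG$ by Remark \ref{rem:coam-quo1}; the first part gives $\QH=\QG$, that is $\hh{\G/\LLambda}=\hh\G$, so $\LLambda=\{e\}$.

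The main obstacle is purely technical: one has to be careful in passing between the universal and reduced pictures — that $h_\QH\circ\pi=h^u_\QG$ yields injectivity of $\pi$ on the Hopf $^*$-algebra, and that a closed quantum subgroup produces a surjective Hopf $^*$-morphism at the polynomial level — though all of this is standard, as is the identification of the conditional expectations $E^u$ and $E$ used above. The conceptual content, namely that faithfulness of the Haar state forces $\QH=\QG$, is short once the trace has been produced.
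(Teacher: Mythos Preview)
Your proof is correct and follows essentially the same approach as the paper: both produce a tracial state on $\C(\QG)$ from the Haar state of $\QH$, then use the unique trace property to force $\QH=\QG$. The paper is slightly more streamlined in that it invokes Theorem~\ref{thm_fun} directly to obtain a reduced morphism $\tilde\pi\in\Mor(\C(\QG),\C(\QH))$ and takes $h_\QH\circ\tilde\pi$ as the trace (arguing by contrapositive that if $\QH\neq\QG$ this trace is non-faithful, hence distinct from $h_\QG$); your state $\omega=\varepsilon^r\circ E$ is exactly this same functional, you just build it by reproducing part of the proof of Theorem~\ref{thm_fun} rather than citing its conclusion.
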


\begin{proof}
Suppose that $\QH$ is a subgroup of $\QG$ realized by $\pi\in\Mor(\C^u(\GG),\C^u(\HH))$ such that $\QH\backslash\QG$ is co-amenable. Then by Theorem \ref{thm_fun} the map $\pi$ admit a reduced version $\tilde\pi\in\Mor(\C(\GG),\C(\HH))$. If $\QH\subset\QG$ is a proper subgroup then $\tilde\pi$ is not injective and therefore $h_\QH\circ\tilde\pi$ is a trace on $\C(\QG)$ which is not faithful. This proves the first statement. To prove the second one, suppose $\LLambda\subset \G$ is a non-trivial normal amenable subgroup. Then $\QH =\hh{\LLambda/\G}$ is a subgroup of $\QG$ such that $\QH\backslash\QG$ is co-amenable. Thus, we have $\QH = \QG$, which implies $\LLambda = \{e\}$.
\end{proof}

\section{Further applications: $\C^*$-simplicity and crossed products}

In this  section we discuss some applications of the concepts discussed earlier to simplicity and existence of nuclear embeddings. These will find concrete interpretations in the last section.

Recall first that a discrete quantum group is said to be \emph{$\cst$-simple} if the reduced $\cst$-algebra $\C(\hh\G)$ of the dual compact quantum group $\hh\G$ is simple.

\begin{prop}\label{thm:C*-simple-->triv-amen-rad}
	Let $\G$ be a $\cst$-simple discrete quantum group, and let $\QG = \hh\G$. If $\QH\subset \QG$ is a closed quantum subgroup such that $\QH\backslash\QG$ is co-amenable then $\QH = \QG$. In particular, $\G$ has trivial amenable radical.
\end{prop}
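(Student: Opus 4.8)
The plan is to run the argument of Proposition~\ref{thm:unq-trc-->triv-amen-rad}, with the unique trace property replaced by $\cst$-simplicity of $\C(\hh\G)$.

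For the first assertion I would argue by contradiction, assuming $\QH\subsetneq\QG=\hh\G$ is a proper closed quantum subgroup, realized by $\pi\in\Mor(\C^u(\QG),\C^u(\QH))$, with $\QH\backslash\QG$ co-amenable. Since $\QG$ is compact, so is $\QH$, whence $\C(\QH)$ is unital. Theorem~\ref{thm_fun} then supplies a reduced version $\tilde\pi\in\Mor(\C(\QG),\C(\QH))$ of $\pi$, i.e.\ a unital $*$-homomorphism with $\tilde\pi\circ\Lambda_{\QG}=\Lambda_{\QH}\circ\pi$. The key point I would establish is that $\tilde\pi$ fails to be injective: properness of the subgroup means the surjective Hopf$^*$-algebra map $\pi|_{\Pol(\QG)}\colon\Pol(\QG)\to\Pol(\QH)$ has a non-zero kernel, and since $\Pol(\QG)$ embeds faithfully into $\C(\QG)$ and $\tilde\pi$ restricted to $\Pol(\QG)$ agrees with $\Lambda_{\QH}\circ\pi|_{\Pol(\QG)}$, this non-injectivity passes to $\tilde\pi$. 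As $\tilde\pi$ is unital, its kernel is then a non-zero proper closed two-sided ideal of $\C(\QG)=\C(\hh\G)$, contradicting simplicity; hence $\QH=\QG$.

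For the triviality of the amenable radical I would take a non-trivial normal amenable closed quantum subgroup $\LLambda\subset\G$ — which exists as soon as the amenable radical is non-trivial — and derive a contradiction. By the dual short exact sequence~\eqref{ex2}, $\QH:=\hh{\G/\LLambda}$ is a closed \emph{normal} quantum subgroup of $\hh\G=\QG$, and it is proper precisely because $\LLambda\neq\{e\}$. Amenability of $\LLambda$ gives, via Theorem~\ref{thm:rel_amen}, that $\linf(\G/\LLambda)$ is relatively amenable in $\linf(\G)$; using the biduality identifications $\linf(\hh\QH)=\linf(\G/\LLambda)$ and $\linf(\hh\QG)=\linf(\G)$, Theorem~\ref{thm:rel_amen_coamen} (applicable since $\QH$ is normal) then shows $\QH\backslash\QG$ is a co-amenable quotient. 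The first assertion forces $\hh{\G/\LLambda}=\hh\G$, hence $\linf(\G/\LLambda)=\linf(\G)$, hence $\LLambda=\{e\}$ --- a contradiction.

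The one step I expect to require real care is the implication ``$\QH$ proper $\Rightarrow$ $\tilde\pi$ not injective'': one must pin down what properness of a closed quantum subgroup means at the level of the Hopf$^*$-algebras and check that non-injectivity survives the reduction $\C^u\to\C$ through faithfulness of $\Pol(\QG)\hookrightarrow\C(\QG)$ --- the same point invoked tacitly in the proof of Proposition~\ref{thm:unq-trc-->triv-amen-rad}. Everything else is a straightforward unwinding of the dictionaries of Sections~2 and~3.
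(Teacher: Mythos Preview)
Your proposal is correct and follows essentially the same approach as the paper: use Theorem~\ref{thm_fun} to produce the reduced surjection $\tilde\pi\colon\C(\QG)\to\C(\QH)$, observe that $\cst$-simplicity forces $\tilde\pi$ injective (equivalently, properness of $\QH$ would give a nontrivial kernel), hence $\QH=\QG$; the amenable radical part is deduced exactly as in Proposition~\ref{thm:unq-trc-->triv-amen-rad}. The paper phrases the first step directly rather than by contradiction, and for the second simply refers back to the earlier proposition, but the content is the same.
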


\begin{proof}
	Let $\QH$ be a quantum subgroup of $\QG$ such that $\QH\backslash\QG$ is co-amenable. Then by Theorem \ref{thm_fun} there is a canonical *-homomorphism $\tilde\pi: \C(\QG)\to \C(\QH)$. Since $\G$ is $\cst$-simple, i.e.\ $\C(\QG)$ is simple, $\tilde\pi$ is injective, hence $\QH = \QG$. This proves the first statement. The second follows exactly as in Corollary \ref{thm:unq-trc-->triv-amen-rad}. 
\end{proof}


\begin{lem}\label{lem:aut-equiv}
Let $\Hil$ be a Hilbert space. Consider the action $(\Delta\otimes\id)$ of $\G$ on $B(\ell^2(\G) \ot \Hil)$, given by the formula
\[ (\Delta\ot \id)(x) = \ww^*_{12}x_{23}\ww_{12}, \;\;\; x \in B(\ell^2(\G) \ot \Hil).\] 
Suppose $\psi$ is a ucp map on $B(\ell^2(\G) \ot \Hil)$ whose restriction to $\C(\hh\G)\otimes \I$ is the identity map. Then $\psi$ is $\G$-equivariant.
\end{lem}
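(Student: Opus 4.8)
The plan is to verify the equivariance condition in its sliced form. Since $\G$ is discrete we have $\c0(\G)^*=\ell^1(\G)$, and (with $\beta(x):=\ww^*_{12}x_{23}\ww_{12}$ denoting the action in the statement) the assertion that $\psi$ is $\G$-equivariant means precisely that $\psi(x*\mu)=\psi(x)*\mu$ for all $x\in B(\ell^2(\G)\ot\Hil)$ and all $\mu\in\ell^1(\G)$, where $x*\mu=(\mu\ot\id)(\beta(x))$. So I would fix such an $x$ and $\mu$ and aim at this identity. The only structural input is the \emph{multiplicative domain}: the restriction of $\psi$ to the unital $\C^*$-subalgebra $\C(\hh\G)\ot\I\subseteq B(\ell^2(\G)\ot\Hil)$ is the identity, in particular a unital $*$-homomorphism, so $\C(\hh\G)\ot\I$ lies in the multiplicative domain of $\psi$; hence $\psi(cxc')=c\,\psi(x)\,c'$ for all $c,c'\in\C(\hh\G)\ot\I$ and all $x$.

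The main step is to exhibit the conjugation by $\ww$ as a sum of conjugations by elements of $\C(\hh\G)\ot\I$ whose coefficients sit in the first leg. For $\gamma\in\Irr(\hh\G)$ let $p_\gamma\in\linf(\G)$ be the corresponding minimal central projection, so that $p_\gamma\linf(\G)\cong M_{n_\gamma}$, with matrix units $e^\gamma_{ij}$. Using $\ww\in\M(\c0(\G)\ot\C(\hh\G))=\prod_{\gamma\in\Irr(\hh\G)}M_{n_\gamma}(\C(\hh\G))$ one may write the \emph{finite} sums $(p_\gamma\ot\I)\ww=\sum_{i,j}e^\gamma_{ij}\ot u^\gamma_{ij}$ and $(p_\gamma\ot\I)\ww^*=\sum_{i,j}e^\gamma_{ij}\ot v^\gamma_{ij}$ with $u^\gamma_{ij},v^\gamma_{ij}\in\C(\hh\G)$. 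A direct leg-notation computation, in which all cross terms between distinct blocks drop out because $p_\gamma\linf(\G)p_\delta=0$ for $\gamma\neq\delta$, then gives that $(p_\gamma\ot\I)\beta(x)$ is a \emph{finite} sum of terms of the form $(\,\cdot\,)_1\ot\bigl((v^\gamma_{ij}\ot\I)\,x\,(u^\gamma_{kl}\ot\I)\bigr)_{23}$. Slicing the first leg by $\mu$ and applying the bimodule property of $\psi$ over $\C(\hh\G)\ot\I$ to each of the finitely many summands yields, for every $\gamma$,
\[
\psi\Bigl((\mu\ot\id)\bigl((p_\gamma\ot\I)\,\ww^*_{12}x_{23}\ww_{12}\bigr)\Bigr)
 =(\mu\ot\id)\bigl((p_\gamma\ot\I)\,\ww^*_{12}\psi(x)_{23}\ww_{12}\bigr).
\]

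Finally I would sum over $\gamma$. Writing $\mu_\gamma$ for the $\gamma$-component of $\mu$ in $\ell^1(\G)=\bigoplus_\gamma(M_{n_\gamma})_*$, the decomposition $\mu=\sum_\gamma\mu_\gamma$ converges in norm, and since $\nu\mapsto x*\nu$ is bounded by $\|x\|$ we get $x*\mu=\sum_\gamma x*\mu_\gamma$ in norm (with $x*\mu_\gamma=(\mu\ot\id)((p_\gamma\ot\I)\beta(x))$), and likewise $\psi(x)*\mu=\sum_\gamma\psi(x)*\mu_\gamma$. Applying $\psi$ term by term — legitimate by norm-continuity of $\psi$ — and invoking the per-block identity above gives $\psi(x*\mu)=\psi(x)*\mu$, which is the desired equivariance.

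The delicate point is that $\psi$ is \emph{not} assumed to be normal, so it cannot be applied to $\sigma$-weak limits; this is exactly why the computation must be organised block by block over $\Irr(\hh\G)$, so that every intermediate expression lives inside a finite matrix amplification $M_{n_\gamma}\bigl(B(\ell^2(\G)\ot\Hil)\bigr)$ and is handled by the multiplicative-domain property alone, the passage to the limit over $\gamma$ being carried out only at the level of norm-convergent series before $\psi$ is applied. Everything else is routine bookkeeping with the leg notation and the block structure of $\linf(\G)$.
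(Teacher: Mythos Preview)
Your proof is correct and rests on the same key idea as the paper's: since $\psi$ is the identity on $\C(\hh\G)\ot\I$, that subalgebra lies in the multiplicative domain of $\psi$, and the conjugation by $\ww$ can be pushed through $\psi$. The packaging differs slightly. The paper argues with the amplified map $\id\otimes\psi$ on the triple tensor product: it observes that $\id\otimes\psi$ is the identity on $\c0(\G)\ot\C(\hh\G)\ot\I$, hence also on its multiplier algebra, so $\ww_{12}=\ww\ot 1\in\M(\c0(\G)\ot\C(\hh\G))\ot\I$ lies in the multiplicative domain of $\id\otimes\psi$ and the equivariance identity $(\id\otimes\psi)(\ww_{12}^*x_{23}\ww_{12})=\ww_{12}^*\psi(x)_{23}\ww_{12}$ follows in one line. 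Your block-by-block computation over $\Irr(\hh\G)$ is exactly the unfolding of this argument: writing $\ell^\infty(\G)\wot B(\ell^2(\G)\ot\Hil)=\prod_\gamma M_{n_\gamma}(B(\ell^2(\G)\ot\Hil))$ and applying $\id_{n_\gamma}\otimes\psi$ on each finite block is precisely how one makes sense of ``$\id\otimes\psi$'' without any normality assumption. So your version is a bit longer but makes explicit the point you flagged as delicate, which the paper handles implicitly by working inside $\M(\c0(\G)\ot B(\ell^2(\G)\ot\Hil))$.
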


\begin{proof}
The ucp map $\id\otimes\psi$ on $B(\ell^2(\G) \ot\ell^2(\G) \ot \Hil)$ restricts to the identity map on $\c0(\G) \ot \C(\hh\G)\otimes \I$, and therefore contains the latter in its multiplicative domain. Thus, it follows that $\id\otimes\psi$ also restricts to identity on $\M(\c0(\G)\otimes \C(\hh\G)\otimes \I)$. In particular, $\ww\ot 1$ is in the multiplicative domain of $\id \ot \psi$. Hence, for every $x \in B(\ell^2(\G) \ot \Hil)$ we get
\[\begin{split}
(\id\otimes\psi)((\Delta\otimes\id)(x)) &= (\id\otimes\psi)(\ww^*_{12}x_{23}\ww_{12}) = \ww^*_{12}((\id\otimes\psi)(x_{23}))\ww_{12} \\&= \ww^*_{12}\psi(x)_{23}\ww_{12} =
(\Delta\otimes\id)(\psi(x)) .
\end{split}\]
\end{proof}

Before we formulate the next theorem, we recall that $\G$ acts canonically on $\ell^\infty(\G)$ (by the coproduct, viewed as the right action), and also on $B(\ell^2(\G))$; the latter right action, denoted by $\Delta^r$, is given by the same implementation, namely
\[ \Com^r(x) = V (x \ot 1)V^*, \;\;\; x \in B(\ell^2(\G)),\]
with $V$ denoting again the right multiplicative unitary.
We note here that there exists a (neccesarily unique) $\G$-equivariant faithful normal conditional expectation $\mathbb{E}:B(\ell^2(\G)) \to \ell^\infty(\G)$. It is given by the formula
\[ \mathbb{E}(x)  = (\omega_\Omega \ot \id)\circ \Delta^r(x), \;\;\; x \in B(\ell^2(\G)),  \]
where $\Omega$ is the unit vector in $\ell^2(\G)$ corresponding to the counit of $\ell^\infty(\G)$. The fact that $\mathbb{E}$ is a $\G$-equivariant ucp idempotent with image $\ell^\infty(\G)$ is easy to check. In order to justify the  faithfulness of $\mathbb{E}$ we invoke the crossed product construction $\G\ltimes \ell^\infty(\G) = [(\Linf(\hh\G)\otimes\I)\Delta(\ell^\infty(\G))]^{''}$ and the canonical faithful conditional expectation $\mathbb{E}_{\G\ltimes \ell^\infty(\G)}:\G\ltimes \ell^\infty(\G)\to \Delta(\ell^\infty(\G))$ as described in Subsection \ref{subsect.-crossedprod}. The latter is uniquely characterised by two conditions:
\begin{align*}
    \mathbb{E}_{\G\ltimes \ell^\infty(\G)}(x\otimes\I) &= h(x)\\
    \mathbb{E}_{\G\ltimes \ell^\infty(\G)}(\Delta(y))& = \Delta(y)
\end{align*}
for all $x\in\Linf(\hh\G)$ and $y\in\ell^\infty(\G)$, where $h:\Linf(\hh\G)\to\CC$ is the Haar state on the compact quantum group $\hh\GG$. Noting that 
\[(x\otimes\I)\Delta(y) = (x\otimes\I)V(y\otimes\I)V^* = V(xy\otimes\I)V^* \] we get the standard identification of $\G\ltimes \ell^\infty(\G)$ with $[\Linf(\hh\G)\ell^\infty(\G)]^{''} = B(\ell^2(\G))$. 
It is easy to check that 
$\mathbb{E}(x) =h(x)\I$  and   $\mathbb{E}(y)=y$ for all $x\in\Linf(\hh\G)$ and $y\in\ell^\infty(\G)$; thus   under the identification 
 $\G\ltimes \ell^\infty(\G)  \cong B(\ell^2(\G))$ both conditional expectations coincide. In particular $\mathbb{E}$ is faithful. 

We also recall that an inclusion of operator systems $\sV\subset \sW$ is said to be essential if a ucp map $\phi$ from $\sW$ into any operator system is completely isometric when its restriction to $\sV$ is completely isometric. We will use the facts that $\sV\subset \sW$ is essential iff $\sW\subset I(\sV)$, where $I(\sV)$ denotes the injective envelope of $V$, and that any essential operator system inclusion of $\cst$-algebras is a $\cst$-inclusion (see \cite{Ham79b} for these facts and more details).

In the proof of the following theorem we will make use of the notion of Fubini tensor product $\sV\otimes_\cF \sW$ of two operator systems $\sV$ and $\sW$. For relevant definitions and properties we refer the reader to \cite{Tomiyama} or to \cite[Chapter 7]{EffrosRuan}.

\begin{thm}\label{embed:crossed}
Let $\G$ be a discrete quantum group, and let $\sA$ be a $\G$-$\cst$-algebra. Then $\sA$ is a $\G$-boundary if and only if 
	\begin{equation}\label{C*-incl<->bnd}\GGamma\ltimes_r \sA \subset I(\C(\hh\G)).\end{equation}
\end{thm}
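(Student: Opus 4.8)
The plan is to prove both implications by exploiting the characterisation of $\C(\fb)$ as the (unique) minimal $\G$-injective operator system inside $\linf(\G)$, together with the injective-envelope formalism recalled just before the theorem: an operator system inclusion $\sV \subset \sW$ is essential iff $\sW \subset I(\sV)$, and for a $\cst$-algebra $\sV$ the injective envelope $I(\sV)$ coincides with the $\G$-injective envelope when $\sV$ carries a $\G$-action making the inclusion $\G$-equivariant. Since $\C(\hh\G)$ sits inside $B(\ell^2(\G))$ with the adjoint $\G$-action $\beta(x) = \ww^*(\I\ot x)\ww$, and $B(\ell^2(\G))$ is $\G$-injective (Proposition \ref{prop:injectivity-general} with $\Hil = \CC$), the $\G$-injective envelope of $(\C(\hh\G),\beta)$ is realised inside $B(\ell^2(\G))$; the key structural input will be that this $\G$-injective envelope of $\C(\hh\G)$ is precisely $\C(\fb)$ (realised as the image of a minimal ucp $\G$-equivariant idempotent on $B(\ell^2(\G))$ that restricts to the identity on $\C(\hh\G)$ — Lemma \ref{lem:aut-equiv} guarantees that any ucp map on $B(\ell^2(\G))$ fixing $\C(\hh\G)\ot\I$ is automatically $\G$-equivariant, so the classical Hamana minimality argument and the $\G$-rigidity of $\C(\fb)$ combine to identify $I(\C(\hh\G))$ with $\C(\fb)$).

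For the implication "$\G$-boundary $\implies$ \eqref{C*-incl<->bnd}": assume $\sA$ is a $\G$-boundary. By Lemma \ref{lemcrossed} the crossed product $\G \ltimes_r \sA$ is a $\G$-$\cst$-algebra and both $\C(\hh\G)$ (via $\beta$) and $\sA$ (via $\alpha$) embed $\G$-equivariantly into it by ucp maps. Embed $\C(\hh\G) \subset B(\ell^2(\G))$, and using $\G$-injectivity of $B(\ell^2(\G))$ extend the inclusion $\C(\hh\G)\hookrightarrow B(\ell^2(\G))$ to a ucp $\G$-equivariant map $\Theta: \G\ltimes_r\sA \to B(\ell^2(\G))$; composing with the minimal idempotent $\phi_0: B(\ell^2(\G)) \to \C(\fb)$ (which restricts to the identity on $\C(\hh\G)$ and is $\G$-equivariant) we obtain a ucp $\G$-equivariant map $\Psi:\G\ltimes_r\sA \to \C(\fb)$ restricting to the identity on $\C(\hh\G)$. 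Now restrict $\Psi$ to the copy of $\sA$: this is a ucp $\G$-equivariant map $\sA \to \C(\fb)$, hence completely isometric by $\G$-essentiality of $\C(\fb)$ (Proposition \ref{prop:rigidity&essentiality}(2)). Then $\Psi$ is completely isometric on $\C(\hh\G)$ and on $\sA$, and since $\G\ltimes_r\sA$ is generated (as an operator system / $\cst$-algebra) by products from these two copies, the standard multiplicative-domain argument (both $\C(\hh\G)\ot\I$ and $\alpha(\sA)$ lie in the multiplicative domain of $\Psi$) forces $\Psi$ to be completely isometric on all of $\G\ltimes_r\sA$. Hence $\G\ltimes_r\sA$ embeds completely isometrically into $\C(\fb) = I(\C(\hh\G))$, i.e.\ the inclusion $\C(\hh\G) \subset \G\ltimes_r\sA$ is essential, giving \eqref{C*-incl<->bnd}.

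For the converse: assume $\G\ltimes_r\sA \subset I(\C(\hh\G)) = \C(\fb)$, meaning the inclusion $\C(\hh\G) \subset \G\ltimes_r\sA$ is essential, so $\G\ltimes_r\sA$ sits inside $\C(\fb)$ as a $\G$-$\cst$-subalgebra (here one must check that the $\G$-action on $\G\ltimes_r\sA$ is the restriction of the action on $\C(\fb)$ — this follows because both are restrictions of $\beta$ on $B(\ell^2(\G))$, again via Lemma \ref{lem:aut-equiv}). Since $\C(\fb)$ is a $\G$-boundary and $\sA$ embeds $\G$-equivariantly and completely isometrically into $\G\ltimes_r\sA \subset \C(\fb)$, Proposition \ref{prop:subsp-bnd-->bnd} immediately yields that $\sA$ is a $\G$-boundary.

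The main obstacle I anticipate is the identification $I(\C(\hh\G)) = \C(\fb)$: one must show that the injective envelope of $\C(\hh\G)$ (an a priori plain operator-system notion) carries a $\G$-action compatible with $\beta$ and is $\G$-isomorphic to the Furstenberg boundary. The essential point is that rigidity of the injective envelope forces any ucp self-map fixing $\C(\hh\G)$ to be the identity, Lemma \ref{lem:aut-equiv} upgrades "fixing $\C(\hh\G)$" to "$\G$-equivariant", and then the universal property of $\C(\fb)$ together with $\G$-rigidity/$\G$-essentiality pins down the isomorphism; care is needed to run the minimality argument inside $B(\ell^2(\G))$ rather than $\linf(\G)$, but Proposition \ref{prop:injectivity-general} and the faithful $\G$-equivariant conditional expectation $\mathbb{E}: B(\ell^2(\G)) \to \linf(\G)$ make this transfer routine.
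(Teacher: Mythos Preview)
Your proposal rests on the identification $I(\C(\hh\G)) = \C(\fb)$, and this is where it breaks down: the identification is false. The Furstenberg boundary $\C(\fb)$ is by construction the $\G$-injective envelope of the \emph{trivial} $\G$-$\cst$-algebra $\CC$, not of $\C(\hh\G)$. Concretely, inside $\G\ltimes_r\C(\fb)$ the two subalgebras $\C(\hh\G)\ot\I$ and $\alpha(\C(\fb))$ intersect only in scalars, so $\C(\hh\G)$ does not sit inside $\C(\fb)$ and cannot have $\C(\fb)$ as its essential extension. (Your reference to Proposition~\ref{prop:injectivity-general} with $\Hil=\CC$ is also off: that gives $\linf(\G)$ with $\Delta$, not $B(\ell^2(\G))$ with the adjoint action.)

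Even if one ignores the identification problem, your forward implication has a second gap. You produce a ucp $\G$-equivariant $\Psi:\G\ltimes_r\sA\to I(\C(\hh\G))$ that is the identity on $\C(\hh\G)$ and completely isometric (even a $*$-homomorphism, via Corollary~\ref{cor:bnd->C*-incl}) on $\alpha(\sA)$. The multiplicative domain argument then shows $\Psi$ is a $*$-homomorphism on the crossed product, but it does \emph{not} show $\Psi$ is injective: nothing rules out a nonzero ideal of $\G\ltimes_r\sA$ meeting both generating subalgebras trivially. The paper avoids this by working instead with a projection $\phi$ on an auxiliary injective space $\im(\Phi)\subset B(\ell^2(\G))\ot_\cF\C(\fb)$ (after reducing to $\sA=\C(\fb)$) and arranging that $\phi$ is the \emph{identity} on both $\C(\hh\G)\ot\I$ and $\Delta(\C(\fb))$; then the multiplicative domain argument gives that $\phi$ is the identity on the whole crossed product, whence $\G\ltimes_r\C(\fb)\subset\im(\phi)=I(\C(\hh\G))$. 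Proving that $\phi$ fixes $\Delta(\C(\fb))$ pointwise is the genuine work, and uses the faithful $\G$-equivariant conditional expectation $\mathbb{E}:B(\ell^2(\G))\to\linf(\G)$ together with $\G$-rigidity of $\C(\fb)$. For the converse direction the paper also bypasses any identification of $I(\C(\hh\G))$: it uses essentiality of $\C(\hh\G)\subset\G\ltimes_r\sA$ directly to see that $\id\ot\phi$ is completely isometric on the crossed product for any ucp $\G$-map $\phi:\sA\to\linf(\G)$, and then a short norm computation using $\Delta\circ\phi=(\id\ot\phi)\circ\alpha$ transfers this to $\phi$ itself.
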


\begin{proof}
Suppose $\sA$ is a $\G$-$\cst$-algebra such that $\GGamma\ltimes_r \sA \subset I(\C(\hh\G))$. Let $\phi: \sA\to \linf(\G)$ be a ucp $\G$-equivariant map. Let $\sK$ be a Hilbert space on which $\sA$ acts. Extend $\phi$ to a ucp map from $B(\Kil)$ to $\linf(\G)$, still denoted by $\phi$. The ucp map $\id\ot\phi: B(\ell^2(\G))\ot B(\sK) \to B(\ell^2(\G))\ot \linf(\G)$ restricts to identity on $\C(\hh\G)\ot\I$, hence restricts to a completely isometric map $\G\ltimes_r \sA\to B(\ell^2(\G))\ot \linf(\G)$ by essentiality. Denote by $\alpha$ the action of $\G$ on $\sA$. Since $\phi$ is $\G$-equivariant, for all $T\in M_n(\mathbb{C})\ot\sA$ and $n\in\mathbb{N}$ we get
\[\|(\id_{M_n(\mathbb{C})}\ot\phi)(T)\| = \|(\id\ot\Delta)(\id_{M_n(\mathbb{C})}\ot\phi)(T)\| = \|(\id_{M_n(\mathbb{C})}\ot\id\ot\phi)(\id_{M_n(\mathbb{C})}\ot\alpha)(T))\| = \|T\| ,\]
which shows that $\phi$ is completely isometric. Hence $\sA$ is a $\G$-boundary.

	Conversely, suppose $\sA$ is a $\G$-boundary. Let $\iota: \sA\to \C(\fb)$ be a $\G$-equivariant completely ucp map. Then $\id\ot\iota: \GGamma\ltimes_r \sA \to \GGamma\ltimes_r \C(\fb)$ is an operator system inclusion. Hence, it is enough to show that $\GGamma\ltimes_r \C(\fb) \subset I(\C(\hh\G))$.
	
Since $\C(\fb)$ is $\G$-injective there is a ucp $\G$-equivariant map $\psi:\ell^\infty(\G)\otimes_\cF \C(\fb)\to\C(\fb)$ such that $\psi\circ\Delta$ is the identity map on $\C(\fb)$. Consider the map $\Phi = (\id\otimes \psi)\circ(\Delta^r\otimes\id)$ on $B(\ell^2(\G))\otimes_\cF \C(\fb)$, where as above $\Delta^r$ denotes the canonical right action of $\G$ on $B(\ell^2(\G))$. We have
	\[\begin{split}
	\Phi\circ\Phi &= (\id\otimes \psi)\circ(\Delta^r\otimes\id)\circ(\id\otimes \psi)\circ(\Delta^r\otimes\id)
	\\&=
	(\id\otimes \psi)\circ(\id\otimes\id\otimes \psi)\circ(\Delta^r\otimes\id\otimes\id)\circ(\Delta^r\otimes\id)
	\\&=
	(\id\otimes \psi)\circ(\id\otimes\id\otimes \psi)\circ(\id\otimes\Delta\otimes\id)\circ(\Delta^r\otimes\id)
	\\&=
	(\id\otimes \psi)\circ(\id\otimes\Delta)\circ(\id\otimes \psi)\circ(\Delta^r\otimes\id)
	\\&=
	(\id\otimes \psi\circ\Delta)\circ(\id\otimes \psi)\circ(\Delta^r\otimes\id)
	\\&=
	(\id\otimes \psi)\circ(\Delta^r\otimes\id)
	\\&=
	\Phi ,
	\end{split}\] 
	which shows that $\Phi$ is a projection. The $\C^*$-algebra $\C(\fb)$ is injective, as $\ell^\infty(\G)$ is a direct product of matrix algebras, and $\C(\fb)$ is the image of a ucp projection on $\ell^\infty(\G)$. Thus $B(\ell^2(\G))\otimes_\cF \C(\fb)$ is also injective, for example by comments in \cite[Section 4]{Paulsen}. Hence finally ${\rm im}(\Phi)$ is injective. Furthermore, for $a\in \C(\fb)$ and $\hat x\in \C(\hh\G)$ we have
	\[\begin{split}
	\Phi((\hat x\otimes 1)\Delta(a)) &= (\id\otimes \psi)\big((\Delta^r(\hat x)\otimes 1)(\Delta\otimes\id) \Delta(a)\big)
	\\&=
	(\hat x\otimes 1)(\id\otimes \psi\circ\Delta)(\Delta(a))
	\\&=
	(\hat x\otimes 1)\Delta(a) ,
	\end{split}\]
	which shows that $\GGamma\ltimes_r \C(\fb) \subset {\rm im}(\Phi)$.
	
So, in particular, $\C(\hh\G)\otimes\I \subset {\rm im}(\Phi)$ and by the afore-mentioned injectivity of ${\rm im}(\Phi)$ we have an inclusion $I(\C(\hh\G)\otimes\I) \subseteq {\rm im}(\Phi)$. Let $\phi: {\rm im}(\Phi) \to I(\C(\hh\G)\otimes\I)$ be a projection. Since $\phi$ restricted to $\C(\hh\G)\otimes\I$ is the identity map, if we show that the restriction of $\phi$ to $\Delta(\C(\fb))$ is also the identity map then it finishes the proof (by multiplicative domain arguments, standard properties of the crossed product, and the obvious identification $I(\C(\hh\G)\otimes\I) = I(\C(\hh\G))$).

Extend $\phi$ to a ucp map $\phi: B(\ell^2(\G)\ot \Hil) \to I(\C(\hh\G)\otimes\I)\subset B(\ell^2(\G)\ot \Hil)$, where $\Hil$ is a Hilbert space on which the $\C^*$-algebra $\C(\fb)$ acts faithfully. Consider the action $(\Delta\otimes\id)$ of $\G$ on $B(\ell^2(\G) \ot \Hil)$. Then $\phi$ is $\G$-equivariant by Lemma \ref{lem:aut-equiv}. 

Denote by $\mathbb{E}:B(\ell^2(\G))\to \ell^\infty(\G)$ the canonical conditional expectation, which is faithful and $\G$-equivariant by the remarks before the theorem.
	Then for any $T\in B(\ell^2(\G))\otimes_\cF \C(\fb)$ we have
	\[\begin{split}
	\Delta\big((\varepsilon\otimes\id)(\mathbb{E}\otimes \id)\Phi(T)\big)
	&= 
	\Delta\big((\varepsilon\otimes\id)(\mathbb{E}\otimes \id)(\id\otimes \psi)(\Delta^r\otimes\id)(T)\big)
	\\&=
	\Delta\big((\varepsilon\otimes\id)(\id\otimes \psi)(\mathbb{E}\otimes \id\otimes \id)(\Delta^r\otimes\id)(T)\big)
	\\&=
	\Delta\big((\varepsilon\otimes\id)(\id\otimes \psi)(\Delta\otimes\id)(\mathbb{E}\otimes \id)(T)\big)
	\\&=
	\Delta\big((\varepsilon\otimes\id)(\Delta\circ\psi)(\mathbb{E}\otimes \id)(T)\big)
	\\&=
	(\Delta\circ\psi)(\mathbb{E}\otimes \id)(T)
	\\&=
	(\mathbb{E}\otimes \id)\Phi(T) ,
	\end{split}\]
which shows that the conditional expectation $(\mathbb{E}\otimes \id)$ maps ${\rm im}(\Phi)$ onto $\Delta(\C(\fb))$.
	
Thus, the composition of $(\mathbb{E}\ot \id)$ with the restriction of $\phi$ to $\Delta(\C(\fb))$ yields the ucp $\G$-equivariant map 
$(\mathbb{E}\ot \id)\circ \phi|_{\Delta(\C(\fb))} : \Delta(\C(\fb))\to \Delta(\C(\fb))$, which by Proposition \ref{prop:rigidity&essentiality} must be the identity map. Since the conditional expectation $(\mathbb{E}\otimes \id)$ is faithful, it follows that $\phi$ is identity on $\Delta(\C(\fb))$.
\end{proof}

We now have all we need to generalize the main result of \cite{KalKen}, which led to applications of the Furstenberg boundary action in $\C^*$-simplicity problems. 

The argument below follows that of \cite[Theorem 6.2]{KalKen}, with several necessary modifications.
\begin{thm} \label{thm:c-star-simplicity}
Let $\G$ be a discrete quantum group. Then the following are equivalent:
\begin{enumerate}
\item
$\C(\hat \G)$ is simple; 
\item
$\G\ltimes_r \sA$ is simple for every $\G$-boundary $\sA$;
\item
$\G\ltimes_r \sA$ is simple for some $\G$-boundary $\sA$;
\item
$\G\ltimes_r \C(\fb)$ is simple.
\end{enumerate}
\end{thm}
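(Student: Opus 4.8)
The plan is to prove the cycle of implications $(1)\Rightarrow(2)\Rightarrow(4)\Rightarrow(3)\Rightarrow(1)$. The implications $(2)\Rightarrow(4)$ and $(4)\Rightarrow(3)$ are immediate, since by Theorem~\ref{thm:universalF} the algebra $\C(\fb)$ is itself a $\G$-boundary (so in particular a $\G$-boundary always exists). Thus only $(1)\Rightarrow(2)$ and $(3)\Rightarrow(1)$ require work, and essentially all of the content sits in the latter.

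First I would establish $(1)\Rightarrow(2)$. Let $\sA$ be an arbitrary $\G$-boundary. By Theorem~\ref{embed:crossed} we have $\C(\hh\G)\subseteq\G\ltimes_r\sA\subseteq I(\C(\hh\G))$, the first inclusion being the canonical unital embedding of the reduced algebra into the crossed product; hence $\C(\hh\G)\subseteq\G\ltimes_r\sA$ is an essential inclusion. Now let $J$ be a proper closed two-sided ideal of $\G\ltimes_r\sA$. Then $J\cap\C(\hh\G)$ is a proper closed ideal of $\C(\hh\G)$ (proper since $\I\notin J$), hence $\{0\}$ by simplicity. The quotient $*$-homomorphism $q\colon\G\ltimes_r\sA\to(\G\ltimes_r\sA)/J$ is therefore injective, hence completely isometric, on $\C(\hh\G)$, and by essentiality it is completely isometric on all of $\G\ltimes_r\sA$; thus $J=\{0\}$ and $\G\ltimes_r\sA$ is simple.

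For $(3)\Rightarrow(1)$ I would first reduce to the Furstenberg boundary. Given a $\G$-boundary $\sA$ with $\G\ltimes_r\sA$ simple, Corollary~\ref{cor:bnd->C*-incl} shows that the universal map $\sA\to\C(\fb)$ is an injective $\G$-equivariant $*$-homomorphism, which induces an injective $*$-homomorphism $\G\ltimes_r\sA\hookrightarrow\G\ltimes_r\C(\fb)$ by functoriality of the reduced crossed product under equivariant embeddings. Since $\C(\hh\G)\subseteq\G\ltimes_r\sA\subseteq\G\ltimes_r\C(\fb)\subseteq I(\C(\hh\G))$ (the last inclusion by Theorem~\ref{embed:crossed}), all of these algebras have the same injective envelope; in particular $\G\ltimes_r\sA\subseteq\G\ltimes_r\C(\fb)$ is essential, and the argument of $(1)\Rightarrow(2)$ applied to this inclusion yields that $\G\ltimes_r\C(\fb)$ is simple too. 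It thus remains to prove that simplicity of $\G\ltimes_r\C(\fb)$ forces simplicity of $\C(\hh\G)$.

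This last implication is the core of the theorem, and I expect it to be the main obstacle; here I would follow \cite[Theorem~6.2]{KalKen}, with the necessary modifications. Arguing by contradiction, assume $\C(\hh\G)$ has a nonzero proper closed ideal $J$ and fix a nonzero unital $*$-representation $\pi$ of $\C(\hh\G)$ with $\ker\pi=J$. Extending $\pi$ to a ucp map $\tilde\pi$ on $\G\ltimes_r\C(\fb)$ by injectivity of the target, one gets $\C(\hh\G)$ into the multiplicative domain of $\tilde\pi$; the goal is then to show $\tilde\pi$ is forced to be completely isometric on $\G\ltimes_r\C(\fb)$, so that $\pi$ is faithful and $J=\{0\}$, a contradiction. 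To obtain this one exploits the $\G$-rigidity and $\G$-injectivity of $\C(\fb)$ (Propositions~\ref{prop:rigidity&essentiality} and~\ref{prop:injectivity-general}) together with the canonical faithful conditional expectation $E\colon\G\ltimes_r\C(\fb)\to\alpha(\C(\fb))$, which satisfies $E(x\otimes\I)=h(x)\I$ for $x\in\C(\hh\G)$, with $h$ the Haar state (faithful on $\C(\hh\G)$); a possible alternative route is to propagate $J$ to a closed ideal of the monotone-complete algebra $I(\C(\hh\G))$ and to intersect it back with $\G\ltimes_r\C(\fb)$. Classically this is exactly the point where one uses freeness of the action $\G\act\C(\fb)$; as freeness has no direct analogue for quantum group actions, the argument must be carried out entirely within the injective-envelope and rigidity machinery of Section~4, while carefully tracking the interplay between the adjoint action of $\G$ on $\C(\hh\G)$ and the action on $\C(\fb)$.
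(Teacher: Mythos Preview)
Your cycle of implications and the arguments for $(1)\Rightarrow(2)$ and for the reduction $(3)\Rightarrow(4)$ are correct and essentially coincide with the paper's (the paper packages these as $(1)\Rightarrow(2)$ and $(3)\Rightarrow(4)$ via the same essentiality argument you use).

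The genuine gap is in your attack on $(4)\Rightarrow(1)$. Your stated goal --- show that the ucp extension $\tilde\pi$ is completely isometric on $\G\ltimes_r\C(\fb)$ --- cannot be reached, because $\tilde\pi$ annihilates $J\otimes\I$ by construction; so no amount of rigidity will make $\tilde\pi$ completely isometric once $J\neq\{0\}$. Simplicity of the crossed product only controls $*$-homomorphisms out of it, not ucp maps, so there is no mechanism by which simplicity alone forces $\tilde\pi$ to be isometric. Neither of the two routes you sketch (the faithful conditional expectation onto $\alpha(\C(\fb))$, or propagating $J$ to an ideal of $I(\C(\hh\G))$) closes this gap: the expectation does not turn $\tilde\pi$ into a homomorphism, and ideals do not behave well under passage to injective envelopes.

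The paper argues the contrapositive and supplies the missing idea: from $J\neq\{0\}$ it \emph{manufactures a genuine $*$-homomorphism} on $\G\ltimes_r\C(\fb)$ with nontrivial kernel. Two points you do not have are essential. First, the extension $\tilde\pi$ must be taken \emph{$\G$-equivariant}: one notes that the adjoint action descends to $\C(\hh\G)/J$, embeds the quotient equivariantly into a $\G$-injective $\sB$ (Corollary~\ref{cor:inj-embd}), and extends $\pi$ to a $\G$-equivariant ucp map $\tilde\pi:\G\ltimes_r\C(\fb)\to\sB$. Only then does Proposition~\ref{prop:bnd-act-eqvs} guarantee that $\tilde\pi|_{\C(\fb)}$ is completely isometric. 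Second, $\tilde\pi|_{\C(\fb)}$ is isometric but typically \emph{not} multiplicative; the paper identifies $\tilde\pi(\C(\fb))$ with $\C(\fb)$ via the Choi--Effros product, observes that the kernel $\sI$ of the associated $\G$-equivariant $*$-homomorphism $\C^*(\tilde\pi(\C(\fb)))\to\C(\fb)$ contains exactly the defects $\tilde\pi(ab)-\tilde\pi(a)\tilde\pi(b)$, shows that $\tilde\sI=\overline{\sI\,\tilde\pi(\C(\hh\G))}$ is a proper two-sided ideal of $\C^*(\tilde\pi(\G\ltimes_r\C(\fb)))$ (using $\G$-invariance of $\sI$ and the Podle\'s condition), and finally checks that the composite of $\tilde\pi$ with the quotient by $\tilde\sI$ is a nonzero $*$-homomorphism that kills $J$. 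This construction of the ``defect ideal'' and the verification that quotienting by it upgrades $\tilde\pi$ to a homomorphism is the substantive step you are missing.
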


\begin{proof}
The implications $(1)\Rightarrow (2)$ and $(3)\Rightarrow (4)$ follow from Theorem \ref{embed:crossed}, Corollary \ref{cor:bnd->C*-incl}, and the general fact that if we are given a $\cst$-inclusion $\sB \subset \sC  \subset I(\sB)$ and $\sB$ is a simple $\cst$-algebra, then so is $\sC$. Indeed suppose that $\sJ\subset \sC$ is an ideal. Since $\sB$ is simple the map $\sB\to \sC/\sJ$ is injective. Therefore there is a ucp map $\gamma:\sC/\sJ \to I(\sB)$ that extends the canonical embedding $\sB\to I(\sB)$. Consider the induced map $\tilde\gamma:\sC\to I(\sB)$. Since $\sC$ embeds into $I(\sB)$ there is an extension of $\tilde\gamma$ to $\tilde{\tilde{\gamma}}:I(\sB)\to I(\sB)$. Noting that the restriction of  $\tilde{\tilde{\gamma}} $ to $\sB$ is completely isometric we see that the map  $\tilde{\tilde{\gamma}}$ must be isometric (we use here the enveloping property of $I(\sB)$). In particular $\tilde\gamma$ has trivial kernel and hence $\sJ = \{0\}$.

The implication $(2)\Rightarrow(3)$ is trivial, so it remains to prove $(4)\Rightarrow (1)$.
Suppose that $\C(\hat \G)$ is not simple, let $\sJ$ be a non-trivial ideal of $\C(\hat \G)$, and let $\pi : \C(\hat \G) \to \C(\hat \G)/{\sJ}$ denote the corresponding quotient map. Recall that the action $\beta$ of $\G$ on $\C(\hat \G)$ is implemented by (the left) multiplicative unitary $\ww\in \M(\c0(\G) \ot \C(\hat \G))$, i.e.
	\[ \beta(x) = \ww^*(1 \ot x) \ww, \;\;\; x \in \C(\hat \G).\]
It is then easy to see that  $\beta$ induces an action $\tilde{\beta}$ on the quotient $\C(\hat \G)/{\sJ}$:
	\[
	\tilde{\beta}(\pi(x)) := (\id\otimes \pi)\beta(x) \quad x\in \C(\hat \G) .
	\]
	Observe that $\pi$ is $\G$-equivariant by definition.

By Corollary \ref{cor:inj-embd} the $\G$-$\C^*$-algebra $\C(\hat \G)/{\sJ}$ embeds $\G$-equivariantly into a $\G$-injective $\cst$-algebra, which we will denote by $\mathsf{B}$. 

Let $\tilde{\pi} : \G\ltimes_r \C(\fb) \to \mathsf{B}$ be a $\G$-equivariant ucp extension of $\pi$. Note that the restriction of $\tilde{\pi}$ to $\C(\fb)$ (or rather strictly speaking to its copy in the crossed product, which we will identify with the original object), is isometric by the rigidity established in Proposition \ref{prop:bnd-act-eqvs}. The last fact implies that there is a $\G$-equivariant projection $\phi : \mathsf{B}\to \tilde{\pi}(\C(\fb))$. Equipped with the corresponding Choi-Effros product, $\tilde{\pi}(\C(\fb))$ is a $\G$-injective $\cst$-algebra isomorphic to $\C(\fb)$. Moreover, the restriction of $\phi$ to the $\cst$-algebra $\C^*(\tilde{\pi}(\C(\fb)))$ is easily seen to be a 
$\G$-equivariant surjective $^*$--homomorphism; denote by $\sI$ the kernel of this $^*$--homomorphism. Then $\sI$ is a $\G$-invariant ideal in $\C^*(\tilde{\pi}(\C(\fb)))$, and define $\tilde{\sI} = \overline{\sI \,\tilde\pi(\C(\hat \G))}$. We will often use below the fact that $\G\ltimes_r \C(\fb)$ is spanned (as a normed space) by the products of elements in $\C(\hat \G)$ and in $\C(\fb)$, and by the multiplicative domain argument we have also that  
$\tilde{\pi}(\G\ltimes_r \C(\fb))$ is spanned by the products of elements in $\C(\hat \G)/\sJ$ and in $\tilde{\pi}(\C(\fb))$.

Observing  that $\sI$ is contained in the multiplicative domain of $\phi$ we conclude that for $x \in \tilde{\sI}$ we have $\phi(x)=0$. In particular since $\phi|_{\C(\fb)} = \id_{\C(\fb)}$ we see that  $\tilde{\sI}\,\cap\, \tilde\pi(\C(\fb)) = \{0\}$, thus  $\tilde{\sI}$ is strictly contained in $\C^*(\tilde{\pi}(\G\ltimes_r \C(\fb))$. 

Let us show that
$\tilde{\sI}$ is an ideal in $\C^*(\tilde{\pi}(\G\ltimes_r \C(\fb)))$; for that it suffices to show that   $\overline{\sI \,\tilde\pi(\C(\hat \G))} = \overline{\tilde\pi(\C(\hat \G))\,  \sI}$. 
As $\tilde{\pi}$ is a ucp $\G$-equivariant map, and $\G$ acts on the copy of $\C(\fb)$ in the crossed product by the formula 
\[ \alpha(y) = \ww^*_{12} (1 \ot y) \ww_{12},\;\;\;\; y \in \C(\fb) \subset \G \ltimes \C(\fb) \subset \M(\mathcal{K}(\ell^2(\G)) \ot \C(\fb)), \]
the action of $\G$ on $\tilde{\pi}(\C(\fb))$ -- hence also on  $\C^*(\tilde{\pi} (\C(\fb)))$  --  is  implemented
 by the unitary operator $U=(\id \ot \tilde{\pi})(\ww)\in \M(\c0(\G) \ot \C(\hat \G)/\sJ) \subset \M(\c0(\G) \ot \mathsf{B})$. Further we have the equality $\tilde{\pi}(\C(\hat \G)) = \overline{\{(\omega \ot \id)(U): \omega \in \ell^1(\G)\}}$. For any $z \in \sI$, $\omega \in \ell^1(\G)$ and $b \in \c0(\G)$ we thus note that
\begin{align*} z\, [(b\omega \ot \id)(U)] &= (b \omega \ot \id)(UU^* (1 \ot z)U) = 
(\omega \ot \id) (U U^*(1\ot z)U (b \ot 1)) \\&=
(\omega \ot \id)(U (\alpha(z)(b \ot 1))) \in \overline{\tilde{\pi}(\C(\hat \G))\, \sI},
\end{align*}
where in the last equality we use the $\G$-invariance of $\sI$ and the Podle\'s condition for the restricted action. This now implies that $\overline{\sI \,\tilde\pi(\C(\hat \G))} = \overline{\tilde\pi(\C(\hat \G))\,  \sI}$.

Finally consider the quotient map $q: \C^*(\tilde{\pi}(\G\ltimes_r \C(\fb))) \to \C^*(\tilde{\pi}(\G\ltimes_r \C(\fb)))/\,\tilde{\sI}$ and the composition 
$q\circ \tilde{\pi} : \G\ltimes_r \C(\fb) \to \C^*(\tilde{\pi}(\G\ltimes_r \C(\fb)))/\,\tilde{\sI}$. The latter is a priori just a ucp map, but we claim it is in fact multiplicative. Indeed, note that since $\tilde{\pi}$ is multiplicative on $\C(\hat\G)$, so is $q\circ \tilde{\pi}$.
Also, it follows from the construction that $\sI$ (as the kernel of $\phi$) contains all the elements of the form $\tilde{\pi}(xy) - \tilde{\pi}(x)\tilde{\pi}(y)$, $x, y \in \C(\fb)$.
Hence $q\circ \tilde{\pi}$ is also multiplicative on $\C(\fb)$, so that 
 another multiplicative domain argument yields the desired fact. 

Now $q\circ \tilde{\pi}$ vanishes on $\sJ$, which was assumed to be non-trivial and on the other hand is a non-zero map, as the target algebra is non-zero. This implies the crossed product $\G\ltimes_r \C(\fb)$ is not simple.
\end{proof}

\section{Boundary actions for free orthogonal quantum groups } \label{orthog}
In this section we give concrete examples of boundary actions for free orthogonal quantum groups.

\subsection{Free orthogonal quantum groups}\label{subsec_free_QG}

We denote by $\FO_Q$ Van Daele and Wang's free orthogonal discrete quantum group,
with $Q \in M_N(\CC)$ such that $Q\bar Q = \pm I_N$, $N\geq 2$. When $Q = I_N$,
we write also $\FO_N$ instead $\FO_Q$. The discrete quantum group in question is defined via the full Woronowicz \Cst
algebra $\sA_o(Q) = \cst_u(\FO_Q) = C^u(O^+_Q)$, so that we have $\hh \FO_Q = O_Q^+$. The corresponding reduced \Cst algebra is  $\cst_r(\FO_Q) = C(O^+_Q)$.

Following Banica's classification of the irreducible representations of $\hh\FO_Q$, we denote by
$\Hil_n$ ($n \in \mathbb{N}_0$) the spaces of the irreducible representations (up to equivalence), in
such a way that $\Hil_0 = \CC$, $\Hil_1 = \CC^N$ and
$\Hil_1\otimes \Hil_n \simeq \Hil_{n-1} \oplus \Hil_{n+1}$ equivariantly. Even more
precisely we consider $\Hil_n$ as a (uniquely defined) subspace of
$\Hil_1^{\otimes n}$ and denote by $P_n^+ \in B(\Hil_1^{\otimes n})$ the corresponding
orthogonal projection. We write $t\subset r\otimes s$ if there is an equivariant
embedding $\Hil_t \subset \Hil_r\otimes \Hil_s$, this happens exactly for $t = r+s-2a$
with $0\leq a\leq\min(r,s)$.

The dual algebras are given by $\sA = \c0(\FO_Q) = \bigoplus_{n\in\mathbb{N}} \sA_n$ with
$\sA_n = B(\Hil_n)$, and $M = \ell^\infty(\FO_Q) = \prod_{n\in\mathbb{N}} B(\Hil_n)$.
All coproducts are denoted $\Delta$. We denote by $p_n \in B(\Hil_n)\subset  \sM$ the
minimal central projections and write  $p_{\geq n} = \sum_{k\geq n} p_k \in \sM$.

\bigskip

Woronowicz' modular matrices are denoted $F_n \in B(\Hil_n)_+$ and normalized by
$\Tr(F_n) = \Tr(F_n^{-1})$; in particular $F_1 = {}^t(Q^*Q)$. We will use the
standard and normalized categorical ``traces'' given for $a\in B(\Hil_n)$ by
$\qTr_n(a) = \Tr(F_na)$ and $\qtr_n(a) = \qTr_n(a)/\qdim(n)$, where
$\qdim(n) = \Tr(F_n)$ is the quantum dimension of $\Hil_n$. These dimensions are
given by 
\begin{displaymath}
  \qdim(n) = [n+1]_q = \frac{q^{n+1}-q^{-n-1}}{q-q^{-1}}
\end{displaymath}
for a unique value $q \in \left]0,1\right]$. Here we assume that $q<1$, or
equivalently, $Q \notin U_2$. Then there are constants $C_1$, $C_2 > 0$
(depending on $q$) such that $C_1 q^{-n} \leq \qdim(n) \leq C_2 q^{-n}$ for all
$n$.

Note that the identity $q+q^{-1} = \qdim(1) = \Tr(F_1)$, with $q^{-1}\geq 1$,
resolves into $q^{-1} = f(\Tr(F_1))$ where $f(t) = \frac 12(t+\sqrt{t^2-4})$. On
the other hand it is known that the spectrum of $F_1$ is symmetric, so that
$\Tr(F_1) > \|F_1\| + \|F_1\|^{-1}$ when $N\geq 3$. Since $f$ is strictly
increasing, this yields $q^{-1} > \|F_1\|$. We will need later this fact under
the form $q \|F_1\| < 1$, which is satisfied if and only if $N\geq 3$.

Denote by $t_n \in \Hil_n\otimes \Hil_n$ a fixed vector such that $\|t_n\|^2 = \qdim(n)$.
Such a vector realizes the self-duality of $\Hil_n$ as well as the quantum trace
above, in the following sense: we have $\qTr_n(a) = t_n^*(a\otimes \id)t_n$.  According to the conjugate
equation we have $(\id_n\otimes\qTr_n)(t_nt_n^*) = \id_n$. On the other hand one
can compute $(\qTr_n\otimes\id_n)(t_nt_n^*) = F_n^{-2}$ (to get the identity here
one would need to use the right quantum trace). We shall moreover choose the maps $t_n$ in a coherent way, by putting $t_{n+1} = (P_{n+1}^+\otimes P_{n+1}^+)(\id\otimes t_n\otimes\id)t_1$. We have then $t_{m+n} = (P_{m+n}^+\otimes P_{m+n}^+)(\id\otimes t_n\otimes\id)t_m$ for any $n$, $m \in  \mathbb{N}_0$.

There are well-defined antilinear maps $j_n : \Hil_n \to \Hil_n$ such that $t_n  = \sum_i e_i\otimes j_n(e_i)$ for any  ONB $(e_i)_i$ of $\Hil_n$. For $\xi \in \Hil_n$ we denote also $\bar\xi = j_n(\xi) = (\xi^*\otimes\id)t_n$. Associated to $t_n$ is the invariant form $s_n= \sum_ie_i^*\ot(j_n^{-1}e_i)^*$ on $\Hil_n\otimes \Hil_n$. We have $F_n = j_n^*j_n$, $F_n^{-1} = j_nj_n^*$ and $j_n^2 = \pm\id_n$, where the sign is the same as in the standing assumption $Q\bar Q = \pm I_N$. One can moreover check that $(F_n^p\otimes F_n^p) t_n = t_n$ for all $p\in\RR$ and $j_{n\otimes n}t_n = \pm t_n$ where $j_{n\otimes n} = (j_n\otimes j_n)\sigma$. We will also use the identity
\begin{equation}\label{usefulident}
t_n = \pm \sigma (F_n^{-1}\otimes\id_n) t_n.
\end{equation}

 Given  $a \in B(\Hil_n)$ one can consider the unique element $\tilde a \in B(\Hil_n)$ satisfying $(a\otimes\id_n)t_n = (\id\otimes \tilde a)t_n$. Explicitly we have $\tilde a = j_na^*j_n^{-1}$ and $\tilde a = (s_n\otimes\id)(\id\otimes a\otimes\id)(\id\otimes t_n)$.

\bigskip

Recall that by our choice for the irreducible spaces $\Hil_n$, the space $\Hil_{r+s}$ is a subspace of $\Hil_r\otimes\Hil_s$.
From Banica's fusion rules we know that the orthogonal complement of $\Hil_{r+s}$ in $\Hil_r\otimes\Hil_s$ is isomorphic, as a corepresentation of $\FO_Q$, to $\Hil_{r-1}\otimes\Hil_{s-1}$. This isomorphism can be realized by the intertwiner $(P_r^+\otimes P_s^+)(\id\otimes t_1\otimes\id) : \Hil_{r-1}\otimes\Hil_{s-1} \to \Hil_r\otimes\Hil_s$, which is injective although not isometric, see e.g. \cite[Proposition~2.3]{Vergnioux_Cayley}.

A related intertwiner is 
the map $(P_r^+\otimes P_s^+)(\id\otimes t_a\otimes\id) P_t^+ : \Hil_t \to \Hil_r\otimes \Hil_s$ for $t = r+s-2a$, $0\leq a\leq \min(r,s)$. We denote $\kappa_t^{r,s}$ the inverse of its norm, so that $V_{r,s}^{t} = \kappa_t^{r,s} (P_r^+\otimes P_s^+)(\id\otimes t_a\otimes\id) P_t^+$ is an isometric intertwiner. The quantity $\kappa_t^{r,s}$ has been studied by many authors: here we follow the notation of \cite{FreslonVergnioux}, in \cite{BrannanCollins} it is denoted
\begin{displaymath}
\kappa_t^{r,s} = \|A_t^{r,s}\|^{-1} = 
\left(\frac{[t+1]_q}{\theta_q(t,r,s)}\right)^{1/2},
\end{displaymath}
and in \cite{Vergnioux_Decay} it appears in the proof of Lemma~4.8 in the form
\begin{displaymath}
\left(\kappa_t^{r,s}\right)^{-2} = \frac{\qdim(r)}{\qdim(r-m)} 
N_{r-a,s-a}^t \cdots N_{r-1,s-1}^t \text{, \quad with~}
N_{r,s}^t = 1 - \frac{\qdim(r-a)\qdim(s-a-1)}{\qdim(r+1)\qdim(s)}.
\end{displaymath}
Note that we have an evident lower bound
$\kappa_t^{r,s} \geq \|t_a\|^{-1} = \qdim(a)^{-1/2} \geq C_2^{-1/2}q^{a/2}$. In
fact one can show that there are constants $D_1$, $D_2 > 0$ (depending on $q$)
such that $D_1 q^{a/2} \leq \kappa_t^{r,s} \leq D_2 q^{a/2}$ for all
$t\subset r\otimes s$: cf \cite[Lemma~4.8]{Vergnioux_Decay} and
\cite[Prop.~3.1]{BrannanCollins}.

In this article we need one more result about these quantities.
\begin{lemma}\label{lem:kappa}
	Assume that $q<1$. For each $k \in \NN$ there exists a constant $E_k$
	(depending also on $q$) such that
	\begin{displaymath}
	\left|1-\left({\kappa_{t-k}^{r-k,s}}/{\kappa_t^{r,s}}\right)^2\right| \leq E_k \, q^{2(r-a)}
	\end{displaymath}
	for all $t = r+s-2a$, $0\leq a\leq\min(r-k,s)$.
\end{lemma}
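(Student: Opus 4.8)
The plan is to reduce the estimate to the closed product formula
\[
\left(\kappa_t^{r,s}\right)^{-2}=\frac{[a]_q!\,\prod_{i=2}^{a+1}[t+i]_q}{\bigl([r-a+1]_q\cdots[r]_q\bigr)\bigl([s-a+1]_q\cdots[s]_q\bigr)},\qquad a=(r+s-t)/2,
\]
where $[n]_q!=[1]_q[2]_q\cdots[n]_q$. This follows from the formula recalled just before the lemma: its $j$-th factor $N^t_{r-a+j,s-a+j}$ equals, by the elementary $q$-number identity $[m]_q[n]_q-[m-l]_q[n-l]_q=[l]_q[m+n-l]_q$, the quantity $[j+1]_q[t+j+2]_q\big/\bigl([r-a+j+2]_q[s-a+j+1]_q\bigr)$, and multiplying over $j=0,\dots,a-1$ and simplifying against the prefactor $\qdim(r)/\qdim(r-a)=[r+1]_q/[r-a+1]_q$ gives the displayed expression (it is also the formula of \cite[Prop.~3.1]{BrannanCollins} and of \cite{FreslonVergnioux}). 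The decisive observation is that passing from $(t,r,s)$ to $(t-k,r-k,s)$ leaves the parameter $a=(r+s-t)/2$ unchanged.

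First I would divide the two instances of this formula. The factors $[a]_q!$ and $[s-a+1]_q\cdots[s]_q$ are identical in both and cancel, leaving
\[
\left(\frac{\kappa_{t-k}^{r-k,s}}{\kappa_t^{r,s}}\right)^{2}=\frac{\prod_{i=2}^{a+1}[t+i]_q}{\prod_{i=2}^{a+1}[t-k+i]_q}\cdot\frac{[r-k-a+1]_q\cdots[r-k]_q}{[r-a+1]_q\cdots[r]_q}.
\]
Substituting $[n]_q=q^{1-n}(1-q^{2n})/(1-q^2)$, a direct degree count shows that the powers of $q$ and of $(1-q^2)$ in numerator and denominator match (there are $2a$ $q$-number factors on each side, and the exponents $1-n$ have equal sums), so they drop out and one is left with
\[
\left(\frac{\kappa_{t-k}^{r-k,s}}{\kappa_t^{r,s}}\right)^{2}=\frac{\prod_{m\in\mathcal N}(1-q^{2m})}{\prod_{m\in\mathcal D}(1-q^{2m})},
\]
where, as multisets, $\mathcal N=\{t+2,\dots,t+a+1\}\cup\{r-k-a+1,\dots,r-k\}$ and $\mathcal D=\{t-k+2,\dots,t-k+a+1\}\cup\{r-a+1,\dots,r\}$.

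The last step is to estimate this ratio. After cancelling all factors common to $\mathcal N$ and $\mathcal D$, these two multisets have equal cardinality $\ell\le 2k$ (the two "$r$-intervals" overlap in $\max(0,a-k)$ points, and likewise the two "$t$-intervals"). Using only the standing constraints $0\le a\le\min(r-k,s)$ one checks that every exponent $m$ still present satisfies $m\ge 1$ and $m\ge(r-a)-k$ — for instance $r-k-a+1\ge 1$ since $a\le r-k$, and $t-k+2=r+s-2a-k+2\ge r-a-k+2$ since $s\ge a$ — so that $q^{2m}\le q^{-2k}q^{2(r-a)}$. Since each surviving factor lies in $[1-q^2,1]$, the denominator product is $\ge(1-q^2)^{2k}$, and by the elementary inequality $\bigl|\prod_{i=1}^{\ell}a_i-\prod_{i=1}^{\ell}b_i\bigr|\le\sum_{i=1}^{\ell}|a_i-b_i|$, valid whenever all $|a_i|,|b_i|\le 1$, the difference of the two length-$\ell$ products is at most $2\ell\,q^{-2k}q^{2(r-a)}\le 4k\,q^{-2k}q^{2(r-a)}$. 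Dividing, $\bigl|1-(\kappa_{t-k}^{r-k,s}/\kappa_t^{r,s})^2\bigr|\le 4k\,q^{-2k}(1-q^2)^{-2k}\,q^{2(r-a)}$, so one may take $E_k=4k\,q^{-2k}(1-q^2)^{-2k}$.

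The main obstacle is the bookkeeping in the last paragraph: one has to list precisely which $q$-numbers survive in $\mathcal N$ and $\mathcal D$ after cancellation, count them, and verify the lower bound $m\ge(r-a)-k$ for each of them using $a\le r-k$ and $a\le s$. Establishing the closed formula in the first paragraph (in particular getting the indexing of $N^t_{r,s}$ right) and checking the degree-count cancellation in the second also require some routine care with conventions.
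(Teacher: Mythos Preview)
Your proof is correct and follows essentially the same route as the paper's: both start from the closed $q$-factorial formula for $(\kappa_t^{r,s})^{-2}$, form the ratio (noting that $a$ is unchanged), and show that the leading $q$-power contributions cancel exactly, leaving only correction terms of order $q^{2(r-a)}$. The only difference is cosmetic: the paper writes $[n]_q = \delta q^{-n}(1+O(q^{2n}))$ and tracks the $O(\cdot)$ through four quotients $[n]_q!/[n-k]_q!$, whereas you substitute the exact expression $[n]_q = q^{1-n}(1-q^{2n})/(1-q^2)$, cancel the powers of $q$ and $1-q^2$ by a degree count, and bound the surviving $(1-q^{2m})$-factors directly, which has the small bonus of yielding an explicit $E_k$.
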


\begin{proof}
	We start from the explicit expression that can be found in
	\cite{KauffmannLins,BrannanCollins}:
	\begin{displaymath}
	\left(\kappa_t^{r,s}\right)^{-2} = 
	[a]_q! \frac{[r-a]_q![s-a]_q![t+a+1]_q!}{[r]_q![s]_q![t+1]_q!},
	\end{displaymath}
	where $[n]_q! = [n]_q[n-1]_q \cdots [1]_q$. This yields:
	\begin{equation}\label{kappa_quotient}
	K :=
	\left(\frac{\kappa_{t-k}^{r-k,s}}{\kappa_t^{r,s}}\right)^2 =
	\frac{[r-k]_q![r-a]_q![t-k+1]_q![t+a+1]_q!}
	{[r]_q![r-k-a]_q![t+1]_q![t-k+a+1]_q!}.
	\end{equation}
	Now we have by definition $[n]_q = \delta q^{-n}(1+O(q^{2n}))$ where
	$\delta = (q^{-1}-q)^{-1}$. From this it follows
	\begin{displaymath}
	\frac{[n]_q!}{[n-k]_q!} = [n]_q[n-1]_q\cdots[n-k+1]_q
	= \delta^k q^{-nk} q^{k(k-1)/2}(1+O(q^{2n}))
	\end{displaymath}
	where the constant involved in the $O(\cdot)$ depends on $q$ and $k$. We apply
	this, with $n = r$, $r-a$, $t+1$, $t+a+1$ respectively, to the $4$ quotients
	appearing in~\eqref{kappa_quotient}. The factors $\delta^k$, $q^{k(k-1)/2}$
	simplify and we are left with
	\begin{displaymath}
	K = q^{kr}q^{-k(r-a)}q^{k(t+1)}q^{-k(t+a+1)} 
	\frac{(1+O(q^{2(r-a)}))(1+O(q^{2(t+a+1)}))}
	{(1+O(q^{2r}))(1+O(q^{2(t+1)}))}.
	\end{displaymath}
	The powers of $q$ also simplify and since $t\geq r-a$ we obtain
	$K = 1 + O(q^{2(r-a)})$ as claimed.
\end{proof}

\subsection{A $\GGamma$-boundary}

The Gromov boundary of $\FO_Q$ is constructed in \cite{VaesVergnioux} using the ucp maps
$\psi_{m,n} : \sA_m \to \sA_n$ given by
\begin{displaymath}
\psi_{m,n}(a) = V_{m,n-m}^{n*}(a\otimes\id_{n-m})V_{m,n-m}^{n}.
\end{displaymath}
Using the embedding of $\Hil_n$ in $\Hil_1^{\otimes n}$ one can also write
$\psi_{m,n}(a) = P_n^+(a\otimes P_{n-m}^+) P_n^+ = P_n^+(a\otimes \id) P_n^+$.
The algebra $\sB\subset \sM$ is then defined as the norm closure of the subspace
\begin{displaymath}
\sB_0 = \{ b = (b_n)_n \in \sM \mid \exists m ~ 
\forall n\geq m ~ b_n = \psi_{m,n}(b_m)\}.
\end{displaymath}
For $a\in \sA_m$, put $\psi_{m,\infty}(a) = (\psi_{m,n}(a))_{n\geq m} \in \sB_0$.
One can show that $\sB$ is a sub-$\C^*$-algebra of $\sM$ which contains $\sA$
\cite[Prop.~3.4]{VaesVergnioux} and one defines
$\sB_\infty = \sC(\partial \FO_Q) = \sB/\sA$ with the canonical projection
$\pi : \sB \to \sB_\infty$.   For $b = (b_m)_{m\in \mathbb{N}_0} \in \sM$, the norm of $\pi(\sB)$ in $\sM/\sA$
is $\limsup\limits_{m \to\infty} \|b_m\|$, moreover for $b = \psi_{m,\infty}(a)$ the sequence of
norms is decreasing, hence by density the norm of $\pi(b) \in \sB_\infty$ is
$\lim\limits_{m\to \infty} \|b_m\|$.

One also proves that $\Delta(\sB) \subset \M(\sA\otimes \sB)$
\cite[Prop.~3.6]{VaesVergnioux}, and since $\Delta(\sA)\subset 
\M(\sA\otimes \sA)$ the
restriction of $\Delta$ factors to an action of the discrete quantum group $\FO_Q$ on $\sB_\infty$, which we denote by 
$\beta : \sB_\infty \to \sM(\sA\otimes \sB_\infty)$. The algebra $\sB_\infty$ equipped with this action is called the algebra of continuous functions on the \emph{Gromov boundary of $\FO_Q$}. 
 There is a well-defined state
$\omega$ on $B$ such that $\omega(b) = \lim\limits_{m\rightarrow\infty} \qtr_m(b_m)$
\cite[Prop.~5.5]{VaesVergnioux}. It vanishes on $\sA$ and one denotes
$\omega_\infty$ its factorization to a state on $\sB_\infty$. Note that for
$b = \psi_{m,\infty}(a)$ one has $\omega(b) = \qtr_n \psi_{m,n}(a) = \qtr_m(a)$
for all $n\geq m$.

\bigskip

Since $(\qtr_1\otimes\qtr_n)\Delta$ is a convex combination of $\qtr_{n-1}$ and
$\qtr_{n+1}$ \cite[Thm.~5.6]{VaesVergnioux}, the state $\omega_\infty$ is {\em
	stationary} (with respect to the ``nearest neighborhood'' random walk),
meaning that $(\qtr_1\otimes\omega_\infty)\beta_\infty = \omega_\infty$. In fact
this property characterizes $\omega_\infty$.

\begin{thm}\label{thm:unique stn}
Let $N\geq 3$ and let $Q \in M_N(\CC)$ be such that $Q\bar Q = \pm I_N$.	Consider the Gromov boundary algebra $\sB_\infty$ of a discrete quantum group
	$\FO_Q$. If a state $\nu$ on $\sB_\infty$ is
$\qtr_1$-stationary, i.e.\  $\nu = (\qtr_1\otimes\nu)\beta_\infty$ then $\nu = \omega_\infty$.
\end{thm}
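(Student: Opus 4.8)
The plan is to combine the stationarity relation, iterated, with a norm‑mixing property of the ``nearest‑neighbour'' transition operator on the Gromov boundary. Write $\mu=\qtr_1$, viewed as a state in $\ell^1(\FO_Q)$ supported on the block $\sA_1=B(\Hil_1)$, and let $\mu^{*k}$ be its $k$‑fold convolution, which is supported on the central blocks $p_n$ with $n\le k$ and $n\equiv k\pmod 2$. Set $P:=(\mu\otimes\id)\circ\beta_\infty\colon\sB_\infty\to\sB_\infty$; this is a well‑defined ucp contraction, since $\Delta(\sB)\subset\M(\sA\otimes\sB)$ and $\sA\subset\sB$. Iterating the action identity $(\id\otimes\beta_\infty)\circ\beta_\infty=(\Delta\otimes\id)\circ\beta_\infty$ gives $P^k=(\mu^{*k}\otimes\id)\circ\beta_\infty$, hence $(\mu^{*k}\otimes\nu)\circ\beta_\infty=\nu\circ P^k$ for every state $\nu$. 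Thus a $\mu$‑stationary state satisfies $\nu=\nu\circ P^k$ for all $k$, and the whole theorem reduces to the single claim that $\|P^k(b)-\omega_\infty(b)\,\I\|\to 0$ as $k\to\infty$ for every $b\in\sB_\infty$: granting this, $\nu(b)=\nu(P^k(b))\to\omega_\infty(b)$, so $\nu=\omega_\infty$.

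To prove the claim, since $P$ is a contraction it suffices to treat the dense family $b=\pi(\psi_{m,\infty}(a))$ with $a\in\sA_m=B(\Hil_m)$. I would lift to $\sM=\prod_n B(\Hil_n)$ and work with $\widetilde P:=(\mu\otimes\id)\circ\Delta$ (which restricts to $\sB$ and satisfies $\pi\circ\widetilde P=P\circ\pi$), using the standard description of the coproduct on the dual, $(\pi_r\otimes\pi_s)\Delta(x)=\sum_{t\subset r\otimes s}V_{r,s}^t\,\pi_t(x)\,(V_{r,s}^t)^*$. Then the level‑$n$ component of $\widetilde P(\psi_{m,\infty}(a))$ is an explicit ``nearest‑neighbour'' combination of the compressions of $\psi_{m,n-1}(a)$ and $\psi_{m,n+1}(a)$ by $V_{1,n}^{n-1}$ and $V_{1,n}^{n+1}$, with weights built from the ratios $\qdim(n\pm1)/(\qdim(1)\qdim(n))$ and the constants $\kappa_n^{1,n\pm1}$. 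Decompose $\psi_{m,n}(a)=\qtr_m(a)\,\id_{\Hil_n}+r_n$, where $r_n$ is $\qTr_n$‑traceless; the constant sequence $\qtr_m(a)\,\I$ is fixed by $\widetilde P$ (as $\widetilde P(\I)=\I$), and one checks that $\widetilde P$ preserves the ``$\qTr$‑traceless at every level'' subspace, so that $\widetilde P^k(\psi_{m,\infty}(a))=\qtr_m(a)\,\I+\widetilde P^k(r)$ with $r=(r_n)_n$, $\sup_n\|r_n\|\le 2\|a\|$.

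The heart of the matter is then to show that $\widetilde P^k(r)$ tends to $0$ in $\sM/\sA$, i.e.\ $\limsup_n\|(\widetilde P^k r)_n\|\to 0$ as $k\to\infty$. Here two inputs enter: first, the asymptotics $C_1q^{-n}\le\qdim(n)\le C_2q^{-n}$ fix the limiting transition weights ($\tfrac1{1+q^2}$ forward, $\tfrac{q^2}{1+q^2}$ backward), showing the induced central walk on $\NN$ drifts to $+\infty$; second, and more importantly, the compressions by the $V_{1,n}^t$ strictly shrink the off‑central part, by a definite factor uniform in the level $n$ — this is exactly where $N\ge 3$, equivalently $q\|F_1\|<1$, is used — while Lemma~\ref{lem:kappa} (together with $D_1q^{a/2}\le\kappa_t^{r,s}\le D_2q^{a/2}$) controls the discrepancy between the genuine coefficients and their limits by an error $O(q^{cn})$, uniformly in the number of iterations. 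Propagating these estimates through the recursion yields $\limsup_n\|(\widetilde P^k r)_n\|\le C\rho^k$ for some $\rho<1$; passing to $\sB_\infty=\sM/\sA$, where only the behaviour at large $n$ matters, gives $\|P^k(\pi(\psi_{m,\infty}(a)))-\qtr_m(a)\,\I\|\to 0$, and $\qtr_m(a)=\omega(\psi_{m,\infty}(a))=\omega_\infty(b)$ by \cite[Prop.~5.5]{VaesVergnioux}.

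The main obstacle is precisely the last step: upgrading the heuristic ``the walk mixes on the boundary'' to a \emph{norm} statement that is \emph{uniform in the level $n$} and survives arbitrarily many iterations of the recursion, keeping track of the operator (not merely scalar) part of $\psi_{m,n}(a)$. This is what Lemma~\ref{lem:kappa} is designed to feed into, and it is where the standing hypothesis $N\ge 3$ (via $q\|F_1\|<1$) becomes indispensable — consistently with the fact that $\FO_2$ is amenable and has trivial boundary.
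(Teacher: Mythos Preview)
Your reduction is correct: if $\|P^k(b)-\omega_\infty(b)\I\|\to 0$ for every $b\in\sB_\infty$, then any $\mu$-stationary state equals $\omega_\infty$. The decomposition $\psi_{m,n}(a)=\qtr_m(a)\id+r_n$ and the observation that $\widetilde P$ preserves the traceless-at-every-level subspace are also right. But the crucial step --- that iterating $\widetilde P$ drives $(r_n)_n$ to zero in $\sM/\sA$ --- is only asserted, not proved, and you yourself flag it as ``the main obstacle''. The specific mechanism you invoke, that the compressions by $V_{1,n}^t$ ``strictly shrink the off-central part by a definite factor uniform in $n$'', is neither evident nor established: a single step of $\widetilde P$ is merely a ucp contraction, there is no a~priori spectral gap on the traceless part, and Lemma~\ref{lem:kappa} is an asymptotic comparison of the coefficients $\kappa$, not a contraction estimate. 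Even granting contraction at each fixed level, controlling the $\limsup_n$ norm while the recursion couples neighbouring levels is a separate difficulty you do not address. As it stands, the proposal is a strategy with its hardest part missing; note also that the norm convergence $P^k\to\omega_\infty(\cdot)\I$ is strictly \emph{stronger} than what the theorem asserts.

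The paper's argument is organized differently and avoids this difficulty. Rather than iterating the one-step operator, it first notes (from the convex-combination identity for $(\qtr_1\otimes\qtr_n)\Delta$) that $\qtr_1$-stationarity propagates to $(\qtr_n\otimes\nu)\beta_\infty=\nu$ for \emph{every} $n$, so one may work at a single large level $n$ instead of tracking $k$ iterations. For fixed $a\in B(\Hil_k)$ with $a\ge 0$ and $\|a\|\le 1$, both $\nu_k(a)$ and $\qtr_k(a)$ are expanded as sums indexed by the isometries $V_{n,r}^{s}$ (with $s=n+r-2m$, $r\to\infty$) and compared term by term: for $m<n/2$ one estimates $\psi_{k,s}(a)-\phi^{n,r}_{s,s'}(a)$ via the projection estimate $\|P_n^+-(P_{n-m}^+\otimes P_m^+)(P_k^+\otimes P_{n-k}^+)\|\le Bq^{n-m-k}$ together with Lemma~\ref{lem:kappa}; for $m\ge n/2$ one bounds the outer forms $(\qtr_n\otimes\nu_r)(V_{n,r}^{s}\,\cdot\,V_{n,r}^{s'*})$ directly by $C(q\|F_1\|)^{2\min(m,m')}$, which is exactly where $N\ge 3$ enters. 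Sending $r\to\infty$ and then $n\to\infty$ gives $\nu_k(a)\le\qtr_k(a)$; since both sides are states, equality follows. The key point is that the unknown state $\nu$ appears only through the functionals $\nu_r$, which are handled by norm bounds that do not depend on $\nu$, so no operator-level convergence of $P^k$ is ever required.
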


\begin{proof}
	By continuity and density, to prove $\nu = \omega_\infty$ it is enough to
	prove that $\nu_k := \nu\circ \pi\circ \psi_{k,\infty}$ on $B(\Hil_k)$ coincides
	with $\qtr_k$ for all $k$. Since $\nu_k$ and $\qtr_k$ are both states,
	$\nu_k\leq\qtr_k$ is in fact sufficient. Hence we fix $k \in \NN$,
	$a \in B(\Hil_k)$ such that $a\geq 0$ and $\|a\|\leq 1$, and we shall prove that
	$\nu_k(a) \leq \qtr_k(a)$.
	
	\bigskip
	
	Since $(\qtr_n\otimes\qtr_1)\Delta$ is a (non trivial) convex combination of
	$\qtr_{n-1}$ and $\qtr_{n+1}$, an easy induction shows that the stationarity
	assumption implies $(\qtr_n\otimes\nu)\beta_\infty = \nu$ for all $n\in \mathbb{N}_0$. Hence
	we have $\nu_k(a) = (\qtr_n\otimes \nu \pi)\Delta(\psi_{k,\infty}(a))$ for all
	$n$. Moreover, given $\epsilon > 0$, we know by the proof of
	\cite[Prop.~3.6]{VaesVergnioux} that for $r\geq k+n$ large enough
	\begin{displaymath}
	\| (\qtr_n\otimes p_{\geq r})\Delta (\psi_{k,\infty}(a)) - 
	(\qtr_n\otimes \psi_{r,\infty})(b)\| \leq \epsilon,
	\end{displaymath}
	where $b = \sum_{s\subset n\otimes r} V_{n,r}^s \psi_{k,s}(a)V_{n,r}^{s*}$. In
	particular this shows that $\nu_k(a) = \lim_{r\to\infty}\nu_k^{n,r}(a)$ with
	\begin{displaymath}
	\nu_k^{n,r}(a) = {\textstyle\sum_{m=0}^n} 
	(\qtr_n\otimes\nu_r)(V_{n,r}^{n+r-2m} \psi_{k,n+r-2m}(a) V_{n,r}^{n+r-2m*}).
	\end{displaymath}
	
	To compare this with $\qtr_k(a)$ we will decompose $\qtr_k(a)$ in a slightly
	unnatural way. For all $r\geq n\geq k$ we have
	\begin{displaymath}
	\qtr_k(a) = \qtr_n\psi_{k,n}(a) = \qtr_n(P_n^+(a\otimes 1)P_n^+) =
	(\qtr_n\otimes\nu_r)((P_n^+\otimes
	P_r^+)(a\otimes \id)(P_n^+\otimes P_r^+))
	\end{displaymath}
	since $\nu_r(P_r^+) = 1$.  Next we decompose
	$P_n^+\otimes P_r^+ = \sum_{m=0}^n V_{n,r}^{n+r-2m}V_{n,r}^{n+r-2m*}$, so that
	\begin{displaymath}
	\qtr_k(a) = {\textstyle\sum_{m=0}^n} {{\textstyle\sum_{m'=0}^n}}
	(\qtr_n\otimes\nu_r) (V_{n,r}^{n+r-2m}
	{\phi^{n,r}_{n+r-2m,n+r-2m'}}(a)V_{n,r}^{n+r-2{m'}*}),
	\end{displaymath}
	where
	$\phi^{n,r}_{s,s'}(a) = V_{n,r}^{s*}(a\otimes\id)V_{n,r}^{s'} =
	V_{n,r}^{s*}(a\otimes P_{n-k}^+\otimes P_r^+)V_{n,r}^{s'}$.
	Our strategy will be to perform a term-by-term comparison with
	$\psi_{k,s}(a) = P_s^+(a\otimes P_{s-k}^+)P_s^+$.
	
	We will take care simultaneously of the terms $s\neq s'$, but let us note
	already that $\phi^{n,r}_{s,s'}(a)$ vanishes if $|m-m'|>k$. Indeed, one can
	obtain $a \in B(\Hil_k)$ by slicing $t_kt_k^* \in B(\Hil_k)\otimes B(\Hil_k)$ on the
	left with some functional $\omega\in B(H_k)^*$, hence one can obtain $\phi^{n,r}_{s,s'}(a)$ by slicing
	$(\id\otimes V_{n,r}^{s*})(t_kt_k^*\otimes\id)(\id\otimes V_{n,r}^{s'}) \in
	B(\Hil_k)\otimes B(\Hil_{s'},\Hil_s)$
	on the left as well. But this map is an intertwiner from $\Hil_k\otimes \Hil_{s'}$ to
	$\Hil_k\otimes \Hil_s$, which have no irreducible component in common if
	$|s-s'| = 2|m-m'|>2k$.

	To compare $\phi_{s,s'}^{n,r}(a)$ and $\psi_{k,s}(a)$, for $s = n+r-2m$ we write
	$V_{n,r}^s = \kappa_{s}^{n,r} (P_n^+\otimes P_r^+)(\id\otimes t_m\otimes\id)
	P_s^+$ as explained previously. We use moreover the estimate
	$\|P_n^+ - (P_{n-m}^+\otimes P_m^+)(P_k^+\otimes P_{n-k}^+)\| \leq B
	q^{n-m-k}$ from \cite[Lemma~A.4]{VaesVergnioux} which yields
	\begin{align*}
	(\kappa_s^{n,r}{\kappa_{s'}^{n,r})^{-1}} 
	{\phi^{n,r}_{s,s'}}(a) &\simeq P_s^+(\id\otimes t_m^*\otimes\id)
	(P_{n-m}^+\otimes P_m^+\otimes P_r^+) (a\otimes P_{n-k}^+\otimes P_r^+) \\
	& \makebox[5cm]{} (P_{n-{m'}}^+\otimes P_{{m'}}^+\otimes P_r^+)
	(\id\otimes t_{{m'}}\otimes\id)P_{{s'}}^+,
	\end{align*}
	meaning more precisely that the difference of both terms has a norm dominated
	by {$B (q^{n-m-k} + q^{n-m'-k})\|t_m\|\|t_{m'}\|$}, hence by
	$BC_2 q^{n-k}(q^{-m} + q^{-m'})q^{-m/2}q^{-m'/2}$.
	
	In the second term the projection $(P_{n-m}^+\otimes P_m^+\otimes P_r^+)$ is absorbed 
	by other projections on the left and on the right, so that for $m$, $m' \leq n-k$ this term equals:
	\begin{align*}
	&P_s^+ (a\otimes[(\id\otimes t_m^*\otimes \id)(P_{n-k}^+\otimes P_r^+)
	(\id\otimes t_{m'}\otimes \id)]) P_{s'}^+ = \\
	&\qquad\qquad = 
	P_s^+ (a\otimes[P_{s-k}^+(\id\otimes t_m^*\otimes \id)(P_{n-k}^+\otimes P_r^+)
	(\id\otimes t_{m'}\otimes \id)P_{s'-k}^+]) P_{s'} ^+ \\
	&\qquad\qquad = \delta_{s,s'} (\kappa_{s-k}^{n-k,r})^{-2} 
	P_s^+ (a\otimes V_{n-k,r}^{s-k*}V_{n-k,r}^{s-k}) P_s^+  
	= \delta_{s,s'} (\kappa_{s-k}^{n-k,r})^{-2} P_s^+ (a\otimes P_{s-k}^+) P_s^+.
	\end{align*}
	Indeed
	$P_{s-k}^+(\id\otimes t_m^*\otimes \id)(P_{n-k}^+\otimes P_r^+) (\id\otimes
	t_{m'}\otimes \id)P_{s'-k}^+$
	is an intertwiner between irreducibles, hence a scalar. So it vanishes if
	$s\neq s'$, and when $s=s'$ one recognizes
	$(\kappa_{s-k}^{n-k,r})^{-2} V_{n-k,r}^{s-k*}V_{n-k,r}^{s-k} =
	(\kappa_{s-k}^{n-k,r})^{-2} P_{s-k}^+$.
	
	\bigskip
	
	Using the inequalities $\kappa_s^{n,r}\leq D_2 q^{m/2}$,
	$\kappa_{s'}^{n,r}\leq D_2 q^{m/2}$ and $\kappa_{s-k}^{n-k,r}\leq D_2 q^{m/2}$
	this yields, for $0\leq m$, $m'\leq n-k$ and $s = n+r-2m$ different from
	$s' = n+r-2m'$:
	\begin{align*}
	\|\phi^{n,r}_{s,s'}(a)\| &\leq 
	BC_2 \kappa_s^{n,r}\kappa_{s'}^{n,r}  q^{n-k}(q^{-m} + q^{-m'})q^{-m/2}q^{-m'/2} \\
	&\leq BC_2D_2^2 q^{n-k}(q^{-m} + q^{-m'}). 	\end{align*}
Further for $s$ as above 
\[	\|\psi_{k,s}(a) - (\kappa_{s-k}^{n-k,r}/\kappa_s^{n,r})^{2} \phi^{n,r}_{s,s}(a)\| 
	\leq 2BC_2 (\kappa_{s-k}^{n-k,r})^2 q^{n-2m-k} \\
	\leq 2BC_2 D_2^2 q^{n-m-k}.
	\]
	Finally for the terms $s=s'$ we can apply Lemma~\ref{lem:kappa} and we obtain,
	since $\|\phi^{n,r}_{s,s}(a)\| \leq \|a\|\leq 1$:
	\begin{displaymath}
	\|\psi_{k,s}(a) - \phi^{n,r}_{s,s}(a)\| \leq 2BC_2D_2^2 q^{n-m-k} + E_k q^{2(n-m)}
	\leq F_k q^{n-m}
	\end{displaymath}
	for some constant $F_k$. We can assume also that $F_k \geq BC_2D_2^2 q^{-k}$
	so as to control the term $s\neq s'$ at the same time. We will use these
	estimates later in the case $m$, $m' < n/2 + k$ --- assuming that $n$ is even
	and greater than $4k$.
	
	\bigskip
	
	For the terms corresponding to $m\geq n/2$, (and in particular for the terms corresponding to $m>n-k$), we will
	concentrate instead on the ``exterior part'' of the formulae for
	$\nu_k^{n,r}(a)$ and $\qtr_k(a)$, that is, on the linear forms
	$\mu_{n,r}^{s,s'} : x \mapsto (\qtr_n\otimes\nu_r)(V_{n,r}^s x
	V_{n,r}^{s'*})$,
	which in both cases are applied to elements $x \in B(\Hil_{s'}, \Hil_s)$ with
	$\|x\| \leq \|a\| \leq 1$.
	
	Without loss of generality we can assume $m' =: m+p \geq m$. We compute again
	$V_{n,r}^s$ using $t_m$ and we recall that
	$t_{m'} = (P_{m+p}^+ \otimes P_{m+p}^+)(\id\otimes t_m\otimes\id)t_p$. Since
	the projections $P_{m+p}^+$  are absorbed by $P_n^+$ resp. $P_r^+$ (e.g. $ (\id\otimes P_{m+p})P_n= P_n$, i.e. $\Hil_n$ is a
subrepresentation of $\Hil_{n-m-p}\otimes \Hil_{m+p}$) this allows
	to write
	\begin{align*}
	V_{n,r}^s x V_{n,r}^{s'*} &= \kappa_{n,r}^s \kappa_{n,r}^{s'} (P_n^+\otimes P_r^+)
	(\id \otimes t_m\otimes\id)P_s^ +x 
	P_{s'}^+ (\id\otimes t_{p}^*\otimes \id)(\id\otimes t_{m}^*\otimes \id) (P_n^+\otimes P_r^+) \\
	&= \kappa_{n,r}^s \kappa_{n,r}^{s'} (P_n^+\otimes P_r^+)
	(\id \otimes t_m\otimes\id)y(\id\otimes t_{m}^*\otimes \id) (P_n^+\otimes P_r^+)
	\end{align*}
	where
	$y := P_s^+ x P_{s'}^+(\id\otimes t_p^*\otimes\id) \in B(\Hil_{n-m}\otimes \Hil_{r-m})$
	satisfies $\|y\| \leq \|t_p\| \|x\| \leq C_2^{1/2}q^{-p/2}$.
	
	We see in particular that $\mu_{n,r}^{s,s'}(x)$ is a composition of a bounded map $x \mapsto y$ and a positive linear
	functional of $y$, whose norm can be computed by evaluating at $y=1$. Since
	$\qTr_n = (\qTr_{n-m}\otimes\qTr_m) (P_n^+ \cdot P_n^+) \leq
	(\qTr_{n-m}\otimes\qTr_m)$, where we view both sides of the inequality  as functionals acting on $B(\Hil_{n-m} \ot \Hil_m)$, we can write
	\begin{align*}
	& \kappa_{n,r}^s \kappa_{n,r}^{s'} (\qtr_n\otimes\nu_r)[(P_n^+\otimes P_r^+)
	(\id \otimes t_mt_{m}^*\otimes \id) (P_n^+\otimes P_r^+)] \leq \\
	& \hspace{2cm} \leq \frac{\kappa_{n,r}^s \kappa_{n,r}^{s'}}{\qdim(n)}
	(\qTr_{n-m}\otimes\qTr_m\otimes\tilde\nu_r) 
	(P_{n-m}^+ \otimes t_mt_m^*\otimes P_{r-m}^+),
	\end{align*}
	where $\tilde\nu_r = \nu_r(P_r^+\cdot P_r^+)$ on $B(\Hil_m\otimes\Hil_{r-m})$.
	Since $(\qTr_m\otimes\id)(t_mt_m^*) = {F_m^{-2}}$ and
	$\qTr_{n-m}(P_{n-m}^+) = \qdim(n-m)$, the last quantity can be simplified and
	bounded above as follows:
	\begin{align*}
	\kappa_{n,r}^s \kappa_{n,r}^{s'} \frac{\qdim(n-m)}{\qdim(n)} 
	\tilde\nu_r({F_m^{-2}}\otimes P_{r-m}^+) 
	&\leq \kappa_{n,r}^s \kappa_{n,r}^{s'}  \frac{\qdim(n-m)}{\qdim(n)} 
	{\|F_m\|^2}  \\
	&\leq D_2^2 q^{m/2} q^{m'/2} \frac{C_2 q^{-n+m}}{C_1 q^{-n}}{\|F_m^{-1}\|^2} 
	=: G~ q^{2m}q^{p/2}{\|F_m^{-1}\|^2}.
	\end{align*}
	Altogether we have obtained the estimate
	$\|\mu_{n,r}^{s,s'}\| \leq GC_2^{1/2}~ q^{2m} \|F_m\|^2$.  Since
	$\|F_m^{-1}\|^2 = \|F_m\|^2 = \|F_1\|^{2m}$ this can also be written, without any assumptions on
	$m$, $m'$, as \[\|\mu_{n,r}^{s,s'}\| \leq GC_2^{1/2}~ (q\|F_1\|)^{2\min(m,m')}.\]
	
	\bigskip
	
	We finally put all terms together as follows, for $r\geq n\geq 4k$, $n$
	even, dropping the terms $\phi^{n,r}_{s,s}(a)$ for $m\geq n/2$ which are
	positive:
	\begin{align*}
	\nu_k^{n,r}(a) - \qtr_k(a) &\leq 
	{\textstyle\sum_{m=0}^{n/2-1}} (\qtr_n\otimes\nu_r)(V_{n,r}^{s} [\psi_{k,s}(a)-\phi^{n,r}_{s,s}(a)] V_{n,r}^{s*}) \\
	& \qquad\qquad + {\textstyle\sum_{n/2}^n} (\qtr_n\otimes\nu_r)(V_{n,r}^{s} \psi_{k,s}(a) V_{n,r}^{s*}) \\
	& \qquad\qquad + {\textstyle \sum_{m\neq m'}} \big |(\qtr_n\otimes\nu_r)(V_{n,r}^{s} \phi^{n,r}_{s,s'}(a)V_{n,r}^{s'*})\big|.
	\end{align*}
	Using the fact that the terms $m\neq m'$ vanish if $|m-m'|>k$, we can split
	the last sum according to whether $m$, $m' \geq n/2$ or $m$, $m' < n/2+k$,
	up to adding some of the terms twice. Using the estimates obtained previously
	we arrive at
	\begin{align*}
	\nu_k^{n,r}(a) - \qtr_k(a) & \leq F_k {\textstyle\sum_{m=0}^{n/2-1}} q^{n-m} + 
	GC_2^{1/2} {\textstyle\sum_{n/2}^n} (q\|F_1\|)^{2m} + \\
	& \qquad\qquad + F_k q^n{\textstyle \sum_{m,m'< n/2+k}}(q^{-m}+q^{-m'}) \\
	& \qquad\qquad + GC_2^{1/2} {\textstyle \sum_{m,m'\geq n/2}} (q\|F_1\|)^{2\min(m,m')} \\
	& \leq F_k {\textstyle\frac n2} q^{n/2} + 
	G C_2^{1/2} ({\textstyle\frac n2}+1) {(q\|F_1\|)^n} + \\
	& \qquad\qquad + 2 F_k ({\textstyle\frac n2}+k)^2 q^{n/2-k} 
	+ GC_2^{1/2} ({\textstyle\frac n2}+1)^2 (q\|F_1\|)^{n},
	\end{align*}
	where we have used the fact that $q\|F_1\|<1$. Letting $r\to\infty$ we obtain
	the same upper bound for $\nu_k(a) - \qtr_k(a)$.  Then we let $n\to\infty$ and
	obtain $\nu_k(a) \leq \qtr_k(a)$.
\end{proof}


Note that the assumption $N\geq 3$ in the previous theorem is optimal, since for $N=2$ (and still $q<1$) the discrete quantum group $\FO_Q$ is amenable whereas the algebra $\sB_\infty$ is still infinite-dimensional. When $q=1$ the construction of $\sB_\infty$ in \cite{VaesVergnioux} does not apply.

As advertised before, the concept of boundary actions allows us to confirm Ozawa's conjecture for the exact $\cst$-algebras $\C_r^*(\FO_Q) = C(O^+_Q)$, and to obtain a $\cst$-simplicity result for the crossed product of the Gromov boundary actions.

\begin{cor}\label{KeyCorollary}
Let $N\geq 3$ and let $Q \in M_N(\CC)$ be such that $Q\bar Q = \pm I_N$. The action of the discrete quantum group $\FO_Q$ on its Gromov boundary $\sB_\infty$, introduced in \cite{VaesVergnioux}, is a boundary action. Moreover we have the embedding 
	\[\C(O^+_Q)\subset \FO_Q\ltimes_r \sB_\infty \subset I(\C(O^+_Q)),\]
and the $\C^*$-algebra in the middle is nuclear. 	

If in addition $\|Q\|^8 \leq \frac{3}{8} \textup{Tr} (QQ^*)$, then the crossed product $\FO_Q\ltimes_r \sB_\infty$ is simple.
\end{cor}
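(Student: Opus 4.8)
The statement combines three assertions: that the Gromov boundary action is a boundary action, that we obtain the displayed double inclusion with a nuclear middle term, and that under the stated arithmetic condition on $Q$ the crossed product is simple. For the first assertion, the plan is to invoke Theorem~\ref{thm:unq-stn-->bnd}: by Theorem~\ref{thm:unique stn} the state $\omega_\infty$ is the unique $\qtr_1$-stationary state on $\sB_\infty$, so it remains only to check that the Poisson transform $\p_{\omega_\infty}$ is completely isometric. For this I would use the description of $\sB_\infty$ as a quotient $\sB/\sA$ together with the fact that the norm in $\sB_\infty$ of $\pi(\psi_{m,\infty}(a))$ equals $\lim_{n}\|\psi_{m,n}(a)\|$, and identify $\p_{\omega_\infty}$ on the dense subalgebra generated by the $\pi(\psi_{m,\infty}(a))$ with (a shift of) the inclusion $\sB_\infty \hookrightarrow \sM/\sA$; matrix amplification is handled by replacing $a$ with an element of $M_n\otimes B(\Hil_m)$, since all the maps $\psi_{m,n}$, $\Delta$, $\qtr$ involved are completely positive and the relevant norms are limits of norms of completely positive images. (In fact $\sB_\infty$ being a nuclear boundary action, one expects $\p_{\omega_\infty}$ to be even a $*$-isomorphism onto its image, but complete isometry is all that is needed.)

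Once $\sB_\infty$ is known to be a $\FO_Q$-boundary, Theorem~\ref{embed:crossed} immediately gives the $\cst$-inclusion $\FO_Q\ltimes_r\sB_\infty \subset I(\C(\hh{\FO_Q})) = I(\C(O_Q^+))$, and the outer inclusion $\C(O_Q^+)\subset \FO_Q\ltimes_r\sB_\infty$ is the standard embedding of $\C(\hh\G)$ into the reduced crossed product. Nuclearity of the middle algebra follows from the fact that $\sB_\infty$ is itself nuclear --- it is built from the matrix blocks $B(\Hil_n)$ via the ucp maps $\psi_{m,n}$, and $\sB$ is an extension of the nuclear algebra $\sB_\infty$ by the nuclear (in fact $c_0$-direct sum of matrix) algebra $\sA$, hence nuclear; then $\FO_Q\ltimes_r\sB_\infty$ is a reduced crossed product of a nuclear algebra by a discrete quantum group that is exact (equivalently $\hh{\FO_Q} = O_Q^+$ has the Haagerup property / is exact, as established in the literature on free orthogonal quantum groups), so the crossed product is nuclear; alternatively one can note it sits inside the injective --- hence nuclear --- algebra $I(\C(O_Q^+))$ while containing $\C(O_Q^+)$, but nuclearity does not pass to subalgebras, so the crossed-product/exactness argument is the one to use.

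For the final simplicity statement, the plan is to apply Theorem~\ref{thm:c-star-simplicity}: since $\FO_Q\ltimes_r\sB_\infty$ is simple \emph{if and only if} $\C(O_Q^+)$ is $\cst$-simple, and $\sB_\infty$ is a $\FO_Q$-boundary, it suffices to prove $\cst$-simplicity of $\C(O_Q^+)$ under the hypothesis $\|Q\|^8 \leq \tfrac38 \Tr(QQ^*)$. For this I would either invoke the known $\cst$-simplicity results for free orthogonal quantum groups in the appropriate parameter range (which is exactly what the arithmetic condition on $Q$ encodes --- it is the quantitative hypothesis under which $C(O_Q^+)$ is known to be simple), or, to keep the argument self-contained, combine Theorem~\ref{thm_faithful} (faithfulness of the Gromov boundary action, stated but not yet proved in the excerpt) with Theorem~\ref{thm:faith-Fur-bnd-->unq-trc} in the unimodular case and a separate simplicity input in the non-unimodular case. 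The main obstacle is precisely this last point: faithfulness of the boundary action gives the unique trace property via Theorem~\ref{thm:faith-Fur-bnd-->unq-trc}, but $\cst$-simplicity is strictly stronger than unique trace in the quantum setting (the excerpt itself emphasizes we only get one-sided implications), so the arithmetic condition on $Q$ is genuinely needed to feed an external simplicity theorem into Theorem~\ref{thm:c-star-simplicity}; identifying that this is the correct numerical threshold and citing it properly is the delicate step, the rest being assembly of results already in hand.
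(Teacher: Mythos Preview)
Your overall architecture matches the paper's: unique stationarity (Theorem~\ref{thm:unique stn}) plus Theorem~\ref{thm:unq-stn-->bnd} for the boundary claim, Theorem~\ref{embed:crossed} for the double inclusion, and Theorem~\ref{thm:c-star-simplicity} together with an external $\cst$-simplicity result from \cite{VaesVergnioux} for the last assertion. For the complete isometry of $\p_{\omega_\infty}$ the paper simply cites \cite[Theorem~5.6]{VaesVergnioux}, whereas you sketch an argument via the identification of $\sB_\infty$ inside $\sM/\sA$; either route works, but citing is cleaner since that theorem is exactly the statement needed.

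There is one genuine gap, in your nuclearity argument. You write that $\FO_Q\ltimes_r\sB_\infty$ is nuclear because it is ``a reduced crossed product of a nuclear algebra by a discrete quantum group that is exact''. This inference is false: exactness of the acting quantum group together with nuclearity of the coefficient algebra yields exactness of the reduced crossed product, not nuclearity. (The classical counterexample is already $F_2\ltimes_r\CC = \cst_r(F_2)$, which is exact but not nuclear.) The paper's argument uses a different and necessary ingredient: the Gromov boundary action is \emph{amenable} in the sense of \cite[Theorem~4.5]{VaesVergnioux}, and \cite[Proposition~4.4]{VaesVergnioux} then gives that the crossed product of a nuclear algebra by an amenable action is nuclear. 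You also correctly note that the inclusion into the injective envelope cannot help since nuclearity does not pass to subalgebras; the fix is to replace ``exact'' by ``amenable action'' and cite the two results from \cite{VaesVergnioux} just mentioned. Your remaining parenthetical conflation of the Haagerup property with exactness should also be dropped, as these are distinct properties.
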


\begin{proof}
The last theorem, Theorem  \ref{thm:unq-stn-->bnd} and Theorem 5.6 of \cite{VaesVergnioux} imply that the Gromov boundary action is a boundary action in the sense of Definition \ref{def:boundary}. 

The displayed inclusion is then a consequence of Theorem  \ref{embed:crossed}. 
Corollary 6.2 of \cite{VaesVergnioux} implies that $\FO_Q$ is exact; Theorem 4.5 of that paper (amenability of the Gromov boundary action), together with the results in Section 3 of \cite{VaesVergnioux} 	(nuclearity of $\sB_\infty$) imply via Proposition 4.4 of that paper that the crossed product $\FO_Q\ltimes_r \sB_\infty$ is nuclear.

Finally the extra condition on $Q$ is shown in Theorem of \cite{VaesVergnioux} to guarantee $\C^*$-simplicity of $\FO_Q$, and the last statement follows now from Theorem \ref{embed:crossed}. 
\end{proof}

\subsection{Faithfulness}

Our next aim is to prove that the action $\beta_\infty$ of $\FO_Q$ on $\sB_\infty$ is faithful.  Let
$\sN_\infty = \{(\id\otimes\varphi)\beta_\infty(x) \mid x\in \sB_\infty, \varphi\in
\sB_\infty^*\}''\subset \ell^\infty(\FO_Q)$ be the cokernel of $\beta_\infty$.
 Using Proposition \ref{prop_p0} we see that showing that  $\beta_\infty$ is faithful is equivalent with  showing that $p_0\in\sN_\infty$. Before proving this we first fix some further  notation. Recall that for $a \in B(\Hil_n)$ we denote $\tilde a \in B(\Hil_n)$ the unique element satisfying
$(a\otimes\id_n)t_n = (\id\otimes \tilde a)t_n$.
 
\begin{defn}
  We consider $z_n = H_n(A_n) \in \sN_\infty$, where
  $A_n \in B(\Hil_n)\otimes B(\Hil_n)$ is such that
  $\sum \tilde A_{n(1)} \otimes A_{n(2)} = t_nt_n^*$, and
  $H_n : B(\Hil_n) \otimes B(\Hil_n) \to \sN_\infty$ is defined by
  \begin{displaymath}
    H_n (a\otimes b) = (\id\otimes\omega_\infty)
    (\beta_\infty(\pi\psi_{n,\infty}(a))
    (1\otimes \pi\psi_{n,\infty}(b))).
  \end{displaymath}
\end{defn}
Recalling that $t_n = \sum_i e_i\otimes j(e_i) = \sum j^*(e_i)\otimes e_i$ if $(e_i)_i$ is an ONB of $\Hil_n$ one can check that
\begin{equation}\label{expl_A}A_n = \sum e_j(F_n^{-1}e_i)^*\otimes e_ie_j^*.\end{equation} In particular for every  $g \in B(\Hil_n)$ we have 
\begin{align*}
   {\textstyle \sum } A_{n(1)} g A_{n(2)} &= {\textstyle\sum}_{i,j}e_ie_i^*\langle e_j|F_n^{-1} g e_j\rangle \\&= \qTr_n'(g){\textstyle\sum}_ie_ie_i^* = \I\qTr_n'(g)
\end{align*}  where $\qTr'_n(a) = t_n^*(\id_n\otimes a)t_n = \Tr_n(F_n^{-1}a)$ is the ``right''
  quantum trace.
In particular we have the following property:
 \begin{equation} \label{lem_traces}
  {\textstyle\sum}  \qTr_n( f A_{n(1)} g A_{n(2)}) = 
    \qTr_n(f) \qTr_n'(g), \;\;\; f, g \in B(\Hil_n). 
\end{equation}

\bigskip

Our aim is now to show that $q^nz_n$ converges $\sigma$-weakly to $p_0$, thus establishing that $p_0 \in \sN_\infty$. This will require a few lemmas. Let us formulate first two remarks about our strategy.

\begin{remark}
   In the case of a classical free group $F_N$, the analogue of $q^nz_n$ is the function $f_n \in \ell^\infty(F_N)$ defined as follows. Let $\partial F_N$ be the Gromov boundary of $F_N$ identified with the space of infinite reduced words on the generators and their inverses. For $g\in F_N$, denote $X_g \subset \partial F_N$ the subspace of infinite reduced words starting by $g$, and $\chi_g \in C(\partial F_N)$ its characteristic function. The function $f_n$ is then given by
   \begin{equation*}
   f_n(g) = \sum_{|h|=n} (\chi_h\mid g\cdot\chi_h)
  \end{equation*}
  where we use the hermitian structure on $C(\partial F_N)$ given by the canonical ``uniform harmonic'' probability measure on $\partial F_N$. We have $f_n(e) = 1$ and one can check that $f_n(g) \to 0$ as $n\to\infty$ for fixed $g\neq e$. This is of course not the easiest way to establish faithfulness of the boundary action for $F_N$.
  \end{remark}

\begin{remark}
  Imagine that instead of working in $\sB_\infty$ we remain in $B(\Hil_n)$ and
  consider $G_n (a\otimes b) = (\id\otimes\qtr_n) (\Delta(a) (1\otimes b))$,
  $y_n = G_n(A_n) \in \ell^\infty(\FO_Q)$ with the same element $A_n$ as above. One can then compute
  directly for any $k$, using the equality~\eqref{lem_traces}:
  \begin{align*}
    p_k y_n &= {\textstyle\sum} (p_k\otimes\qtr_n)(V_{k,n}^nA_{n(1)}V_{k,n}^{n*}
    (1\otimes A_{n(2)})) \\
    &= {\textstyle\sum} (\id\otimes\qtr_n)(V_{k,n}^n)(\id\otimes\qTr'_n)(V_{k,n}^{n*}). 
  \end{align*}
 Moreover
  $(\id\otimes\qTr_n)(V_{k,n}^n) = (\id_k\otimes t_n^*)(V_{k,n}^n\otimes\id_n)t_n$
  vanishes for all $k>0$ because it is an invariant vector in $\Hil_k$. As a result $y_n = \qdim(n)p_0$ for all $n$. However the connection of $y_n$ to our element $z_n$ is not clear.
  \end{remark}

Our first lemma shows that $q^nz_n$ has the correct value ``at the unit''.
\begin{lemma}  \label{lem_eval_unit}
  We have $p_0 z_n = q^{-n} p_0$.
\end{lemma}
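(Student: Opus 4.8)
The plan is to unwind the definitions and use the fact that $p_0$ is the support of the counit, so that $p_0 z_n = \varepsilon(z_n) p_0$ with $z_n \in \sN_\infty \subset \ell^\infty(\FO_Q)$, and then compute $\varepsilon(z_n)$. Recall $z_n = H_n(A_n) = \sum (\id\otimes\omega_\infty)\bigl(\beta_\infty(\pi\psi_{n,\infty}(A_{n(1)}))(1\otimes \pi\psi_{n,\infty}(A_{n(2)}))\bigr)$. Applying $\varepsilon$ in the first leg and using that $\varepsilon$ is the counit for the action $\beta_\infty$ (i.e. $(\varepsilon\otimes\id)\circ\beta_\infty = \id$, which holds since $\beta_\infty$ is a genuine $\FO_Q$-action and $\varepsilon \in \ell^1(\FO_Q)$), one gets
\[
\varepsilon(z_n) = \sum \omega_\infty\bigl(\pi\psi_{n,\infty}(A_{n(1)})\,\pi\psi_{n,\infty}(A_{n(2)})\bigr).
\]
So the first step is this reduction, together with noting $p_0 z_n = \varepsilon(z_n)p_0$.

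Next, one evaluates the right-hand side using the description of $\omega_\infty$ recalled in the excerpt: for $b = \psi_{m,\infty}(a)$ one has $\omega(b) = \qtr_m(a)$, and $\omega_\infty$ is multiplicative-compatible in the sense that $\pi$ is a $*$-homomorphism, so $\pi\psi_{n,\infty}(f)\,\pi\psi_{n,\infty}(g) = \pi(\psi_{n,\infty}(f)\psi_{n,\infty}(g))$. The key point is that $\psi_{n,\infty}(f)\psi_{n,\infty}(g)$ need not be of the form $\psi_{n,\infty}(\cdot)$, but its image under $\omega_\infty$ can still be computed: since $\psi_{n,\infty}(f)_n = f$ and $\psi_{n,\infty}(g)_n = g$ (the $n$-th components are literally $f$ and $g$, as $V_{n,0}^n = \id$), and $\omega(b) = \lim_m \qtr_m(b_m)$ with the sequence $\qtr_m(b_m)$ eventually constant for elements of $\sB_0$, one obtains $\omega_\infty(\pi\psi_{n,\infty}(f)\pi\psi_{n,\infty}(g)) = \qtr_n(fg)$. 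Hence
\[
\varepsilon(z_n) = \sum \qtr_n(A_{n(1)}A_{n(2)}) = \qdim(n)^{-1}\sum \qTr_n(A_{n(1)}A_{n(2)}).
\]
Then apply identity \eqref{lem_traces} with $f = g = \id_{\Hil_n}$: $\sum \qTr_n(A_{n(1)}A_{n(2)}) = \qTr_n(\id)\qTr_n'(\id) = \qdim(n)\cdot\qdim(n)$ — wait, here one must be careful: $\qTr_n'(\id_n) = \Tr_n(F_n^{-1}) = \Tr_n(F_n) = \qdim(n)$ by the normalization $\Tr(F_n)=\Tr(F_n^{-1})$. So $\varepsilon(z_n) = \qdim(n)^{-1}\cdot\qdim(n)^2 = \qdim(n)$. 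This gives $p_0 z_n = \qdim(n) p_0$, and since $\qdim(n) = [n+1]_q \sim$ is comparable to $q^{-n}$... but the claim is the exact equality $p_0 z_n = q^{-n}p_0$, not just up to constants.

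The main obstacle — and the step needing genuine care — is reconciling the exact power $q^{-n}$ with the quantum dimension $\qdim(n) = [n+1]_q$, which is \emph{not} equal to $q^{-n}$. I expect the resolution is that the correct bookkeeping replaces one occurrence of the balanced quantum trace $\qTr_n$ (normalized by $\Tr F_n = \Tr F_n^{-1}$) by the \emph{right} quantum trace $\qTr_n'$ entering through $t_n t_n^*$, and that $\omega_\infty$ applied to products $\pi\psi_{n,\infty}(f)\pi\psi_{n,\infty}(g)$ is not $\qtr_n(fg)$ but rather involves $\qtr_n'$ on one factor, or that the identity \eqref{lem_traces} must be used with the roles of $f,g$ as $\tilde A_{n(1)}$ versus $A_{n(2)}$ (note $A_n$ is characterized via $\sum \tilde A_{n(1)}\otimes A_{n(2)} = t_n t_n^*$, so there is a twist by $a\mapsto\tilde a$ hiding in $\beta_\infty$ through the coproduct). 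Carefully tracking which leg carries the modular twist should convert the naive answer $[n+1]_q$ into $q^{-n}$; concretely, I would recompute $\sum\qtr_n(A_{n(1)}A_{n(2)})$ directly from the explicit formula \eqref{expl_A}, $A_n = \sum_{i,j} e_j(F_n^{-1}e_i)^* \otimes e_i e_j^*$, giving $\sum_{i,j} \qtr_n\bigl(e_j (F_n^{-1}e_i)^* e_i e_j^*\bigr) = \sum_{i,j} \qtr_n(e_j e_j^*)\langle F_n^{-1}e_i, e_i\rangle$, whose second sum is $\Tr(F_n^{-1})$ and first sum is $\qTr_n(\id)/\qdim(n)$ — only after inserting the coproduct $\Delta$ and the $\tilde{(\cdot)}$ twist correctly (which effectively replaces one $F_n$ by $F_n$ with opposite sign of exponent, killing the balancing) does $q^{-n} = \|F_1\|^{\pm}$-type simplification occur, yielding exactly $q^{-n}$. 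So the plan is: (1) reduce via the counit; (2) identify $\omega_\infty$ on the relevant products; (3) apply \eqref{lem_traces} or the explicit formula \eqref{expl_A}, paying scrupulous attention to the modular twist $a \mapsto \tilde a$, to land on the exact value $q^{-n}$.
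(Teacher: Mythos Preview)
Your reduction via the counit in step (1) is correct and matches the paper. The genuine gap is in step (2): you claim
\[
\omega_\infty\bigl(\pi\psi_{n,\infty}(f)\,\pi\psi_{n,\infty}(g)\bigr) = \qtr_n(fg),
\]
arguing that the sequence $\qtr_m(b_m)$ is eventually constant. But the product $\psi_{n,\infty}(f)\psi_{n,\infty}(g)$ is \emph{not} in $\sB_0$; its $t$-th component is $\psi_{n,t}(f)\psi_{n,t}(g)$, which is not $\psi_{n,t}(fg)$ because the maps $\psi_{n,t}$ are only \emph{approximately} multiplicative. Consequently $\qtr_t\bigl(\psi_{n,t}(f)\psi_{n,t}(g)\bigr)$ is genuinely $t$-dependent and one must take $t\to\infty$. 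This is precisely why you obtained $\qdim(n)=[n+1]_q$ rather than $q^{-n}$, and no amount of juggling the modular twist $a\mapsto\tilde a$ will repair it: the error is not in \eqref{lem_traces} but in evaluating $\omega_\infty$ too early.

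The paper's computation keeps $t$ free: using $\qTr_t = (\qTr_n\otimes\qTr_{t-n})(P_t^+\,\cdot\,P_t^+)$ and then identity \eqref{lem_traces} with $f = (\qTr_n\otimes\id)(P_t^+)$-type slices, one finds
\[
\sum \qtr_t\bigl(\psi_{n,t}(A_{n(1)})\psi_{n,t}(A_{n(2)})\bigr)
= \qdim(t-n)^{-1}\qTr_{t-n}\bigl[(\qTr_n\otimes\id)(P_t^+)\,(\qTr'_n\otimes\id)(P_t^+)\bigr]
= \frac{\qdim(t)}{\qdim(t-n)},
\]
since $(\qTr'_n\otimes\id)(P_t^+)$ is an intertwiner of $\Hil_{t-n}$, hence equal to $\tfrac{\qdim(t)}{\qdim(t-n)}P_{t-n}^+$. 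Now $\qdim(t)/\qdim(t-n)\to q^{-n}$ as $t\to\infty$, which is the exact value claimed. So the mechanism producing $q^{-n}$ is the \emph{asymptotic ratio} of quantum dimensions, not a modular cancellation at level $n$.
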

\begin{proof}
  Since $p_0$ is the support of the co-unit $\varepsilon$ of
  $\ell^\infty(\GGamma)$, and $(\varepsilon\otimes\id)\alpha = \id$ for any
  continuous action $\alpha$, we have
  $p_0 H_n(a\otimes b) = \omega(\psi_{n,\infty}(a)\psi_{n,\infty}(b))p_0 =
  \lim_{t\to\infty} \qtr_t(\psi_{n,t}(a)\psi_{n,t}(b))p_0$, where we are using the approximate multiplicativity of the maps $\psi_{n,t}$.

  Notice that we have $\qTr_t(f) = (\qTr_n\otimes \qTr_{t-n})(f)$ for
  $f \in B(\Hil_t)$. Applying the equality \eqref{lem_traces} we can write
  \begin{align*}
    {\ts\sum}\qTr_t(\psi_{n,t}(A_{n(1)})\psi_{n,t}(A_{n(2)})) 
    &= {\ts\sum} \qTr_{t-n}(\qTr_n\otimes\id_{t-n})
    [P_t^+(A_{n(1)}\otimes\id_{t-n})P_t^+(A_{n(2)}\otimes\id_{t-n})] \\
    &= \qTr_{t-n}[(\qTr_n\otimes\id)(P_t^+) (\qTr'_n\otimes\id)(P_t^+)].
  \end{align*}
  Now $(\qTr'_n\otimes\id)(P_t^+)$ is an intertwiner of $H_{t-n}$, hence a
  multiple of the $P_{t-n}^+$, and by applying $\qTr'_{t-n}$ one finds
  $(\qTr'_n\otimes\id)(P_t^+) = \qdim(t)/\qdim(t-n) P_{t-n}^+$. As a result we have
  \begin{align*}
    {\ts\sum}\qtr_t(\psi_{n,t}(A_{n(1)})\psi_{n,t}(A_{n(2)}))
    = \qdim(t-n)^{-1} (\qTr_n\otimes \qTr_{t-n})(P_t^+)
    = \qdim(t)/\qdim(t-n).
  \end{align*}
  Since $\qdim(t) \sim C q^{-t}$ when $t\to\infty$, with a
  constant $C>0$ depending only on $q$, the limit of the above ratio is $q^{-n}$
  as stated.
\end{proof}
\begin{lemma} \label{lem_cones} 
  Let $B\in B(\Hil_t)$ and let $e\in B(\Hil_n)$ be a rank one matrix. Then we have
  \begin{displaymath}
    |\qtr_t(B\psi_{n,t}(e))| \leq \frac{\qdim(t-n)}{\qdim(t)} \|B\| \|eF_n\|.
  \end{displaymath}
\end{lemma}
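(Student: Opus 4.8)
The plan is to pass from $\qtr_t$ to the ordinary trace weighted by the modular matrix, and then apply trace–norm/operator–norm duality on $B(\Hil_n\otimes\Hil_{t-n})$; the rank–one hypothesis on $e$ is exactly what makes the resulting trace-norm collapse to an operator norm. First I would recall that $\qtr_t(x)=\qdim(t)^{-1}\Tr(F_tx)$ for $x\in B(\Hil_t)$, and realize $\Hil_t$ as a subspace of $\Hil_n\otimes\Hil_{t-n}$ (this is the $a=0$ fusion component, and $V_{n,t-n}^t$ is simply the inclusion), so that, working inside $B(\Hil_n\otimes\Hil_{t-n})$ with $P_t^+$ now denoting the orthogonal projection onto $\Hil_t$, one has $\psi_{n,t}(e)=P_t^+(e\otimes\id_{t-n})P_t^+$. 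The key structural input is twofold: (i) $\qTr_t=(\qTr_n\otimes\qTr_{t-n})|_{B(\Hil_t)}$, equivalently $F_t=P_t^+(F_n\otimes F_{t-n})P_t^+$ (already used in the proof of Lemma~\ref{lem_eval_unit}); and (ii) $F_n\otimes F_{t-n}$ commutes with $P_t^+$. Fact (ii) is special to $\FO_Q$: since (as recorded in Subsection~\ref{subsec_free_QG}) $F_m$ is the restriction of $F_1^{\otimes m}$ to $\Hil_m\subset\Hil_1^{\otimes m}$ and $(F_1\otimes F_1)t_1=t_1$, the positive operator $F_1^{\otimes t}$ commutes with every Temperley–Lieb idempotent on $t$ strands — in particular with $P_t^+$ and with $P_n^+\otimes P_{t-n}^+$ — hence its restriction $F_n\otimes F_{t-n}$ to $\Hil_n\otimes\Hil_{t-n}$ preserves the subspace $\Hil_t$. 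Thus $F_t=(F_n\otimes F_{t-n})P_t^+=P_t^+(F_n\otimes F_{t-n})$ as operators on $\Hil_t$, i.e.\ $F_t$ is an honest restriction of $F_n\otimes F_{t-n}$, not merely a compression.

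Next, viewing $B$, $F_t$ and $\psi_{n,t}(e)$ as operators on $\Hil_n\otimes\Hil_{t-n}$ supported on $\Hil_t$ (so that $P_t^+B=BP_t^+=B$), I would combine (i), (ii) and cyclicity of the trace to compute
\[
\qdim(t)\,\qtr_t\big(B\psi_{n,t}(e)\big)=\Tr\big(F_tB\psi_{n,t}(e)\big)=\Tr\big(B\,(eF_n\otimes F_{t-n})\big),
\]
all traces over $\Hil_n\otimes\Hil_{t-n}$. Then I would apply $|\Tr(XY)|\le\|X\|_1\|Y\|$ with $Y=B$ and $X=eF_n\otimes F_{t-n}$, using $\|eF_n\otimes F_{t-n}\|_1=\|eF_n\|_1\,\|F_{t-n}\|_1$, the identity $\|F_{t-n}\|_1=\Tr(F_{t-n})=\qdim(t-n)$ (valid since $F_{t-n}>0$), and — crucially — $\|eF_n\|_1=\|eF_n\|$, which holds because $eF_n$ has rank at most one (this is where the hypothesis that $e$ is rank one is used). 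Dividing by $\qdim(t)$ yields exactly $|\qtr_t(B\psi_{n,t}(e))|\le\frac{\qdim(t-n)}{\qdim(t)}\|B\|\|eF_n\|$.

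The main obstacle is step (ii): that $F_n\otimes F_{t-n}$ commutes with the fusion projection $P_t^+$. This is false in a general rigid $C^*$-tensor category, so it genuinely uses the free orthogonal structure; but, as indicated, it reduces to the $F_1\otimes F_1$-invariance of $t_1$ from Subsection~\ref{subsec_free_QG}, which forces $F_1^{\otimes t}$ to commute with all Temperley–Lieb diagrams at $t$ strands. Everything else — the passage to $\Hil_t\subset\Hil_n\otimes\Hil_{t-n}$, the cyclic rearrangement of the trace, and the final estimate — is routine.
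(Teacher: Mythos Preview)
Your proof is correct and follows essentially the same route as the paper: both use that $P_t^+$ commutes with $F_n\otimes F_{t-n}$ to rewrite $\qtr_t(B\psi_{n,t}(e))$ as $(\qdim t)^{-1}$ times a trace (or tensor quantum trace) of $P_t^+BP_t^+(e\otimes\id_{t-n})$ against $F_n\otimes F_{t-n}$, and then exploit the rank-one property of $eF_n$ to bound the $\Hil_n$-leg. The only cosmetic difference is that the paper bounds the functional $\qTr_n(\,\cdot\,e)\otimes\qTr_{t-n}$ directly by $\|eF_n\|\,\qdim(t-n)$, whereas you pass through $|\Tr(XY)|\le\|X\|_1\|Y\|$ with $\|eF_n\otimes F_{t-n}\|_1=\|eF_n\|\,\qdim(t-n)$; these are the same estimate unpacked two different ways.
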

\begin{proof}
  Recall that $\qTr_t(\cdot) = (\qTr_n\otimes\qTr_{t-n})(P_t^+\cdot P_t^+)$. Since $P_t^+$ commutes with 
  $F_n\otimes F_{t-n}$ we have
  \begin{equation*}
    \qtr_t(B\psi_{n,t}(e)) = (\qdim t)^{-1} (\qTr_n\otimes\qTr_{t-n})
    (P_t^+B P_t^+(e\otimes \id_{t-n})).
  \end{equation*}
  The norm of the functional $\qTr_{t-n}$ is $\qdim(t-n)$. If $e$ has rank one,
  so does $eF_n$ and the norm of $\qTr_n(\,\cdot\,e) = \Tr_n(\,\cdot\,eF_n)$
  equals $\|eF_n\|.$
\end{proof}
\begin{lemma}  \label{lem_global_bound}
  There exists a constant $M>0$, depending only on $q$, such that
  $\|z_n\| \leq M q^{-n}$ for all $n$.
\end{lemma}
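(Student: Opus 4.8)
The plan is to bound $\|z_n\| = \|H_n(A_n)\|$ by bounding the operator-valued weight $H_n$ applied to the specific element $A_n$, using the explicit formula \eqref{expl_A} together with the norm estimates from Lemmas~\ref{lem_cones} and \ref{lem:kappa}. Since $z_n = H_n(A_n) \in \sN_\infty \subset \ell^\infty(\FO_Q) = \prod_k B(\Hil_k)$, it suffices to bound $\|p_k z_n\|$ uniformly in $k$ by $M q^{-n}$. Writing $A_n = \sum_i \sum_j e_j(F_n^{-1}e_i)^* \otimes e_i e_j^*$ in terms of an ONB $(e_i)$ of $\Hil_n$, we have
\[
p_k z_n = {\ts\sum}_{i,j} (p_k\otimes\omega_\infty)\bigl(\beta_\infty(\pi\psi_{n,\infty}(e_j(F_n^{-1}e_i)^*))(1\otimes \pi\psi_{n,\infty}(e_ie_j^*))\bigr),
\]
so the task reduces to estimating, for fixed rank-one matrices, quantities of the form $\qtr_t(\psi_{k,t}(\cdot)\,\psi_{n,t}(\cdot))$ in the limit $t\to\infty$, which is exactly what Lemma~\ref{lem_cones} is designed to control.

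First I would unwind $\beta_\infty = \pi$-factorization of the restricted coproduct on $\sB_\infty$, so that $(p_k\otimes\omega_\infty)\beta_\infty(\pi\psi_{n,\infty}(a)) = \lim_{t\to\infty}\qtr_t(\,\cdot\,)$ applied after expressing $\Delta(\psi_{n,\infty}(a))$ via the isometries $V_{k,t-?}^{\,\cdot}$, in the spirit of the proof of Theorem~\ref{thm:unique stn}: the relevant component lives in $B(\Hil_k \otimes \Hil_s)$ for $s$ ranging over irreducibles in $k \otimes (\text{large})$, with $\psi_{k,s}$-type terms appearing. Each such term, paired with $\omega_\infty$, produces a factor of the shape $\qtr_t(V_{k,t}^{\,s} \psi_{n,s}(e_ie_j^*) V_{k,t}^{\,s*}\,\cdots)$ whose modulus is bounded by Lemma~\ref{lem_cones} by $\frac{\qdim(s-n)}{\qdim(s)}\|B\|\|e_ie_j^* F_n\|$. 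Crucially, summing $\|e_ie_j^* F_n\|$ — or rather the paired contributions $\sum_{i,j}$ with the $(F_n^{-1}e_i)^*$ leg — over the ONB reintroduces a trace, and one should expect $\sum_{i,j}\|e_j(F_n^{-1}e_i)^*\|\cdot\|e_ie_j^*F_n\|$ to be controlled by $\qTr_n(\I)\cdot(\text{something}) = \qdim(n)\cdot O(1)$, since $\qdim(n)\le C_2 q^{-n}$ this is the source of the $q^{-n}$ growth. The geometric factors $\frac{\qdim(s-n)}{\qdim(s)} \approx q^{n}$ that appear in Lemma~\ref{lem_cones} must not be double-counted against the $\qdim(n)$; instead the $q^n$ from the ratio cancels part of the $\qdim(n)$, and the residual divergence in $n$ is only the \emph{polynomial}-times-$q^{-n}$ that a careful bookkeeping (as in the sum over $m$ in the proof of Theorem~\ref{thm:unique stn}) converts into a clean $Mq^{-n}$.

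The key steps, in order, are: (1) expand $z_n = H_n(A_n)$ using \eqref{expl_A} and the definition of $H_n$; (2) for each $k$, decompose $\beta_\infty(\pi\psi_{n,\infty}(a))$ against $p_k$ using the Banica fusion rules and the isometries $V^{s}_{k,t}$, taking $t\to\infty$ and using that the norm in $\sB_\infty$ is $\lim_t\|\cdot\|$; (3) apply Lemma~\ref{lem_cones} to each matrix coefficient, with $B$ absorbing the bounded isometric intertwiners (whose norms are $1$) and the $\psi_{n,s}(e_ie_j^*)$ factor contributing $\|e_ie_j^*F_n\|$; (4) sum over the ONB indices $i,j$, converting $\sum_{i,j}$ into a quantum trace and producing the factor $\qdim(n) = \qTr_n(\I) \le C_2 q^{-n}$; (5) control the sum over the fusion parameter $s$ (equivalently over $m$ with $s = k+t-2m$) by a convergent geometric series with an at-most-polynomial prefactor in $n$, using $q<1$; (6) conclude $\|p_k z_n\| \le M q^{-n}$ uniformly in $k$, hence $\|z_n\|\le M q^{-n}$.

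The main obstacle I anticipate is step (4)–(5): matching the combinatorics of the ONB sum (which carries the $F_n^{-1}$ and $F_n$ weights split across the two legs of $A_n$ in a non-symmetric way, as \eqref{lem_traces} with its \emph{two different} quantum traces $\qTr_n$ and $\qTr'_n$ makes explicit) against the geometric decay coming from $\frac{\qdim(s-n)}{\qdim(s)}$ in Lemma~\ref{lem_cones}, \emph{uniformly in $k$}. One has to be careful that the rank-one decomposition of $A_n$ does not introduce a factor of $(\dim \Hil_n)^2 \sim q^{-2n}$ but only $\qdim(n)\sim q^{-n}$: this is exactly where the identity \eqref{lem_traces} — which says $\sum \qTr_n(f A_{n(1)} g A_{n(2)}) = \qTr_n(f)\qTr_n'(g)$ and was singled out precisely for this purpose — must be invoked, applied with $f,g$ the relevant ``reduced'' intertwiners obtained after pulling the isometries through. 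Making this bookkeeping rigorous, while keeping the dependence on $k$ uniform and the dependence on $n$ down to a polynomial-times-$q^{-n}$, is the technical heart of the lemma.
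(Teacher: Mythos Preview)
Your approach is substantially more complicated than necessary, and the paper's proof reveals a key simplification you missed. The paper does \emph{not} fix $k$ and decompose $\Delta$ via the isometries $V^s_{k,t}$; instead it takes an arbitrary $\eta\in\ell^\infty(\G)_*$ and simply observes that
\[
  B_{ij}^{nt} := (\eta\otimes p_t)\Delta(\psi_{n,\infty}(A_{n,ij}))
\]
satisfies $\|B_{ij}^{nt}\| \leq \|\eta\|\,\|A_{n,ij}\|$ because $\Delta$ is a $*$-homomorphism and slicing by $\eta$ is a contraction up to $\|\eta\|$. One then applies Lemma~\ref{lem_cones} directly to $\qtr_t(B_{ij}^{nt}\psi_{n,t}(e_ie_j^*))$ and sums over $i,j$. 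Your proposed steps (2) and (5) --- the fusion decomposition and the sum over the parameter $s$ --- are entirely avoided, and neither Lemma~\ref{lem:kappa} nor the identity~\eqref{lem_traces} plays any role here. (The identity~\eqref{lem_traces} is used later, in Lemma~\ref{lem_decomp}, where a more refined analysis is genuinely needed.)

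Your accounting in step (4) is also off. You worry that the ONB sum might give $(\dim\Hil_n)^2\sim q^{-2n}$ and hope to reduce it to $\qdim(n)\sim q^{-n}$ via~\eqref{lem_traces}. In fact the paper is content with a factor of order $q^{-2n}$ from the sum: taking an eigenbasis of $F_n$, one has $\|A_{n,ij}\| = \|F_n^{-1}e_i\|$ and $\|e_ie_j^*F_n\| = \|F_ne_j\|$, so
\[
  {\ts\sum_{i,j}} \|A_{n,ij}\|\,\|e_ie_j^*F_n\| = \Tr(F_n^{-1})\,\Tr(F_n) = \qdim(n)^2 \leq C_2^2\, q^{-2n}.
\]
This is then multiplied by the factor $\lim_{t}\qdim(t-n)/\qdim(t)\sim q^n$ coming from Lemma~\ref{lem_cones}, yielding the claimed $Mq^{-n}$. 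The cancellation is between $q^{-2n}$ and $q^n$, not between $q^{-n}$ and $O(1)$ as your outline suggests. Your more elaborate route via fusion rules and~\eqref{lem_traces} might in principle be pushed through, but it introduces a sum over $m=0,\dots,\min(k,t)$ whose uniformity in $k$ you would have to establish --- a real obstacle that the paper's direct argument sidesteps entirely.
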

\begin{proof}
  Let $\eta \in \ell^\infty(\GGamma)_*$. Let us decompose
  $A_n = \sum A_{n,ij}\otimes e_ie_j^*$, where $(e_i)_i$ is an ONB of $\Hil_n$. We
  have by definition
  \begin{align*}
    \eta(z_n) &= {\ts\sum} (\eta\otimes\omega_\infty\pi)
    (\Delta(\psi_{n,\infty}(A_{n,ij}))(1\otimes\psi_{n,\infty}(e_ie_j^*))) \\
    &= {\ts\sum}\lim_{t\to\infty} (\eta\otimes\qtr_t)
    (\Delta(\psi_{n,\infty}(A_{n,ij}))(1\otimes\psi_{n,t}(e_ie_j^*))) \\
    &= {\ts\sum}\lim_{t\to\infty} \qtr_t(B_{ij}^{nt}\psi_{n,t}(e_ie_j^*)),
    \end{align*}
    where $B_{ij}^{nt} = (\eta\otimes p_t)\Delta(\psi_{n,\infty}(A_{n,ij}))$
    satisfies $\|B_{ij}^{nt}\| \leq \|\eta\| \|A_{n,ij}\|$. Applying
    Lemma~\ref{lem_cones} and taking the limit we obtain
    $|\eta(z_n)| \leq q^n \|\eta\| \sum \|A_{n,ij}\| \|F_n e_j\|$.
    
    Now according to the explicit formula for $A_n$ (c.f. Eq. \eqref{expl_A}) we have
    $\|A_{n,ij}\| = \|e_j\| \|F_n^{-1}e_i\|$. Taking for $(e_i)_i$ an
    ONB of eigenvectors for $F_n$ we obtain
    $|\eta(z_n)| \leq q^n \|\eta\| \Tr(F_n^{-1}) \Tr(F_n)$. Since
    $\Tr(F_n^{-1}) = \Tr(F_n) = \qdim(n) \leq \sqrt M q^{-n}$ for some constant $M>0$ this
    yields the result.
\end{proof}

Now we need to prove that $q^n p_k z_n \to 0$, for fixed $k$, as
$n\to\infty$ which  requires a  much more technical argument. We start by rewriting $p_k z_n$ in a convenient way.

\begin{lemma}\label{lem_decomp}
  Let $\zeta$, $\xi$ be vectors in $\Hil_k$. Then we have
  \begin{displaymath}
    (\zeta \mid p_k z_n \xi) = {\textstyle\sum_{l=0}^k} \lim_{t\to\infty} 
    (\kappa_s^{k,t})^2 \qTr_{t-n}(w_{n,t}^{k,l}(\zeta)^*w_{n,t}^{\prime\,k,l}(\xi))
  \end{displaymath}
  where $s = t+k-2l$ and $w_{n,t}^{k,l}(\xi)$, $w_{n,t}^{\prime\,k,l}(\xi) \in B(\Hil_{t-n},\Hil_{s-n})$, for
  $\xi = \sum \xi^{(1)}\otimes\xi^{(2)}\in \Hil_{k-l}\otimes\Hil_l$, are given by:
  \begin{align*}
    w_{n,t}^{k,l}(\xi) = \qdim(t)^{-1/2} {\ts\sum} (\qTr_n\otimes\id)
    (P_s^+(\xi^{(1)}\bar\xi^{(2)*}\otimes \id_{t-l})P_t^+) \\
    w_{n,t}^{\prime\,k,l}(\xi) = \qdim(t)^{-1/2} {\ts\sum} (\qTr'_n\otimes\id)
    (P_s^+(\xi^{(1)}\bar\xi^{(2)*}\otimes \id_{t-l})P_t^+).
  \end{align*}
\end{lemma}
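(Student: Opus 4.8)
The plan is to unwind the definitions of $z_n$, $H_n$, $\beta_\infty$ and $\omega_\infty$, reduce the computation of $p_kz_n$ to a finite sum over $l\in\{0,\dots,k\}$ of limits as $t\to\infty$, perform two elementary ``slicings'' by $\zeta$ and $\xi$ which produce the rank one operators $\zeta^{(1)}\bar\zeta^{(2)*}$, $\xi^{(1)}\bar\xi^{(2)*}$ together with the prefactor $(\kappa_s^{k,t})^2$, and finally establish a single categorical identity between the remaining scalars by a trace computation. First I would recall that $\beta_\infty\circ\pi=(\id\otimes\pi)\circ\Delta$ on $\sB$ and $\omega_\infty\circ\pi=\omega$, with $\omega(b)=\lim_{m}\qtr_m(b_m)$. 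Then for $a,b\in B(\Hil_n)$ and $\zeta,\xi\in\Hil_k$ one gets, writing $\omega_{\zeta,\xi}(T):=(\zeta\mid T\xi)$,
\[
(\zeta\mid p_kH_n(a\otimes b)\,\xi)=\lim_{t\to\infty}\,(\omega_{\zeta,\xi}\otimes\qtr_t)\Bigl((p_k\otimes p_t)\bigl[\Delta(\psi_{n,\infty}(a))(1\otimes\psi_{n,\infty}(b))\bigr]\Bigr).
\]
Using the standard description of the coproduct of the discrete quantum group $\FO_Q$ on block diagonal elements, $(p_k\otimes p_t)\Delta(x)=\sum_{s\subset k\otimes t}V_{k,t}^{s}\,x_s\,V_{k,t}^{s*}$ (cf.\ \cite{Vergnioux_Cayley}), and the fact that $\psi_{n,\infty}(a)$ has $s$-component $\psi_{n,s}(a)$ for $s\ge n$ and $0$ otherwise, one obtains, for all $t$ large enough that each $s=k+t-2l$ with $0\le l\le k$ satisfies $s\ge n$, after inserting the explicit form $A_n=\sum_{i,j}e_j(F_n^{-1}e_i)^*\otimes e_ie_j^*$ from~\eqref{expl_A},
\[
(\zeta\mid p_kz_n\,\xi)=\sum_{l=0}^{k}\ \lim_{t\to\infty}\ \sum_{i,j}(\omega_{\zeta,\xi}\otimes\qtr_t)\Bigl(V_{k,t}^{s}\,\psi_{n,s}\bigl(e_j(F_n^{-1}e_i)^*\bigr)\,V_{k,t}^{s*}\bigl(1\otimes\psi_{n,t}(e_ie_j^*)\bigr)\Bigr),
\]
the finite sum over $l$ commuting with $\lim_t$ and the sums over $i,j$ running over an ONB of $\Hil_n$.

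For the slicing step, fix $l$ and write $s=k+t-2l$, $V_{k,t}^{s}=\kappa_s^{k,t}(P_k^+\otimes P_t^+)(\id\otimes t_l\otimes\id)P_s^+$. Using the embeddings $\Hil_k\subset\Hil_{k-l}\otimes\Hil_l$, $\Hil_t\subset\Hil_l\otimes\Hil_{t-l}$, $\Hil_s\subset\Hil_{k-l}\otimes\Hil_{t-l}$, expanding $\xi=\sum\xi^{(1)}\otimes\xi^{(2)}$ in $\Hil_{k-l}\otimes\Hil_l$ and using $t_l^*(\eta\otimes\,\cdot\,)=\langle\bar\eta\mid\,\cdot\,\rangle$, one computes $V_{k,t}^{s*}(\xi\otimes\id_{\Hil_t})=\kappa_s^{k,t}R_\xi$ with
\[
R_\xi:=\sum P_s^+\bigl(\xi^{(1)}\bar\xi^{(2)*}\otimes\id_{t-l}\bigr)P_t^+\in B(\Hil_t,\Hil_s),
\]
and similarly for $\zeta$. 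Since $(\omega_{\zeta,\xi}\otimes\id)\bigl(V_{k,t}^{s}\,y\,V_{k,t}^{s*}(1\otimes z)\bigr)=(\kappa_s^{k,t})^2\,R_\zeta^{*}\,y\,R_\xi\,z$, each summand above equals $(\kappa_s^{k,t})^2\,\qtr_t\bigl(R_\zeta^{*}\,\psi_{n,s}(e_j(F_n^{-1}e_i)^*)\,R_\xi\,\psi_{n,t}(e_ie_j^*)\bigr)$. Comparing with the claimed formula, in which $w_{n,t}^{k,l}(\zeta)$ and $w_{n,t}^{\prime\,k,l}(\xi)$ are exactly $\qdim(t)^{-1/2}(\qTr_n\otimes\id)(R_\zeta)$ and $\qdim(t)^{-1/2}(\qTr'_n\otimes\id)(R_\xi)$ viewed in $B(\Hil_{t-n},\Hil_{s-n})$, it remains to prove, for each $l$, the scalar identity
\[
\sum_{i,j}\qTr_t\Bigl(R_\zeta^{*}\,\psi_{n,s}(e_j(F_n^{-1}e_i)^*)\,R_\xi\,\psi_{n,t}(e_ie_j^*)\Bigr)=\qTr_{t-n}\bigl((\qTr_n\otimes\id)(R_\zeta)^{*}\,(\qTr'_n\otimes\id)(R_\xi)\bigr),
\]
the powers of $\qdim(t)$ matching because $\qtr_t=\qTr_t/\qdim(t)$.

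To prove this identity I would use the modular identity $\qTr_t=(\qTr_n\otimes\qTr_{t-n})(P_t^+\,\cdot\,P_t^+)$ (recalled in the proof of Lemma~\ref{lem_eval_unit}), the definition $\psi_{n,m}(c)=P_m^+(c\otimes\id_{m-n})P_m^+$ which shows $\psi_{n,s}$, $\psi_{n,t}$ act only on the $\Hil_n$-legs of $\Hil_s\subset\Hil_n\otimes\Hil_{s-n}$ and $\Hil_t\subset\Hil_n\otimes\Hil_{t-n}$, and decompositions $R_\xi=\sum_\alpha X_\alpha\otimes Y_\alpha$, $R_\zeta=\sum_\beta U_\beta\otimes W_\beta$ with $X_\alpha,U_\beta\in B(\Hil_n)$ and $Y_\alpha,W_\beta\in B(\Hil_{t-n},\Hil_{s-n})$ (legitimate once $\Hil_s,\Hil_t$ are viewed inside the relevant tensor products via the associativity of the projections $P_\bullet^+$). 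Substituting these, one gets $\qTr_t\bigl(R_\zeta^{*}\psi_{n,s}(e_j(F_n^{-1}e_i)^*)R_\xi\psi_{n,t}(e_ie_j^*)\bigr)=\sum_{\alpha,\beta}\qTr_n\!\bigl(U_\beta^*\,e_j(F_n^{-1}e_i)^*\,X_\alpha\,e_ie_j^*\bigr)\,\qTr_{t-n}(W_\beta^*Y_\alpha)$, and the elementary identities $\sum_j\langle e_j\mid F_n U_\beta^* e_j\rangle=\qTr_n(U_\beta^*)$, $\sum_i\langle e_i\mid F_n^{-1}X_\alpha e_i\rangle=\qTr'_n(X_\alpha)$ (the computation behind~\eqref{lem_traces}) turn the sum over $i,j$ into $\sum_{\alpha,\beta}\qTr_n(U_\beta^*)\qTr'_n(X_\alpha)\qTr_{t-n}(W_\beta^*Y_\alpha)$, which is $\qTr_{t-n}\bigl((\qTr_n\otimes\id)(R_\zeta)^*(\qTr'_n\otimes\id)(R_\xi)\bigr)$ — exactly the required identity. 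Summing the $\zeta,\xi$ expansions and reassembling the previous step yields the lemma.

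The main obstacle is the bookkeeping in the last two steps: getting the coproduct convention right, keeping the two tensor ``cut points'' $n$ and $l$ (and their complements inside $\Hil_s$ and $\Hil_t$) mutually consistent so that $R_\zeta,R_\xi$ genuinely decompose along $B(\Hil_n)\otimes B(\Hil_{\bullet-n})$, and tracking all factors $F_n^{\pm1}$ so that exactly one of the two residual $\Hil_n$-traces comes out as $\qTr_n$ (yielding $w$) and the other as $\qTr'_n$ (yielding $w'$). The only analytic point — passing $\omega_\infty$ to $\lim_t\qtr_t$ through the $p_k$-compression and a product with a multiplier — is routine.
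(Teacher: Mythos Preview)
Your proposal is correct and follows essentially the same route as the paper's proof: unwind $z_n$ via $\omega_\infty\pi = \lim_t\qtr_t$, decompose $(p_k\otimes p_t)\Delta$ as $\sum_l V_{k,t}^s(\cdot)V_{k,t}^{s*}$, and separate the two $\Hil_n$-traces using the identity~\eqref{lem_traces}. The only differences are cosmetic: the paper applies~\eqref{lem_traces} abstractly (with $f$, $g$ the left legs of $(\zeta^*\otimes P_t^+)V_{k,t}^s$ and $V_{k,t}^{s*}(\xi\otimes P_t^+)$) and only afterwards simplifies $V_{k,t}^{s*}(\xi\otimes P_t^+)$ to $\kappa_s^{k,t}R_\xi$, whereas you perform the slicing first and then re-derive~\eqref{lem_traces} from the explicit form~\eqref{expl_A} of $A_n$; the tensor/projection bookkeeping (dropping the inner $P_s^+$, $P_t^+$ by absorption into $R_\zeta$, $R_\xi$ and cycling the outer $P_t^+$ through $\qTr_n\otimes\qTr_{t-n}$ using that $P_t^+$ commutes with $F_n\otimes F_{t-n}$) is exactly the bookkeeping you flag as the main obstacle, and it goes through as you indicate.
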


\begin{proof}
  For a linear form $\eta\in B(\Hil_k)^* \subset \ell^\infty(\GGamma)_*$ we have,
  as in the proof of Lemma~\ref{lem_global_bound}:
    \begin{equation*}
    \eta(z_n) = {\ts\sum}\lim_{t\to\infty} (\eta\otimes\qtr_t)
    (\Delta(\psi_{n,\infty}(A_{n(1)}))(1\otimes\psi_{n,t}(A_{n(2)}))).
  \end{equation*}
  We then compute the coproduct as
  $(p_k\otimes p_t)\Delta(x) = \sum_{l=0}^k V_{k,t}^s x V_{k,t}^{s*}$, where
  $s = t+k-2l$:
  \begin{align*}
    \eta(z_n) &= \lim_{t\to\infty} (\qdim t)^{-1}{\ts\sum\sum_{l=0}^k} 
                (\eta\otimes\qTr_t)
    [V_{k,t}^s P_s^+(A_{n(1)}\otimes \id_{s-n})P_s^+ V_{k,t}^{s*} \\
    & \hspace{5cm}
    (\id_k\otimes P_t^+)(\id_k\otimes A_{n(2)} \otimes\id_{t-n})
      (\id_k\otimes P_t^+)].
  \end{align*}
  For $\eta = (\zeta \mid \,\cdot\, \xi)$ this yields
  \begin{align*}
    (\zeta \mid p_kz_n\xi ) &= \lim_{t\to\infty} (\qdim t)^{-1}
    {\ts\sum\sum_{l=0}^k} (\qTr_n \otimes \qTr_{t-n})
    [(\zeta^*\otimes P_t^+)V_{k,t}^s P_s^+ \\
    & \hspace{5cm}   (A_{n(1)}\otimes \id_{s-n})P_s^+ V_{k,t}^{s*} 
    (\xi\otimes P_t^+)(A_{n(2)} \otimes\id_{t-n})].
  \end{align*} 
  Now we apply again the equality \eqref{lem_traces}, with $f$ and $g$ corresponding to the
  left legs of
  $(\zeta^*\otimes P_t^+)V_{k,t}^s P_s^+ \in B(H_n)\otimes B(H_{s-n}, H_{t-n})$ and
  $P_s^+ V_{k,t}^{s*} (\xi\otimes P_t^+) \in B(H_n)\otimes B(H_{t-n}, H_{s-n})$
  respectively:
  \begin{align*}
    (\zeta \mid p_kz_n\xi ) &= \lim_{t\to\infty} (\qdim t)^{-1}
    {\ts\sum_{l=0}^k} \qTr_{t-n}
    [(\qTr_n\otimes\id)((\zeta^*\otimes P_t^+)V_{k,t}^s P_s^+)  \\
    & \hspace{6cm}  (\qTr'_n\otimes\id)(P_s^+ V_{k,t}^{s*} (\xi\otimes P_t^+))].
  \end{align*}
  This has the required form if we put
  \begin{align*}
    w_{n,t}^{k,l}(\zeta) &= \qdim(t)^{-1/2}(\kappa_s^{k,t})^{-1}
    (\qTr_n\otimes\id)(P_s^+ V_{k,t}^{s*} (\zeta\otimes P_t^+)) \qquad \text{and} \\
    w^{\prime\,k,l}_{n,t}(\xi) &= \qdim(t)^{-1/2}(\kappa_s^{k,t})^{-1}
    (\qTr'_n\otimes\id)(P_s^+ V_{k,t}^{s*} (\xi\otimes P_t^+)).
  \end{align*}
  We have moreover, by definition of $\kappa_s^{k,t}$:
  \begin{align*}
    w_{n,t}^{k,l}(\zeta) &= \qdim(t)^{-1/2}
      (\qTr_n\otimes\id)(P_s^+ (\id_{k-l}\otimes t_l^*\otimes \id_{t-l}) 
      (\zeta\otimes P_t^+)) \\
    &= \qdim(t)^{-1/2} (\qTr_n\otimes\id)
      (P_s^+ (\zeta^{(1)}\bar\zeta^{(2)*} \otimes \id_{t-l}) P_t^+),
  \end{align*}
  and similarly for $w^{\prime\,k,l}_{n,t}(\xi)$.
  Here $\zeta^{(1)}\bar\zeta^{(2)*} \in B(\Hil_l,\Hil_{k-l})$ arises from the
  decomposition $\zeta = \sum \zeta^{(1)}\otimes\zeta^{(2)} \in \Hil_{k-l}\otimes \Hil_l$.
\end{proof}
Note that the element $w_{n,t}^{k,l}(\xi) \in B(\Hil_{t-n},\Hil_{s-n})$ is defined
for any vector $\xi\in \Hil_{k-l}\otimes \Hil_l$. In the next lemma we use twisted
Hilbert-Schmidt norms given for $f \in B(\Hil_p, \Hil_q)$ by
$\|f\|_\HS^2 = \qTr_p(f^*f)$. 

\begin{lemma} \label{lem_easy} ~
Let $ \xi 	\in \Hil_{k-l}\otimes\Hil_l$.
  \begin{itemize}
  \item We have
    $\qdim(t)^{1/2}\|w_{n,t}^{k,l}(\xi)\|_\HS = \qdim(s)^{1/2}\|w_{n,s}^{k,k-l}((F_l^{1/2}\otimes F_{k-l}^{1/2})\bar\xi)\|_\HS$ and
    $\|\xi\| = \|(F_l^{1/2}\otimes F_{k-l}^{1/2})\bar\xi\|$. 
  \item We have $\|w_{n,t}^{k,l}(\xi)\|_\HS \leq C q^{-n/2}q^{-l/2} \|\xi\|$ for
    a constant $C>0$ depending only on $q$.
  \end{itemize}
  The same properties hold for $w_{n,t}^{\prime\,k,l}(\xi)$.
\end{lemma}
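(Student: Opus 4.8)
The plan is to reduce everything to elementary operator‑norm and trace bookkeeping. I would treat only $w_{n,t}^{k,l}$ explicitly, since the statements for $w_{n,t}^{\prime\,k,l}$ follow by the identical computation after replacing $\qTr_n$ by $\qTr'_n$ everywhere and using that both functionals have norm $\qdim(n)$ on $B(\Hil_n)$ (because of the normalisation $\Tr(F_n)=\Tr(F_n^{-1})=\qdim(n)$).

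For the second bullet (the estimate) I would start from the formula obtained in the proof of Lemma~\ref{lem_decomp},
\[ w_{n,t}^{k,l}(\xi)=\qdim(t)^{-1/2}\,(\qTr_n\otimes\id)\bigl(P_s^+(\id_{k-l}\otimes t_l^*\otimes\id_{t-l})(\xi\otimes P_t^+)\bigr),\qquad s=t+k-2l, \]
and bound the operator norm first: the projections $P^\bullet$ are contractions, $\|\xi\otimes P_t^+\|\le\|\xi\|$, $\|\id\otimes t_l^*\otimes\id\|=\|t_l\|=\qdim(l)^{1/2}\le C_2^{1/2}q^{-l/2}$, and $\|(\qTr_n\otimes\id)(X)\|\le\|\qTr_n\|\,\|X\|=\qdim(n)\|X\|\le C_2q^{-n}\|X\|$, whence $\|w_{n,t}^{k,l}(\xi)\|\le \qdim(t)^{-1/2}C_2^{3/2}q^{-n}q^{-l/2}\|\xi\|$. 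Passing to the twisted Hilbert–Schmidt norm via $\|f\|_\HS^2=\qTr_{t-n}(f^*f)\le\qdim(t-n)\|f\|^2$ and using $\qdim(t-n)/\qdim(t)\le (C_2/C_1)q^{n}$ gives
\[ \bigl\|w_{n,t}^{k,l}(\xi)\bigr\|_\HS^2\le \frac{\qdim(t-n)}{\qdim(t)}\,C_2^{3}q^{-2n}q^{-l}\|\xi\|^2\le \frac{C_2^{4}}{C_1}\,q^{-n}q^{-l}\|\xi\|^2, \]
i.e.\ the claim with $C=C_2^{2}C_1^{-1/2}$.

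For the first bullet (the rotation identity) I note that with $s=t+k-2l$ the upper index attached to $w_{n,s}^{k,k-l}$ is $s+k-2(k-l)=t$, so $w_{n,s}^{k,k-l}(\,\cdot\,)\in B(\Hil_{s-n},\Hil_{t-n})$ is ``adjoint‑shaped'' to $w_{n,t}^{k,l}(\xi)\in B(\Hil_{t-n},\Hil_{s-n})$. The plan is to prove the pointwise identity expressing $w_{n,t}^{k,l}(\xi)$, up to the ratio of normalisations $\qdim(t)^{-1/2}$ versus $\qdim(s)^{-1/2}$, as the appropriately $F$‑twisted adjoint of $w_{n,s}^{k,k-l}\bigl((F_l^{1/2}\otimes F_{k-l}^{1/2})\bar\xi\bigr)$. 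This is a Frobenius‑reciprocity (``rotation'') computation: starting from the displayed formula one slides the $\Hil_l$‑leg through $t_l$ using the conjugate equations together with $t_k=(P_k^+\otimes P_k^+)(\id\otimes t_{k-l}\otimes\id)t_l$ and the identities $(F_n^p\otimes F_n^p)t_n=t_n$, $F_n=j_n^*j_n$; the twists $F_l^{1/2}$, $F_{k-l}^{1/2}$ on $\bar\xi$ are exactly what absorbs the modular factors produced by turning the diagram around. Since the twisted Hilbert–Schmidt norm is invariant under the twisted adjoint, taking norms and tracking the normalisation yields
\[ \qdim(t)^{1/2}\bigl\|w_{n,t}^{k,l}(\xi)\bigr\|_\HS=\qdim(s)^{1/2}\bigl\|w_{n,s}^{k,k-l}\bigl((F_l^{1/2}\otimes F_{k-l}^{1/2})\bar\xi\bigr)\bigr\|_\HS. \]
The remaining equality $\|\xi\|=\|(F_l^{1/2}\otimes F_{k-l}^{1/2})\bar\xi\|$ amounts to $F_n^{1/2}j_n$ being antiunitary on each $\Hil_n$, which follows from $F_n=j_n^*j_n$ and $j_n^2=\pm\id_n$ (so $j_n^*F_nj_n=\id_n$), applied on the two legs.

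The step I expect to be the main obstacle is the bookkeeping of the modular ($F$‑twist) and quantum‑dimension factors in the rotation identity of the first bullet: turning the diagram defining $w_{n,t}^{k,l}$ ``by $90^\circ$'' is conceptually routine, but the exact placement of $F_l^{1/2}$, $F_{k-l}^{1/2}$ and the $\qdim(t)^{1/2}/\qdim(s)^{1/2}$ normalisation must be verified carefully. The estimate in the second bullet, by contrast, is a short chain of operator‑norm inequalities and should present no difficulty.
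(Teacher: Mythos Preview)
Your treatment of the second bullet is correct and essentially identical to the paper's: the only cosmetic difference is that you bound $\|t_l^*\|=\qdim(l)^{1/2}$ directly in the unsimplified formula $w_{n,t}^{k,l}(\xi)=\qdim(t)^{-1/2}(\qTr_n\otimes\id)\bigl(P_s^+(\id\otimes t_l^*\otimes\id)(\xi\otimes P_t^+)\bigr)$, while the paper first passes to the form with $\xi^{(1)}\bar\xi^{(2)*}$ and then bounds $\sum\|\xi^{(1)}\|\|\bar\xi^{(2)}\|\le\qdim(l)^{1/2}\|\xi\|$ by Cauchy--Schwarz. The resulting chain of inequalities is the same.

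For the first bullet your plan is valid but more roundabout than necessary. The paper avoids any ``rotation through $t_l$'' or appeal to $t_k=(P_k^+\otimes P_k^+)(\id\otimes t_{k-l}\otimes\id)t_l$: it simply observes, directly from the defining formula $w_{n,t}^{k,l}(\xi)=\qdim(t)^{-1/2}\sum(\qTr_n\otimes\id)\bigl(P_s^+(\xi^{(1)}\bar\xi^{(2)*}\otimes\id)P_t^+\bigr)$, that taking operator adjoints gives the pointwise identity
\[
\qdim(t)^{1/2}\,w_{n,t}^{k,l}(\xi)^{*}=\qdim(s)^{1/2}\,w_{n,s}^{k,k-l}(\bar\xi)
\]
(up to a global sign), since $(\xi^{(1)}\bar\xi^{(2)*})^*=\bar\xi^{(2)}\xi^{(1)*}$ and $\bar\xi=\sum\bar\xi^{(2)}\otimes\bar\xi^{(1)}$. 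The modular bookkeeping you flagged as the main obstacle then reduces to two clean facts: the twisted adjoint formula $\|f^*\|_\HS=\|F_q^{1/2}fF_p^{-1/2}\|_\HS$ for $f\in B(\Hil_p,\Hil_q)$, and the observation that since $F$-matrices commute with all intertwiners and $\qTr_n(F_n^{1/2}\,\cdot\,F_n^{-1/2})=\qTr_n$, one has $F_{s-n}^{1/2}\,w_{n,t}^{k,l}(\xi)\,F_{t-n}^{-1/2}=w_{n,t}^{k,l}\bigl((F_{k-l}\otimes F_l)^{1/2}\xi\bigr)$. Combining these and using $j_lF_l^{1/2}=F_l^{-1/2}j_l$ to move the twist onto $\bar\xi$ yields the stated identity in three lines. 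Your diagrammatic Frobenius-reciprocity computation would reach the same destination, but the direct adjoint observation bypasses all the leg-sliding and makes the placement of the $F^{1/2}$ factors transparent rather than something to be ``verified carefully''.
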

\begin{proof}
  We clearly have $\qdim(t)^{1/2} w_{n,t}^{k,l}(\xi)^* = \qdim(s)^{1/2} w_{n,s}^{k,k-l}(\bar\xi)$, since the conjugate of $\xi  = \sum \xi^{(1)}\otimes \xi^{(2)}\in \Hil_{k-l}\otimes \Hil_l$ is given by $\bar\xi = \sum \bar\xi^{(2)}\otimes\bar\xi^{(1)} \in \Hil_l\otimes \Hil_{k-l}$. 
  On the other hand we have $\|f^*\|_\HS = \|F_q^{1/2}fF_p^{-1/2}\|_\HS$. Finally, since $F$-matrices
  commute with intertwiners, $\qTr_n(F_n^{1/2} \,\cdot\, F_n^{-1/2}) = \qTr_n$,
  and $F_l^{-1/2} j_l = j_l F_l^{1/2}$, it follows that
  \begin{displaymath}
    F_{s-n}^{1/2} w_{n,t}^{k,l}(\xi) F_{t-n}^{-1/2} =
    w_{n,t}^{k,l}((F_{k-l}\otimes F_l)^{1/2}\xi).
  \end{displaymath}
  
  For the norm estimate, first note that
  $\sum \|\xi^{(1)}\| \|\bar\xi^{(2)}\| = \sum \|\xi^{(1)}\|
  \|F_l^{1/2}\xi^{(2)}\| \leq \|\xi\| \qdim(l)^{1/2}$,
  by taking for $(\xi^{(2)})$ an ONB of eigenvectors of $F_l$ and using
  Cauchy-Schwarz. Hence we have
  \begin{align*}
    \|w_{n,t}^{k,l}(\xi)\|_\HS &\leq \|\qTr_{t-n}\|^{1/2} \|w_{n,t}^{k,l}(\xi)\| 
    \leq (\qdim t)^{-1/2} \|\qTr_{t-n}\|^{1/2} \|\qTr_n\| ~ 
    {\ts\sum} \|\xi^{(1)}\|  \|\bar\xi^{(2)}\| \\
    &\leq (\qdim t)^{-1/2} \qdim(t-n)^{1/2} \qdim(n) 
    \qdim (l)^{1/2} \|\xi\|
    \leq C q^{-n/2}q^{-l/2} \|\xi\|.
  \end{align*}
\end{proof}

The norm estimate above is of the same order as the one of
Lemma~\ref{lem_global_bound} as $n\to\infty$, which is not enough for our
purposes. Using the next lemma we will obtain a better estimate in the proof of
Theorem~\ref{thm_faithful} --- in the Kac case even a much better one since we will deduce that $\|w_{n,t}^{k,l}(\xi)\|_\HS$ is bounded with respect to $n$, for fixed $k$. In the proof we use the ``higher weight'' Wenzl recursion relation \cite[Lemma~3.4]{FreslonVergnioux}: for
  $\zeta\in H_{p+q}$  decomposed as $\zeta = \sum
  \zeta^{(1)}\otimes\zeta^{(2)}\in H_{p}\otimes H_{q}$ and $\zeta = \sum
  \zeta_{(1)}\otimes\zeta_{(2)}\in H_{q}\otimes H_{p}$ we have
  \begin{displaymath}
    \sum (\overline{\zeta}^{(1)*}\otimes
    \id_{n-p})P_{n}^+(\zeta^{(2)}\otimes\id_{n-q}) = \alpha_{p, q}^{n}\sum
    P_{n-p}^+(\zeta_{(1)}\overline{\zeta}_{(2)}^{*}\otimes
    \id_{n-p-q})P_{n-q}^+,
  \end{displaymath}
  where the coefficient $\alpha_{p, q}^{n}$ satisfies $|\alpha_{p, q}^{n}| \leq 1$. This formula is stated in \cite{FreslonVergnioux} only in the Kac case --- since the methods used in that article to study MASAs only hold in the tracial case. However a rapid inspection shows that its statement and proof hold without modification in the general, non-Kac case.

\begin{lemma}[The Ice-Swimming Lemma]  \label{lem_technical} 
  For any $1\leq\l\leq k\leq n\leq t$ and $\xi \in \Hil_{k-l}\otimes \Hil_l$ there are
  vectors $\xi'\in \Hil_{k-l}\otimes \Hil_l$,  
  $\xi''\in \Hil_{k-l-1}\otimes \Hil_{l-1}$ with $\|\xi'\|$,
  $\|\xi''\| \leq \|\xi\|$ such that
  \begin{align} \label{eq_technical}
    \|w_{n,t}^{k,l}(\xi)\|_\HS &\leq  \|F_l\| \Big({\ts\frac{\qdim(t-l)}{\qdim(t)}}\Big)^{1/2} \|w_{n-l,t-l}^{k,l}(\xi')\|_\HS + \\
    & \nonumber \qquad \qquad +
    C \|F_l\|^2 \|w_{n-1,t-1}^{k-2,l-1}(\xi'')\|_\HS  + 
    C q^{-3l/2} \|F_l\|^2 q^{n/2} \|\xi\|,
  \end{align}
   where $C$ is a constant depending only on $q$. In the case $k=l$ we can delete the term involving $\xi''$. In the case $k=2l$, if
  $t_l^*(\xi) = 0$ then we have also $t_{l-1}^*(\xi'')=0$.
\end{lemma}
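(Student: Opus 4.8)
The only tool available is the ``higher‑weight Wenzl recursion relation'' recalled just before the statement (from \cite[Lemma~3.4]{FreslonVergnioux}, valid verbatim in the non‑Kac case), so the whole proof is an exercise in peeling $l$ legs off the object
\[
w_{n,t}^{k,l}(\xi)=\qdim(t)^{-1/2}{\ts\sum}(\qTr_n\otimes\id)\bigl(P_s^+(\id_{k-l}\otimes t_l^*\otimes\id_{t-l})(\xi\otimes P_t^+)\bigr),\qquad s=t+k-2l,
\]
using that relation. First I would rewrite $w_{n,t}^{k,l}(\xi)$ in the ``folded'' form $\qdim(t)^{-1/2}\sum(\qTr_n\otimes\id)(P_s^+(\xi^{(1)}\bar\xi^{(2)*}\otimes\id_{t-l})P_t^+)$ already appearing in Lemma~\ref{lem_decomp}, and split the quantum partial trace as $\qTr_n=(\qTr_{n-l}\otimes\qTr_l)$ along $\Hil_n\subset\Hil_{n-l}\otimes\Hil_l$ (legitimate since $l\leq n$), so that the ``cap'' of $\qTr_l$ sits precisely on $l$ legs that $\xi$ does not touch. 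The combination ``$P_s^+$‑then‑cap‑then‑$P_t^+$'' on those $l$ legs is exactly of the shape to which the Wenzl recursion (with $p=l$, $q=k-l$) applies.

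Applying the recursion produces a \emph{main} term, in which $P_t^+$ is replaced by $P_{t-l}^+$ and $P_s^+$ by $P_{s-l}^+$, i.e.\ an expression of the form $(\text{const})\,(\qTr_{n-l}\otimes\id)\bigl(P_{s-l}^+(\xi'^{(1)}\bar\xi'^{(2)*}\otimes\id)P_{t-l}^+\bigr)$. Renormalising (comparing $\qdim(t)^{-1/2}$ with $\qdim(t-l)^{-1/2}$) and passing to the twisted Hilbert–Schmidt norm via the identities $\|f^*\|_\HS=\|F_q^{1/2}fF_p^{-1/2}\|_\HS$ and $\qTr_n(F_n^{1/2}\cdot F_n^{-1/2})=\qTr_n$ used already in Lemma~\ref{lem_easy}, this becomes the term $\|F_l\|\bigl(\qdim(t-l)/\qdim(t)\bigr)^{1/2}\|w_{n-l,t-l}^{k,l}(\xi')\|_\HS$. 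The remaining terms of the recursion are the ones carrying the reduced vector $\zeta_{(1)}\overline\zeta_{(2)}^*$ flanked by a cup–cap $t_1t_1^*$; after the same manipulations these reorganise into $C\|F_l\|^2\|w_{n-1,t-1}^{k-2,l-1}(\xi'')\|_\HS$ — the cup–cap removes one leg from each of the two tensor factors $\Hil_{k-l}$, $\Hil_l$ of $\xi$, which is exactly why $\xi''\in\Hil_{k-l-1}\otimes\Hil_{l-1}$ and $(k,l)\mapsto(k-2,l-1)$. All genuinely higher cup–cap contributions (two or more cups) are collected in the error.

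The error term $Cq^{-3l/2}\|F_l\|^2 q^{n/2}\|\xi\|$ arises when replacing the genuine projections $P_n^+$, $P_t^+$ on $\Hil_l\otimes(\text{rest})$ by the split projections $(P_{n-m}^+\otimes P_m^+)(P_{k}^+\otimes P_{n-k}^+)$ etc., using $\|P_n^+-(P_{n-m}^+\otimes P_m^+)(P_k^+\otimes P_{n-k}^+)\|\leq Bq^{n-m-k}$ from \cite[Lemma~A.4]{VaesVergnioux}, together with $\|t_l\|=\qdim(l)^{1/2}\leq C_2^{1/2}q^{-l/2}$, the crude bounds $\kappa_\bullet^{\bullet}\leq D_2q^{l/2}$ from Lemma~\ref{lem:kappa}'s ambient estimates, $C_1q^{-n}\leq\qdim(n)\leq C_2q^{-n}$, and $\|F_l\|=\|F_1\|^l$; every ``small'' piece lands here, producing a single power $q^{n/2}$, the prefactor $q^{-3l/2}\|F_l\|^2$ absorbing the finitely many $q^{-l/2}$‑losses (a constant once $l\leq k$ is fixed). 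The bounds $\|\xi'\|,\|\xi''\|\leq\|\xi\|$ I would check as in the first part of Lemma~\ref{lem_easy}: $\xi'$ is $\xi$ composed with a partial isometry and with $F$‑powers that are absorbed into the explicit $\|F_l\|$ prefactor (here one uses $j_{m\otimes m}t_m=\pm t_m$, $(F_m^p\otimes F_m^p)t_m=t_m$ and the coherence $t_{m+n}=(P^+\otimes P^+)(\id\otimes t_n\otimes\id)t_m$), while $\xi''$ picks up only the Wenzl coefficient $\alpha_{p,q}^n$ with $|\alpha_{p,q}^n|\leq1$. For the special cases: if $k=l$ the factor $\Hil_{k-l}=\CC$ is trivial, so no reduced‑vector term exists and $\xi''=0$; if $k=2l$ and $t_l^*(\xi)=0$ one checks, using $t_l=(P_l^+\otimes P_l^+)(\id\otimes t_1\otimes\id)t_{l-1}$ and the coherence of the $t_n$'s, that the cup $t_l\in\Hil_l\otimes\Hil_l$ passes through the recursion precisely to the cup $t_{l-1}\in\Hil_{l-1}\otimes\Hil_{l-1}$, so orthogonality to $t_l$ forces $t_{l-1}^*(\xi'')=0$.

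\textbf{Expected main obstacle.} The delicate part is the intertwiner bookkeeping in the middle step: identifying unambiguously which terms of the Wenzl recursion feed the $w_{n-l,t-l}^{k,l}(\xi')$ term and which feed the $w_{n-1,t-1}^{k-2,l-1}(\xi'')$ term, and then verifying that after all partial traces, projections and $F$‑twists have been commuted past one another what remains really is a bounded multiple of a $w$‑object \emph{at the advertised indices} rather than some more complicated expression — all while keeping every constant independent of $n$ and $t$ (depending only on $q$, and for the $q^{-3l/2}$‑type factors on $k$). A secondary but real difficulty is handling the non‑Kac modular matrices: since $\|F_l\|=\|F_1\|^l$ is not bounded in $l$, one must make sure these factors occur only with $l\leq k$ fixed (guaranteed because in the application the recursion is iterated with $k$ fixed), and that the normalisations $\kappa_t^{r,s}$ and the $\qdim$‑ratios are estimated sharply enough via $D_1q^{a/2}\leq\kappa_t^{r,s}\leq D_2q^{a/2}$ and $C_1q^{-n}\leq\qdim(n)\leq C_2q^{-n}$ that no spurious growth in $n$ survives.
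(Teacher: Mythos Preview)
Your overall shape is right---split off the first $l$ legs via the $\qTr_n=\qTr_{n-l}\otimes\qTr_l$ decomposition, approximate $P_t^+$ by $(\id_l\otimes P_{t-l}^+)(P_n^+\otimes\id_{t-n})$ with error $O(q^{n-l})$ from \cite[Lemma~A.4]{VaesVergnioux}, then invoke the higher-weight Wenzl relation. But there is a real gap in your understanding of how the two terms $\xi'$ and $\xi''$ actually arise.

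The Wenzl relation from \cite[Lemma~3.4]{FreslonVergnioux} is a \emph{single identity}, not a sum of a ``main term'' and ``remaining terms carrying $t_1t_1^*$''. It only applies to vectors $\zeta\in \Hil_{p+q}$, the irreducible subspace. After the first rewriting (what the paper calls~\eqref{eq_technical_transformed}) you have, up to error,
\[
(\qdim t)^{-1/2}{\ts\sum}(\qTr_{n-l}\otimes\id)\bigl[((F_l\bar\xi^{(2)})^*\otimes\id_{s-l})P_s^+(\xi^{(1)}\otimes\id_{t-l})\bigr],
\]
which is governed by the flipped, twisted vector $\tilde\sigma(\xi)=\sum F_l^{-1}\xi^{(2)}\otimes\xi^{(1)}\in\Hil_l\otimes\Hil_{k-l}$. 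This vector is \emph{not} in $\Hil_k$ in general, so you cannot feed it directly to the Wenzl relation. The actual mechanism is: decompose $\tilde\sigma(\xi)=\xi'+(P_l^+\otimes P_{k-l}^+)(\id_{l-1}\otimes t_1\otimes\id_{k-l-1})(\xi_0'')$ with $\xi'=P_k^+\tilde\sigma(\xi)\in\Hil_k$ and $\xi_0''\in\Hil_{l-1}\otimes\Hil_{k-l-1}$. Only $\xi'$ is eligible for Wenzl, and applying it (one clean identity, with coefficient $|\alpha_{l,k-l}^s|\leq 1$) produces exactly the $w_{n-l,t-l}^{k,l}(\xi')$ term; the factor $\|F_l\|$ comes from $\|\xi'\|\leq\|F_l^{-1}\|\|\xi\|=\|F_l\|\|\xi\|$.

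The second term $\xi''$ does \emph{not} come from ``remaining terms of the recursion''. For the $\xi_0''$-part one writes the embedding via $\sum_i\bar e_i\otimes\bar e_i$ (with $(e_i)$ an ONB of $\Hil_1$), uses $\sum_i\bar e_i^*f\bar e_i=\qTr_1'(f)$ and $(\qTr_1'\otimes\id)(P_s^+)=\tfrac{\qdim(s)}{\qdim(s-1)}P_{s-1}^+$, and then runs the first step \emph{backwards} to recognise $w_{n-1,t-1}^{k-2,l-1}(\tilde\sigma^{-1}(\xi_0''))$, up to a second error term of the same type. The norm control on $\xi_0''$ (hence on $\xi''=\tilde\sigma^{-1}(\xi_0'')/C\|F_l\|^2$) uses the explicit lower bound on the norm of $(P_l^+\otimes P_{k-l}^+)(\id\otimes t_1\otimes\id)$ restricted to each irreducible piece, from \cite[Proposition~2.3]{Vergnioux_Cayley}---this is how one gets $\|\xi_0''\|\leq C\|F_l\|\|\xi\|$ without losing a $q$-power.

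This also clarifies the special cases. When $k=l$ one has $\tilde\sigma(\xi)=F_k^{-1}\xi\in\Hil_k$ already, so $\xi_0''=0$. When $k=2l$ and $t_l^*(\xi)=0$, use $\tilde\sigma^*t_l=\pm t_l$ to get $t_l^*(\tilde\sigma(\xi))=0$; since $t_l^*(\Hil_k)=0$ this forces $t_{l-1}^*(\xi_0'')=0$, and then $t_{l-1}^*(\xi'')\propto t_{l-1}^*\tilde\sigma^{-1}(\xi_0'')=\pm t_{l-1}^*(\xi_0'')=0$. Your sketch of this case via ``the cup $t_l$ passes through the recursion to $t_{l-1}$'' is too vague to be the argument.
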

\begin{proof}
  In this proof $C>0$ is a ``generic constant'', depending only $q$, that we
  will modify only a finite number of times.
  
   We first use the estimate
  $P_t^+ = (\id_l\otimes P_{t-l}^+)(P_n^+\otimes \id_{t-n}) + D$ from
  \cite[Lemma~A.4]{VaesVergnioux}, with ``generic'' error term $D\in B(\Hil_l\otimes \Hil_{n-l}\otimes \Hil_{t-n})$ controlled by
  $\|D\|\leq C q^{n-l}$, and we permute through the trace according to
  $\qTr_l(\zeta\bar \xi^{(2)*}) = (F_l\bar \xi^{(2)})^*\zeta$:
  \begin{align} \nonumber
    w_{n,t}^{k,l}(\xi) &\simeq 
    (\qdim t)^{-1/2} {\ts\sum} (\qTr_l\otimes \qTr_{n-l}\otimes\id)
    [P_s^+(\xi^{(1)}\otimes \id_{t-l})
    ((\bar\xi^{(2)*}\otimes\id_{n-l})P_n^+)  \otimes\id_{t-n})] 
    \\ \label{eq_technical_transformed} &= (\qdim t)^{-1/2}
    {\ts\sum} (\qTr_{n-l}\otimes\id) [((F_l\bar\xi^{(2)})^*\otimes\id_{s-l})
    P_s^+(\xi^{(1)}\otimes \id_{t-l})].
  \end{align}  
   To control the error term $E \in B(\Hil_{t-n},\Hil_{s-n})$ let us recall that
  $\sum \|\xi^{(1)}\| \|\bar\xi^{(2)}\| \leq \|\xi\| \qdim(l)^{1/2}$, see the proof of Lemma~\ref{lem_easy}. We shall also use a similar formula 
  \begin{equation}\label{sim_form}
       \sum \|\xi^{(1)}\| \|j^*\xi^{(2)}\| \leq \|\xi\| \qdim(l)^{1/2}
  \end{equation}
  which is proved as the previous one by noting that 
  $\|F_l\| = \|F_l^{-1}\|$. In particular 
  \[  {\ts\sum} \|\xi^{(1)}\|  \|F_l\bar\xi^{(2)}\| =  {\ts\sum} \|\xi^{(1)}\|  \|j^*\xi^{(2)}\|\leq\|\xi\| \qdim(l)^{1/2}. \]
 Hence we have
  \begin{align*}
    \|E\|_\HS &\leq \|\qTr_{t-n}\|^{1/2} \|E\| \leq C q^{n-l} (\qdim t)^{-1/2}
    \|\qTr_{t-n}\|^{1/2} \|\qTr_n\| ~ 
    {\ts\sum} \|\xi^{(1)}\|  \|F_l\bar\xi^{(2)}\| \\
    &\leq C q^{n-l} (\qdim t)^{-1/2} \qdim(t-n)^{1/2} \qdim(n) 
    \qdim(l)^{1/2} \|\xi\|
    \leq C q^{-3l/2} q^{n/2} \|\xi\|.
  \end{align*}
  This will go into the third term on the right-hand side of~\eqref{eq_technical}.
  
  Now we use \cite[Lemma~3.4]{FreslonVergnioux}. We want to apply it to
  $\tilde\sigma(\xi) := \sum F_l^{-1}\xi^{(2)}\otimes \xi^{(1)}$ which is {\em not}
  in $\Hil_k$ (even if $\xi$ was), so we first decompose:
  \begin{displaymath}
    \tilde\sigma(\xi) = \xi' + 
    (P_l^+\otimes P_{k-l}^+)(\id_{l-1}\otimes t_1\otimes\id_{k-l-1}) (\xi''_0)
  \end{displaymath}
  with $\xi'  = \sum \xi'^{(1)}\otimes \xi'^{(2)}\in \Hil_k\subset \Hil_{l}\otimes \Hil_{k-l}$ given by
  $\xi' = P_k^+\tilde\sigma(\xi)$ --- in particular $\|\xi'\| \leq \|F_l\| \|\xi\|$ ---, and with
  $\xi''_0 =\sum \xi_0''^{(1)}\otimes \xi_0''^{(2)} \in \Hil_{l-1}\otimes
  \Hil_{k-l-1}$ which we map into the orthogonal complement of $\Hil_{k}$ in $\Hil_l\otimes \Hil_{k-l}$ as explained in subsection \ref{subsec_free_QG}. 
  We denote by $W'$, $W''$ the corresponding terms of the right-hand side of~\eqref{eq_technical_transformed}.
  
  The statement of \cite[Lemma~3.4]{FreslonVergnioux} involves a rearrangement
  of the tensor product decomposition: $\xi' = \sum\xi'_{(1)}\otimes \xi'_{(2)}$
  with $\xi'_{(1)} \in \Hil_{k-l}$, $\xi'_{(2)}\in \Hil_l$. We can then write 
  \begin{align*}
    W' &= (\qdim t)^{-1/2}
    {\ts\sum} (\qTr_{n-l}\otimes\id) [\bar\xi'^{(1)*}\otimes\id_{s-l})
    P_s^+(\xi'^{(2)}\otimes \id_{t-l})]
    \\ &= (\qdim t)^{-1/2}
    \alpha^s_{l,k-l} {\ts\sum} (\qTr_{n-l}\otimes\id)
    [P_{s-l}^+ (\xi'_{(1)}\bar\xi'^*_{(2)}\otimes\id_{t-2l}) P_{t-l}^+] \\
    &= \Big({\ts\frac{\qdim(t-l)}{\qdim(t)}}\Big)^{1/2} \alpha^s_{l,k-l}  
    w_{n-l,t-l}^{k,l}(\xi').
  \end{align*}
    We recall that   $\alpha^s_{l,k-l} \leq
  1$. Replacing $\xi'$ with $\xi'/\|F_l\|$ we obtain the first term on the right-hand side of~\eqref{eq_technical}.
  
   Now we take $\xi_0''$ into account. Observe that we used
  $(P^+_l\otimes P^+_{k-l})(\id_{l-1}\otimes t_1\otimes\id_{k-l-1})$ to embed
  $\Hil_{l-1}\otimes \Hil_{k-l-1}$ into $\Hil_l\otimes \Hil_{k-l}$ as the orthogonal of
  $\Hil_k$, and on each irreducible subspace
  $P_m^{l-1,k-l-1} (\Hil_{l-1}\otimes \Hil_{k-l-1})$ equivalent to $\Hil_m$, with
  $m=k-2-2b$, this morphism has norm controlled as follows
  \cite[Proposition~2.3]{Vergnioux_Cayley}:
  \begin{align*}
    \|(P^+_l\otimes P^+_{k-l})(\id_{l-1}\otimes t_1\otimes\id_{k-l-1})
    P_m^{l-1,k-l-1}\|^2 &= 
    \ts\frac{\qdim(l)\qdim(k-l-1) - \qdim(l-b-1)\qdim(k-l-b-2)}
    {\qdim(l-1) \qdim(k-l-1)} \\
    &= \ts\frac {\qdim(b) \qdim(k-b-1)} {\qdim(l-1)\qdim(k-l-1)} \geq C > 0.
  \end{align*}
  In particular we have
  $\|\xi_0''\| \leq C^{-1} \|\tilde\sigma(\xi)\| \leq C\|F_l\| \|\xi\|.$

  Then we compute the term corresponding to $\xi_0''$ in $w_{n,t}^{k,l}(\xi)$, that is, we
  replace $F_l\bar\xi^{(2)}$ with
  $P_l^+(\bar e_i\otimes \bar\xi_0''^{(1)})$ and $\xi^{(1)}$ with
  $P_{k-l}^+(\bar e_i\otimes \xi_0''^{(2)})$ in~\eqref{eq_technical_transformed}, where
  $(e_i)_i$ is a ONB of $\Hil_1$. The projections $P_l^+$, $P_{k-l}^+$ are absorbed
  in $P_s^+$ and we moreover observe that we have, for $f\in B(\Hil_1)$:
  $\sum \bar e_i^* f \bar e_i = \qTr'_1(f)$. Since
  $\qdim(s-1) (\qTr'_1\otimes\id) (P_s^+) = \qdim(s) P_{s-1}^+$ we obtain
  \begin{align*}
    W'' & = 
    (\qdim t)^{-1/2} {\ts\sum} (\qTr_{n-l}\otimes\id)
    [(\bar\xi_0''^{(1)*}\otimes\id_{s-l}) (\qTr'_1\otimes\id)(P_s^+)
    (\xi_0''^{(2)}\otimes \id_{t-l})] \\
    &= (\qdim t)^{-1/2} {\ts\frac{\qdim(s)}{\qdim(s-1)}} 
    {\ts\sum} (\qTr_{n-l}\otimes\id)
    [(\bar\xi_0''^{(1)*}\otimes\id_{s-l}) P_{s-1}^+
    (\xi_0''^{(2)}\otimes \id_{t-l})].
  \end{align*} 
   Applying backwards the first step of this proof, with $\tilde\sigma(\xi)$
  replaced by $\xi_0''$, we recognize
  \begin{equation*}
    W'' \simeq 
    \Big(\frac{\qdim(t-1)}{\qdim(t)}\Big)^{1/2}\frac{\qdim(s)}{\qdim(s-1)} 
    w_{n-1,t-1}^{k-2,l-1}(\tilde\sigma^{-1}(\xi_0'')),
  \end{equation*}
  with a new error term $E$ with the same control as previously --- except that $\|\xi\|$ is replaced by $\|\tilde\sigma^{-1}(\xi''_0)\| \leq \|F_{l-1}\|\|\xi''_0\|\leq  C \|F_l\|^2 \|\xi\|$ --- and which goes into the third term on the right-hand side of~\eqref{eq_technical} as well.  We finally put
  $\xi'' = \tilde\sigma^{-1}(\xi_0'') / C \|F_l\|^2$ which satisfies
  $\|\xi''\| \leq \|F_{l-1}\| \|\xi_0''\| / C \|F_l\|^2 \leq  \|\xi\|$. Since $\qdim(t-1)/\qdim(t) \leq 1$ and
  $\qdim(s)/\qdim(s-1) \leq C$ this yields the second term on the right-hand side of~\eqref{eq_technical}.
  
  In the case $k=l$ we have already $\tilde\sigma(\xi) = F_k^{-1}\xi \in \Hil_k$ so that $\xi''=0$. In the case $k=2l$, we have  $\tilde\sigma^* t_l = \pm t_l$ (c.f. \eqref{usefulident}), in particular if $t_l^*(\xi)=0$ we get $t_l^*\tilde\sigma(\xi) = 0$. Since $t_l^*(\Hil_{2l}) = \{0\}$ this implies $t_{l-1}^*(\xi''_0)=0$ and finally $t_{l-1}^*(\xi'')$ vanishes because it is proportional to  $t_{l-1}^*\tilde\sigma^{-1}(\xi''_0) = t_{l-1}^*(\xi''_0) = 0$. This crucial property allows in the proof of the next Theorem to initialize the induction at $k=2$ and not $k=0$ (where the required estimate is false).
\end{proof}

We are ready to state and prove the second main result of this section: the proof will exploit the expression obtained in Lemma \ref{lem_decomp} and the induction based on Lemma~\ref{lem_technical}.

\begin{thm} \label{thm_faithful}
  For $N\geq 3$ the boundary action $\beta_\infty$ of $\FO_Q$ is faithful.
\end{thm}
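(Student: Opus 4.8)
The plan is to show $p_0 \in \sN_\infty$, which by Proposition \ref{prop_p0} (applied to the Baaj-Vaes subalgebra $\sN_\infty$, using Proposition \ref{prop:coker->Baaj-Vaes}) immediately yields faithfulness of $\beta_\infty$. By Lemma \ref{lem_eval_unit} we know $p_0 z_n = q^{-n} p_0$ and by Lemma \ref{lem_global_bound} we have $\|q^n z_n\| \leq M$ for all $n$, so the sequence $(q^n z_n)_n$ is bounded. Hence it suffices to show that every $\sigma$-weak cluster point of $(q^n z_n)$ equals $p_0$; equivalently, that for each fixed $k \geq 1$ we have $q^n p_k z_n \to 0$ as $n \to \infty$ (the case $k = 0$ being covered by Lemma \ref{lem_eval_unit}). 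Since $p_k z_n \in B(\Hil_k)$ is a finite matrix, it is enough to show $(\zeta \mid p_k z_n \xi) \to 0$ after multiplying by $q^n$, for all $\zeta, \xi \in \Hil_k$, uniformly enough to conclude.

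First I would use the expression from Lemma \ref{lem_decomp}: $(\zeta \mid p_k z_n \xi) = \sum_{l=0}^k \lim_{t\to\infty}(\kappa_s^{k,t})^2 \qTr_{t-n}(w_{n,t}^{k,l}(\zeta)^* w_{n,t}^{\prime\,k,l}(\xi))$ with $s = t+k-2l$. The $l = 0$ term should be handled separately and shown to cancel against (part of) the contribution that produces $p_0$: indeed for $l = 0$ the maps $w^{k,0}_{n,t}, w'^{\,k,0}_{n,t}$ reduce to the ``scalar'' case and essentially recover the computation underlying Lemma \ref{lem_eval_unit}, so the $l=0$ term gives the $q^{-n}(\zeta\mid p_0\xi)$-type contribution, and it is the $l \geq 1$ terms that must vanish faster than $q^{-n}$. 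For the estimation of these, I would apply Cauchy-Schwarz in the twisted Hilbert-Schmidt norm: $|\qTr_{t-n}(w_{n,t}^{k,l}(\zeta)^* w_{n,t}^{\prime\,k,l}(\xi))| \leq \|w_{n,t}^{k,l}(\zeta)\|_\HS \|w_{n,t}^{\prime\,k,l}(\xi)\|_\HS$. Combined with the bound $(\kappa_s^{k,t})^2 \leq D_2^2 q^{l}$ from Subsection \ref{subsec_free_QG}, the whole task reduces to controlling $\|w_{n,t}^{k,l}(\xi)\|_\HS$ (and its primed analogue) for $l \geq 1$, uniformly in $t$, with a bound that beats $q^{-n}$ by a factor going to zero.

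The heart of the argument is then an induction on $k$ using the Ice-Swimming Lemma \ref{lem_technical}, which expresses $\|w_{n,t}^{k,l}(\xi)\|_\HS$ in terms of $\|w_{n-l,t-l}^{k,l}(\xi')\|_\HS$, $\|w_{n-1,t-1}^{k-2,l-1}(\xi'')\|_\HS$, and an explicit error $Cq^{-3l/2}\|F_l\|^2 q^{n/2}\|\xi\|$. Running the recursion in the first variable, the factor $\big(\qdim(t-l)/\qdim(t)\big)^{1/2} \approx q^{l/2}$ accumulates geometrically; iterating the first term down to $n - jl$ roughly $n/l$ times produces a bound like $\|F_l\|^{n/l} \cdot (\text{something})^{n/2}$ which, since $q\|F_1\| < 1$ in the regime $N \geq 3$ (established in Subsection \ref{subsec_free_QG}), decays appropriately; the second term lowers $k$ by $2$ and $l$ by $1$, so the induction is well-founded provided we initialize at $k = 2$ rather than $k = 0$ — and this is exactly what the last assertion of Lemma \ref{lem_technical} (the preservation of the condition $t_{l-1}^*(\xi'') = 0$ when $k = 2l$, $t_l^*(\xi) = 0$) makes possible: for the relevant slices of $A_n$ the vanishing condition $t_l^*(\xi) = 0$ holds at the appropriate leaves, so the problematic $k = 0$ base case (where the desired decay is false) is never reached. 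I expect the \textbf{main obstacle} to be the careful bookkeeping in the induction: one must track simultaneously the three parameters $(k, l, n)$, keep the constants depending only on $q$ and $k$ under control as they are multiplied $O(n)$ times, verify that the vanishing hypothesis $t_l^*(\xi)=0$ propagates correctly through all branches of the recursion (using that $A_n$ arises from $t_n t_n^*$ and hence its relevant components are annihilated by the appropriate $t_l^*$), and check that the accumulated geometric factor genuinely uses $q\|F_1\| < 1$ and not just $q < 1$ — which is why $N \geq 3$ is essential here as well. Once the estimate $q^n \|w_{n,t}^{k,l}(\xi)\|_\HS \to 0$ for $l \geq 1$ is in hand, summing the finitely many terms $l = 1, \dots, k$ and taking $t \to \infty$ then $n \to \infty$ gives $q^n p_k z_n \to 0$, hence $q^n z_n \to p_0$ $\sigma$-weakly, hence $p_0 \in \sN_\infty$ and $\beta_\infty$ is faithful.
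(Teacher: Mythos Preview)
Your overall strategy matches the paper's: show $q^n z_n \to p_0$ $\sigma$-weakly using Lemmas~\ref{lem_eval_unit}, \ref{lem_global_bound}, \ref{lem_decomp}, Cauchy--Schwarz in the HS norm, and an induction on $k$ via Lemma~\ref{lem_technical}. However, two concrete points are wrong and would derail the argument as written.

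\textbf{The $l=0$ term.} Your claim that the $l=0$ summand ``reduces to the scalar case'' and provides a $q^{-n}(\zeta\mid p_0\xi)$-type contribution is incorrect. For $k\geq 1$ we are computing $(\zeta\mid p_k z_n\xi)$ with $\zeta,\xi\in\Hil_k$; there is no $p_0$ piece to cancel, and all terms $l=0,\dots,k$ must be shown to be $o(q^{-n})$. The $l=0$ term involves $w_{n,t}^{k,0}(\xi)\in B(\Hil_{t-n},\Hil_{t+k-n})$, which is a genuine operator, not a scalar. The paper handles $l=0$ not by separation but by the symmetry in the first bullet of Lemma~\ref{lem_easy}: one replaces $w_{n,t}^{k,0}(\xi)$ by $w_{n,s}^{k,k}(\cdot)$ (up to a bounded factor $\qdim(k)^{1/2}$), thereby reducing to $l\geq 1$ where Lemma~\ref{lem_technical} applies.

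\textbf{The vanishing condition $t_l^*(\xi)=0$.} This has nothing to do with $A_n$ or $t_nt_n^*$. The vectors $\xi,\zeta$ in Lemma~\ref{lem_decomp} are simply elements of $\Hil_k$; when $k=2l$ and $\xi\in\Hil_{2l}\subset\Hil_l\otimes\Hil_l$, the condition $t_l^*(\xi)=0$ holds automatically because $\Hil_{2l}$ is orthogonal to the invariant vector $t_l$. The last assertion of Lemma~\ref{lem_technical} then propagates this through the recursion, so that the induction never hits $(H_0)$ (which is false). Your attribution of this to the structure of $A_n$ would not give you the hypothesis at the right place.

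One further point: in the final Cauchy--Schwarz step the paper uses the induction bound $(H_k)$, namely $\|w_{n,t}^{k,l}(\zeta)\|_\HS\leq M_k\rho^{n/2}\|\zeta\|$ with $\rho=\|F_1\|$, on \emph{one} factor, and the easy bound $\|w_{n,t}^{\prime\,k,l}(\xi)\|_\HS\leq Cq^{-n/2}q^{-l/2}\|\xi\|$ from Lemma~\ref{lem_easy} on the other; the product gives $(\rho q)^{n/2}\to 0$ since $\rho q<1$. You do not need (and the paper does not prove) that $q^n\|w\|_\HS\to 0$ on its own.
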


\begin{proof}
  From Lemma \ref{lem_eval_unit} and  Lemma \ref{lem_global_bound} we know that the
  elements $q^nz_n$ of $\sN_\infty$ are uniformly bounded and satisfy
  $p_0 q^nz_n = p_0$. It remains to prove that $p_k q^nz_n \to 0$ for $k>0$
  fixed as $n\to\infty$. For this we use Lemma~\ref{lem_technical}. We put
  $\rho = \|F_1\|$, so that $\|F_l\| = \rho^l$, and we recall that $\rho q<1$.

  More precisely we perform an induction over $k$, with induction assumption
  $(H_k)$: ``there exists a constant $M_k > 0$ such that, for any
  $0\leq l\leq k$, $t\geq n\geq k$ and for any $\xi \in \Hil_{k-l}\otimes \Hil_l$ such that $t_l^*(\xi) = 0$ (when $k=2l$), we have
  $\|w_{n,t}^{k,l}(\xi)\|_\HS \leq M_k \rho^{n/2} \|\xi\|$''. We prove that $(H_{k-2})$ implies $(H_k)$, and at the same time initialize the induction at $k=1$ and $k=2$. 
  
  First of all we ensure that $l\geq 1$ by replacing if necessary
  $w_{n,t}^{k,l}(\xi)$ with
  $w_{n,t}^{k,k-l}((F_l^{1/2}\otimes F_{k-l}^{1/2})\bar\xi)$, c.f.
  Lemma~\ref{lem_easy}. Since $s \subset t\otimes k$ the resulting factor
  $\qdim(s)^{1/2}/\qdim(t)^{1/2}$ is controlled from above by $\qdim(k)^{1/2}$,
  which we incorporate into $M_k$. Then we apply Lemma~\ref{lem_technical}. When
  $k=2l$ we assume that $t_l^*(\xi) = 0$, hence we have also
  $t_{l-1}^*(\xi'') = 0$ and we can apply $(H_{k-2})$ to
  $w_{n-1,t-1}^{k-2,l-1}(\xi'')$, obtaining for all $t\geq n\geq k$:
  \begin{align*}
    \|w_{n,t}^{k,l}(\xi)\|_\HS &\leq  \rho^l \Big({\ts\frac{\qdim(t-l)}{\qdim(t)}}\Big)^{1/2} \|w_{n-l,t-l}^{k,l}(\xi')\|_\HS + 
    C \rho^{2l} M_{k-2} \rho^{n/2}\|\xi''\|  + C q^{-3l/2} \rho^{2l} q^{n/2} \|\xi\| \\
    &\leq \rho^l \Big({\ts\frac{\qdim(t-l)}{\qdim(t)}}\Big)^{1/2} \|w_{n-l,t-l}^{k,l}(\xi')\|_\HS + 
       M'_k \rho^{n/2}\|\xi\|,
  \end{align*}
  where $M'_k = C\rho^{2k} M_{k-2} + C \rho^{2k}q^{-3k/2}$. Note that if $k=1$ we are in the case $k=l=1$ hence $\xi''=0$. The same applies if $k=2$ and $l=2$. If $k=2$ and $l=1$, we have $\xi''\in \Hil_0\otimes \Hil_0 = \CC$ and $t_0^*(\xi'') = 0$ hence $\xi''=0$ as well. As a result we do not need to use $(H_0)$ (which is false by Lemma~\ref{lem_eval_unit}).
  
  Then we apply the above inequality repeatedly until $n < l$. The index $n$ takes the values $n - pl$ with $p$ ranging
  from $0$ to $r := \lfloor n/l\rfloor$. Using the inequality $\qdim(t-pl)/\qdim(t) \leq C q^{pl}$ we obtain
  \begin{align*}
    \|w_{n,t}^{k,l}(\xi)\|_\HS &\leq {\ts\sum_{p=0}^r} ~ \rho^{pl} \Big({\ts\frac{\qdim(t-pl)}{\qdim(t)}}\Big)^{1/2} 
    M'_k \rho^{(n-pl)/2}\|\xi\| + \rho^{l(r+1)}q^{l(r+1)/2}\|w_{n-rl,t-rl}^{k,l}(\xi)\|_\HS \\
    &\leq C M'_k \Big({\ts\sum_{p=0}^\infty} (\rho q)^{pl/2}\Big) \rho^{n/2}  
    \|\xi\| + C \rho^{n/2}\rho^{k/2} q^{-k} \|\xi\|.
\end{align*}
For the second term we used the inequalities
$\rho q<1 $, $lr\leq n$ and the ``easy'' upper bound from Lemma~\ref{lem_easy}:
$\|w_{n-ql,t-ql}^{k,l}(\xi)\|_\HS \leq C q^{-(n-ql)/2}q^{-l/2}\|\xi\| \leq C q^{-k}
\|\xi\|$. We have now proved $(H_k)$.

Now we can come back to $p_kz_n$, for $k\geq 1$. By Cauchy-Schwarz we have:
\begin{displaymath}
|\qTr_{t-n}(w_{n,t}^{k,l}(\zeta)^*w_{n,t}^{\prime\,k,l}(\xi))| \leq \|w_{n,t}^{k,l}(\zeta)\|_\HS \|w_{n,t}^{\prime\,k,l}(\xi)\|_\HS
\leq CM_k q^{-k/2} \rho^{n/2} q^{-n/2} \|\zeta\| \|\xi\|.
\end{displaymath}
We used Lemma~\ref{lem_easy} for the term in $w'$, and the property $(H_k)$ for the term in $w$. Now we apply Lemma~\ref{lem_decomp} and recall that $\kappa_s^{k,t} \leq D_2 q^{l/2} \leq D_2$.
Taking the limit $t\to\infty$, the sum over $0\leq l\leq k$ and the $\sup$ over
$\zeta$, $\xi\in H_k$ of norm $1$ this yields
$\|p_k q^nz_n\| \leq CD_2^2(k+1)M_k q^{-k/2} \rho^{n/2} q^{n/2}$. Since $\rho q < 1$ we have proved that $p_k q^nz_n \to 0$ as $n\to \infty$.
\end{proof}

Theorem \ref{thm:faith-Fur-bnd-->unq-trc} now yields immediately the following corollary. In the unimodular case it already appeared in \cite{VaesVergnioux}, whereas the non-unimodular case of \cite[Theorem~7.2]{VaesVergnioux} deals with the modular group of the Haar state instead of the scaling group. Recall that when $Q$ is normalized by $Q \bar Q = \pm I_N$, $\FO_Q$ is unimodular {\bf iff} $Q$ is unitary.

\begin{cor}
Let $N\geq 3$ and let $Q \in M_N(\CC)$ be such that $Q\bar Q = \pm I_N$. If $Q$ is unitary the $\C^*$-algebra $\C_r^*(\FO_Q) = C(O^+_Q)$ admits a unique trace, and otherwise  it does not admit any KMS state for the scaling automorphism group at the inverse temperature $1$.
\end{cor}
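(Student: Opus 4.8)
The plan is to assemble this corollary directly from the material already developed. First I would record that $\FO_Q$ admits a faithful boundary action: by Corollary~\ref{KeyCorollary} the pair $(\sB_\infty,\beta_\infty)$ is a $\FO_Q$-boundary in the sense of Definition~\ref{def:boundary}, and by Theorem~\ref{thm_faithful} the action $\beta_\infty$ is faithful for $N\geq 3$.

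Next I would apply Theorem~\ref{thm:faith-Fur-bnd-->unq-trc} to the discrete quantum group $\G=\FO_Q$ and the $\G$-boundary $\sA=\sB_\infty$. Since $\hh\FO_Q=O^+_Q$, the dual reduced $\C^*$-algebra is $\C(\hh\FO_Q)=\C(O^+_Q)=\C_r^*(\FO_Q)$. Theorem~\ref{thm:faith-Fur-bnd-->unq-trc} then yields a dichotomy: if $\FO_Q$ is unimodular, $\C(O^+_Q)$ has a unique tracial state; if $\FO_Q$ is not unimodular, $\C(O^+_Q)$ admits no KMS-state for the scaling automorphism group at the inverse temperature $1$.

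It remains to match the dichotomy to the stated condition on $Q$. With the normalization $Q\bar Q=\pm I_N$, the discrete quantum group $\FO_Q$ is unimodular exactly when the dual compact quantum group $O^+_Q$ is of Kac type, which in turn holds precisely when Woronowicz' modular matrix $F_1={}^t(Q^*Q)$ equals $I_N$, i.e.\ when $Q$ is unitary. Feeding this observation into the two cases above produces the statement. I do not expect any genuine obstacle in this argument: the substantive content sits in Theorem~\ref{thm_faithful} (faithfulness of the Gromov boundary action, established via the Ice-Swimming Lemma) and in Theorem~\ref{thm:faith-Fur-bnd-->unq-trc} (the general passage from a faithful boundary action to KMS/trace uniqueness), so the corollary follows at once.
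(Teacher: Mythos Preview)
Your proposal is correct and follows exactly the paper's approach: the paper simply notes that Theorem~\ref{thm:faith-Fur-bnd-->unq-trc} applies (via the faithful boundary action furnished by Corollary~\ref{KeyCorollary} and Theorem~\ref{thm_faithful}) and recalls that, under the normalization $Q\bar Q=\pm I_N$, $\FO_Q$ is unimodular if and only if $Q$ is unitary. You have merely spelled out these steps more explicitly than the paper does.
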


\section{Open questions}

In this short section we gather and discuss some of the open question arising from our work. We begin with some questions related to Section 3, and concepts
related to relative amenability/coamenability. 

\begin{quest}
Suppose that $\G$ is a compact quantum group with a closed quantum subgroup $\QH$. Is $\QG/\QH$ coamenable if and only if $\ell^\infty(\hh \QH)$ is relatively amenable in $\ell^\infty(\hh \QG)$?
\end{quest}

We know the equivalence above holds for \emph{normal} quantum subgroups (so for example if $\QG$ is a dual of a classical discrete group) -- see Theorem \ref{thm:rel_amen_coamen}. The positive answer would generalize the duality between (non-relative) amenability and coamenability from \cite{Tomatsu06} and would provide another  description of the co-kernel of the Furstenberg boundary action for any discrete quantum group.

\begin{quest}
Does every compact quantum group $\QG$ admits a smallest closed quantum subgroup $\QH$ such that $\QG/\QH$ is coamenable?
\end{quest}
One possible approach to this question is the following:
let $\QG$ be a compact quantum group and $\QH_i\subset \QG$, $i = 1,2$ be a pair of quantum subgroups such that $\pi_i\in\Mor(C^u(\QG),C^u(\QH_i))$ admit the reduced versions
$\tilde\pi_i\in\Mor(\C(\QG),\C(\QH_i))$. Denote by $\tilde I_i = \ker\tilde\pi_i$, $i=1,2$. Let $I$ be the  ideal in $\C(\QG)$ generated by $\tilde{I}_1$ and $\tilde{I}_2$. Is it then true that $\I\notin I$? If the answer is positive, so is the answer to the last displayed question, and we would have a well-defined notion of the `co-amenable co-radical' of a compact quantum group.

\begin{quest}
Must the co-kernel of the Furstenberg boundary action of a discrete quantum group $\GG$ be a normal coideal in $\ell^\infty(\hh \GG)$? 
\end{quest}

A positive answer to the above question would yield a simple description of the co-kernel of the Furstenberg boundary action for any discrete quantum group and would in particular imply that the Furstenberg boundary action of $\GG$ is faithful if and only if the amenable radical of $\GG$ is trivial.

We now pass to a series of questions regarding the  operator algebras associated to discrete quantum groups. Note that for classical discrete groups all the answers below are positive, but some have been obtained only recently.

\begin{quest}\label{Q:C*-simple-subgrp}
	Let $\G$ be a discrete quantum group and let $\LLambda\subset \G$ be a normal quantum subgroup. Is there any relation between $\cst$-simplicity or the unique trace property (in the unimodular case) of $\G$ and $\LLambda$?
\end{quest}

\begin{quest}\label{Q:unq-trc-->fthful-bnd?}
	Does the converse of Theorem \ref{thm:faith-Fur-bnd-->unq-trc} hold, at least in the unimodular case? That is, does the unique trace property of a unimodular discrete quantum group $\G$ imply that the action $\G\act \C(\fb)$ is faithful?
\end{quest}

\begin{quest}\label{Q:C*-simple-->unq-tr?}
	Suppose that $\G$ is unimodular. Does  $\cst$-simplicity of $\G$ imply that it has the unique trace property?
\end{quest}

The first two questions are naturally related to the following one, which ends our list.

\begin{quest}\label{Q:Furs-bnd-ext-norm-subgrp}
	Let $\G$ be a discrete quantum group and let $\LLambda\subset \G$ be a normal quantum subgroup. Does the action $\LLambda\act C(\partial_F\LLambda)$ extends to an action $\G\act \C(\partial_F\LLambda)$? (In that case it would be automatically a $\G$-boundary by Proposition \ref{prop:subgrp-bnd-->bnd}.)
\end{quest}

\subsection*{Acknowledgements} 
M.K. was partially supported by the NSF Grant DMS-1700259. 
A.S. was partially supported by the National Science Center (NCN) grant no.~2014/14/E/ST1/00525. 
R.V. was partially supported by the ANR project ANR-19-CE40-0002 and the CEFIPRA project 6101-1. R.V. thanks Holger Reich and the Algebraic Topology group at the Freie Universit\"at Berlin for their kind hospitality in the academic year 2019--2020.
P.K.\, and A.S.\, are grateful to the University of Houston and the Brazos Analysis Seminar for hospitality during the visits in 2017 and 2018 when some of the work on the paper took place.
M.K.\, is grateful to the IMPAN for hospitality during his visit to Warsaw in 2018 when some of the work on the paper took place.
 We all thank the referee for a careful reading of our manuscript and several useful comments improving the presentation.

\end{document}